\title{Lim Cohen-Macaulay sequences of modules}
\author{Bhargav Bhatt$^1$, Melvin Hochster$^1$, and Linquan Ma$^1$}
\date{\today}
\theoremstyle{plain}
\newtheorem{theorem}{Theorem}[section]
\newtheorem{corollary}[theorem]{Corollary}
\newtheorem{proposition}[theorem]{Proposition}
\newtheorem{lemma}[theorem]{Lemma}
\newtheorem{mofact}[theorem]{Motivating Fact}
\newtheorem*{theorem*}{Theorem}
\theoremstyle{remark}
\newtheorem{remark}[theorem]{Remark}
\newtheorem{notation}[theorem]{Notation}
\newtheorem{discussion}[theorem]{Discussion}
\theoremstyle{definition}
\newtheorem{definition}[theorem]{Definition}
\newtheorem{example}[theorem]{Example}
\newtheorem{examples}[theorem]{Examples}
\newcommand{\0}{^\circ}
\newcommand{\Ann}{\mathrm{Ann}}
\newcommand\arwf[1]{\xrightarrow{#1}}
\newcommand\bem[1]{\bibitem[#1]{#1}}
\newcommand{\ben}{\begin{enumerate}}
\newcommand{\bena}{\ben[label=(\alph*)]}
\newcommand{\benn}{\ben[label=(\arabic*)]}
\newcommand{\benr}{\ben[resume*]}
\newcommand\bensp[1]{\ben[label=#1]}
\newcommand\blank{\underline{\hbox{\ \ }}}
\newcommand\bu{\bullet}
\newcommand{\cl}{\cln}
\newcommand{\cla}{^{*\cM,\alpha}}
\newcommand{\clb}{^{*\cM,\beta}}
\newcommand{\cln}{^{\natural}}
\newcommand{\Coker}{\mathrm{Coker}}
\newcommand{\C}{\mathbb{C}}
\newcommand{\cB}{\mathcal{B}}
\newcommand{\cC}{\mathcal{C}}
\newcommand{\cF}{\mathcal{F}}
\newcommand{\cG}{\mathcal{G}}
\newcommand{\cI}{\mathcal{I}}
\newcommand{\cJ}{\mathcal{J}}
\newcommand{\cK}{\mathcal{K}}
\newcommand{\cL}{\mathcal{L}}
\newcommand{\cM}{\mathcal{M}}
\newcommand{\CM}{Cohen-Macaulay\xspace}
\newcommand{\cP}{\mathcal{P}}
\newcommand{\cV}{\mathcal{V}}
\newcommand{\cZ}{\mathcal{Z}}
\newcommand{\dm}{\mathrm{dim}}
\newcommand{\depth}{\mathrm{depth}}
\newcommand{\disp}{\displaystyle}
\newcommand{\een}{\end{enumerate}}
\newcommand{\Ext}{\mathrm{Ext}}
\newcommand{\gr}{\mathrm{gr}}
\newcommand{\fA}{\mathfrak{A}}
\newcommand{\fB}{\mathfrak{B}}
\newcommand{\fL}{\mathfrak{L}}
\newcommand{\fN}{\mathfrak{N}}
\newcommand{\fp}{\mathfrak{p}}
\newcommand{\fq}{\mathfrak{q}}
\newcommand{\hgt}{\mathrm{height}}
\newcommand{\Hom}{\mathrm{Hom}}
\newcommand{\id}{\mathrm{id}}
\newcommand{\Img}{\mathrm{Im}}
\newcommand{\imp}{\Rightarrow}
\newcommand{\inc}{\subseteq}
\newcommand{\Ker}{\mathrm{Ker}}
\newcommand{\lCM}{lim Cohen-Macaulay\xspace}
\newcommand{\m}{\mathfrak{m}}
\newcommand{\MaxSpec}{\mathrm{MaxSpec}}
\newcommand{\MIN}{\mathfrak{Min}}
\newcommand{\n}{\mathfrak{n}}
\newcommand{\nn}{{}^n}
\newcommand{\N}{\mathbb{N}}
\newcommand{\pchar}{prime characteristic $p>0$\xspace}
\newcommand{\oo}{\mathrm{o}}
\newcommand{\OO}{\mathrm{O}}
\newcommand{\ov}{\overline}
\newcommand{\Q}{\mathbb{Q}}
\newcommand{\R}{\mathbb{R}}
\newcommand{\rank}{\mathrm{rank}}
\newcommand{\rmk}{(R,\,\mathfrak{m},\,K)}
\newcommand{\smo}{\setminus\{0\}}
\newcommand{\Spec}{\mathrm{Spec}}
\newcommand{\sop}{system of parameters\xspace}
\newcommand{\surj}{\twoheadrightarrow}
\newcommand{\Tor}{\mathrm{Tor}}
\newcommand{\und}{\underline}
\newcommand{\ux}{\und{x}}
\newcommand{\UX}{\und{X}}
\newcommand{\uy}{\und{y}}
\newcommand\vect[2]{#1_1,\,\ldots,\, #1_{#2}}
\newcommand{\wh}{\widehat}
\newcommand{\wt}{\widetilde}
\newcommand{\Z}{\mathbb{Z}}
\def\todo#1
\def\forth#1
\begin{document}

\begin{abstract} We introduce the notion of a lim Cohen-Macaulay sequence of modules.  We prove
the existence of such sequences in positive characteristic, and show that their existence in mixed characteristic
implies the long open conjecture about positivity of Serre intersection multiplicities for all regular local rings, as well
as a new proof of the existence of big \CM modules.
We describe how such a sequence leads to a notion of closure for submodules of finitely generated
modules:  this family of closure operations includes
the usual notion of tight closure in characteristic $p>0$, and all of them have the property of
capturing colon ideals. In fact they satisfy axioms formulated by G.~Dietz from which it follows
that if a local ring $R$ has a \lCM\ sequence then it has a big Cohen-Macaulay module.
We also prove the existence of \lCM\ sequences for
certain rings of mixed characteristic.

  \end{abstract}

\subjclass[2020]{Primary 13}

\keywords{Closure operation, Dietz closure, direct summand theorem, Hilbert-Samuel multiplicity, Koszul homology, lim Cohen-Macaulay, local cohomology, Serre multiplicity, tight closure, weakly lim Cohen-Macaulay}

\thanks{$^1$ The first author was partially supported by grants from the National Science Foundation DMS--1801689, 1952399, 1840234, the Packard Foundation, and the Simons Foundation, the second author was partially supported by grants from the National Science Foundation DMS--1902116, 2200501, and the third author was partially supported by grants from the National Science Foundation DMS--2302430, 1952366, 1901672, and a fellowship from the Sloan Foundation when preparing this article.}

\maketitle

\pagestyle{myheadings}
\markboth{BHARGAV BHATT,  MELVIN HOCHSTER, AND LINQUAN MA}{LIM COHEN-MACAULAY SEQUENCES OF MODULES}

\section{Introduction}\label{intro} 

Throughout, all rings are commutative, associative, with identity, and homomorphisms are
assumed to preserve the identity.  By a {\it local ring} $\rmk$ we mean a Noetherian ring
$R$ with a unique maximal ideal $\m$ and residue class field $K$.   In \S \ref{ss:ExecSummary}, we provide a concise introduction to the results of this paper. A more leisurely introduction with background and context is provided in \S \ref{ss:NotIntro}, \S \ref{ss:BCMIntro}, \S \ref{ss:ObjIntro}, \S \ref{ss:SmallCMIntro} and \S  \ref{ss:SerreIntro}.

\subsection{Executive summary}
\label{ss:ExecSummary}
A fundamental conjecture in commutative algebra predicts that complete local domains have
small Cohen-Macaulay modules. This conjecture has many consequences, but is wide open in general. Our objective is to show that a greatly weakened version of this conjecture suffices both to give new proofs of homological results like
the existence of big Cohen-Macaulay modules and to prove the positivity part of the Serre conjecture
on multiplicities, which  has been an open
question for over fifty years.  The new conjecture is phrased in terms of the existence
of {\it lim Cohen-Macaulay sequences}, which is the main new object introduced in this paper.

\begin{definition}
\label{def:limCMintro}
A sequence of nonzero finitely generated modules
$\cM = \{M_n\}_n$ over a local ring of $\rmk$ of Krull dimension $d$ is called {\it \lCM}  if
for some (equivalently, every) system of parameters $\ux = \vect x d$ for $R$,
$\ell\bigl(H_i(\ux; M_n)\bigr) =\oo\bigl(\nu(M_n)\bigr)$ for all $i \geq 1$,  where $\ell(H)$ denotes
the length of $H$,  $H_i(\ux;\, M)$ denotes
the $i\,$th Koszul homology of $M$, and $\nu(M)$ denotes the least number of generators
of $M$, i.e. $\dm_K(K \otimes_R M)$.
\end{definition}

 In \S\ref{limdef}
we show that this condition is independent of the system of parameters and that
when $R$ is a domain,  $\nu(M_n)/\rank(M_n)$ is bounded for such a sequence,
so that the sequence $\cM$ is \lCM\ if and only if
$\ell\bigl(H_i(\ux; M_n)\bigr) = \oo\bigl(\rank(M_n)\bigr)$ for all $i \geq 1$. An important class of examples is provided by the following result, ensuring that lim \CM sequences always exist over F-finite local rings of prime characteristic $p >0$.

 \begin{theorem*}[\bf\ref{nMlimcm}] Let $\rmk$ be an F-finite local ring of prime characteristic $p >0$
of Krull dimension $d$,  and let
$M$ be any $R$-module of Krull dimension $d$.  Then the sequence $\{F^n_*(M)\}_n$ is a lim \CM sequence of
$R$-modules \end{theorem*}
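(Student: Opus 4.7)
The plan is to translate both sides of the \lCM\ defining condition for $\{F^n_*(M)\}$ through the Frobenius functor, reducing the statement to a classical asymptotic vanishing estimate for the Koszul homology of $M$. Since $R$ is F-finite, each $F^n_*(M)$ is a finitely generated $R$-module, and a direct calculation identifies $F^n_*(M)/\m F^n_*(M)$ with the abelian group $M/\m^{[p^n]}M$ equipped with a Frobenius-twisted $K$-action. Its natural $K$-vector space structure (available because $R/\m^{[p^n]}$ contains a coefficient field isomorphic to $K$) has dimension $\ell(M/\m^{[p^n]}M)$, and the Frobenius twist multiplies $K$-dimension by the factor $[K:K^{p^n}] = p^{n\alpha}$, where $p^\alpha := [K:K^p]$. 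Hence
\[
\nu(F^n_*(M)) \;=\; p^{n\alpha}\cdot \ell\bigl(M/\m^{[p^n]}M\bigr),
\]
and by the Hilbert-Kunz theorem of Monsky this grows as $e_{HK}(\m,M)\cdot p^{n(d+\alpha)} + \oo(p^{n(d+\alpha)})$ with positive leading coefficient since $\dim M = d$. Likewise, Frobenius-compatibility of the Koszul differentials gives $K(\ux; F^n_*(M)) = F^n_*\bigl(K(\ux^{[p^n]};M)\bigr)$ as complexes, whence $H_i(\ux; F^n_*(M)) \cong F^n_*\bigl(H_i(\ux^{[p^n]};M)\bigr)$. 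Since $\ell(F^n_*(N)) = p^{n\alpha}\,\ell(N)$ for any finite length $R$-module $N$, one obtains $\ell(H_i(\ux; F^n_*(M))) = p^{n\alpha}\cdot \ell(H_i(\ux^{[p^n]}; M))$.

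Comparing these two expressions, the \lCM\ condition for the sequence $\{F^n_*(M)\}$ becomes equivalent to the estimate $\ell(H_i(\ux^{[p^n]}; M)) = \oo(p^{nd})$ for every $i \geq 1$. To establish this asymptotic vanishing, I would combine Serre's Koszul multiplicity formula $\chi(\ux^{[p^n]}; M) = p^{nd}\,e(\ux, M)$ with Lech's limit formula $\ell(M/\ux^{[p^n]}M) = p^{nd}\,e(\ux, M) + \oo(p^{nd})$. Subtracting, the first partial Euler characteristic $\chi_1(\ux^{[p^n]}; M) = \sum_{i \geq 1}(-1)^{i-1}\ell(H_i(\ux^{[p^n]}; M))$ is $\oo(p^{nd})$. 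Then a prime filtration of $M$ reduces the term-by-term analysis to the case $M = R/\fp$ a domain of dimension $d$: components with $\dim R/\fp < d$ contribute only $O(p^{n\dim R/\fp}) = \oo(p^{nd})$ via the trivial bound $\ell(H_i) \leq \binom{d}{i}\,\ell(H_0)$, while the non-negativity of all higher partial Euler characteristics $\chi_j(\ux^{[p^n]}; M) \geq 0$ (Serre's positivity theorem for parameter ideals) converts the bound on the signed sum $\chi_1$ into individual bounds on each $\ell(H_i(\ux^{[p^n]}; M))$.

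The main obstacle is precisely this final passage: extracting term-by-term $\oo(p^{nd})$-bounds from an alternating-sum bound requires positivity input beyond Serre's multiplicity formula. An alternative avenue, natural in the F-finite setting, is to invoke a maximal Cohen-Macaulay approximation $0 \to Y \to X \to M \to 0$ (available because F-finite rings carry a dualizing complex): the Koszul homology of the \CM\ module $X$ vanishes in positive degrees, and that of $Y$ is controlled through its dimension via the trivial Koszul bound, so the long exact Koszul sequence transfers the estimate to $M$. Apart from this one asymptotic vanishing input, the earlier steps are routine manipulations of Frobenius twists, Koszul complexes, and standard Hilbert-Kunz asymptotics.
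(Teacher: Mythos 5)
Your Frobenius--Koszul bookkeeping matches the paper's proof of Theorem~\ref{nMlimcm} (via Lemma~\ref{nu}) exactly: the identities $\nu(F^n_*(M)) = p^{n\alpha}\,\ell(M/\m^{[p^n]}M)$ and $\ell\bigl(H_i(\ux;F^n_*(M))\bigr) = p^{n\alpha}\ell\bigl(H_i(\ux^{[p^n]};M)\bigr)$, together with Monsky's Hilbert--Kunz asymptotic, correctly reduce the claim to showing $\ell\bigl(H_i(\ux^{[p^n]};M)\bigr) = \oo(p^{nd})$ for every $i \geq 1$, and this is exactly the reduction the paper makes. The gap is in how you try to establish this last estimate. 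The paper invokes Theorem~\ref{Kzbnd} (Hochster--Huneke \cite[Theorem 6.2]{HH93}, strengthening Roberts \cite{Rob89}), which gives $\ell\bigl(H_i(\ux^{[p^n]};M)\bigr) \leq Cp^{n(d-i)}$; this is a substantive theorem proved via dualizing-complex techniques, and neither of your proposed alternatives succeeds in replacing it.

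Your primary route --- deducing $\ell(H_i) = \oo(p^{nd})$ from $\chi_1(\ux^{[p^n]};M) = \oo(p^{nd})$ and $\chi_j \geq 0$ --- simply does not follow. Since $\ell(H_i) = \chi_i + \chi_{i+1}$, non-negativity of $\chi_{i+1}$ gives only $\ell(H_i) \geq \chi_i$, which points the wrong way; two consecutive $\ell(H_i)$'s can both be of order $p^{nd}$ while the alternating sum stays small. Indeed, $\chi_1 = \oo(p^{nd})$ says precisely that $\{F^n_*M\}_n$ is \emph{weakly} lim \CM, and the paper itself notes that weakly lim \CM does not imply lim \CM (the example $M_n = R^n \oplus R/\m^{n^n}$ over $R = K[[x,y]]$, for which $\chi_j \geq 0$ also holds). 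Moreover, the supporting ``trivial bound'' $\ell\bigl(H_i(\ux;N)\bigr) \leq \binom{d}{i}\ell\bigl(H_0(\ux;N)\bigr)$ is false: for $R = K[[x,y]]$, $N = R/\m^2$, $\ux = (x,y)$, one has $\ell(H_0) = 1$ and $\ell(H_2) = \ell(\m/\m^2) = 2$, so $\chi(\ux;N) = 0$ (as $\dim N = 0 < 2$) forces $\ell(H_1) = 3 > 2 = \binom{2}{1}\ell(H_0)$. The correct elementary bound, $\ell(H_i) \leq \binom{d}{i}\ell(N)$, applies only to finite-length $N$; for a prime-cyclic factor $R/\fp$ of positive dimension, what Corollary~\ref{Koszlength}(b) gives elementarily is $\ell\bigl(H_i(\ux^{[p^n]};R/\fp)\bigr) \leq p^{nd}\ell\bigl(H_i(\ux;R/\fp)\bigr) = \OO(p^{nd})$, which is not small.

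Your alternative route via maximal Cohen--Macaulay approximations also does not apply: Auslander--Buchweitz approximations require the ring to be \CM with a canonical module, whereas the $R$ in Theorem~\ref{nMlimcm} need not be \CM; and even when an exact sequence $0 \to Y \to X \to M \to 0$ with $X$ maximal \CM is available, there is no control on $\dim Y$ that would make the Koszul homology of $Y$ grow sublinearly, so the long exact sequence does not transfer the estimate. In short, your reduction steps are sound and parallel the paper's, but the estimate $\ell\bigl(H_i(\ux^{[p^n]};M)\bigr) = \oo(p^{nd})$ for $i\geq 1$ is the crux, and it genuinely requires the depth of the Roberts/Hochster--Huneke theorem.
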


One of our motivations for exploring Definition~\ref{def:limCMintro} is Serre's intersection multiplicity conjecture. In fact, one consequence of our results is a proof that the Serre multiplicity \cite{Ser65} behaves correctly\footnote{I.e., is nonnegative and positive if and only if the intersection is proper.} in the equal characteristic case that avoids the technique of reduction to the diagonal.  Instead,  our proof in  characteristic $p$ proof uses \lCM sequences, and the
equal characteristic 0 case follows by reduction to the case of \pchar, a technique pioneered in \cite{PS74}.  More precisely, in \S\ref{limcmSerre}, we prove that if one can establish the existence of lim \CM sequences for complete local domains of mixed characteristic with algebraically closed residue field, then Serre's conjecture follows; the characteristic $p$ case of the latter then follows from Theorem~\ref{nMlimcm} discussed above.  The key in this direction is Theorem~\ref{pos} (and  Remark~\ref{geq1}):

 \begin{theorem*}[\bf \ref{pos}] Let $(T,\,\m,\,K)$ be a regular local ring of Krull dimension $d$.
 Let $P$ and $Q$ be prime ideals of $T$ such that $\dm(T/P) + \dm(T/Q)  = d$
 and $P+Q$ is $\m$-primary.  Assume that $R := T/P$ and $S := T/Q$ admit lim Cohen-Macaulay
 sequences $\{M_n\}_n$ and $\{N_n\}_n$, respectively. Then the multiplicity
 \[ \chi(R,\,S) := \sum_{i=0}^d (-1)^i \ell \left( \mathrm{Tor}_i^T(R,S) \right)\]
  is positive.  In fact,
 $$\chi(R,\,S) =  \lim_{n \to \infty} \frac{\ell(M_n \otimes_T N_n)}{\rank(M_n)\rank(N_n)} \geq 1.$$ \end{theorem*}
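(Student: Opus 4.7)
Write $r_n := \rank_R(M_n)$ and $s_n := \rank_S(N_n)$. The strategy is to relate $\ell(M_n \otimes_T N_n)$ to the Euler characteristic $\chi^T(M_n, N_n) := \sum_i (-1)^i \ell(\Tor^T_i(M_n, N_n))$, which is finite because $T$ is regular and $M_n \otimes_T N_n$ is supported in $V(P+Q) = \{\m\}$.

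Step 1 establishes the exact identity $\chi^T(M_n, N_n) = r_n s_n \chi^T(R, S)$ using additivity of $\chi^T$ combined with Serre's vanishing theorem for regular local rings (proved in full generality by Roberts and by Gillet--Soul\'e). A prime filtration of $M_n$ contains $R = T/P$ exactly $r_n$ times (by localization at $P$); every other subquotient $T/\fp$ satisfies $\dim T/\fp < \dim R$, and since $\fp \supseteq P$ gives $V(\fp) \cap V(Q) \subseteq V(P) \cap V(Q) = \{\m\}$, Serre vanishing zeroes its contribution to $\chi^T(-, N_n)$. The symmetric filtration of $N_n$ then yields the claim.

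Step 2, the main technical step, is to show $\ell(\Tor^T_i(M_n, N_n)) = o(r_n s_n)$ for every $i \geq 1$. Since $P+Q$ is $\m$-primary, by prime avoidance one can choose $T$-regular sequences $\uy = y_1,\ldots,y_e \in Q$ with $\bar\uy$ a system of parameters of $R$, and $\uz = z_1,\ldots,z_f \in P$ with $\bar\uz$ a system of parameters of $S$; after a faithfully flat regular extension of $T$ (to avoid the pathology $Q \subseteq \m^2$), one can further arrange that $T' := T/(\uy)$ is a regular local ring of dimension $f$ surjecting onto $S$. Koszul then identifies $\Tor^T_q(M_n, T') = H_q(\uy; M_n)$, which the \lCM\ hypothesis for $\{M_n\}$ bounds by $o(r_n)$ for $q \geq 1$ and $O(r_n)$ for $q = 0$; the \lCM\ hypothesis for $\{N_n\}$ applied to $\uz \bmod (\uy)$ (a sop of $T'$ that remains a sop of $S$) gives $\Tor^{T'}_p(K, N_n) = H_p(\uz \bmod (\uy);\, N_n) = o(s_n)$ for $p \geq 1$. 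Plugging both estimates into the change-of-rings spectral sequence
\[E_2^{p,q} = \Tor^{T'}_p\bigl(\Tor^T_q(M_n, T'),\, N_n\bigr) \implies \Tor^T_{p+q}(M_n, N_n),\]
and filtering each finite-length module appearing as $\Tor^T_q(M_n, T')$ by its residue field, all entries with $p+q \geq 1$ are bounded by products of the form $o(r_n) \cdot O(s_n) + O(r_n) \cdot o(s_n) = o(r_n s_n)$, hence the target is $o(r_n s_n)$.

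Combining Steps 1 and 2 gives $\ell(M_n \otimes_T N_n) = r_n s_n \chi^T(R, S) + o(r_n s_n)$, and dividing by $r_n s_n$ gives the limit formula. For the lower bound $\chi^T(R, S) \geq 1$, the canonical surjection $M_n \otimes_T N_n \twoheadrightarrow (M_n/\m M_n) \otimes_K (N_n/\m N_n) \cong K^{\nu(M_n) \nu(N_n)}$ yields $\ell(M_n \otimes_T N_n) \geq \nu(M_n)\nu(N_n) \geq r_n s_n$, using $\nu \geq \rank$ for finitely generated modules over a domain. The hardest part is Step 2: arranging the auxiliary regular sequences so that $T'$ is regular, and then coordinating the two \lCM\ estimates through the spectral sequence---individually $\ell(\Tor^T_q(M_n, S))$ is only $O(r_n)$ (since $Q$ need not be a complete intersection), so the extra decay to $o$ must come from the \lCM\ property of $\{N_n\}$.
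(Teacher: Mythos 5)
Your Step~1 and the lower bound $\chi(R,S) \geq 1$ match the paper's argument exactly.  However, Step~2 contains a genuine gap in the reduction to $T' := T/(\uy)$ regular.

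The issue is that for the identification $\Tor^{T'}_p(K, N_n) \cong H_p(\uz \bmod (\uy); N_n)$ you need the Koszul complex on $\uz \bmod (\uy)$ to be a free resolution of $K$ over $T'$, i.e., you need $\uy, \uz$ to be a \emph{regular} system of parameters for $T$ (elements whose images in $\m/\m^2$ are linearly independent).  This is impossible whenever $P + Q \subseteq \m^2$: every element of $Q$ then lies in $\m^2$, and no faithfully flat local base change fixes this, since the image of $Q$ in $\m_{T'}/\m_{T'}^2$ is controlled by the image of $Q$ in $\m/\m^2$ whenever $\m T' = \m_{T'}$ (and if one allows $\m T' \subsetneq \m_{T'}$, the same obstruction reappears after adjusting the ideals so that the Serre multiplicity is preserved).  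A concrete failure: $T = K[[X,Y]]$, $P = (Y^2 - X^3)$, $Q = (Y^2 - 2X^3)$, with $P + Q = (Y^2, X^3)$ primary but contained in $\m^2$; here $T/(\uy)$ is necessarily singular.  Without this identification, your change-of-rings spectral sequence only produces $\Tor^{T'}_p(K, N_n) = $ Betti numbers of $N_n$ over the complete intersection $T'$, which the lim Cohen-Macaulay hypothesis (being a Koszul-homology condition) does not control directly.

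The paper sidesteps exactly this difficulty.  Rather than passing to the quotient $T/(\uy)$, it works with the Koszul complexes $\cK_\bu(\ux;T)$ and $\cK_\bu(\uy;T)$ on the original $T$-regular sequences (which, since $T$ is Cohen-Macaulay, are automatically regular sequences for any partial system of parameters), and uses Lemma~\ref{Koszsurj} to obtain a \emph{surjection} $\Tor^T_{d+i}(T/(\ux,\uy),\cP_\bu) \surj \Tor^T_i(M_n,N_n)$ after replacing each parameter by its $(d{+}1)$-th power, where $\cP_\bu$ is a flat complex computing $\Tor^T_\bu(M_n, N_n)$.  Two applications of the multiple-Tor spectral sequence (Discussion~\ref{mulTor}) then bound the left side by $\sum_{r+s+t = d+i} \ell\bigl(\Tor^T_t(H_r(\ux;M_n), H_s(\uy;N_n))\bigr)$, where crucially the outer Tor is over the regular ring $T$, so Corollary~\ref{ltor} gives a clean bound $\ell(H_r(\ux;M_n))\,\ell(H_s(\uy;N_n))\,\ell(\Tor^T_t(K,K))$.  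For $i\geq 1$ and $t\leq d$ one has $r+s\geq 1$, so at least one of $r,s$ is positive and the lim CM hypothesis makes the product $o(\rank(M_n)\rank(N_n))$.  The conceptual difference is that the paper's approach never leaves the regular ring $T$: the quotients $T/(\ux)$, $T/(\uy)$ never appear as base rings for a Tor computation, so no regularity of those quotients is required.
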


 In \S\ref{op} we describe a very general method for defining a closure operation from a
 sequence\footnote{One may also use a {\it net} of modules: a family indexed by a directed set. See  Remark~\ref{net}.}
 of finitely generated  $R$-modules (such as a lim \CM sequence) and an $\N_+$-valued function $\alpha$ defined on
 the  modules in these sequences.  Typically, $\alpha$ is the least number of generators or rank.  We show that
 integral closure for ideals arises in this way (Example \ref{intclop}), as well as tight closure of modules for a reduced F-finite equidimensional
 local rings (Theorem~\ref{tclimcm}).  We also show that under surprisingly weak assumptions, closures arising from a sequence of modules
 ({\it not} necessarily a lim \CM sequence) have the property that integrally closed ideals are closed in the ring
 (Theorem \ref{cloint}).

 In \S\ref{cap} we prove that the closure operations arising from lim \CM sequences have several colon-capturing
 properties like those of tight closure.  We use these to show that the direct summand theorem\footnote{For background,
 see \cite{Ho73a, Ho83, Heit02, And18, Bha18}. For information about related homological conjectures, see
 \cite{Aus63, Bass63, Du87, DHM85, EvG81, Heit93, Ho75a, PS74, PS76, Rob76, Rob80, Rob85, Rob87, Rob89}.}  follows from the existence of lim \CM sequences, and we also show that for a regular local ring, every submodule of every
 module is closed with respect to every lim \CM closures.

 In \S\ref{big} we prove that every lim \CM closure on a local ring
 is what is called a {\it Dietz closure}:
 it satisfies axioms developed by G.~Dietz  \cite{Di10}. This implies that every local ring
 that has a lim \CM sequence of modules has a big \CM module.  Hence, whenever we
 can prove that a local ring $R$ has a lim \CM sequence, we get a new proof that $R$ has
 a big \CM module. These results were developed
 in the hope of obtaining a proof of the existence of big \CM modules in mixed characteristic involving
 considerably less machinery than the existing proofs (which use almost mathematics and perfectoid
 geometry).  However, our current results on the existence of lim \CM sequences, while satisfactory
 in prime characteristic $p >0$, are very limited in mixed characteristic.

 In \S\ref{strong}, we define and prove some results about {\it strong}\footnote{In an earlier version of this
 paper, this was the definition of lim \CM sequence.  The weaker notion used here seems more natural
 and yields the same main results about positivity of Serre multiplicities and existence of big \CM modules.}
lim  \CM sequences, including the
 non-obvious fact that they are, as one expects from the name, lim \CM sequences.  We prove that over
 an F-finite local ring $R$, if a finitely generated module $M$ has the same Krull dimension as $R$,
 then $\{F^n_*(M)\}_n$ is strongly lim \CM.   See Corollary~\ref{strongimp}.

 Finally, in \S\ref{mixed} we give some examples of rings in mixed characteristic for which we can construct lim \CM sequences of algebras,
 and we show that some of these examples do not admit small \CM algebras. See Example~\ref{ex:LimCMNoFrob} and Corollary~\ref{NonExistSmallCM}.

In the rest of this section, we fix some notation and terminology and then
 discuss what is known about big and small \CM modules, some history
 for Serre's conjecture on intersection multiplicities, and the connection between the existence of small \CM
 modules and Serre's conjecture.

\subsection{Terminology and notation}
\label{ss:NotIntro}
As mentioned earlier, by a \emph{local ring} $\rmk$ we mean a Noetherian ring
$R$ with a unique maximal ideal $\m$ and residue class field $K$.
 We often assume that given rings are complete local domains,
which suffices in all the applications we have in mind.
 For certain purposes we may assume that $K$ is perfect or even algebraically closed: the latter case
suffices for the major applications that we have in mind.

The {\it rank} of a module $M$ over a domain $D$ with fraction field $\cK$ is its torsion-free rank,
i.e., $\dm_\cK (\cK \otimes_D M)$.  We denote this $\rank(M)$. More general notions of rank are
consider in Subsection~\ref{rank}.

We use the notations $\N_+$, $\N$, $\Z$, $\Q$, $\R$ and $\C$ for the positive integers, nonnegative integers,
integers, rational numbers, real numbers, and complex numbers, respectively.

\subsection{Big and small Cohen-Macaulay modules}
\label{ss:BCMIntro}
A {\it big Cohen-Macaulay module}\footnote{The term ``big Cohen-Macaulay module" is also used in the
literature for a module such that one system of parameters is a regular sequence, and then the term {\it balanced
big Cohen-Macaulay module} is used when every \sop is a regular sequence.  However, it is shown in
\cite{BaSt83}  that if one system of parameters for $R$ is a regular sequence on $M$, then the $m$-adic
completion of $M$ is a balanced big Cohen-Macaulay module and, since it is easy to make a transition from a
module satisfying the weaker condition to a module satisfying the stronger condition, we use the term
``big \CM module" in this manuscript to be synonymous with ``balanced big \CM module."}
over a local ring $(R,\m,K)$ is a (not necessarily finitely
generated) module $M$ such that $\m M \not= M$   and every system of parameters for
$R$ is a regular sequence\footnote{As part of the definition, in order for
$\vect x d$ to be a {\it regular sequence} on $M$, we require that $(\vect xd)M \not= M$ as well
as the condition $(*)$ that $x_{i+1}$ not be a zerodivisor on $M/(\vect x i)M$ for $0 \leq i \leq d-1$.
If $(*)$ holds but we allow the possibility that $(\vect x d)M =M$, we refer to $\vect x d$ as
a {\it possibly improper regular sequence} on $M$.}

 If $M$ is finitely generated, then
$M$ is a big Cohen-Macaulay module for $R$ iff $M \not=0$ and {\it one} system of parameters
is a regular sequence on $M$.  In this case,  $M$ is called a {\it small} Cohen-Macaulay module for $R$.  In this terminology,  a small Cohen-Macaulay module
is always a big Cohen-Macaulay module and is a {\it maximal} finitely generated Cohen-Macaulay
module in the sense that $\depth_\m(R) = \dm(R)$.

   For a long time,
big Cohen-Macaulay modules and algebras were known to exist only in equal characteristic and if dim$(R) \leq 3$.
Cf. \cite{Ho75a, Ho75b, Ho94,  HH92, HH95, Heit02, Ho02}.
There has been an explosion in this area recently utilizing almost mathematics (\cite{Fal02, GaRa03}) and perfectoid geometry (\cite{Sch12}), and big Cohen-Macalay algebras are now known to exist in
general \cite{And18,HeitMa18, And20, Bha20}.

However, small Cohen-Macaulay modules are only known to exist if dim$(R) \leq 2$, if $R$ is $\N$-graded
over a perfect field of characteristic $p$ and has an isolated non-Cohen-Macaulay point at the origin,
and in a handful of other cases.  Results of this type are discussed, for example, in
\cite{Ho75b, Hanes99, Hanes04, HY23}.
Small \CM modules are not known to exist in equal characteristic 0 nor in prime characteristic $p >0$, even
over local rings at maximal ideals of affine domains of dimension 3 over algebraically closed field.

\subsection{The objectives of this paper}
\label{ss:ObjIntro}
We therefore seek a weaker condition than the existence of small Cohen-Macaulay
modules that suffices both to prove the positivity of the Serre intersection multiplicity
for ramified regular local rings of mixed characteristic (the remaining open case)
and to give a new proof of the direct summand theorem.  The existence of lim Cohen-Macaulay sequences,
introduced in  \S\ref{limdef}, suffices for both, and, unlike small Cohen-Macaulay modules,
we can prove easily that such sequences exist in positive characteristic.

In \cite{Ma23},  the third author has already used a variation on the ideas presented here to prove Lech's conjecture
in equal characteristic when the base ring is a finitely generated standard graded algebra over a perfect field. The
equal characteristic 0 case is proved  by reduction to prime characteristic $p > 0$.  In characteristic $p$ the
result is proved by constructing a sequence that is lim Cohen-Macaulay in a sense (called ``weakly
lim Cohen-Macaulay")  that we discuss in subsection \ref{wklimdef},
and that also, asymptotically, consists of modules that approach the condition of being
Ulrich. These sequences are called {\it weakly lim Ulrich}, and are also utilized in \cite{IMW22}.
These methods are, in a way, a generalization of techniques developed by Hanes in \cite{Hanes99, Hanes04},
where maximal \CM modules that asymptotically approach the condition of being Ulrich are used.
 It is shown in  \cite{Yhee23} that weakly lim Ulrich sequences do not always exist for local rings of affine
 semigroup domains, even in prime characteristic $p >0$. But these rings have integral closures that
 are \CM, and lim \CM sequences do always exist, for example, for all F-finite local domains in \pchar:  see
 Section~\ref{Fflcm}.

\subsection{Applying small Cohen-Macaulay modules to positivity of Serre multiplicities}
\label{ss:SmallCMIntro}

Let $(T, \n, K)$ be a regular local ring of dimension $n$ and let $M, N$ be nonzero finitely generated modules such
that $\ell(M \otimes_T N) < \infty$,   i.e., such that $\text{Supp}(M) \cap \text{Supp}(N) = \{\n\}$.
Here,  $\ell(\ )$
denotes length.   {\it Serre's
intersection multiplicity} is defined by the formula
$$\chi^T(M,N) = \chi(M,N) := \sum_{i=0}^{\dim(T)} (-1)^i \ell\bigl(\Tor^T_i(M,N)\bigr).$$
The function $\chi(M,N)$ is {\it bi-additive} in $M$ and $N$ when it is defined on all the pairs occurring.
Since $M$ and $N$ have finite filtrations in which all factors are prime cyclic modules $T/P$,
the behavior of $\chi$ is determined by its behavior on pairs of such modules $T/P$, $T/Q$,   where
$P$, $Q$ are prime and $P+Q$ is $\n $-primary.
This is a formal situation analogous to
studying the intersection of two varieties near an isolated point of intersection.

In equal characteristic, $T$ is regular iff its completion is a formal power series ring over a field.
In mixed characteristic, it may be formal power series over a complete DVR $(V,pV)$    (like
the $p$-adic numbers)   whose maximal ideal is generated by the characteristic $p$ of the
residue class field.
In the (frequently more difficult) {\it ramified} case the ring has the form
$V[[X_1, \, \ldots, X_d]]/(p- F)$
where $F$ is in the square of the maximal ideal.    Such a ring, in general, is regular but {\it not} a formal
power series ring over a DVR.
 E.g., we may have $T = V[[X,\,Y,\,Z]]/(p - X^3 - Y^5 - Z^7)$.

Statements about results of Serre in this section all refer to \cite{Ser65}. Serre proved that if
$(T,\, \n, K)$ is regular local and its completion is formal power series over a field or
a DVR,    then the following hold for finitely generated nonzero modules  $M$, $N$ when
$\ell(M \otimes_TN) < \infty$
(keep in mind the case where $M = T/P$ and  $N = T/Q$ are domains, and $P+Q$ is $\n$-primary: it
implies the other cases and is closer to the geometric situation ):    \medskip

\bena
\item  $\dm(M)  +\dm(N) \leq \dm(T)$.
\item  $\dm(M)  + \dm(N) < \dm(T) \imp \chi(M,N) = 0$.
\item $\dm(M)  +\dm(N) = \dm(T) \imp \chi(M,N) > 0$.
\een

Serre also proved  that (a) holds for {\it every} regular local ring
$T$, and,   essentially, conjectured    (``Il est naturel de $\underline{\hbox{\rm conjecturer}}$")
that  (b) and (c) hold for all regular rings as well.    The remaining case is the ramified case in mixed
characteristic.   {\it It has been an open question for over fifty years.}

Serre also proved the case when either $M$ or $N$ is $T/(f_1, \, \ldots, f_h)T$ where $f_1, \, \ldots, f_h$
is a regular sequence, i.e., the case where one of the modules is a {\it complete intersection}.

\subsection{Progress on Serre's multiplicity conjecture}
\label{ss:SerreIntro}

Paul Roberts  \cite{Rob85} and, independently, H. Gillet and C. Soul\'e \cite{GS85} proved
part (b), i.e.,  $\dm(M)  + \dm(N) < \dm(T) \imp \chi(M,N) = 0$.

O.~Gabber, using  De Jong's results on alterations \cite{deJo96}, was able to prove that  $\chi(M,N) \geq 0$
in the ramified case.  There is an exposition by Berthelot \cite{Ber97} containing Gabber's
result.

The case where $\dm(T) \leq 4$ is settled in \cite{Ho73a}.  There are many results settling cases of the
conjecture and considering related conjectures in the work of S.~P.~Dutta
\cite{Du83a, Du83b, Du87, Du88, Du93, Du00, Du05}. and there are related results, both on
intersection multiplcities and on intersection theorems of various sorts in  \cite{PS74, PS76}
and \cite{Rob87, Rob89} for the mixed characteristic case.
There is a counterexample to a generalized form of the conjecture, in which just one of the
modules has finite propjective dimension, in \cite{DHM85}, where an example is given where
the Serre multiplicity is negative.

Perhaps the most tantalizing problem that remains is to prove Serre's original conjecture part (c)
on the strict positivity of  $\chi(M,N)$ in the case of ramified regular local rings.
{\it This remains open in all dimensions $\geq 5$.}   One may assume
that $T$ is complete with a perfect (or even algebraically closed) residue field.    Also, because of
the bi-additivity it suffices to prove the result when $M = T/P$ and $N = T/Q$ are prime cyclic modules.

We next want to explain the relevance
of the existence of small Cohen-Macaulay modules to Serre's multiplicity conjecture:  their
existence implies the remaining case, part (c), for ramified regular local rings.  The argument
below will be modified later, in \S\ref{limcmSerre}, to show that the small Cohen-Macaulay modules can be
replaced by lim Cohen-Macaulay sequences.

Let $P$ and $Q$ be prime ideals of a regular local ring $(T, \, \n, \, K)$ such that
$\dm(T/P) + \dm(T/Q) = \dm(T)$ and $P+Q$ is $\n$-primary.

\begin{mofact} Let $P$ and $Q$ be prime ideals of a regular local ring $(T, \, \n, \, K)$ such that
$\dm(T/P) + \dm(T/Q) = \dm(T)$ and $P+Q$ is $\n$-primary.  If $M$ is a small Cohen-Macaulay
module for $R$ and $N$ is a small Cohen-Macaulay module for $S$, then $\chi(R,\, S) > 0$.
\end{mofact}

\begin{proof}Suppose $M$ is a small Cohen-Macaulay module for $R  =T/P$
 of (torsion-free) rank $r$   and that $N$ is a small Cohen-Macaulay module for $S = T/Q$ of rank  $s$.
 $M$ has a finite filtration with $r$ factors equal to $R$ and other factors of {\it smaller} dimension.
  $N$ has a finite filtration with $s$ factors equal to $S$ and other factors of {\it smaller} dimension.
  Using the bi-additivity of $\chi$ and the fact that the vanishing part (b) of the
  conjecture holds, one obtains that $\chi(M,N) = rs\,\chi(R, S)$.    But
  when $M, N$ are Cohen-Macaulay {\it the higher Tors vanish}\footnote{Let us give a derived category argument for this higher Tor vanishing; a more explicit argument can be extracted from \S \ref{strong}. First, observe that 
  \[ M \otimes_T^L N \simeq R\Gamma_{\mathfrak{n}}(T) \otimes^L_T M \otimes^L_T N \simeq  R\Gamma_{\mathfrak{n}}(M) \otimes_T^L R\Gamma_{\mathfrak{n}}(N)\]
  as $M \otimes_T^L N$ is $\mathfrak{n}$-primary. Now  $R\Gamma_{\mathfrak{n}}(M)$ and  $R\Gamma_{\mathfrak{n}}(N)$ are concentrated in degrees $\dim(T/P)$ and $\dim(T/Q)$ respectively by the Cohen-Macaulayness assumption. As the $\mathrm{Tor}$-dimension of $T$ is $d=\dim(T/P) + \dim(T/Q)$, the right side of the above expression lies in cohomological degrees $\geq 0$; as the left side lies in cohomological degrees $\leq 0$, it follows that both must be in degree $0$.}
   and $\chi(M,N) =
  \ell(M \otimes_T N) >0 \imp rs\chi(R,S) > 0 \imp \chi(R,S)$. \end{proof}

  We hope that understanding this argument well help to motivate the much more difficult
  proof in \S\ref{limcmSerre}.

\section{Alternative notions of length and rank}\label{alt}

 In this section, we consider some alternative notions of length and rank that
 we need later.

Let $R$ be any ring and $\cV$ a family of $R$-modules.  For any $R$-module $M$,
we define $\ell_\cV(M)$, the length of $M$ with respect to $\cV$, to be 0 if $M = 0$,
to be $+\infty$ if $M$ has no finite filtration with all factors in $\cV$,  and otherwise to
be the length of a shortest filtration of $M$ with all factors isomorphic to an element
of  $\cV$.

If $\cV = \{R/m: m \in \MaxSpec(R)\} \cup \{0\}$ this is the usual notion of length.  If $I$ is an ideal
of $R$ and $\cV = \{R/J: I \inc J\}$ (up to isomorphism, the cyclic modules killed by $I$),
then this is the notion of {\it quasi-length} with respect to $I$ introduced in \cite{HH09} and studied further in
\cite{HZ18}.  In this case we shall write $\cL_I(M)$ for $\ell_{\cV}(M)$.

We note that, in general, $\ell_\cV$ is not additive on short exact sequences of
modules $0 \to A \to B \to C \to 0$ but that if $\ell_\cV (A)$ and $\ell_\cV(C)$ are both
finite then so is $\ell_\cV(B)$ and $\ell_\cV(B) \leq \ell_\cV(A) + \ell_\cV(C)$.  More
generally,  if $M$ has a finite filtration with factors $N_i$ such every $\ell_{\cV}(N_i)$ is finite,
then $\ell_{\cV}(M)$ is finite and $\ell_\cV(M) \leq \sum_i \ell_\cV(N_i)$.

We are also interested here in the notion $\fL_I(M)$, which we define to be
$\ell_\cV$  for $\cV = \{J'/J: I \inc J \inc J' \inc R\}$.  Up to isomorphism, these are the
submodules of cyclic modules killed by $I$, and this choice of $\cV$ is closed, up to isomorphism, under
taking submodules and quotient modules.

Note that if $\cV$ consists of all cyclic $R$-modules, then $\ell_\cV(M)$ is
the least number of generators of $M$, which we denote $\nu_R(M)$ or $\nu(M)$.

 If $R$ is a domain, $\cV$ consists of all ideals of $R$, and if $M$ is a finitely generated torsion-free $R$-module, then
 $\ell_\cV(M) = \rank(M)$, the torsion-free rank of $M$.

\subsection{Subadditivity}\label{subadd}  
Suppose that $\cV$ is closed under taking
submodules and quotient modules,
up to isomorphism.  Then if $N$ is a submodule  or quotient of module $M$ with $\ell_\cV(M)$ finite,
we have that $\ell_\cV(N) \leq \ell_\cV(M)$ and, in particular, $\ell_\cV(N)$ is finite.  Consequently, the same
is true for subquotients of $M$.

Throughout the rest of this section, let $\lambda$ be a function from $R$-modules to $\N \cup \{+\infty\}$ such that
\ben
\item If $N$ is a subquotient of $M$ then $\lambda(N) \leq \lambda(M)$.
\item If $0 \to A \to B \to C \to 0$ is exact then $\lambda(B) \leq \lambda(A) + \lambda(C)$.
\een
Note that we then have that if $A \to B \to C$ is exact at $B$, then
\benr
\item $\lambda(B) \leq \lambda(A) + \lambda(C).$
\een
Suppose that $\cF$ is a functor from $R$-modules to $R$-modules such that:
\bensp{($\dagger$)}
\item If the sequence $0 \to A \to B \to C \to 0$ is exact then there is an exact sequence
$\cF(A) \to \cF(B) \to \cF(C)$ or
$\cF(C) \to \cF(B) \to \cF(A)$,  depending on whether $F$ is covariant or contravariant
(the map on the left need not be injective nor the map on the right surjective).
\een
Functors satisfying $(\dagger)$
include  $\Tor^R_i(\blank, M)$,  $H^i_J(\blank)$, $\Ext^i_R(\blank,\, N)$
and $\Ext^i_R(N, \, \blank)$.  Then if $0 \to A \to B \to C \to 0$ is exact we have
$$\lambda\big(\cF(B)\bigr) \leq \lambda\bigl(\cF(A)\bigr)  + \lambda\bigl(\cF(C)\bigr),$$  and an
easy induction shows that if $B$ has a finite filtration with factors $B_i$,  then if all
the $\lambda\bigl(\cF(B_i)\bigr)$ are finite then so is $\lambda\bigl(\cF(B)\bigr)$ and
$$(*)\qquad \lambda\bigl(\cF(B)\bigr)  \leq \sum_i \lambda\bigl(\cF(B_i)\bigr).$$
Hence:

\begin{proposition}\label{filt} Let $\lambda$ as above satisfy conditions (1) and (2).
Let $G$ be a functor  from pairs of $R$-modules to $R$-modules that satisfies
condition $(\dagger)$ as a functor of each variable when the other is held fixed.
Then if $A$ has a finite filtration with factors $A_i$ and $B$ has a finite filtration
with factors $B_j$, we have
$$(\#)\qquad \lambda\bigl(G(A,\,B)\bigr) \leq \sum_{i,j} \lambda\bigl(G(A_i,\,B_j)\bigr)$$
whenever all the terms in the sum on the right are finite.

In particular, $(\#)$ holds for $\lambda =\ell_{\cV}$ when $\cV$ is closed under taking submodules and quotients,
and, hence,  when $\lambda$ is the usual notion $\ell$ of length, and also when $\lambda$ is
$\fL_I$.
 \end{proposition}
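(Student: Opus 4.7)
The plan is to bootstrap the one-variable bound $(*)$ established just before the proposition to the bivariate setting by applying it once in each slot. No new idea is required; the only subtlety is to track the finiteness hypothesis carefully so that $(*)$ is legally applicable at each stage.

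Concretely, I would fix filtrations of $A$ and $B$ whose successive quotients are the $A_i$'s and $B_j$'s of the statement. By hypothesis on $G$, for every fixed $R$-module $X$ each single-variable slice $G(X, \blank)$ and $G(\blank, X)$ satisfies $(\dagger)$, so the bound $(*)$ applies to each whenever the relevant values of $\lambda$ on the filtration factors are finite. The argument then has two steps. First, for each index $i$, apply $(*)$ to the functor $G(A_i, \blank)$ equipped with the filtration of $B$: the hypothesis that every $\lambda\bigl(G(A_i, B_j)\bigr)$ is finite gives
\[\lambda\bigl(G(A_i, B)\bigr) \leq \sum_j \lambda\bigl(G(A_i, B_j)\bigr) < \infty.\]
Second, apply $(*)$ to the functor $G(\blank, B)$ equipped with the filtration of $A$; the finiteness from the first step justifies the application and yields
\[\lambda\bigl(G(A, B)\bigr) \leq \sum_i \lambda\bigl(G(A_i, B)\bigr) \leq \sum_{i,j} \lambda\bigl(G(A_i, B_j)\bigr),\]
which is the asserted inequality $(\#)$.

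For the concluding ``in particular,'' I would verify that $\ell_\cV$ satisfies conditions (1) and (2) whenever $\cV$ is closed under submodules and quotient modules up to isomorphism: condition (2) follows by concatenating $\cV$-filtrations of $A$ and $C$ in a short exact sequence $0 \to A \to B \to C \to 0$; condition (1) follows by intersecting a $\cV$-filtration of $M$ with a submodule and then projecting to a quotient, using the closure assumption on $\cV$. The ordinary length $\ell$ is the instance $\cV = \{R/\m : \m \in \MaxSpec(R)\} \cup \{0\}$, whose members are simple and hence trivially closed under subquotients up to isomorphism, and $\fL_I$ is the instance $\cV = \{J'/J : I \inc J \inc J' \inc R\}$ by construction. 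The main obstacle here is essentially absent: once $(*)$ is in hand, the proof is routine bookkeeping, and the only point requiring care is to invoke the finiteness hypothesis in the correct order so that each appeal to $(*)$ is valid.
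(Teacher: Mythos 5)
Your proof is correct and is exactly the argument the paper is implicitly leaving to the reader when it writes ``Hence:'' immediately after establishing the one-variable bound $(*)$; the two successive applications of $(*)$, first to $G(A_i,\blank)$ and then to $G(\blank,B)$, with the intermediate finiteness secured by the first application, is the intended route. The verification that $\ell_\cV$ satisfies conditions (1) and (2) when $\cV$ is closed under subquotients is likewise just a restatement of what the paper already records in the opening paragraphs of \S\ref{subadd}, so no new argument was needed there.
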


\begin{corollary}\label{ltor} If $(T, \m, K)$ is a local ring and $M, N$ are modules of finite length, then for all $t$ we have
$$\ell\big(\Tor^T_t(M,N)\big) \leq \ell(M)\ell(N)\ell\big(\Tor^T_t(K,K)\big).$$ \end{corollary}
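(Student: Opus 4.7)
This corollary is a direct application of Proposition \ref{filt}. The plan is to take $\lambda = \ell$ (the usual length), take $G(-,-) = \Tor^T_t(-,-)$ as a bifunctor, and use composition series as the finite filtrations on both $M$ and $N$.

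First I would verify that the hypotheses are met. The ordinary length function $\ell$ satisfies conditions (1) and (2) in the subadditivity framework: length does not increase on passing to subquotients, and length is additive (hence subadditive) on short exact sequences. Next, $\Tor^T_t(-, N)$ with $N$ fixed satisfies $(\dagger)$ because any short exact sequence in the first variable yields a long exact sequence of Tor, and extracting the three consecutive terms at position $t$ gives the required three-term exact sequence; the same holds for $\Tor^T_t(M, -)$ with $M$ fixed. Thus $G = \Tor^T_t(-,-)$ satisfies the bifunctorial hypothesis of Proposition \ref{filt}.

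Since $M$ and $N$ are of finite length over the local ring $(T, \m, K)$, each admits a composition series: a filtration with $\ell(M)$ factors each isomorphic to $K$ for $M$, and $\ell(N)$ factors each isomorphic to $K$ for $N$. Applying the inequality $(\#)$ of Proposition \ref{filt} to these filtrations, with all factors equal to $K$, yields
$$\ell\bigl(\Tor^T_t(M,N)\bigr) \leq \sum_{i=1}^{\ell(M)} \sum_{j=1}^{\ell(N)} \ell\bigl(\Tor^T_t(K,K)\bigr) = \ell(M)\,\ell(N)\,\ell\bigl(\Tor^T_t(K,K)\bigr),$$
provided every term on the right-hand side is finite. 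This finiteness is automatic: if $\ell\bigl(\Tor^T_t(K,K)\bigr)$ is infinite, the stated inequality is trivially valid (with the convention that $0 \cdot \infty$ is handled by the usual case analysis, and if either $M$ or $N$ is zero both sides are zero), while if it is finite then all summands are finite and $(\#)$ applies directly.

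I do not anticipate any real obstacle: the content of the corollary is packaged entirely into Proposition \ref{filt}, and the only input beyond that is the standard fact that finite-length modules over a local ring are built from the residue field via composition series. The one minor point worth stating explicitly in the writeup is the bifunctorial verification of $(\dagger)$ for $\Tor$, which is immediate from the long exact sequence.
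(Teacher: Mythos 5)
Your proof is correct and is exactly the argument the paper intends: Corollary \ref{ltor} is stated as an immediate consequence of Proposition \ref{filt}, and you apply it with $\lambda = \ell$, $G = \Tor^T_t(-,-)$, and composition series of $M$ and $N$ as the filtrations, just as the paper does.
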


\subsection{$\cL_I$ and $\fL_I$ for one-dimensional cases}  

For many one-dimensional Noetherian rings
$R$  there is a nonnegative integer
$\fN_R$ that is the greatest value of $\nu_R(I)$ for any ideal $I  \inc R$.
The existence of such a bound implies that $R$ has dimension at most one.  This is well known
in the local case \cite{Sal78, Gott93}, and the semilocal case follows easily.  In fact:

\begin{proposition} Let $R$ be a Noetherian ring of dimension at most one. Suppose that one of the following
conditions holds:
\ben
\item $R$ is local.
\item $R$ is semilocal.
\item For every minimal prime $P$ of $R$,  the singular locus in $\Spec(R/P)$ contains a non-empty
open set.
\item $R$ is excellent.
\een
Then $\fN_R$ is finite, i.e., there is a finite bound for the number of generators of all ideals of $R$. \smallskip

Moreover, if $R = S/I$ is such that $\fN_R$ is finite, then for every finitely generated
 $S$-module $M$ killed by a power of $I$,
$\fL_I(M) \leq \cL_I(M) \leq \fN_R \, \fL_I(M)$.
\end{proposition}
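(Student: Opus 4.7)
The proof splits into two parts: establishing $\mathfrak{N}_R < \infty$ under each of (1)--(4), and the ``Moreover'' comparison between $\cL_I$ and $\fL_I$. I would handle them as follows.

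For case (1), I would cite the local results \cite{Sal78, Gott93}. For the semilocal case (2), I would apply Nakayama over the Jacobson radical $J$ of $R$: with $\mathfrak{m}_1, \ldots, \mathfrak{m}_k$ the maximal ideals and residue fields $k_i$, $R/J \cong \prod_i k_i$, and for any finitely generated $R$-module $M$,
\[\nu_R(M) = \dim_{R/J}(M/JM) = \sum_{i=1}^k \dim_{k_i}(M_{\mathfrak{m}_i}/\mathfrak{m}_i M_{\mathfrak{m}_i}) = \sum_{i=1}^k \nu_{R_{\mathfrak{m}_i}}(M_{\mathfrak{m}_i}).\]
Specialized to ideals, this gives $\mathfrak{N}_R \leq \sum_i \mathfrak{N}_{R_{\mathfrak{m}_i}}$, which is finite by (1). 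Since excellent rings have open regular locus in $\Spec(R/P)$ for every prime $P$, case (4) reduces to case (3).

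For case (3), interpreted as saying the regular locus of each $\Spec(R/P)$ is a non-empty open subset, I would first reduce to the domain case by a nilradical filtration. If $P_1, \ldots, P_s$ are the minimal primes of $R$ and $N = \bigcap_j P_j$ satisfies $N^e = 0$, the filtration $I \supseteq IN \supseteq IN^2 \supseteq \cdots \supseteq IN^e = 0$ has factors that are finitely generated $R/N$-modules, so bounds on $\nu_{R/P_j}$ combined with (2) applied to $R/N \hookrightarrow \prod_j R/P_j$ yield a bound for $\nu_R(I)$. For each $R/P_j$, the hypothesis makes the singular locus a proper closed subset of a $1$-dimensional spectrum, hence a finite set of maximal ideals $\mathfrak{n}_1, \ldots, \mathfrak{n}_r$; at any other closed point the localization is a DVR, so ideals become principal there. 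A Forster--Swan style argument (Nakayama at each $\mathfrak{n}_i$ together with prime avoidance to lift local generators on the DVR locus) then produces a uniform global bound for $\nu(I')$ for all ideals $I' \subseteq R/P_j$, completing (3).

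For the ``Moreover'' part, $\fL_I(M) \leq \cL_I(M)$ is immediate because $\cV_1 = \{R/J : I \inc J\} \inc \cV_2 = \{J'/J : I \inc J \inc J' \inc R\}$, so any $\cV_1$-filtration is automatically a $\cV_2$-filtration. For the reverse inequality, take a $\cV_2$-filtration of $M$ of length $\fL_I(M)$ with factors $F_\alpha = J'_\alpha/J_\alpha$. Since $F_\alpha$ is a quotient of the ideal $J'_\alpha \inc R$, it is generated by at most $\nu_R(J'_\alpha) \leq \mathfrak{N}_R$ elements $\bar x_1, \ldots, \bar x_n$; refining $F_\alpha$ as
\[0 \inc (\bar x_1) \inc (\bar x_1, \bar x_2) \inc \cdots \inc (\bar x_1, \ldots, \bar x_n) = F_\alpha,\]
the successive quotients are cyclic $R$-modules killed by $J_\alpha \supseteq I$, hence lie in $\cV_1$. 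Concatenating these refinements produces a $\cV_1$-filtration of $M$ of length at most $\mathfrak{N}_R \cdot \fL_I(M)$, giving $\cL_I(M) \leq \mathfrak{N}_R \fL_I(M)$.

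The main obstacle is the global-to-local gluing step in case (3): the pointwise data (DVR at most closed points, a bounded finite semilocal piece at the singular ones) must be consolidated into a single uniform bound for $\nu(I)$ valid over all of $R/P_j$. Invoking the Forster--Swan generator theorem is the cleanest route; a direct argument via Nakayama plus prime avoidance on an affine DVR cover is the alternative. The ``Moreover'' step and cases (1)--(2) are by contrast essentially formal.
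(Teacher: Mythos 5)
Your "Moreover" argument is correct and is essentially the paper's: both reduce the estimate $\cL_I(M) \leq \fN_R\,\fL_I(M)$ to the fact that each factor $J'/J$ (being a quotient of an ideal of $R$) is generated by at most $\fN_R$ elements and is killed by $I$, so refines to a chain of $\leq \fN_R$ cyclic $R/I$-modules. Case (4) $\Rightarrow$ (3) matches the paper, and your reading of case (3) as asserting that the \emph{regular} locus of each $\Spec(R/P)$ contains a non-empty open set is the intended one (the paper's own proof chooses $a$ with $R_a$ regular).

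Two points deserve correction. First, in case (2), the equality $\nu_R(M) = \sum_i \nu_{R_{\m_i}}(M_{\m_i})$ is false. By Nakayama, $\nu_R(M) = \nu_{R/J}(M/JM)$, and over the product of fields $R/J \cong \prod_i k_i$ the minimal number of generators of $\prod_i V_i$ is $\max_i \dim_{k_i}V_i$, not the sum (for example $k\times k^2$ over $k\times k$ needs only $2$ generators). Your inequality $\fN_R \leq \sum_i \fN_{R_{\m_i}}$ is still true — indeed the correct equality gives the stronger bound $\fN_R \leq \max_i\fN_{R_{\m_i}}$ — but the displayed formula as stated is wrong. The paper instead lifts local generators and checks they generate globally, which directly gives the $\sum$ bound without appealing to the $R/J$ structure.

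Second, and more seriously, the reduction to the domain case in (3) has a gap. You filter $I \supseteq IN \supseteq \cdots \supseteq IN^e = 0$ and observe the factors $IN^i/IN^{i+1}$ are $R/N$-modules; but they are \emph{not} ideals of $R/N$ (nor submodules of $N^i/N^{i+1}$, in general, since the natural map $IN^i/IN^{i+1} \to N^i/N^{i+1}$ need not be injective), so bounds on $\nu$ of ideals of the $R/P_j$ do not apply to them, and the appeal to "(2) applied to $R/N \hookrightarrow \prod_j R/P_j$" does not bound $\nu_{R/N}$ of these factors. The paper sidesteps this by filtering $R$ itself (once and for all) by prime cyclic modules $R/Q$ with $Q$ minimal or maximal; intersecting an ideal $I$ with this fixed filtration yields factors that are \emph{sub}modules of the various $R/Q$, i.e., ideals of $R/Q$, so each needs at most $\fN_{R/Q}$ (or $1$) generators and the total is bounded by a sum independent of $I$. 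The Forster--Swan step you invoke for a $1$-dimensional domain with finite singular locus is sound (it gives $\nu(I)\leq\max(2,\max_i\fN_{D_{\n_i}})$), and the paper's explicit "$2+\sum_i\fN_{R_{\m_i}}$ elements generate because they do so at every maximal ideal" argument is an elementary substitute for it; but you should replace the $IN^i$ filtration with a prime cyclic filtration of $R$ before the domain case can be applied.
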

\begin{proof} In the local case, we may complete:  $I$ and $I\wh{R}$ have the same number of generators.
$R$ has a finite filtration with factors $R/P$. This induces a filtration on each ideal of $R$ whose factors are
subquotients of the factors $R/P$.  Hence, we may assume that $R$ is a complete local domain, which
means it is a field (an obvious case) or a free module over a complete discrete valuation ring  $V$.  For any ideal $I$, $\nu_R(I) \leq \nu_V(I)$ which is at most the torsion-free rank of $R$ over $V$, since $I$ must be free over $V$.

If $R$ is semilocal with maximal ideals $\vect m k$ and $I$ is any ideal, for every $i$ we may choose at most
 $\fN_{R_{m_i}}$ elements of $I$ whose images generate $IR_{m_i}$.  These elements generate $I$, since
 that is true locally at every maximal ideal. Thus $\fN_R \leq \sum_{i=1}^k \fN_{R_{m_i}}$.

 In case (3),  $R$ has a finite filtration consisting of factors of the form $R/P$, where $P$ is a minimal prime,
 or of the form $R/m$ for $m$ maximal.  Thus, we may reduce to the case where $R$ is a field (which is
 clear) or $R$ is a one-dimensional
 domain and there is an element $a \in R \smo$  such that $R_a$ is regular, i.e., a Dedekind domain or
 field.  Let $V(a) = \{\vect m k\}$.  Given $I \inc R$,  $IR_a$ needs at  most two generators, which may
 be taken from $R$,  and for every $i$,  $IR_{m_i}$ needs at most $\fN_{R_{m_i}}$ generators, which
 may be taken to be images of elements of $R$.  Then we have at most $2 + \sum_{i=1}^k \fN_{R_{m_i}}$
 elements of $R$ that generate $I$, since these elements generate $I$ after localization at any maximal ideal.
 Condition (4) suffices since  $(4) \imp (3)$. \smallskip

 In the final statement, the inequality on the left is obvious, while
 the inequality on the right follows because $M$ has a filtration by at most $\fL_I(M)$
subquotients of $R$,  and each of these has a filtration by at most $\fN_R$ cyclic $R/I$-modules.
  \end{proof}

\begin{corollary}\label{nurho} If $R$ is a domain of dimension at most one with $\fN_R$ finite,
then for every finitely generated torsion-free module $M$, $\rank(M) \leq \nu(M) \leq \fN_R\rank(M)$. \end{corollary}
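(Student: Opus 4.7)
The first inequality $\rank(M) \leq \nu(M)$ is immediate: any generating set of $M$ gives, after applying $\cK \otimes_R (-)$, a spanning set of $\cK \otimes_R M \cong \cK^{\rank(M)}$ over $\cK$.

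For the nontrivial inequality $\nu(M) \leq \fN_R\rank(M)$, set $r = \rank(M)$ and I would first reduce to the case $M \inc R^r$. To do this, fix an isomorphism $\cK \otimes_R M \cong \cK^r$ and view $M$ as a submodule of $\cK^r$. Since $M$ is finitely generated, there exists a nonzero $a \in R$ with $aM \inc R^r$. Because $M$ is torsion-free, multiplication by $a$ is an isomorphism $M \to aM$ of $R$-modules, so $\nu(M) = \nu(aM)$ and we may assume $M \inc R^r$ from the outset.

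Next, I would exploit the coordinate filtration $0 = F_0 \inc F_1 \inc \cdots \inc F_r = R^r$ where $F_i \inc R^r$ consists of those tuples whose last $r-i$ coordinates vanish, so $F_i \cong R^i$ and $F_i/F_{i-1} \cong R$. Intersecting with $M$ produces the filtration
\[ 0 = M \cap F_0 \inc M \cap F_1 \inc \cdots \inc M \cap F_r = M. \]
For each $i$, the quotient $(M \cap F_i)/(M \cap F_{i-1})$ embeds into $F_i/F_{i-1} \cong R$, hence is isomorphic to an ideal of $R$ (possibly zero). By hypothesis each such ideal requires at most $\fN_R$ generators, so $\nu\bigl((M \cap F_i)/(M \cap F_{i-1})\bigr) \leq \fN_R$.

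Finally, since $\nu$ is subadditive on short exact sequences, summing over the $r$ steps of the filtration yields
\[ \nu(M) \leq \sum_{i=1}^{r} \nu\bigl((M \cap F_i)/(M \cap F_{i-1})\bigr) \leq r\fN_R = \fN_R\rank(M), \]
as desired. There is no serious obstacle here; the only place the hypotheses are used are torsion-freeness (to embed $M$ in $R^r$) and the finiteness of $\fN_R$ (to bound $\nu$ on each ideal factor).
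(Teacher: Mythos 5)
Your argument is correct. Both proofs end in the same place — exhibiting a finite filtration of $M$ with $\rank(M)$ factors, each isomorphic to an ideal of $R$, and then invoking subadditivity of $\nu$ together with the bound $\fN_R$ — but the routes to the filtration are genuinely different. The paper proceeds ``top-down'': it notes $\Hom_R(M,R) \otimes_R \cK \cong \Hom_\cK(\cK^r,\cK) \neq 0$, so there is a nonzero map $M \to R$ giving a surjection $M \surj I$ onto a nonzero ideal; the kernel $N$ is again torsion-free of rank $r-1$, and induction on the rank builds the filtration from the top. You instead go ``bottom-up'': clear denominators to embed $M$ into $R^r$, then intersect with the coordinate flag $0 = F_0 \subset F_1 \subset \cdots \subset F_r = R^r$, observing that each successive quotient $(M\cap F_i)/(M\cap F_{i-1})$ injects into $F_i/F_{i-1}\cong R$ and is hence (isomorphic to) an ideal. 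Your version avoids the induction and the dualization step, at the small cost of the clearing-denominators reduction; the paper's version avoids the reduction $M \inc R^r$ but uses $\Hom_R(M,R)\neq 0$ and a recursion. Both are standard and both are fine; yours is perhaps the more concrete, and it has the pleasant feature that the filtration is produced in one step rather than by induction.
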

\begin{proof} We may assume $M \not=0$. By tensoring with the fraction field of $R$,
we see that $\Hom_R(M,\,R) \not=0$,  and since there is a nonzero map  $M \to R$
there is a surjection $M \surj I$, where $I \inc R$ is a nonzero ideal.  Let $N$ be the kernel.
It follows by induction on the rank that $M$ has a filtration with $\rank(M)$ factors, each
of which is an ideal of $R$. \end{proof}

\subsection{Rank}\label{rank}

If $R$ is a domain with fraction field $L$ and $M$ is an $R$-module, we define
$\rank(M) = \dim_L L \otimes_R M$, which is finite whenever $M$ is finitely generated
(although finite generation is not necessary).

When $R$ is local and not necessarily a domain, we define rank more generally for certain finitely generated modules.
Let $\MIN(R)$ denote the set of minimal primes $\fp$ of $R$ such that $\dim(R/\fp) = \dim(R)$.
We say that a finitely generated $R$-module $M$ has {\it rank} $r$ and write $r = \rank(M)$ if for every minimal prime
$\fp \in \MIN(R)$, we have that  $\ell_{R_{\fp}}(M_{\fp}) = r\,\ell_{R_{\fp}}(R_{\fp})$.  This notion is defined on a
larger class of modules than the notion used in \cite[p.~183]{BHU87}, where it is required that for every associated
prime $\fp$ of $R$ that
$M_\fp$ be $R_\fp$-free of rank $r$.  This rank agrees with the rank defined in
the paragraph above when $R$ is a local domain, and in that case is defined on all finitely generated $R$-modules $M$.

\begin{remark}\label{nzrk}  If $\rank(M)$ is defined for a finitely generated $R$-module $M$, it is clear that
$\rank(M) \not=0$ if and only if $\dim(M) = \dim(R)$, for that is what is needed for some $\fp \in \MIN(R)$
to be in the support of $M$. \end{remark}

\begin{proposition}\label{ranknu} If $R$ is a local ring and $M$ is a finitely generated $R$-module for which rank is
defined, $\rank(M) \leq \nu(M)$.  \end{proposition}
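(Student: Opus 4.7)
The plan is essentially a one-step localization argument. Let $n = \nu(M)$, so by definition there exists a surjection $R^n \twoheadrightarrow M$ of $R$-modules. Pick any minimal prime $\fp \in \MIN(R)$ (which is nonempty since $R$ is local, hence Noetherian of finite dimension). Localization is exact, so we obtain a surjection $R_\fp^n \twoheadrightarrow M_\fp$ of $R_\fp$-modules.

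Because $\fp$ is a minimal prime of $R$, the ring $R_\fp$ is a zero-dimensional (hence Artinian) local ring, so $\ell_{R_\fp}(R_\fp)$ is finite and strictly positive. Length is subadditive on quotients, so the surjection above yields
\[ \ell_{R_\fp}(M_\fp) \;\leq\; n \cdot \ell_{R_\fp}(R_\fp). \]
On the other hand, by the definition of rank recalled just above, we have $\ell_{R_\fp}(M_\fp) = r \cdot \ell_{R_\fp}(R_\fp)$, where $r = \rank(M)$. Combining these two relations gives
\[ r \cdot \ell_{R_\fp}(R_\fp) \;\leq\; n \cdot \ell_{R_\fp}(R_\fp), \]
and dividing by the positive finite integer $\ell_{R_\fp}(R_\fp)$ yields $r \leq n$, which is the desired inequality.

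There is no real obstacle here: the only subtlety is remembering that a minimal prime of a local ring always exists and that localizing at it produces an Artinian local ring, so $\ell_{R_\fp}(R_\fp)$ is a finite positive integer we are allowed to cancel. The choice of $\fp \in \MIN(R)$ (rather than an arbitrary minimal prime) is not needed for this direction of the inequality, but it matches the definition used in the paper.
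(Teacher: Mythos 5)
Your proof is correct and is essentially identical to the one in the paper: both localize the surjection $R^{\nu(M)} \twoheadrightarrow M$ at some $\fp \in \MIN(R)$, compare lengths over $R_\fp$, and cancel $\ell_{R_\fp}(R_\fp)$. One small caveat on your closing remark: the choice of $\fp$ in $\MIN(R)$ (and not an arbitrary minimal prime) is genuinely needed, since the defining identity $\ell_{R_\fp}(M_\fp) = r\,\ell_{R_\fp}(R_\fp)$ is only guaranteed for such $\fp$.
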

\begin{proof}  For any $\fp \in \MIN(R)$ we have $\ell_{R_\fp}(M_\fp) = r\ell(R_\fp)$, where $r = \rank(M)$.
We have a surjection $R^{\nu(M)} \surj  M$ and, hence, we have $R_{\fp}^{\nu(M)} \surj M_{\fp}$.
Consequently,  $r\ell_{R_\fp}(R_\fp) = \ell_{R_\fp}(M_{\fp})  \leq \nu(M)\ell_{R_\fp}(R_\fp)$ and so $r \leq \nu(M)$.
\end{proof}

\begin{discussion}\label{mult}{\bf Hilbert-Samuel multiplicity.}\
If $\rmk$ is local, $M\not=0$ is a finitely generated $R$-module of Krull dimension $d$,  and $I$ is $\m$-primary,
then $\ell(M/I^{t+1}M)$ agrees for $t \gg 0$ with a polynomial in $t$ of degree $d$, the Hilbert-Samuel polynomial,
and its leading coefficient has the form $e/d!$, where $e$ is a positive integer, the {\it Hilbert-Samuel multiplicity}
of $M$ with respect to $I$. We often simply refer to this as the {\it multiplicity} of $M$ with respect too $I$.
If $\dim(M) \leq d$ we write $e_d(I; \, M)$ for $\disp d! \lim_{t \to \infty}\ell(M/I^{t+1}M)/t^d$, which is
an additive function on short exact sequences of finitely generated $R$-modules.  We also have that
$e_d(I; \, M)  = \disp d! \lim_{t \to \infty} \ell(M/I^tM)/t^d$,  since $(t-1)^d/t^d$ has limit $1$ as
$t \to \infty$.

Note that $e_d(I;\,M)$  is nonnegative
and is $0$ if and only if $\dim(M) < d$.  We write $e(I; \ M)$ for $e_d(I;\, M)$ when $M \not=0$ and $d = \dim(M)$.
If $I = \m$, we may write $e_d(M):= e_d(\m; \, M)$ and $e(M):= e(\m;\,M)$.
See Theorem~\ref{Koszul}(n) for a characterization using Koszul homology.  \end{discussion}

We from the additivity of $e_d(I;\, \blank)$ we have at once:

\begin{proposition}\label{multrank} Let $M$ be a finitely generated module of Krull dimension $d$ over the local ring
$\rmk$,  where $\dim(R) = d$.  If $M$ has rank $r$,   then for every $\m$-primary ideal  $I$ of $R$,  we have
$e(I;\,M) = r\,e(I;\,R)$. \end{proposition}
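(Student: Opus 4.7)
The plan is to establish the formula via the standard associativity of Hilbert--Samuel multiplicity along a prime filtration, exploiting the specific shape of the rank hypothesis. First I would choose a prime filtration $0 = M_0 \subset M_1 \subset \cdots \subset M_n = M$ with $M_i/M_{i-1} \cong R/\fp_i$ for primes $\fp_i \in \Spec(R)$; such a filtration exists for any finitely generated module over the Noetherian ring $R$. By the additivity of $e_d(I;-)$ on short exact sequences (Discussion~\ref{mult}), I obtain
\[ e_d(I; M) = \sum_{i=1}^n e_d(I; R/\fp_i), \]
and the analogous identity for $R$ in place of $M$, using a prime filtration of $R$.

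Next I would use that $e_d(I; R/\fp) = 0$ whenever $\dim(R/\fp) < d$, so only $\fp_i$ with $\dim(R/\fp_i) = d$ contribute. Any such $\fp_i$ must lie in $\MIN(R)$: if $\fq \subseteq \fp_i$ is a minimal prime, then $d = \dim(R/\fp_i) \leq \dim(R/\fq) \leq \dim(R) = d$ forces $\fp_i = \fq$. Letting $n_\fp(M)$ denote the number of indices $i$ with $\fp_i = \fp$, I can rewrite
\[ e_d(I; M) = \sum_{\fp \in \MIN(R)} n_\fp(M)\, e_d(I; R/\fp), \qquad e_d(I; R) = \sum_{\fp \in \MIN(R)} n_\fp(R)\, e_d(I; R/\fp). \]

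The bridge to the rank hypothesis is the identification $n_\fp(M) = \ell_{R_\fp}(M_\fp)$ for every $\fp \in \MIN(R)$. Localizing the prime filtration of $M$ at such a $\fp$, a factor $(R/\fp_i)_\fp$ vanishes unless $\fp_i \subseteq \fp$, and minimality of $\fp$ then forces $\fp_i = \fp$, yielding the residue field $R_\fp/\fp R_\fp$ of $R_\fp$-length one. Hence $\ell_{R_\fp}(M_\fp) = n_\fp(M)$ and likewise $\ell_{R_\fp}(R_\fp) = n_\fp(R)$, so the rank hypothesis becomes $n_\fp(M) = r \cdot n_\fp(R)$ for each $\fp \in \MIN(R)$. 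Substituting into the two displayed sums gives $e_d(I; M) = r\, e_d(I; R)$, and since $\dim(M) = d = \dim(R)$ the subscripts drop to yield $e(I; M) = r\, e(I; R)$. There is no serious obstacle beyond this last identification; the whole proof is essentially the associativity formula matched against the fact that the rank hypothesis is imposed uniformly over $\MIN(R)$.
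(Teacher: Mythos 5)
Your proof is correct, and it is the detailed version of the argument the paper invokes implicitly. The paper states that the proposition follows "at once" from the additivity of $e_d(I;\,\blank)$; you have written out exactly what that means, namely the associativity formula for Hilbert--Samuel multiplicity along a prime filtration, combined with the identification $n_\fp(M) = \ell_{R_\fp}(M_\fp)$ for $\fp \in \MIN(R)$, which is precisely how the rank hypothesis enters. No gaps.
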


\section{Multiple Tor and Koszul homology}\label{multtor}

In this section we discuss some facts about multiple Tor, and also about the behavior of
Koszul homolgy.  We note that triple Tors are useful in setting up spectral sequences that
may be viewed as providing a kind of associativity for iterated Tor.  Such spectral
sequences originate, so far as we know, in \cite{CaEi56},  and are utilized effectively in
\cite{Ser65}, where Grothendieck is credited for some of the arguments.
The spectral sequences for quadruple Tor play an essential role in the
proof, in \S\ref{limcmSerre},  that the existence of lim \CM sequences implies the positivity
conjecture for Serre multiplicities.

If $T$ is a ring and $\cF_\bu^{(1)}, \, \ldots, \cF_\bu^{(s)}$ are left complexes of flat $T$-modules,
we denote by $\Tor^T_i(\cF_\bu^{(1)}, \, \ldots, \cF_\bu^{(s)})$ the $i\,th$ homology module
of  the total complex obtained by tensoring together the $s$ complexes $\cF_\bu^{(j)}$.
We also define this $s$-tuple $\Tor$ when some or all of the $\cF^{(j)}_\bu$ are modules instead
of complexes
by replacing each of the modules by a flat resolution of that module over $T$.  The result is independent
of the choice of flat resolution and this agrees, when $s = 2$,
with usual definition of $\Tor^T_i(M,\,N)$ for modules.  The values of the multiple
Tors are independent of the order of the $s$ complexes, up to natural isomorphism.
A subset consisting of $h$ of the input complexes may evidently be replaced by the total complex
obtained by tensoring them together.  This replaces the $s$-tuple Tor by an $(s-h+1)$-tuple Tor.
Note that if $s=1$,   $\Tor^T_i(\cF_\bu^{(1)})$ is simply  $H_i(\cF_\bu^{(1)})$.

\begin{discussion}\label{specTor}{Spectral sequences for multiple Tor.}\label{mulTor}
In  the case of two flat complexes $\cF_\bu,\, \cG_\bu$ the two spectral sequences of the double
complex obtained by tensoring $\cF_\bu$ and $\cG_\bu$ together yield:
$$ (*) \quad H_i\bigl(H_j(\cF_\bu) \otimes_T \cG_\bu \bigr) \imp \Tor^T_{i+j}(\cF_\bu,\,\cG_\bu),$$
which we may also write as
$$  \quad \Tor^T_i\bigl(H_j(\cF_\bu),\, \cG_\bu\bigr) \imp  \Tor^T_{i+j}(\cF_\bu,\,\cG_\bu).$$
Here, $E^2_{ij} = \Tor^T_i\bigl(H_j(\cF_\bu),\, \cG_\bu\bigr)$, and
$d^r:E^r_{ij} \to E^r_{i-r,j+r-1}(\cF_\bu)\bigr)$ for $r \geq 2$.
Similarly,
$$ (**) \quad H_j\bigl(\cF_\bu \otimes_T H_i(G_\bu)\bigr) \imp \Tor^T_{i+j}(\cF_\bu,\,\cG_\bu)$$
which we may also write as
$$ (**) \quad \Tor^T_j\bigl(\cF_\bu,\, H_i(\cG_\bu)\bigr) \imp  \Tor^T_{i+j}(\cF_\bu,\,\cG_\bu).$$
In this case, $E^2_{ji} = \Tor^T_j\bigl(\cF_\bu,\, H_i(\cG_\bu)\bigr)$, and
$d^r:E^r_{ji} \to E^r_{j-r,i+r-1}$ for $r \geq 2$.
We may omit $T$ from the notation if it is clear from context. If there are two complexes as above
and $\cG_\bu$ is a flat resolution of $N$,  the spectral sequence $(*)$ yields
$$\Tor^T_i(\cF_\bu,\, N) \cong H_i(\cF_\bu \otimes_T N).$$
It follows that if $\cF_\bu$ is the total tensor product of  $\cF^{(1)}, \, \ldots, \cF^{(s)}$ then
$$ \Tor^T_i(\cF^{(1)}, \, \ldots, \cF^{(s)}, N) \cong H_i(\cF_\bu \otimes_T N).$$
\end{discussion}

\begin{remark}\label{diag}  Consider, for example, the spectral sequence of a double complex.  Suppose
that on the $E^r$ page there are finitely many nonzero terms  on the diagonal $i+j =n$ and all of them have
finite length.  Then the sum
of those lengths bounds the sum of the lengths for the diagonal $i+j = n$ when $r = \infty$, since in the
transition from $E^s$ to $E^{s+1}$ for $s \geq r$,
the terms on the diagonal $i+j = n$  for $E^{s+1}$ are subquotients of the terms on the diagonal $i+j = n$
for $E^s$. \end{remark}

We shall say that a complex is $\cF_\bu$ is in $D^{[a,b]}$ if $a \leq b$ are integers and $H^i(\cF_\bullet) = 0$ 
whenever $i \notin [a,b]$.  
\begin{remark}\label{torann}  
Suppose that $\cF_\bu^{(j)}$ is in $D^{[a,b]}$ and that $f \in T$ is any element that kills the 
$H_i(\cF_\bu^{(j)})$ for all $i$. Then $f^{b-a+1}$ kills  $\Tor^T_h(\cF_\bu^{(1)}, \, \ldots, \cF_\bu^{(s)})$ for all $h$.  
We may assume that $j = 1$ and we may replace the sequence consisting of the other
flat complexes by their total tensor product.  Thus, we may assume that there 
are only two flat complexes.  The result then follows from the spectral sequence $(*)$ in Discussion~\ref{mulTor},
which yields a filtration of each $\Tor^T_h(\cF_\bu^{(1)}, \, \cF_\bu^{(2)})$ with at most $b-a+1$ factors.
\end{remark}

If $x \in T$, the Koszul complex $\cK_\bu(x; T)$ is the left complex  $0 \to T \arwf{x \cdot} T \to 0$ with
the two copies of $T$ in degrees 0 and 1.
Let  $\ux = \vect x d \in T$. The Koszul complex $\cK_\bu(\ux;\, M)$ may be defined as
the total tensor product of the complexes $\cK_\bu(x_j;\, R)$ tensored with the module
$M$,  and the Koszul homology $H_i(\ux;\, M)$  is $H_i\bigl(\cK_\bu(\ux;\,M)\bigr)$.
Evidently, if  $\cK^{(j)}_\bu$ denotes $\cK_\bu(x_j; R)$ then
$$ H_i(\ux;\, M) \cong \Tor_i(\cK^{(1)}_\bu, \, \ldots, \cK^{(d)}_\bu, \, M).$$

The following result summarizes some well-known properties of Koszul homology.
We refer the reader to \cite{Ser65}, \cite{BruH93}, and \cite{Licht66} for complete proofs, although
we have indicated some of the arguments if they are brief. We first make the
following definition:   if $H_j(\ux; M)$ has finite length for $j \geq i$,
then $\chi_j(\ux;\, M) = \sum_{j=i}^d (-1)^{j-i} \ell\bigl(H_j(\ux;\,M)\bigr)$ and  $\chi(\ux;\, M)
= \chi_0(\ux;\, M)$.  If it is necessary to indicate the ring $T$, it may be used as a superscript,
e.g., one may write $\chi^T(\ux;\,M)$.

\begin{theorem}[Properties of Koszul homology.]\label{Koszul} Let $T$ be a ring, let
$\ux = \vect x d$ $\in T$, and let $M'$, $M$, $M''$ be $T$-modules. If $d \geq 1$ let
$\ux^{-} = \vect x {d-1}$.   Let $I$ be the ideal $(\ux)T$.
\bena
\item $H_0(\ux; M) \cong M/IM.$   
\item $H_d(\ux; M) \cong \Ann_M I.$  
\item There is a long exact sequence $$\cdots \to H_i(\ux^-;\, M)\arwf{\pm x_i \cdot}  H_i(\ux^-;\, M)
\to  H_i(\ux;\, M) \to \qquad\qquad\qquad$$
$$\qquad\qquad\qquad\qquad H_{i-1}(\ux^-;\, M) \arwf{\mp x_i \cdot}  H_{i-1}(\ux^-;\, M) \to \cdots\,.$$ 
\item For all $i$,  there is a short exact sequence
$$0 \to \frac{H_i(\ux^-;\, M)}{x_dH_i(\ux^-;\, M)}  \to H_i(\ux;\,M) \to \Ann_{H_{i-1}(\ux^-;\,M)} x_d \to 0.$$ 
\item If $\vect x d$  is a possibly improper regular sequence on $M$, then $H_i(\ux;\,M) = 0$ for all $i \geq 1$. 
\item Let $\theta:A \to T$ be any ring homomorphism and $\vect Xd \in A$ be  such
that $\vect Xd$ is a regular sequence in $A$ and $X_i \mapsto x_i$, $1 \leq i \leq d$.  For example
if $\theta_0:\Lambda \to T$ is any ring homomorphism
 (e.g., we may always take $\Lambda = \Z$ or
$\Lambda = T$ with $\theta_0$ the identity map), we may let $A$ be the polynomial ring $\Lambda[\vect X d]$
in $d$ variables over $\Lambda$, and extend $\theta_0$ to $\theta:A \to T$ such that
$X_j \mapsto x_j$,  $1 \leq j \leq d$.  Let $\ov{A}$ denote that $A$-module $A/(\vect X d)A$,
which is $\Lambda$ when $A$ has the form $\Lambda[\vect X d]$.  $M$ is an $A$-module via restriction of scalars
using $\theta:A \to T$.
Then $$H_i(\ux; \,M) \cong H_i(\vect X d;\, M) \cong \Tor^A_i(\ov{A}, \, M).$$  
\item $I$ and $\Ann_TM$ kill every  $H_i(\ux;\, M)$. 
\item If $0 \to M' \to M \to M'' \to 0$ is exact there is a long exact sequence
$$\cdots \to H_i(\ux;\,M') \to H_i(\ux;\,M) \to  H_i(\ux;\,M'') \to  H_{i-1}(\ux;\,M') \to \cdots\,.$$
\item If $y,\,z \in T$ then there is a long exact sequence
$$ \cdots \to H_i(\ux, \, y;\, M) \to H_i(\ux, \, yz;\, M) \to H_i(\ux,\, z;\, M) \to \cdots\,.$$  
\item Let $\uy$ be the image of  $\ux$ under multiplication by an invertible matrix over $T$.  Then
$H_i(\uy;\, M) \cong H_i(\ux; \, M)$. Hence, the Koszul homology is unchanged, up to isomorphism,
by permuting the $x_i$, multiplying them by units, or adding a sum of multiples of the $x_j$ for
$j \not= i$ to $x_i$.   If $R$ is a local ring, $H_i(\ux;\, M)$ depends, up to
isomorphism, only on the ideal $I$ and the number of generators $d$ used for $I$, and not on
the specific choice of generators.  
\item If $Q$ is a flat $T$-module (or $T$-algebra), $H_i(\ux; Q \otimes_TM) \cong Q \otimes_T H_i(\ux;\, M)$
In particular, calculation of Koszul homology commutes with localization and, when $T$ is local
and $M$ is finitely generated, with completion. 
\item Let $(T,\,\m,\,K)$ be local, $\vect x s \in \m$, and $M$ finitely generated,
or let  $T$ be $\N$-graded, $\vect x s$ forms of positive degree, and let $M$ be $\Z$-graded
with all degrees of nonzero components bounded below by a fixed integer.  Then $\vect xs$ is a regular
sequence if and only $H_1(\ux;\,M) = 0$, in which case all $H_i(\ux;\,M) =0$ for $i \geq 1$. 
\item Let $T$ be Noetherian and let $M$ be finitely generated.  If $H_i(\ux;\,M) = 0$ (respectively,
has support contained in a closed set $X$, respectively has finite length), then so does
$H_j(\ux;\,M)$ for all $j \geq 1$. 
\item Let $(T,\,\m, \, K)$ be local, let $M$ be finite generated, and let
$\ux$ be a \sop\ for $T$.  Then  $\chi(\ux;\,M) = 0$ if $\dm(M) < \dm(T)$, while $\chi(\ux;\, M)$
is the Hilbert-Samuel multiplicity\footnote{See Discussion~\ref{mult}.}  $e\big((\ux);\,M\big)$ if $\dm(M) = \dm(T)$.   
\item Let $(T,\,\m, \, K)$ be local, let $\ux$ be a system of parameters, and let $M$ be finitely generated.
Then for $i \geq 1$,  $\chi_i(M) \geq 0$,
and $\chi_i(M) = 0$ if and only $H_j(\ux;\, M) =  0$ for all $j \geq r$.  Moreover, $M$ is \CM if and only if
$\chi_1(\ux;\, M) = 0$ if and only if $\ell\big(H_0(\ux; \, M) \big)= e\big((\ux); \, M\big)$.    
\item There is a long exact sequence:
$$
\cdots \to H_{i-1}(\ux^-; \Ann_M x_d) \to H_i(\ux;\,M) \to \qquad\qquad$$
$$\qquad\qquad\qquad\qquad\qquad H_i(\ux^-;\, M/x_dM)  \to H_{i-2}(\ux^-;\, \Ann_Mx_d) \to \cdots \, .
$$  
\item If the first $k$ of the elements $\ux$ form a (possibly improper) regular sequence on $M$,
then for all $i$,  $H_i(\ux; M) \cong H_i\big(x_{k+1}, \ldots, x_d; \, M/(\vect x k)M\big)$. 
\een
\end{theorem}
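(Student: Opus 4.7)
The plan is to prove the seventeen items (a)--(q) in a coordinated way, rather than in the order listed: a handful of foundational facts will be proved directly from the definition, and the rest will follow by induction on $d$ (the length of $\ux$) using a small number of basic long exact sequences.

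First I would settle (a), (b), (c), (f), and (k) from the definition. Parts (a) and (b) are immediate from the description of $\cK_\bu(\ux;M)$ as the total tensor product of the length-one complexes $\cK_\bu(x_j;T)$ tensored with $M$: in degree $0$ the complex is $M$ modulo the image of the boundary, which is $IM$; in degree $d$ it is $M$ with incoming map $0$ and outgoing map given by the $x_j$, which is $\Ann_M I$. Part (c) is the long exact sequence obtained from the evident presentation of $\cK_\bu(\ux;M)$ as the mapping cone of multiplication by $\pm x_d$ on $\cK_\bu(\ux^-;M)$; (d) is then the associated short exact sequence of cycles. For (f), the Koszul complex $\cK_\bu(X_1,\ldots,X_d;A)$ is a free resolution of $\ov{A}$ because $\vect X d$ is a regular sequence on $A$, so $\Tor^A_i(\ov{A},M) = H_i(\cK_\bu(\vect Xd;A)\otimes_A M) = H_i(\vect Xd;M)$, and the identification with $H_i(\ux;M)$ is by definition of restriction of scalars. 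For (k), the Koszul complex is a complex of finite free $T$-modules, so tensoring with flat $Q$ commutes with homology.

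The second batch of items would then be derived from (a)--(f) by bookkeeping. Part (e) is an induction on $d$: if $\ux^-$ is a regular sequence, $H_i(\ux^-;M)=0$ for $i\geq 1$, and the sequence in (c) collapses to give $H_i(\ux;M)=0$ for $i\geq 2$ and $H_1(\ux;M)=\Ann_{M/(\ux^-)M}x_d=0$. Part (g) is automatic, since $I$ kills $M/IM$ and hence (using (c)) every $H_i(\ux;M)$; the $\Ann_T M$ statement is obvious. Part (h) comes from tensoring the short exact sequence of modules with the flat complex $\cK_\bu(\ux;T)$ and taking homology. Part (i) is produced by realizing both sides via (f) over the polynomial ring $\Lambda[\vect Xd,Y,Z]$ and comparing the resulting multiple Tors using the factorization $YZ = Y\cdot Z$. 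Part (j) follows by noting that elementary column operations on the matrix of $\ux$ correspond to isomorphisms of the free module on which the differential acts, and these extend to chain isomorphisms of $\cK_\bu$. In the local case, any two minimal generating sets of a fixed ideal differ by an invertible matrix, giving the ``depends only on $I$ and $d$'' statement. Part (l), the Nakayama-type rigidity, uses (c) and Nakayama applied to $H_0(\ux^-;M)$, with induction on $d$; (m) is a localization/support version obtained the same way, and the finite length case follows by (k) and vanishing away from $V(I)+\Supp M$.

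The final and hardest items, (n), (o), (p), (q), are the ones requiring genuine content. For (p), write $0\to \Ann_M x_d \to M \arwf{x_d}x_d M\to 0$ and $0\to x_dM\to M\to M/x_dM\to 0$, apply (h) with $\ux^-$, and splice the two long exact sequences to obtain the stated one. Part (q) follows from (p) by induction on $k$: when $x_1,\ldots,x_k$ is a regular sequence on $M$ we have $\Ann_M x_1 = 0$, and (p) with $\ux = (x_1,\ldots,x_d)$ and the role of $x_d$ played by $x_1$ collapses to $H_i(\ux;M)\cong H_i(x_2,\ldots,x_d;M/x_1 M)$; then iterate. Part (o) I would prove by induction on $d$ using (c): the long exact sequence shows that each $\chi_i(\ux;M)$ is a sum of lengths of subquotients of $\chi_i(\ux^-;M)$ of nonnegative sign, so nonnegativity propagates, and vanishing of $\chi_r$ forces vanishing of the higher $H_j$ above some threshold; the Cohen-Macaulay characterization follows because $\chi_1(\ux;M)=0$ forces $H_i=0$ for $i\geq 1$, which by (l) and Nakayama gives that $\ux$ is a regular sequence on $M$.

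The main obstacle will be (n), the identification $\chi(\ux;M)=e((\ux);M)$ in the top-dimensional case and vanishing below. My approach is to reduce via (h) and a filtration of $M$ by prime cyclic modules to the case $M=T/\fp$, and within that to induct on $d$ using (c). The key input is the standard additivity of Hilbert-Samuel multiplicity and the fact that when $x_d$ is a parameter $\dim(M/x_dM) = \dim(M)-1$, combined with a careful analysis of when $\chi$ drops (namely precisely when $\dim(M)<d$). This is exactly Serre's theorem and the argument is substantive, but fits the template: reduce to the prime cyclic case, then do double induction on $d$ and $\dim(M)$. The other parts essentially rest on the mechanical consequences of (c) and (d).
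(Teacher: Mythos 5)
Your overall plan — derive everything from (a), (b), (c), (d), (f), (k), with the long exact sequences of (c) and (h) carrying the weight, and citing Serre's argument for (m)--(o) — matches the paper's approach. Two of your steps, however, contain genuine gaps.

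For (g), knowing that $I$ kills $H_0(\ux;\,M)=M/IM$ does \emph{not} propagate to the higher $H_i(\ux;\,M)$ via (c) or (d). The short exact sequence in (d) realizes $H_i(\ux;\,M)$ as an extension of $\Ann_{H_{i-1}(\ux^-;\,M)}x_d$ by $H_i(\ux^-;\,M)/x_dH_i(\ux^-;\,M)$; both ends are killed by $x_d$, but an extension of two modules killed by $x_d$ need not itself be killed by $x_d$ (e.g., $\Z/4$ is an extension of $\Z/2$ by $\Z/2$), so one only gets $x_d^2$ annihilating the middle, and the inductive step for the remaining $x_j$ has the same circularity. Annihilation by $I$ is genuine content: it comes either from the explicit chain homotopy showing multiplication by each $x_j$ is null-homotopic on $\cK_\bu(\ux;\,T)$ (contraction against the generator $e_j$), or — as the paper does — from (f), since $\Tor^A_i(\ov{A},\,M)$ is an $\ov{A}$-module, hence killed by $(\vect X d)A$, whose image under $\theta$ is $I$.

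For (p), the two long exact sequences you obtain by applying (h) with $\ux^-$ to $0\to\Ann_Mx_d\to M\to x_dM\to 0$ and $0\to x_dM\to M\to M/x_dM\to 0$ involve only groups of the form $H_\bu(\ux^-;\,\blank)$. No splicing of these can manufacture the terms $H_i(\ux;\,M)$ that appear in the statement. The correct route, and the one the paper indicates, is to view $\cK_\bu(\ux;\,M)$ as the total complex of the double complex $\cK_\bu(\ux^-;\,T)\otimes_T\cK_\bu(x_d;\,M)$. The spectral sequence taking homology in the $x_d$ direction first has $E^2_{pq}=H_p\bigl(\ux^-;\,H_q(x_d;\,M)\bigr)$; since $H_q(x_d;\,M)$ vanishes outside $q=0$ (giving $M/x_dM$) and $q=1$ (giving $\Ann_Mx_d$), this is a two-row spectral sequence, which unwinds precisely into the long exact sequence of (p). Equivalently, one can phrase this via (f) as the change-of-rings spectral sequence for $\Tor^A$, as the paper says. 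Your derivation of (q) from (p) is then sound once (p) is repaired.
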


\begin{proof}  For more detailed treatments we refer the reader to \cite{Ser65}, \cite{BruH93}, and
\cite{Licht66} but we make some remarks here.

 (a) and (b) are immediate from the definition. Part (c) is
straightforward from the two spectral sequences of the double complex obtained by tensoring
$\cK_\bu(\ux^-; T)$ with $\cK_\bu(x_d; \, M)$ (which is also the mapping cone of multiplication
by $x_d$ mapping $\cK_\bu(\ux^-; \, M)$ to itself), and (d) is immediate from (c).
Part (e) follows easily from (c) using mathematical
induction on $d$.  (f) follows from (e) because $\cK_\bu(\vect X d;\, A)$ is free resolution of $\ov{A}$
over $A$,  and when we apply $\blank\otimes_A M$ we get $\cK(\ux;\, M)$, since the action of every
$X_i$ on $M$ is the same as the action of $x_i$. (g) and (h) then follow from corresponding
properties of Tor.

To prove (i),  let  $A = \Z[\UX, \, Y, \, Z]$
be polynomial, where $\UX = \vect X d$, and map this ring
to $T$ so that $\UX, \, Y, \, Z \mapsto \ux,\, y, \, z$.  The result follows from
the fact that $\UX, V$  is a regular sequence in $A$ when $V$ is any of the three
elements, $Y$, $YZ$, or $Z$, the fact that the sequence
$$0 \to A/(\UX, \, Y) \overset{Z\cdot}{\longrightarrow} A/(\UX^-, \, YZ) \to A/(\UX^-,\, Z) \to 0$$
is exact, the long exact sequence for Tor, and part (f).

Part (j) follows from the characterization of $K(\ux;\, T)$ as the exterior algebra
$\bigwedge^\bu\bigl(\cK_1(\ux;\, T)\bigr)$ such that the differential $d_i$ is the unique extension
of $d_1$ to a derivation (in the sense that if $u$, $v$ are forms,
$d(uv) = (du)\wedge v + (-1)^{\mathrm{deg}(u)}u\wedge dv$:  see \cite{Ser65}.
The matrix $A$ induces an isomorphism of exterior algebras compatible with differentials by letting the map in degree
$i$ be $\bigwedge^i(A)$.  In the local case, there is an invertible matrix that sends any given set of
$d$ generators for $I$ to any other set of $d$ generators for $i$.

Part (k) is obvious from the definitions, and part (l) follows easily by induction from part (c) and
Nakayama's lemma:  cf.~\cite{Ser65}. For parts (m), (n), and (o) we refer to \cite{Ser65}. One can
prove (p) by viewing the Koszul homology as Tor as in part (f) and using the
spectral sequence for change of rings for Tor: we refer to
\cite{Ser65} and \cite{Licht66} for more detail, as well as to \cite{Ho81} for an application to intersection
theory in a hypersurface.  Part (q) reduces to the case where $k=1$ by
induction on $k$, and the case $k=1$ follows from part (p) if we take $x_d$ to be the nonzerodivisor (by
part (j),  the order of the elements $\ux$ does not matter).
\end{proof}

\begin{corollary}\label{Koszlength}  Let $\ux, \, y, \, z \in T$ and let $M$ be a $T$-module.
\bena
\item  If $H_i(\ux,\, y;\, M)$  and $H_i(\ux,\, z; \,M)$ have finite lengths , then
so does $H(\ux,\, yz;\, M)$, and
$$\ell\bigl(H_i(\ux,\, yz; \,M)\bigr) \leq \ell\bigl(H_i(\ux,\, y;\, M)\bigr)+
\ell\bigl(H_i(\ux,\, z;\, M)\bigr).$$
\item If $H_i(\ux; \, M)$ has finite length, this remains true when each $x_i$ is replaced by some power,
and $$\ell\bigl(H_i(x_1^{t_1},\, \ldots, \, x_d^{t_d}; M)\bigr) \leq t_1\cdots t_d \ell\bigl(H_i(\ux;\,M)\bigr).$$
In particular $$\ell\bigl(H_i(x_1^t,\, \ldots, \, x_d^t; M)\bigr) \leq t^d \ell\bigl(H_i(\ux;\,M)\bigr).$$
\een
\end{corollary}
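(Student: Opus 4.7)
The plan for part (a) is to invoke the long exact sequence from Theorem~\ref{Koszul}(i), which reads
$$\cdots \to H_{i+1}(\ux,\,z;\,M) \to H_i(\ux,\,y;\,M) \to H_i(\ux,\,yz;\,M) \to H_i(\ux,\,z;\,M) \to H_{i-1}(\ux,\,y;\,M) \to \cdots\,.$$
Splitting this sequence at $H_i(\ux,\,yz;\,M)$ yields a short exact sequence
$$0 \to C \to H_i(\ux,\,yz;\,M) \to K \to 0,$$
where $C$ is a quotient of $H_i(\ux,\,y;\,M)$ and $K$ is a submodule of $H_i(\ux,\,z;\,M)$. Since length is subadditive on short exact sequences and submodules/quotients of finite-length modules have finite length, the middle term has finite length bounded by $\ell\bigl(H_i(\ux,\,y;\,M)\bigr) + \ell\bigl(H_i(\ux,\,z;\,M)\bigr)$, as desired.

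For part (b), the plan is to replace the variables one at a time, using Theorem~\ref{Koszul}(j) to permute a chosen variable into the last slot so that part (a) applies. First I would prove the one-variable bound by induction on $t \geq 1$: assuming $\ell\bigl(H_i(\ux',\,x;\,M)\bigr)$ is finite (where $\ux'$ denotes the remaining elements), show that $\ell\bigl(H_i(\ux',\,x^t;\,M)\bigr) \leq t\,\ell\bigl(H_i(\ux',\,x;\,M)\bigr)$. The base case $t=1$ is trivial; for the inductive step apply part (a) with $y=x$ and $z=x^{t-1}$, using the inductive hypothesis (and the $t=1$ case again) to control the two summands.

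Once this one-variable statement is in hand, iterate: apply it with $j=1$ to bound $\ell\bigl(H_i(x_1^{t_1},\,x_2,\,\ldots,\,x_d;\,M)\bigr) \leq t_1\,\ell\bigl(H_i(\ux;\,M)\bigr)$, then permute to bring $x_2$ to the last slot and replace it by $x_2^{t_2}$ to multiply the bound by $t_2$, and so on through all $d$ variables. After $d$ steps the bound reads $t_1 \cdots t_d\,\ell\bigl(H_i(\ux;\,M)\bigr)$, which gives the general inequality; specializing $t_1 = \cdots = t_d = t$ yields the final displayed bound.

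There is no serious obstacle here: all the tools sit in Theorem~\ref{Koszul}. The only small subtlety is making sure finiteness of length is preserved at each inductive step, but this is guaranteed by the very bounds being proved. One could alternatively prove part (b) directly from Theorem~\ref{Koszul}(f) by viewing the Koszul homology as $\Tor$ over a polynomial ring and using the fact that the surjection $A/(X_1^{t_1},\,\ldots,\,X_d^{t_d}) \surj A/(\vect X d)$ fits into a filtration whose associated graded has $t_1\cdots t_d$ copies of $A/(\vect X d)$, but the inductive approach above via part (a) is cleaner and keeps everything inside the framework already established.
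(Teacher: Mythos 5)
Your proof is correct and matches the paper's approach: part (a) follows by splitting the long exact sequence from Theorem~\ref{Koszul}(i) and applying subadditivity of length (this is exactly what the paper's ``Theorem~\ref{Koszul}(i) and the discussion in \ref{subadd}'' invocation amounts to), and part (b) is the same iterated application of part (a), one variable at a time, that the paper indicates.
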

\begin{proof} Part (a) is immediate from Theorem~\ref{Koszul}(i) and the discussion in
\ref{subadd}, while part (b) simply follows from iterated application of part (a). \end{proof}

We need the following result in the proof  in \S\ref{limcmSerre} that the existence of \lCM\ sequences implies the Serre conjecture on positivity of Tor.

\begin{lemma}\label{Koszsurj} Let $\cG_\bu$ be a flat left complex in $D^{[a,b]}$.
Suppose that $\vect y s \in T$ kill
all the modules $H_i(\cG_\bu)$. Set $x_i={y_i^{b-a+1}}$. Then there is a surjection
$$\Tor_{i+s}(\cK_\bu(x_1;\,T),\, \ldots, \,\cK_\bu(x_s; T), \cG_\bu) \surj H_i(\cG_\bu).$$  
If $\vect x s$
is, in addition, a regular sequence in $T$ and $I = (\vect x s)T$, then there is a surjection
$\Tor^T_{i+s}\big(T/I,\, \cG) \surj H_i(\cG_\bu)$. \end{lemma}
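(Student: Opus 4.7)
The plan is to induct on $s$, with base case $s=0$ reducing to the tautology $\Tor_i(\cG_\bu) = H_i(\cG_\bu) \surj H_i(\cG_\bu)$. For the inductive step, suppose the surjection has been built with $s$ replaced by $s-1$, and set
$$\cH_\bu := \cK_\bu(x_1;T) \otimes_T \cdots \otimes_T \cK_\bu(x_{s-1};T) \otimes_T \cG_\bu,$$
so that $H_h(\cH_\bu) = \Tor^T_h(\cK_\bu(x_1;T), \ldots, \cK_\bu(x_{s-1};T), \cG_\bu)$. The inductive hypothesis supplies a surjection $\alpha\colon H_{i+s-1}(\cH_\bu) \surj H_i(\cG_\bu)$.

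The crux of the argument is the claim that $x_s$ annihilates every $H_h(\cH_\bu)$. This is exactly where the exponent $b-a+1$ enters: apply Remark \ref{torann} with the distinguished flat complex taken to be $\cG_\bu \in D^{[a,b]}$, on whose homology $y_s$ acts as zero. It follows that $y_s^{b-a+1} = x_s$ kills $\Tor^T_h(\cK_\bu(x_1;T), \ldots, \cK_\bu(x_{s-1};T), \cG_\bu) = H_h(\cH_\bu)$ for every $h$. Now consider the short exact sequence of complexes
$$0 \to \cH_\bu \to \cK_\bu(x_s;T) \otimes_T \cH_\bu \to \cH_\bu[-1] \to 0$$
arising from the two-step filtration of $\cK_\bu(x_s;T)$. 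The connecting map in its long exact sequence of homology is $\pm x_s$, so by the annihilation claim the sequence breaks into short exact pieces
$$0 \to H_h(\cH_\bu) \to H_h(\cK_\bu(x_s;T) \otimes_T \cH_\bu) \to H_{h-1}(\cH_\bu) \to 0.$$
Taking $h = i+s$ yields a surjection $\beta\colon H_{i+s}(\cK_\bu(x_s;T) \otimes_T \cH_\bu) \surj H_{i+s-1}(\cH_\bu)$, and the left-hand side is exactly $\Tor^T_{i+s}(\cK_\bu(x_1;T), \ldots, \cK_\bu(x_s;T), \cG_\bu)$. The composition $\alpha \circ \beta$ closes the induction.

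For the final assertion, when $\vect x s$ is a regular sequence in $T$ the complex $\cK_\bu(\vect x s;T) = \bigotimes_{j=1}^s \cK_\bu(x_j;T)$ is a free $T$-resolution of $T/I$, so by the identification noted at the end of Discussion \ref{mulTor} we have
$$\Tor^T_{i+s}(\cK_\bu(x_1;T), \ldots, \cK_\bu(x_s;T), \cG_\bu) \cong \Tor^T_{i+s}(T/I, \cG_\bu),$$
and the surjection just built becomes $\Tor^T_{i+s}(T/I,\cG_\bu) \surj H_i(\cG_\bu)$. The main obstacle is the annihilation statement for $\cH_\bu$; without the power $b-a+1$ in the definition of $x_j$, one could not guarantee that $x_s$ kills the higher homology of the partial Koszul-tensor-$\cG_\bu$ complex, and the mapping-cone long exact sequence would fail to split into the short exact pieces that deliver the desired surjectivity.
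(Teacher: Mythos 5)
Your proof is correct and essentially the same as the paper's: both arguments run an induction on $s$ whose engine is Remark~\ref{torann} applied to $\cG_\bu \in D^{[a,b]}$, which is why the exponent $b-a+1$ appears. The only cosmetic difference is in how the single-Koszul-factor surjection is produced: the paper isolates it as the $s=1$ base case and establishes surjectivity by an explicit cycle chase ($z$ a cycle, $xz=dv$, $\pm v\oplus z$ a cycle in the cone), whereas you start the induction at $s=0$ and obtain the surjection $\beta$ from the long exact homology sequence of the mapping-cone short exact sequence $0\to\cH_\bu\to\cK_\bu(x_s;T)\otimes\cH_\bu\to\cH_\bu[-1]\to 0$, using that the connecting map is $\pm x_s$ and hence zero once the annihilation claim is in hand. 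These are two phrasings of the same argument, and the final reduction to $T/I$ via the Koszul resolution is identical.
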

\begin{proof}  We prove the first statement by induction on $s$.  First suppose that
$s=1$, and let $x = x_1$.  Then $\Tor_{i+1}(\cK_\bu(x;\,T),\, \cG_\bu)$ is the homology
of the mapping cone $\cC_\bu$ of $\cG_\bu \overset{x\cdot}{\longrightarrow} \cG_\bu$, i.e., the total complex of:
$$\CD
\cdots @>d>> \cG_{i+1} @>d>> \cG_i @>d>> \cG_{i-1} @>d>> \cdots\\
     @.           @V{x}VV           @V{x}VV           @V{x}VV      @.\\
\cdots @>d>> \cG_{i+1} @>d>> \cG_i @>d>> \cG_{i-1} @>d>> \cdots\\
  \endCD$$
The bottom row is a subcomplex, and we use the map which kills this subcomplex: $\cC_\bu/\cG_\bu
\cong \cG_{\bu-1}$,  i.e., there is a degree shift down by one.  The quotient map $\cC_\bu \to \cG_{\bu-1}$
yields, in degree $i+1$, maps of homology
$\Tor_{i+1}(\cK_\bu(x;\,T),\, \cG_\bu) \to H_i(\cG_\bu)$.  Given a cycle $z$ in $\cG_i$,
we have that $xz$ is a boundary, since $x$ kills $H_i(G_\bu)$, and we can choose
$v \in \cG_{i+1}$ such that  $dv = xz$.  Then for a suitable choice of sign,  $\pm v \oplus  z
\in G_{i+1} \oplus G_i$  is an $(i+1)$-cycle in the total complex whose class in
$\Tor_{i+1}(\cK_\bu(x;\,T),\, \cG_\bu)$ maps to the class of $z$ in $H_i(G_\bu)$.

We can now prove the general case by induction on $s$.  If $s >1$, the induction hypothesis
yields a surjection $$\pi:\Tor_{i+s-1}(\cK_\bu(x_2;\,T),\, \ldots,\, \cK_\bu(x_s; T), \cG_\bu) \surj H_i(\cG_\bu),$$
and the left hand side is the homology of the flat complex $\widetilde{\cG}$ obtained
by taking the total tensor product over $T$ of $\cK_\bu(x_2;\,T), \, \ldots, \, \cK_\bu(x_s; T)$, and
$\cG_\bu$.  By Remark~\ref{torann},  $x_1$ kills the homology of $\widetilde{\cG}$.  The case
where $s = 1$ yields a surjection  $\Tor_{i+s-1+1}(\cK(x_1;\, T), \widetilde{\cG}) \surj
H_{i+s-1}(\widetilde{\cG})$, and we may compose with the surjection $\pi$ to obtain
the required surjection.

The final statement follows because when $\vect x s$ is a regular sequence in $T$, the total tensor product
of the complexes $\cK_\bu(x_j;\, T)$ is a free resolution of $T/I$.
\end{proof}

\begin{remark}
Let us explain a derived perspective on obtaining the map in the second part of Lemma~\ref{Koszsurj}. Assume $y_1,...,y_s$ is a regular sequence on $T$. Then the lemma essentially proves the following: for any $G$ in the derived category $D^b(T)$ with the property that $(y_1,\dots,y_s) H_*(G) = 0$, the natural map
\[ \alpha:\mathrm{RHom}_T(T/I, G) \to \mathrm{RHom}_T(T,G) \simeq G\]
induces a surjection on all cohomology groups, where $I=(x_1,...,x_s)T$ and the $x_i$'s are large powers of $y_i$ (in fact, one only needs to choose $x_i$ so that $x_i$ annihilates $\Hom_D(G, G)$).  To connect this to the the lemma, observe that since $T/I$ is a perfect complex, the left hand side above is identified with $(T/I)^\vee \otimes_T G$. But $(T/I)^\vee = (T/I)[-n]$ by self-duality of the Koszul complex, so the left side above is simply $T/I \otimes_T^L G[-n]$, whence the induced map $H^*(\alpha)$ coincides with a map of the shape appearing in lemma; we leave it to the reader to check that the maps agree. 
\end{remark}

\section{Lim Cohen-Macaulay sequences}\label{limdef} 

Throughout this section, $\rmk$ denotes a local ring of Krull dimension $d$.

\begin{definition}
If $\ux = \vect x d$ is a \sop\ for the local ring $\rmk$ and $M$ is a finitely generated
$R$-module, we shall write $h_i(\ux; \, M)$ for $\ell\bigl(H_i(\ux;\, M)\bigr)$.  We write
$\sigma_i(\ux;\,M) = \sum_{j=i}^d h_i(\ux; M)$.  \end{definition}

Although we give a general definition, we are most interested in the case where
$R$ is a domain, which suffices for applications to positivity of Serre multiplicities and existence of
big \CM modules and algebras.

\begin{definition}\label{limcmdef} A sequence of modules $\cM = \{M_n\}_n$ is {\it lim Cohen-Macaulay}
if there exists a system of parameters $\ux$ such that for all $i \geq 1$,  $h_i(\ux; M_n) = o\big(\nu(M_n)\bigr)$.
If, moreover, each $M_n$ is a module-finite $R$-algebra, we call $\cM$ a {\it lim \CM sequence of $R$-algebras}.
\end{definition}

\begin{remark} An obviously equivalent condition is that $\sigma_1(M_n) = \oo\big(\nu(M_n)\bigr)$.
\end{remark}

\begin{remark}  In the expository manuscript \cite{Ho17}, for the sake of simplicity the definition of lim \CM sequence
of modules is only given in the case where $R$ is a local domain.  We  do not make this restriction here.  However, for the main applications we have in mind, it would suffice to have the existence of lim \CM sequences for complete local
domains with algebraically closed residue field.  Moreover, by  Propostion~\ref{limcmres} below,
to construct a lim \CM sequence over $R$,
it suffices to construct such a sequence over the domain $R/\fp$,  where $\fp$ is a minimal prime ideal
of $R$ such that $\dim(R/\fp) = \dim(R)$. \end{remark}

\begin{remark}\label{net}  We can make the same definition for a {\it net} of modules, i.e., for a
family of  modules $M_\lambda$ each of which haas the same Krull dimension as the ring $R$ indexed
 by a directed set $\Lambda$, i.e., a set
with a {\it preorder} (transitive, reflexive binary relation) such that any two elements have
a common upper bound.  Given a net $a_\lambda$ of real numbers, $\lim_\Lambda a_\lambda = 0$
means that for all $\epsilon >0$ there exists $\lambda \in \Lambda$ such that for all
$\mu \geq \lambda$, $|a_\mu| < \epsilon$.  Hence,  $f(\lambda) = O\bigl(g(\lambda)\bigr)$
and $f(\lambda) = \oo\bigl(g(\lambda)\bigr)$ both have meanings in the more general context.
Without exception, the results on sequences of modules in this paper are valid for nets, with no
essential changes in the arguments. However, at this point we have no applications
of the more general notion except that it can be used to define integral closure of ideals
for a local domain:  see \S\ref{op}, Example~\ref{intclop} \end{remark}

It turns out that if the condition in Definition~\ref{limcmdef} holds for one system of parameters
then it holds for every \sop. This follows at once from the following result, which is one of the main results of
this section.

\begin{theorem} Let $\rmk$ be a local ring and let $\ux = \vect xd$ and $\uy = \vect y d$
be two systems of parameters for $R$.  Then there exist positive constants $C,\, C'$ independent
of $i$, $M$
such that for all $i \leq 1 \leq d$ and for every $R$-module $M$,  $\sigma_i(\ux;\, M) \leq
C\sigma_i(\uy; \,M)$ and $\sigma_i(\uy; \,M) \leq C'\sigma_i(\ux; \,M)$.  \end{theorem}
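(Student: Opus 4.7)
The plan is to reduce the general comparison to a chain of ``one-coordinate swaps'' and then to control each swap via Theorem~\ref{Koszul}(d) together with Theorem~\ref{Koszul}(g). By Theorem~\ref{Koszul}(k), the faithfully flat extension $R \to R(X) := R[X]_{\m R[X]}$ commutes with Koszul homology, and since the transition has residue field extension $K \to K(X)$, it preserves lengths of Koszul-homology modules (a composition series for a finite-length module yields a composition series of the same length after tensoring with $R(X)$). I may therefore freely assume that $K$ is infinite. Under this assumption, repeated prime avoidance produces a finite chain of systems of parameters $\ux = \uz^{(0)}, \uz^{(1)}, \ldots, \uz^{(m)} = \uy$ in which any two consecutive members differ in exactly one coordinate, reducing the problem to comparing $\ux = (x_1, x_2, \ldots, x_d)$ with $\uy = (y_1, x_2, \ldots, x_d)$.

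For this one-swap case, set $L_j := H_j(x_2, \ldots, x_d;\, M)$. After permuting via Theorem~\ref{Koszul}(j) to place $x_1$ last, Theorem~\ref{Koszul}(d) yields
\[
h_i(\ux;\, M) \;=\; \ell\bigl(L_i / x_1 L_i\bigr) \,+\, \ell\bigl(\Ann_{L_{i-1}}(x_1)\bigr),
\]
and analogously for $\uy$ with $y_1$ in place of $x_1$. By Theorem~\ref{Koszul}(g), $(x_2, \ldots, x_d)$ annihilates each $L_j$. Choose $N$ with $x_1^N \in (\uy)$ and write $x_1^N = a y_1 + z$ with $z \in (x_2, \ldots, x_d)$; then on every $L_j$ one has $x_1^N = a y_1$, so $x_1^N L_i \subseteq y_1 L_i$, giving a surjection $L_i/x_1^N L_i \twoheadrightarrow L_i/y_1 L_i$ and therefore
\[
\ell(L_i/y_1 L_i) \;\leq\; \ell(L_i/x_1^N L_i) \;\leq\; N\,\ell(L_i/x_1 L_i),
\]
where the last step filters $L_i/x_1^N L_i$ by the submodules $x_1^k L_i/x_1^N L_i$, whose successive quotients are all quotients of $L_i/x_1 L_i$. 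A parallel argument---using the inclusion $\Ann_{L_{i-1}}(y_1) \subseteq \Ann_{L_{i-1}}(x_1^N)$ (produced the same way) together with the injection $\Ann(x_1^{k+1})/\Ann(x_1^k) \hookrightarrow \Ann(x_1)$ given by multiplication by $x_1^k$---yields $\ell(\Ann_{L_{i-1}}(y_1)) \leq N\,\ell(\Ann_{L_{i-1}}(x_1))$. Adding these inequalities gives $h_i(\uy;\, M) \leq N\,h_i(\ux;\, M)$. Choosing $N'$ with $y_1^{N'} \in (\ux)$ and repeating the argument with the roles of $\ux$ and $\uy$ swapped produces $h_i(\ux;\, M) \leq N'\,h_i(\uy;\, M)$.

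Summing the per-degree bounds over $j \geq i$ then yields $\sigma_i(\ux;\, M) \leq N'\,\sigma_i(\uy;\, M)$ and its reverse; concatenating these bounds along the chain from the first step produces the constants $C, C'$ claimed in the theorem. By construction they depend only on the chain length and the various powers $N, N'$ employed at each step, so they are independent of both $M$ and $i$. The main obstacle is the very first step---building the chain of sops with single-coordinate differences---for which the reduction to infinite residue field and the resulting prime avoidance are essential; without them the naive intermediate $(y_1, x_2, \ldots, x_d)$ can fail to be a sop, forcing a detour through auxiliary parameters.
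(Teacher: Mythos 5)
Your proof is correct, and it takes a genuinely cleaner route than the paper's at the crucial one-coordinate-swap step. Both approaches reduce to comparing $\ux = (x_1, x_2, \ldots, x_d)$ with $\uy = (y_1, x_2, \ldots, x_d)$ via a chain of systems of parameters differing in one coordinate (the paper asserts the existence of such a chain of length $2d-1$ with no more detail than you give). But the mechanism for the swap differs. The paper invokes Theorem~\ref{Koszul}(i) -- the long exact sequence attached to a product $yz$ -- which couples degree $i$ to degrees $i+1$ and forces a reverse induction on $i$ to clean up the alternating sum of terms farther along the sequence. You instead invoke Theorem~\ref{Koszul}(d), the short exact sequence
\[
0 \to L_i/x_1 L_i \to H_i(\ux; M) \to \Ann_{L_{i-1}}(x_1) \to 0
\]
with $L_j = H_j(x_2,\ldots,x_d;\,M)$, which decomposes $h_i(\ux; M)$ into a cokernel piece and an annihilator piece, each of which you then bound directly using the relation $x_1^N = ay_1$ modulo the ideal $(x_2,\ldots,x_d)$ that kills every $L_j$. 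This gives the uniform bound $h_i(\uy;M) \leq N\,h_i(\ux;M)$ degree by degree with no induction, and the two filtration arguments (one for the cokernel, one for the annihilator) are clean and complete.

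One superfluous step: the passage to $R(X)$ to force an infinite residue field is unnecessary. The chain of sops differing in one coordinate is built by basic prime avoidance -- at each stage you pick $z \in \m$ avoiding the finitely many primes $P$ with $\dim R/P \geq 1$ lying over the relevant partial parameter ideals, and since none of these primes contains $\m$, the classical prime avoidance lemma applies over any Noetherian local ring. The infinite-residue-field reduction would only be needed for refined avoidance statements (e.g., avoiding by a general linear combination), which you don't use. It does no harm, but you can delete the first step of your argument.
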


\begin{proof} For $i > d$ we may use any constants, and, using reverse induction on $i$,
it suffices to show for $1 \leq i \leq d$ that if we have such constants for all $j > i$ then
we also get such constants for $\sigma_i$.  We therefore assume the result for all
$j >i$.  It then suffices to show that there exist positive constants $C$ and $C'$ such that
for all $M$,  $h_i(\ux;\, M) \leq C\sigma_i(\uy;\, M)$ and $h_i(\uy;\, M) \leq C'\sigma_i(\ux;\, M)$.
We can construct a finite chain of systems of parameters of length $2d-1$ joining $\ux$ to $\uy$ such that
any two consecutive elements overlap in $d-1$ elements.  It therefore suffices to consider
the case  where $\uy = \ux^-,\,y$,  where $\ux^- = \vect x {d-1}$, and it suffices to prove the existence of
$C$ such that $h_i(\uy;\,M) \leq C\sigma_i(\ux;\,M)$ for all $M$.  Note that $C^{2d-1}$ can then be used in
place of $C$ when there is a chain of systems of parameters of length $2d-1$ from one \sop to another.

Let $x = x_d$.
We can choose $s$ such that $x^s \in (\uy)R$, and so $x^s = yz + w$,  where
$w \in (\ux^-)R$.  Then  $$ (*) \quad H_i(\ux^-, yz;\, M) \cong H_i(\ux^-, yz+w;\, M) = H_i(x^-,\,x^s;\,M),$$
where the equality on the left follows from Theorem~\ref{Koszul}(j)
and the length of the latter is at most $sh_i(\ux;\, M)$,  by Corollary~\ref{Koszlength}(b).   Thus,
$$(**)\quad h_i(\ux^-, yz;\, M) \leq sh_i(\ux;\, M).$$

Theorem~\ref{Koszul}(i)  yields a long exact sequence
$$ (\dagger)\quad   \cdots \to H_{i+1}(\ux^-,z; M) \to  H_i(\uy; M)
\overset{\alpha}{\longrightarrow} H_i(\ux^-,yz; M) \to \cdots.$$
Let $N = \Img(\alpha)$.
It follows that $h_i(\uy;\,M)$ is the  sum of $\ell(N)$ and alternating sum of lengths
of the at most $3(d-i)$  terms strictly to left of $H_i(\uy; M)$ in $(\dagger)$.
We have that $\ell(N) \leq h_i(\ux^-,yz; M)$,
and the sum of the absolute values of the other $3(d-i)$ terms is
$$ (\#)\quad \sigma_{i+1}(\ux^-,z;\,M) + \sigma_{i+1}(\ux^-,yz;M) + \sigma_{i+1}(\uy; M).$$
By the induction hypothesis we have constants $C_1, \, C_2, \, C_3 > 0$ such that, independent of
$M$,  the summands in $(\#)$ are less than the respective values of $C_\nu \sigma_{i+1}(\ux;\,M)$
for $\nu = 1, 2, 3$.  Combining the information in $(**)$ and $(\#)$, we have that
$$h_i(\uy;\,M) \leq (C_1+C_2+C_3)\sigma_{i+1}(\ux;\,M) + sh_i(\ux;\, M),$$ independent of $M$.
as required.
\end{proof}

\subsection{Weakly lim \CM sequences of modules}\label{wklimdef}
If $\rmk$ is local of dimension $d$ with \sop $\ux =
\vect x d$, recall tht $\chi_1(\ux; \, M) = \sum_{i=1}^d (-1)^{i-1} \ell\big(H_i(\ux;\, M)\big)$. We have at once:

\begin{proposition}\label{wlcm}  Let $\rmk$ be local of Krull dimension $d$ and let $\{M_n\}_n$ be a lim \CM sequence of
$R$-modules of Krull dimension $d$.  Then for every \sop $\ux$ of $R$,
$\disp\lim_{n \to \infty} \frac{\chi_1(M_n)} {\nu(M_n)} = 0.$
\end{proposition}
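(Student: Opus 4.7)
The plan is to obtain this as an essentially immediate corollary of the definition together with the comparison theorem proved earlier in the section. The key point is that the definition of lim Cohen-Macaulay requires $h_i(\ux; M_n) = \oo(\nu(M_n))$ only for some fixed system of parameters, but the preceding theorem shows that the vanishing rates of the $\sigma_i$'s (hence of the individual $h_i$'s, since $h_i \leq \sigma_i$) across different systems of parameters differ only by a bounded multiplicative constant. So the hypothesis $h_i(\ux; M_n) = \oo(\nu(M_n))$ in fact holds for the given \sop $\ux$ and every $i \geq 1$.

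With that in hand, I would simply invoke the triangle inequality:
\[
|\chi_1(\ux;\, M_n)| \;=\; \Bigl|\sum_{i=1}^d (-1)^{i-1} h_i(\ux;\, M_n)\Bigr| \;\leq\; \sum_{i=1}^d h_i(\ux;\, M_n) \;=\; \sigma_1(\ux;\, M_n).
\]
Since the sum on the right is a finite (length $d$) sum of terms each of which is $\oo(\nu(M_n))$, it is itself $\oo(\nu(M_n))$. Dividing through by $\nu(M_n)$ and letting $n \to \infty$ yields the conclusion.

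There is essentially no obstacle here: the content of the proposition is entirely packaged into the independence-of-\sop\ theorem just established. One mild point worth verifying explicitly is that $\nu(M_n) \neq 0$ for all large $n$, which is automatic because each $M_n$ is nonzero and finitely generated over the local ring $R$, so $\nu(M_n) \geq 1$ by Nakayama. This ensures that the quotient $\chi_1(\ux;\, M_n)/\nu(M_n)$ is well-defined and the bound $|\chi_1(\ux;\, M_n)|/\nu(M_n) \leq \sigma_1(\ux;\, M_n)/\nu(M_n) \to 0$ gives the claim.
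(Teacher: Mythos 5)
Your proof is correct and is essentially the argument the paper intends: the paper simply writes ``We have at once'' before the proposition, relying on the independence-of-\sop\ theorem proved just above together with the triangle inequality $|\chi_1(\ux;\,M_n)| \le \sigma_1(\ux;\,M_n)$, exactly as you spell out. No gaps.
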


We are therefore led to define a {\it weakly lim \CM} sequence of modules over the local ring $\rmk$ of
Krull dimension $d$ to be a sequence of finitely generated $R$-modules $\{M_n\}_n$ of Krull dimension
$d$ such that for every \sop $\ux$ of $R$, $\disp \lim_{n \to \infty} \frac{\chi_1(M_n)} {\nu(M_n)} = 0.$

Then we may restate Proposition~\ref{wlcm} as follows:

\begin{proposition}  Every lim \CM sequence of modules over a local ring $R$ is weakly lim \CM
over $R$.  \end{proposition}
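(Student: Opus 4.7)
The proof should be essentially immediate from the definitions, so my plan is just to unwind them carefully.

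First, I would recall that by definition, for a lim Cohen-Macaulay sequence $\{M_n\}_n$ and any fixed system of parameters $\ux$, each individual Koszul homology length satisfies $h_i(\ux; M_n) = \oo\bigl(\nu(M_n)\bigr)$ for $i \geq 1$. Since the index $i$ ranges over the finite set $\{1, \dots, d\}$, I can sum these $o$-estimates without difficulty: the cumulative quantity $\sigma_1(\ux; M_n) = \sum_{i=1}^d h_i(\ux; M_n)$ also satisfies $\sigma_1(\ux; M_n) = \oo\bigl(\nu(M_n)\bigr)$.

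Next, I would compare $\chi_1$ with $\sigma_1$ via the triangle inequality: since $\chi_1(\ux; M_n)$ is an alternating sum of the nonnegative integers $h_i(\ux; M_n)$ for $i = 1, \dots, d$, we have
\[
\bigl|\chi_1(\ux; M_n)\bigr| \;\leq\; \sum_{i=1}^d h_i(\ux; M_n) \;=\; \sigma_1(\ux; M_n).
\]
Dividing by $\nu(M_n)$ and using the conclusion of the previous paragraph yields $\chi_1(\ux; M_n)/\nu(M_n) \to 0$, as required.

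There is essentially no obstacle here: the assertion is a formal consequence of the definition once one notes that the sum defining $\chi_1$ involves only $d$ terms, each of which is $\oo\bigl(\nu(M_n)\bigr)$ individually. I should note in passing that although the definition of lim Cohen-Macaulay in the excerpt only asserts the $\oo$-estimate for \emph{some} system of parameters $\ux$, the theorem proved just above (comparing Koszul homology lengths across systems of parameters via the constants $C, C'$) ensures the estimate holds uniformly for every \sop, so the conclusion indeed applies to every \sop of $R$ as stated.
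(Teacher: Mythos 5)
Your proof is correct and takes essentially the same approach as the paper: the paper dispatches the numbered form of this statement (Proposition~\ref{wlcm}) with the single phrase ``We have at once,'' the underlying argument being exactly your triangle-inequality observation $|\chi_1(\ux;\,M_n)| \leq \sum_{i=1}^d h_i(\ux;\,M_n)$ with each of the finitely many summands $\oo\bigl(\nu(M_n)\bigr)$ by the definition of a lim Cohen-Macaulay sequence. Your closing remark invoking the theorem on independence of the system of parameters is the right way to pass from ``some'' (in the lim Cohen-Macaulay definition) to ``every'' (in the weakly lim Cohen-Macaulay definition).
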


We note the following, but do not give a proof because it is proved in \cite[2.6]{Ma23}.

\begin{theorem} If $\rmk$ is local and $\{M_n\}_n$ is a sequence of modules such that
for one \sop $\ux$ of $R$,  $\disp \lim_{n \to \infty} \frac{\chi_1(\ux; \,M_n)} {\nu(M_n)} = 0$,
then for every \sop $\uy$ for $R$, $\disp\lim_{n \to \infty} \frac{\chi_1(\uy; \,M_n)} {\nu(M_n)} = 0,$
i.e., $\{M_n\}_n$ is a weakly lim \CM. \end{theorem}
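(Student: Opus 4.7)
The plan is to rewrite $\chi_1$ using Theorem~\ref{Koszul}(o) combined with (n), yielding the identity
\[
\chi_1(\ux;M) = \ell\bigl(M/(\ux)M\bigr) - e\bigl((\ux);M\bigr) \geq 0
\]
whenever $\ux$ is a \sop\ for $R$ and $\dim(M)=d$. Under this identity, the hypothesis translates to $\ell(M_n/(\ux)M_n) - e((\ux);M_n) = \oo(\nu(M_n))$, and the target is the analogous estimate with $\uy$ in place of $\ux$. Both $\ell(M_n/(\ux)M_n)/\nu(M_n)$ and $e((\ux);M_n)/\nu(M_n)$ are bounded uniformly in $n$: the first by $\ell(R/(\ux))$ via the surjection $R^{\nu(M_n)}\surj M_n$, and the second (at least when the modules admit a rank, as in the domain case that suffices for applications) by $e((\ux);R)$ via Propositions~\ref{multrank} and~\ref{ranknu}. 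This uniform boundedness will be essential when absorbing error terms.

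Following the structure of the preceding comparison theorem of this section, I would join $\ux$ and $\uy$ by a chain of length at most $2d-1$ in which consecutive systems of parameters overlap in $d-1$ elements, thereby reducing to producing a bound
\[
\chi_1(\uy; M_n) \leq C\,\chi_1(\ux; M_n) + \oo(\nu(M_n))
\]
with $C$ independent of $n$, in the case $\ux = (\ux^-, x)$, $\uy = (\ux^-, y)$ where $\ux^- = x_1, \ldots, x_{d-1}$. Choose $s$ with $y^s \in (\ux)R$ and write $y^s = xz + w$ with $w \in (\ux^-)R$. Theorem~\ref{Koszul}(j) then gives $H_i(\ux^-, xz; M) \cong H_i(\ux^-, y^s; M)$ for all $i$, so these two sops have equal $\chi_1$. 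Taking the alternating sum of lengths in the long exact sequence of Theorem~\ref{Koszul}(i) applied to the elements $x$, $z$, and their product $xz$, together with the additivity of Hilbert--Samuel multiplicity in the last parameter, yields an identity relating $\chi_1(\ux^-, xz; M)$ to $\chi_1(\ux^-, x; M)$ and $\chi_1(\ux^-, z; M)$ up to a controllable $h_0$-correction. A further application of Corollary~\ref{Koszlength}(b) passes from $y^s$ back to $y$, completing the reduction.

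The main obstacle will be the alternating-sum character of $\chi_1$: the crude bound $h_i(x_1^t,\ldots, x_d^t; M) \leq t^d h_i(\ux; M)$ of Corollary~\ref{Koszlength}(b) controls individual Koszul homology lengths but does not directly control the signed sum $\chi_1 = \sum_{i\geq 1}(-1)^{i-1} h_i$, so cancellations could in principle inflate an $\oo(\nu(M_n))$ hypothesis into a $\OO(\nu(M_n))$ conclusion. Circumventing this requires invoking the nonnegativity $\chi_i(\ux;M)\geq 0$ of Theorem~\ref{Koszul}(o) in tandem with the identity $\chi_1 = h_0 - \chi$ and the uniform boundedness recorded above, so that all error terms introduced by the power trick and by the long exact sequence remain of order $\oo(\nu(M_n))$. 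Executing this bookkeeping carefully is the technical core of the argument, which is precisely what \cite[2.6]{Ma23} carries out in detail.
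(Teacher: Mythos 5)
The paper does not present its own proof of this statement; it simply cites \cite[2.6]{Ma23}, so there is no in-paper argument to compare against. Your proposal is a plan that itself defers the technical core to the same reference, and the reduction you sketch has a genuine gap.

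The setup is sound: the identity $\chi_1(\ux;M)=\ell(M/(\ux)M)-e_d((\ux);M)\geq 0$ for modules of full dimension, the $\OO(1)$ bounds on $\ell(M_n/(\ux)M_n)/\nu(M_n)$ and $e((\ux);M_n)/\nu(M_n)$, and the chain reduction to two systems of parameters overlapping in $d-1$ elements are all correct. (Note that the second boundedness claim needs no rank hypothesis: from $\chi_1\geq 0$ one already has $e((\ux);M_n)\leq\ell(M_n/(\ux)M_n)\leq\ell(R/(\ux))\,\nu(M_n)$.) But the adjacent step $\ux=(\ux^-,x)\to\uy=(\ux^-,y)$ does not go through as described. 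Setting $y^s=xz+w$ and combining Theorem~\ref{Koszul}(i) with $e((\ux^-,xz);M)=e((\ux^-,x);M)+e((\ux^-,z);M)$ gives
\[\chi_1(\ux^-,xz;M_n)=\chi_1(\ux^-,x;M_n)+\chi_1(\ux^-,z;M_n)+\bigl[h_0(\ux^-,xz;M_n)-h_0(\ux^-,x;M_n)-h_0(\ux^-,z;M_n)\bigr],\]
which introduces $\chi_1(\ux^-,z;M_n)$, a quantity for an auxiliary \sop about which the hypothesis says nothing, and the bracketed $h_0$-correction is nonpositive but has no visible reason to be $\oo(\nu(M_n))$; both terms could a priori be of order $\nu(M_n)$, and neither the nonnegativity of the $\chi_i$ nor the $\OO(1)$ bounds above makes them disappear. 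Worse, the closing step ``a further application of Corollary~\ref{Koszlength}(b) passes from $y^s$ back to $y$'' points in the wrong direction: that corollary yields $\ell\bigl(H_i(\ux^-,y^s;M)\bigr)\leq s\,\ell\bigl(H_i(\ux^-,y;M)\bigr)$, bounding the high-power side by the low-power side, whereas you need to descend from an estimate on $\chi_1(\ux^-,y^s;M_n)$ to one on $\chi_1(\ux^-,y;M_n)$. That descent amounts to a monotonicity of $\chi_1$ under shrinking parameter ideals, which is not supplied by the tools you cite and is plausibly exactly the nontrivial input that \cite[2.6]{Ma23} furnishes.
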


We also note the following result, which gives an analogue of the behavior of maximal Cohen-Macaulay modules.

\begin{proposition}\label{limcmres}
Let $\rmk\to (S,\n,L)$ be a module-finite local map such that both rings have Krull dimension $d$ and
$\m S$ is primary to $\n$.
\bena
\item For every finite length $S$-module $N$,   $\ell_R(N) = [L:K]\ell_S(N)$. 
\item For every finitely generated $S$-module $M$,  $\nu_S(M) \leq \nu_R(M) \leq \nu_R(S)\nu_S(M)$. 
\item  A  sequence $\{M_n\}$ of finitely generated modules over
$S$ is (weakly) lim \CM iff it is (weakly) lim \CM when
considered as a sequence of $R$-modules by restriction of scalars.
In particular, if $\fp$ is a minimal prime of $R$ such $\dim(R/\fp) = \dim(R)$, a (weakly)  lim \CM sequence over
$R/\fp$ is a (weakly)  lim \CM sequence over $R$.
\een
\end{proposition}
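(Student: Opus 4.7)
The plan is to establish the three parts in order, with part (c) following from the combination of (a) and (b) with the fact that Koszul homology is indifferent to the choice of base ring.

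For part (a), I would take any composition series of $N$ as an $S$-module; it has $\ell_S(N)$ factors, each isomorphic to $L = S/\n$. Because $S$ is module-finite over $R$ and $\m S$ is $\n$-primary, the residue field $L$ is a finite field extension of $K$, so $\ell_R(L) = [L:K]$. Additivity of $R$-length over the composition series then gives $\ell_R(N) = [L:K] \ell_S(N)$. For part (b), the inequality $\nu_S(M) \leq \nu_R(M)$ is immediate since any set of $R$-generators of $M$ is automatically a set of $S$-generators (the $R$-action factors through $S$). For the other inequality, fix a surjection $S^{\nu_S(M)} \surj M$ of $S$-modules and precompose with a surjection $R^{\nu_R(S)\,\nu_S(M)} \surj S^{\nu_S(M)}$ of $R$-modules, using $\nu_R(S^t) \leq t \,\nu_R(S)$; the composite witnesses $\nu_R(M) \leq \nu_R(S)\nu_S(M)$.

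For part (c), I first verify that a system of parameters $\ux$ for $R$ is also a system of parameters for $S$: $\ux$ generates an $\m$-primary ideal in $R$, so some power of $\m$ lies in $(\ux)R$, hence some power of $\m S$ lies in $(\ux)S$, and since $\m S$ is $\n$-primary a power of $\n$ lies in $(\ux)S$; combined with $\dim(S) = d = |\ux|$, this shows $\ux$ is a \sop\ for $S$. Next, the Koszul complex $\cK_\bu(\ux; M)$ is literally the same complex of abelian groups whether built over $R$ or over $S$, so the Koszul homology modules $H_i(\ux; M)$ coincide and are finite-length modules over both rings. By part (a), $\ell_R\bigl(H_i(\ux; M_n)\bigr) = [L:K]\,\ell_S\bigl(H_i(\ux; M_n)\bigr)$, and by part (b), the ratio $\nu_R(M_n)/\nu_S(M_n)$ is pinched between $1$ and $\nu_R(S)$, both of which are constants independent of $n$. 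Consequently, the ratios $h_i(\ux; M_n)/\nu(M_n)$ computed over $R$ and over $S$ differ by a bounded multiplicative factor, so one tends to $0$ iff the other does; this handles the lim \CM case. The weakly lim \CM case is identical since $\chi_1(\ux; M_n)$ is an alternating sum of the $h_i(\ux; M_n)$ and transforms by the same scalar $[L:K]$.

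The final assertion follows by specializing (c) to the surjection $R \twoheadrightarrow R/\fp$, where $\fp$ is a minimal prime with $\dim(R/\fp) = \dim(R) = d$: here $\m \cdot (R/\fp) = \m/\fp$ is already the maximal ideal of $R/\fp$, so the hypothesis $\m S$ primary to $\n$ is trivially satisfied. There is no real obstacle in this argument; the only point requiring any care is the passage of systems of parameters from $R$ to $S$, and that is handled by the two primarity conditions in tandem.
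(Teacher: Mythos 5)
Your proof is correct and fills in exactly the details that the paper dismisses with ``Parts (a) and (b) are straightforward and well known.\ They imply (c) at once.'' The essential observations you make --- that a composition series of an $S$-module has all factors $L$ and $\ell_R(L) = [L:K]$; that $R$-generators of an $S$-module are $S$-generators and surjections can be chained to get the upper bound; that a \sop\ for $R$ pushes forward to a \sop\ for $S$ (via the two primarity conditions together with the assumption $\dim S = d$); and that the Koszul complex $\cK_\bu(\ux;\,M_n)$ is literally the same complex whether formed over $R$ or over $S$, so the homology lengths differ by the fixed scalar $[L:K]$ while $\nu_R(M_n)/\nu_S(M_n)$ is pinched between $1$ and $\nu_R(S)$ --- are precisely what is implicit in the paper's terse argument. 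The appeal to the independence-of-\sop\ results is implicit but correct: you prove the Koszul condition for the chosen \sop\ $\ux$ over $S$ and invoke the preceding theorems to conclude (weakly) lim \CM over $S$. Your specialization to $R \surj R/\fp$ for the final claim is also correct.
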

\begin{proof} Parts (a) and (b) are straightforward and well known.  They imply (c) at once. \end{proof}

The following remarks and propositions may help to give some feeling for what it means to be
a (weakly)  lim \CM sequence.

\begin{remark} Of course, if $R$ is local and $M$ is a fixed finitely generated $R$-module,  the constant
sequence $M, \, M, \,M, \,\ldots, \, M,\, \ldots$ is (weakly) lim \CM if and only if $M$ is a maximal \CM module over $R$.
See Theorem~\ref{Koszul}, parts (l) and (o).
\end{remark}

\begin{proposition}  Let $\rmk$ be local of Krull dimension $d$.    Let $\{M_n\}_n$ be a sequence
of $R$-modules of Krull dimension $d$.
\bena
\item If $R$ is regular and $\beta_i(M)$ denotes the $i\,$th Betti number of $M$ (the rank of the
$i\,$th $R$-free module in a minimal $R$-free resolution of $M$) , then $\{M_n\}_n$ is lim \CM over $R$ if and only
if for $1 \leq i \leq d$,  $\disp \lim_{n \to \infty} \frac{\beta_i(M_n)} {\beta_0(M_n)} = 0$. Note
that $\beta_0(M_n) = \nu_R(M_n)$.
\item If $\uy = \vect y k \in R$ are part of a \sop\ for $R$ and form a regular sequence on every $M_n$, then
$\{M_n\}_n$ is (weaakly) lim \CM over $R$ if and only if $\{M_n/(\uy)M_n\}_n$ is (weakly) lim \CM over $R/(\uy)R$.
\item If $\rmk \to (S,\,\n,\, L)$ is flat local and the closed fiber $S/\m S$ is \CM,  then $\{M_n\}_n$
is (weakly) lim \CM over $R$ iff the sequence $\{S \otimes_R M_n\}_n$ is (weakly)  lim \CM over $S$. In particular,
this holds when $R \to S$ is flat local and $\dim(S/\m S) = 0$, so that $\{M_n\}_n$ is lim \CM over $R$ iff
$\{\wh{M}_n\}_n$ is lim \CM over $\wh{R}$.
\een
\end{proposition}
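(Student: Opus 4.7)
For part (a), the plan is to choose a minimal system of parameters $\ux = \vect x d$ that generates $\m$. Since $R$ is regular, $\cK_\bu(\ux;\,R)$ is a minimal free resolution of $K$, and hence Theorem~\ref{Koszul}(f) identifies $H_i(\ux;\,M_n)$ with $\Tor^R_i(K,\,M_n)$; this is a $K$-vector space of dimension $\beta_i(M_n)$, so $h_i(\ux;\,M_n) = \beta_i(M_n)$. Since $\nu_R(M_n) = \beta_0(M_n)$ always, the equivalence drops out, and we may then invoke the already-established independence of the lim \CM condition from the chosen \sop to conclude.

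For part (b), extend $\uy$ to a \sop\ $\ux = \uy,\,\ux'$ for $R$ where $\ux' = x_{k+1}, \ldots, x_d$ is then a \sop\ for $R/(\uy)R$ (which has Krull dimension $d-k$). Theorem~\ref{Koszul}(q) applied to each $M_n$ yields
\[
H_i(\ux;\, M_n) \cong H_i\bigl(\ux';\; M_n/(\uy)M_n\bigr) \quad \text{for all } i,
\]
so the individual Koszul lengths (and hence the sums $\sigma_1$ and alternating sums $\chi_1$) agree on the two sides. Finally, $\nu_R(M_n) = \nu_{R/(\uy)R}\bigl(M_n/(\uy)M_n\bigr)$ since $\uy \inc \m$ and the residue field is unchanged. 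Dividing, both the lim \CM and weakly lim \CM conditions transfer.

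For part (c), the idea is to combine any \sop\ $\ux$ for $R$ with a lift $\uz$ of a \sop\ for $S/\m S$ to obtain a \sop\ $\ux,\uz$ for $S$. By flatness of $S/R$ and Theorem~\ref{Koszul}(k), $H_j(\ux;\,S\otimes_RM_n) \cong S\otimes_R H_j(\ux;\,M_n)$; the latter has finite length over $R$, so it admits a finite filtration by copies of $K$. Since $S/\m S$ is \CM\ with \sop\ $\uz$, we have $H_i(\uz;\,S/\m S) = 0$ for $i \geq 1$ and $\ell_S\bigl(H_0(\uz;\, S/\m S)\bigr) = \lambda$ for some fixed positive integer $\lambda$; by additivity of Koszul homology on short exact sequences (Theorem~\ref{Koszul}(h)) and induction on the filtration length, for any finite length $R$-module $N$ we obtain $H_i(\uz;\, S\otimes_R N) = 0$ for $i \geq 1$ and $\ell_S\bigl(H_0(\uz;\, S\otimes_R N)\bigr) = \lambda\,\ell_R(N)$. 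Feeding this into the spectral sequence $H_i\bigl(\uz;\, H_j(\ux;\,S\otimes_R M_n)\bigr) \imp H_{i+j}(\ux,\uz;\, S\otimes_R M_n)$ collapses it at the $E^2$-page and produces the identity
\[
h_j(\ux,\uz;\, S\otimes_R M_n) \;=\; \lambda\cdot h_j(\ux;\,M_n) \qquad (j\geq 0).
\]
Since $\nu_S(S\otimes_R M_n) = \dim_L L\otimes_R M_n = \dim_K K\otimes_R M_n = \nu_R(M_n)$ and $\chi_1$ is merely an alternating sum of the $h_j$'s, both the lim \CM and the weakly lim \CM characterizations follow at once by dividing. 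The completion case is the special instance $S = \wh R$, whose closed fiber is a field.

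The main obstacle in the argument is the spectral sequence computation in part (c): verifying that $H_i(\uz;\,S\otimes_R N) = 0$ for $i\geq 1$ whenever $N$ has finite length over $R$ is the crux, and it is precisely where the Cohen-Macaulayness of the closed fiber is used.
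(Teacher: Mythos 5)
Your proposal is correct. Parts (a) and (b) match the paper's argument essentially verbatim: using the Koszul resolution of $K$ to identify $h_i(\ux;\,M_n)$ with $\beta_i(M_n)$, and using Theorem~\ref{Koszul}(q) together with $\nu_R(M_n) = \nu_{R/(\uy)R}(M_n/(\uy)M_n)$.

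For part (c) you take a somewhat different route. The paper first shows (via flatness and Cohen-Macaulayness of the closed fiber, citing \cite{HH94b} and \cite{Mat70}) that $\uz$ is a regular sequence on $S \otimes_R M$ for every $R$-module $M$, and then invokes part (b) to kill $\uz$ and reduce to the case where $S/\m S$ is Artinian, where the comparison of Koszul homology, lengths, and generator numbers is immediate. You instead bypass the bootstrap from (b) and run a spectral sequence $E^2_{ij} = H_i\bigl(\uz;\, H_j(\ux;\,S\otimes_R M_n)\bigr) \Rightarrow H_{i+j}(\ux,\uz;\, S\otimes_R M_n)$ directly; the Cohen-Macaulayness of $S/\m S$ enters precisely to degenerate this at $E^2$ by forcing $H_i(\uz;\, S\otimes_R N) = 0$ for $i \geq 1$ whenever $N$ has finite $R$-length. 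The two arguments rest on the same essential facts (flatness gives $H_j(\ux;\,S\otimes_R M) \cong S\otimes_R H_j(\ux;\,M)$; CM closed fiber gives vanishing of higher Koszul homology over the fiber), but the paper's version is a bit shorter because it recycles (b), while yours is more self-contained and makes the multiplicative constant $\lambda = \ell_S\bigl((S/\m S)/(\uz)(S/\m S)\bigr)$ explicit without passing to the quotient ring. Either is a valid choice; yours arguably makes the role of the CM hypothesis more transparent.
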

\begin{proof} (a) Let $\ux = \vect x d$ be a regular \sop for $R$, so that $(\ux)R = \m$.  Then $\cK(\ux;\,R)$
is a free resolution of $K = R/(\ux)$,  and $\beta_i(M)$ is the dimension as aa $K$-vector space of
$\Tor^R_i(K,M) \cong H_i(\ux; M)$.  The result follows at once.

(b) This follows because $\nu_R(M_n/(\ux)M_n) = \nu_R(M)$ (both are $\dim_K(M_n/\m M_n)$ while
if we extend $\ux$ to a full \sop $\ux,\uy$ for $R$ we have $H_i(\ux,\uy; M) \cong H_i(\uy;\, M/(\ux)M$
for all $i \geq 0$, by Theorem~\ref{Koszul}(q).

(c) Let $\uy$ be a sequence of elements in $S$ whose images give a \sop for $S/\m S$, and let
$\ux$ be a \sop in $\m$. The $\ux, \, \uy$ is a \sop for $S$
and  $\uy$ is a regular sequence on $S \otimes_R M$ for every $R$-module
$M$: see \cite[Lemma 7.10]{HH94b} and \cite[Corollary (20.F)]{Mat70}. By part (b), we have that
$H_i(\uy,\, \ux; S \otimes_R M) \cong H_i(\ux; (S/\uy) \otimes_R M)$.

 Moreover,
$R \to S/(\uy)S$ is flat local.  Thus, we may replace $S$ by $S/(\uy)S$ and assume that the closed
fiber of $R \to S$ has dimension 0.  In this case,  $H_i(\ux; S \otimes_R M) \cong S \otimes H_i(\ux; \, M)$,
and for any finite length $R$-module $N$,  $\ell_S(S \otimes N) = \ell_S(S/\m S)\ell_R(N)$, while
for every finitely generated $R$-module $M$, $\nu_S(S \otimes_R M) = \nu_R(M)$.  The result
is now clear. \end{proof}

\begin{remark}\label{ineq} Before stating the next result, we recall that $H_0(\ux; M) \cong M/(\ux)M$ and note
the following.  When $\ux$ is a \sop for the local ring $\rmk$ and $M$ is a finitely generated $R$-module
we always have the inequalities  $\nu(M) \leq \ell\big(H_0(\ux;\,M)\big) \leq \ell\big(R/(\ux)\big) \nu(M)$. The inequality to the left is obvious, since  $\nu(M) = \ell(M/\m M)$, while the inequality to the right follows because there is a surjection
 $R^{\nu(M)} \surj M$ and we may tensor with $R/(\ux)R$. \end{remark}

\begin{proposition}\label{HO} Let $\rmk$ be local of Krull dimension $d$, and let $\{M_n\}_n$ be a sequence
of finitely generated $R$-modules of Krull dimension $d$.  Then $\{M_n\}_n$ is lim \CM (respectively,
weakly lim \CM) if and only if for some (equivalently, every) \sop $\ux$ for $R$,
 $$\mathrm{for\ all\ } i \geq 1,\, \lim_{n\to \infty} \frac{\ell\big(H_i(\ux; \, M_n)\big)} {\ell\big(H_0(\ux; \, M_n)\big)} \to 0\
 (\mathrm{respectively},\, \lim_{n\to \infty} \frac{\ell\big(\chi_1(\ux; \, M_n)\big)} {\ell\big(H_0(\ux; \, M_n)\big)} \to 0).
 $$\smallskip

 Moreover,  $\{M_n\}_n$ is weakly lim \CM if and only if for some (equivalently, every) \sop
$\ux$ for $R$,  $\disp \lim_{n \to \infty} \frac{e\big((\ux), M_n)} {\ell\big(H_0(\ux; \, M_n)\big)} = 1$.
 \end{proposition}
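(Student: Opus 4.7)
The plan is to deduce both characterizations from the already established definitions by exploiting the sandwich estimate from Remark~\ref{ineq}:
$$\nu(M_n) \leq \ell\bigl(H_0(\ux;M_n)\bigr) \leq \ell\bigl(R/(\ux)R\bigr)\,\nu(M_n).$$
Since $\ell(R/(\ux)R)$ is a positive integer depending only on $R$ and the chosen \sop, the sequences $\ell(H_0(\ux;M_n))$ and $\nu(M_n)$ differ only by bounded multiplicative constants. Consequently, for any nonnegative quantity $f(n)$, the statement $f(n) = \oo(\nu(M_n))$ is equivalent to $f(n) = \oo\bigl(\ell(H_0(\ux;M_n))\bigr)$. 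Applying this observation with $f(n) = \ell(H_i(\ux;M_n))$ for each $i \geq 1$, and with $f(n) = \chi_1(\ux;M_n)$, transforms the original definitions of \lCM\ and weakly \lCM\ (given in terms of $\nu(M_n)$) into the ratio statements in the proposition. The equivalence of "some" and "every" \sop in these reformulations is inherited from the corresponding independence result for the $\nu(M_n)$-normalized definitions proved earlier in the section.

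For the multiplicity criterion in the final sentence, the key input is Theorem~\ref{Koszul}(n): because every $M_n$ has Krull dimension equal to $d = \dim(R)$,
$$e\bigl((\ux);M_n\bigr) = \chi(\ux;M_n) = \ell\bigl(H_0(\ux;M_n)\bigr) - \chi_1(\ux;M_n).$$
Dividing through by $\ell(H_0(\ux;M_n))$ yields the identity
$$\frac{e\bigl((\ux);M_n\bigr)}{\ell\bigl(H_0(\ux;M_n)\bigr)} = 1 - \frac{\chi_1(\ux;M_n)}{\ell\bigl(H_0(\ux;M_n)\bigr)},$$
so the left-hand ratio tends to $1$ exactly when the right-hand fraction tends to $0$. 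By the first half of the proposition, this latter condition is precisely the weakly \lCM\ property.

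The only minor technical point is to note that $\ell(H_0(\ux;M_n)) > 0$ for every $n$ (so that the quotients make sense), which follows from $\ux \subseteq \m$ and $M_n \neq 0$ via Nakayama's lemma, since $M_n/(\ux)M_n$ surjects onto $M_n/\m M_n \neq 0$. I do not anticipate any serious obstacle: the proposition is essentially a cosmetic rephrasing of the definitions using the Koszul multiplicity formula and the two-sided bound comparing $\ell(H_0(\ux;M_n))$ with $\nu(M_n)$.
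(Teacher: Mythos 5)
Your proposal is correct and takes essentially the same approach as the paper: the paper also derives the first part immediately from Remark~\ref{ineq} (the sandwich $\nu(M_n) \leq \ell(H_0(\ux;M_n)) \leq \ell(R/(\ux))\nu(M_n)$), and then obtains the multiplicity criterion from the identity $e((\ux);M_n) = \ell(H_0(\ux;M_n)) - \chi_1(\ux;M_n)$ of Theorem~\ref{Koszul}(n). Your added remarks about nonnegativity of $\chi_1$ and nonvanishing of $\ell(H_0(\ux;M_n))$ via Nakayama are harmless elaborations of what the paper leaves implicit.
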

 \begin{proof}   The characterizations of (weakly) lim
 \CM sequences in the first part of the proposition in which $\nu(M_n)$ is
 replaced by $\ell\big(H_0(\ux; \, M)\big) = \ell\big(M/(\ux)M\big)$ in the
 denominators are immediate from Remark~\ref{ineq}.

The final characterization of weakly lim \CM sequences then follows at once from the first part of this
proposition and the fact that $e(\ux; M) = H_0(\ux; \, M) - \chi_1(\ux; \, M)$, by Theorem~\ref{Koszul}
\end{proof}

\begin{example}  A weakly lim \CM sequence need not be lim \CM.  Here is an example, also given
in \cite[\S10]{Ho17}.
Let $R = K[[x,y]]$,  and let  $M_n := R^n \oplus R/m^{n^n}$. This is a weakly lim Cohen-Macaulay sequence
since the multiplicity of $M_n$ with respect to $\m = (x,y)R$ is $n$, the length of
$M_n/(x,y)M_n$ is $n+1$,   but the length of  $H_2(x,y; M_n) \cong m^{n^n-1}/m^{n^n}$ is enormous compared to
$\nu(M_n) = n+1$. \end{example}

When $\{M_n\}_n$ is a sequence of modules over a local ring $R$ of dimension $d$
such that every $M_n$ has Krull dimension $d$ and the rank\footnote{See \ref{rank}} of $M_n$ is defined,
which is always the case when $R$ is a domain, we prove below (see Theorem~\ref{rkchar})
that we can replace $\nu(M_n)$ by
$\rank(M_n)$ in the definitions of lim \CM and weakly lim \CM sequence.

We first observe:

\begin{lemma}\label{Cbound} If $\rmk$ is a local ring  of Krull dimension $d$ and $\{M_n\}_n$ is weakly lim \CM
sequence of $R$-modules for which rank is defined (it is then nonzero since every $M_n$ has dimension $d$), then
$$(\dagger)\quad\lim_{n\to \infty} \frac {\ell\big(H_0(\ux: M)\big)}  {\rank(M_n)} =  e(\ux; \, R).$$

Hence, for every \sop\ $\ux$ for $R$ there is a positive real constant $C_{\ux}$ such that for all $n$,
$$ \quad1 \leq \frac {\nu(M_n)}{\rank(M_n)} \leq \frac {\ell\big(H_0(\ux; M_n)\big)}{\rank(M_n)} \leq C_{\ux}.$$
\end{lemma}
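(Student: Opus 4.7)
The plan is to deduce the limit $(\dagger)$ by combining the multiplicity reformulation of the weakly lim Cohen-Macaulay condition in Proposition~\ref{HO} with the identity relating multiplicity to rank in Proposition~\ref{multrank}. Since every $M_n$ has Krull dimension $d = \dim(R)$, Remark~\ref{nzrk} guarantees that $\rank(M_n) \neq 0$, so division by $\rank(M_n)$ is legitimate; likewise $e\bigl((\ux);\,R\bigr) > 0$ because $\ux$ is a \sop for $R$.

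More concretely, Proposition~\ref{HO} gives, for the specified \sop $\ux$,
$$\lim_{n \to \infty} \frac{e\bigl((\ux);\, M_n\bigr)}{\ell\bigl(H_0(\ux;\, M_n)\bigr)} = 1,$$
while Proposition~\ref{multrank} yields the identity $e\bigl((\ux);\, M_n\bigr) = \rank(M_n)\, e\bigl((\ux);\, R\bigr)$ for every $n$. Substituting the latter into the former and inverting the resulting convergent ratio (valid because the limit $1$ is nonzero) gives $(\dagger)$ at once.

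For the chain of inequalities, each link comes from a fact already recorded in the paper. The leftmost bound $1 \leq \nu(M_n)/\rank(M_n)$ is Proposition~\ref{ranknu}. The middle inequality $\nu(M_n) \leq \ell\bigl(H_0(\ux;\, M_n)\bigr)$ is the elementary observation in Remark~\ref{ineq}, coming from the surjection $M_n/(\ux)M_n \surj M_n/\m M_n$. Finally, $(\dagger)$ shows that the sequence $\ell\bigl(H_0(\ux;\, M_n)\bigr)/\rank(M_n)$ converges to the positive real $e\bigl((\ux);\, R\bigr)$ and is therefore bounded; we may take $C_{\ux}$ to be any such upper bound (for example, the maximum of $e\bigl((\ux);\, R\bigr)+1$ and the finitely many initial terms that exceed it).

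The proof is essentially a bookkeeping exercise and presents no real obstacle; the only subtlety is ensuring $\rank(M_n) > 0$ so the denominators are well-defined, which is handled cleanly by the Krull dimension hypothesis via Remark~\ref{nzrk}.
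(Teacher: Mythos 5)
Your proof is correct and follows essentially the same path as the paper: both arguments combine Proposition~\ref{multrank} (giving $e(\ux;M_n)=\rank(M_n)\,e(\ux;R)$) with the final characterization in Proposition~\ref{HO} (that $\ell(H_0(\ux;M_n))/e(\ux;M_n)\to 1$), and then obtain $C_{\ux}$ from the boundedness of a convergent sequence together with the inequalities $\rank(M_n)\le\nu(M_n)\le\ell\big(H_0(\ux;M_n)\big)$. Your explicit invocation of Remark~\ref{nzrk} to justify the nonvanishing of the denominator is a nice touch, but otherwise the argument matches the paper's.
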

\begin{proof} By Proposition~\ref{multrank},  $\rank(M_n) = e(\ux; M_n)/e(\ux; \, R)$.
When we substitute this into the denominator of the fraction on the left in the first statement of the lemma,
we see that the result follows from the final statement in Proposition~\ref{HO}.

The existence of $C_{\ux}$
such that  $\disp\frac {\ell\big(H_0(\ux; M_n)\big)}{\rank(M_n)} \leq C_{\ux}$ for all $n$
follows from $(\dagger)$. But, for all $n$,  $\rank(M_n) \leq \nu(M_n) \leq \ell\big(H_0(\ux;\, M_n)\big)$. \end{proof}

\begin{theorem}\label{rkchar} Let $\rmk$ be a local ring of Krull dimension $d$ and let
$\{M_n\}_n$  be a sequence of finitely generated modules of Krull dimension $d$.
Assume that for all $n  \in \N_+$, that $\rank(M_n)$ is defined.  Let $\ux$ be a \sop for $R$.
Then $\ell\big(H_i(\ux; M_n)\big) = \oo\big(\nu(M_n)\big)$ for all $i \geq 1$ if and only
$\ell\big(H_i(\ux; M_n)\big) = \oo\big(\rank (M_n)\big)$ for all $i \geq 1$.
Moreover,  $\chi(\ux; \, M_n) = \oo\big(\nu(M_n)\big)$ if and only if $\chi(\ux; \, M_n) = \oo\big(\rank(M_n)\big)$,

Hence,  $\{M_n\}_n$ is lim \CM iff for some (equivalently, every) \sop $\ux$
$\ell\big(H_i(\ux, \, M_n)\big) = \oo\big(\rank(M_n)\big)$  for all $i \geq 1$,
and  $\{M_n\}_n$ is weakly lim \CM iff for some  (equivalently, every) \sop $\ux$,
$\chi_1(M_n) = \oo\big(\rank(M_n)\big).$
\end{theorem}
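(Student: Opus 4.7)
The plan. One direction is immediate: by Proposition~\ref{ranknu}, $\rank(M_n) \leq \nu(M_n)$, so any nonnegative function that is $\oo(\rank(M_n))$ is automatically $\oo(\nu(M_n))$. Hence in both the statement about each $\ell\bigl(H_i(\ux;\,M_n)\bigr)$ and the statement about $\chi_1(\ux;\,M_n)$, the implication from the rank side to the $\nu$ side is trivial. All the content lies in the converse, and I would extract it from a single bound: under the (weaker) hypothesis $\chi_1(\ux;\,M_n) = \oo\bigl(\nu(M_n)\bigr)$, one has $\nu(M_n) = \OO\bigl(\rank(M_n)\bigr)$.

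To obtain that bound I would use multiplicities. By Theorem~\ref{Koszul}(n), since $\dm(M_n) = d = \dm(R)$,
\[
\chi(\ux;\,M_n) = e\bigl((\ux);\,M_n\bigr) = e\cdot\rank(M_n),
\]
where $e := e\bigl((\ux);\,R\bigr) > 0$, the last equality being Proposition~\ref{multrank}. Combining $\chi(\ux;\,M_n) = \ell\bigl(H_0(\ux;\,M_n)\bigr) - \chi_1(\ux;\,M_n)$ with the inequality $\nu(M_n) \leq \ell\bigl(H_0(\ux;\,M_n)\bigr)$ from Remark~\ref{ineq} yields
\[
\nu(M_n) \;\leq\; e\cdot\rank(M_n) + \chi_1(\ux;\,M_n). \qquad (\star)
\]

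If $\chi_1(\ux;\,M_n) = \oo\bigl(\nu(M_n)\bigr)$, then for all sufficiently large $n$ we have $\chi_1(\ux;\,M_n) \leq \tfrac{1}{2}\nu(M_n)$; substituting into $(\star)$ gives $\nu(M_n) \leq 2e\cdot\rank(M_n)$, so $\nu(M_n) = \OO\bigl(\rank(M_n)\bigr)$. Then for any nonnegative function $f(n)$ with $f(n) = \oo\bigl(\nu(M_n)\bigr)$,
\[
\frac{f(n)}{\rank(M_n)} \;=\; \frac{f(n)}{\nu(M_n)} \cdot \frac{\nu(M_n)}{\rank(M_n)} \;\longrightarrow\; 0.
\]
Applied with $f(n) = \ell\bigl(H_i(\ux;\,M_n)\bigr)$ for each $i \geq 1$, this gives the converse in the first statement; applied with $f(n) = \chi_1(\ux;\,M_n)$ it gives the converse in the second. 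The hypothesis $\chi_1 = \oo(\nu)$ required to apply $(\star)$ is supplied, in the first case, by the assumption $\ell\bigl(H_i(\ux;\,M_n)\bigr) = \oo\bigl(\nu(M_n)\bigr)$ for all $i \geq 1$, because $\chi_1(\ux;\,M_n) \leq \sum_{i\geq 1}\ell\bigl(H_i(\ux;\,M_n)\bigr)$, and in the second case it is the hypothesis itself.

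The only potential obstacle is controlling $\nu(M_n)/\rank(M_n)$, and inequality $(\star)$ handles this cleanly without appeal to (weak) lim Cohen-Macaulayness itself (which would have been circular here; contrast Lemma~\ref{Cbound}). The final ``hence'' sentences about \lCM\ and weakly lim \CM\ sequences then follow by combining the equivalences just proved with the independence of the choice of system of parameters established at the beginning of this section.
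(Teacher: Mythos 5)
Your proof is correct and follows the same route as the paper's: the crucial bound on $\nu(M_n)/\rank(M_n)$ comes from combining $\ell\bigl(H_0(\ux;\,M_n)\bigr) = e\bigl((\ux);\,R\bigr)\rank(M_n) + \chi_1(\ux;\,M_n)$ (i.e.\ Theorem~\ref{Koszul}(n) plus Proposition~\ref{multrank}) with $\nu(M_n)\leq\ell\bigl(H_0(\ux;\,M_n)\bigr)$, which is exactly what Lemma~\ref{Cbound} packages via Proposition~\ref{HO}. Your parenthetical worry that citing Lemma~\ref{Cbound} would be circular is unfounded---each of the four hypotheses in the theorem implies $\chi_1(\ux;\,M_n)=\oo\bigl(\nu(M_n)\bigr)$, so one is always in the weakly lim \CM\ setting before the lemma is invoked---but your inlined derivation is equally valid and slightly more self-contained.
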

\begin{proof} The statements in the second paragraph follow from those in the first paragraph.  Fix $\{M_n\}_n$
and $\ux$.  Since $\rank(M_n) \leq \nu(M_n)$,  the statements for $\rank$ imply those for $\nu$,  and thus
in all cases we may assume that $\{M_n\}_n$ is at least weakly lim \CM.  Therefore, we may apply  Lemma~\ref{Cbound},
and the required results follow because $\nu(M_n)/\rank(M_n)$ is bounded above and below by positive constants
that do not depend on $n$. \end{proof}

\subsection{Ulrich modules and (weakly) lim Ulrich sequences}

Let $\rmk$ be local.  A module over $R$ is called {\it Ulrich} if it is a maximal Cohen-Macaulay
module such that $\nu(M) = e(\m;\,M)$ (one always has $\nu(M) \leq e(\m;\,M)\,)$. Ulrich modules
have received a great deal of study (see, for example, \cite{Ul84, BHU87, HUB91, Hanes99, Hanes04, BRW05})
in part because their existence implies a famous conjecture of C. Lech (cf. \cite{Lech60, Lech64}
that if $(R,\, \m) \to (S, \n)$ is a flat local map then $e(\m;\,R) \leq e(\n;\,S)$. See also \cite{Hanes99, Hanes04, Ma17}.
Lech's conjecture has an appealingly simple statement, but remains unsettled despite enormous effort.

We conclude this section by mentioning that a sequence of modules $\{M_n\}_n$ over
a local ring $\rmk$ is {\it (weakly) lim Ulrich} if it is (weakly) lim \CM and
$\disp\lim_{n \to \infty}  \frac{e(\m;\, M_n)} {\nu(M_n)} = 1$.  The existence of weakly
lim Ulrich sequences has been used recently to prove cases of Lech's conjecture \cite{Ma23} and
related results \cite{IMW22}.  The notion is also studied in \cite{Yhee23}.
																																												
																							\section{Existence of lim \CM algebra sequences in the F-finite case} \label{Fflcm} 

In this section we prove that lim \CM sequences of algebras exist for every F-finite local ring.
Thus, the situation is very much better than for small \CM modules, which are known to exist
only in a handful of cases.
 For example, if $R$ is an F-finite local domain (or if $R$ is F-finite and reduced)
the sequence $\{R^{1/p^n}\}_n$ is a lim \CM sequence of  $R$-algebras. In contrast, a local
domain of prime characteristic $p>0$ may fail to have any module-finite extension that is
\CM:  see \cite{Bha14}.

\begin{notation}  If $R$ is a ring of prime characteristic $p>0$, then $F$ or $F_R$ denotes the Frobenius endomorphism
of $R$,  $F^n_R$ or $F^n$ denotes its $n$-fold composition with itself, and
$\cF_R^n$ or simply $\cF^n$  denotes the base change functor from $R$-modules to $R$-modules
using the homomorphism $F_R^n:R \to R$.  Thus, $\cF^n(R) \cong R$,  and $\cF^n\big(\Coker(r_{ij})\big)
\cong \Coker(r_{ij}^{p^n})$. If $M$ is an $R$-module, we shall write $\nn M$ or $F^n_*(M)$ for
the $R$-module obtained from $M$ by restriction of scalars under the homomorphism $F^n:R \to R$.  \end{notation}

Our existence theorems for lim \CM sequences are based on the following result, \cite[Theorem 6.2]{HH93},  which is a strengthening of a theorem proved by P.~Roberts \cite{Rob89} for the case $M = R$, $s = d$.  Related results and 
refinements may be found in \cite{Chang97, Du83a} and \cite{Sei89}.

\begin{theorem}\label{Kzbnd} Let $\rmk$ be a local ring of prime characteristic $p >0$ and let $G_\bu$ be a finite 
left complex $0 \to G_s \to \cdots \to G_0 \to 0$
of length $s$ such that each $G_i$ is a finitely generated free $R$-module. Also suppose that every
$H_i(G_\bu)$ has  finite length. Let $M$ be a finitely generated $R$-module and let $d = \dim(M)$. Then there
is a constant $C > 0$ such  that $\ell\big(H_{s-t}(M \otimes_R  \cF^n(G_\bu)\big) \leq Cp^{n\min\{d,t\}}$ for all
$t, n \in \N$. \end{theorem}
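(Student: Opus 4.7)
The plan is to prove the bound by induction on $d = \dim(M)$. By considering a prime filtration of $M$ and using the subadditivity of length on homology (Proposition~\ref{filt}), one reduces to the case $M = R/P$ with $\dim(R/P) = d$; factors of strictly smaller dimension contribute bounded terms absorbed into the inductive hypothesis. The base case $d = 0$ is immediate: each term $M \otimes \cF^n(G_i) \cong M^{\rank(G_i)}$ has length at most $\rank(G_i) \cdot \ell(M)$, so all the homology modules have length bounded by a constant, matching $C p^{n \min\{0, t\}} = C$.

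\textbf{Inductive step via a Frobenius-twisted parameter.} For $M = R/P$ with $d \geq 1$, pick $y \in \m \setminus P$ (a parameter on the domain $R/P$) and tensor the short exact sequence
\[ 0 \to R/P \xrightarrow{y^{p^n}} R/P \to R/(P + y^{p^n} R) \to 0 \]
with the bounded flat complex $\cF^n(G_\bu)$. The resulting long exact sequence produces a surjection from $H_{s-t+1}\bigl(\cF^n(G_\bu) \otimes R/(P+y^{p^n}R)\bigr)$ onto the kernel of $y^{p^n}$ acting on $H_{s-t}(\cF^n(G_\bu) \otimes R/P)$. Since $y$ is a nonzerodivisor on $R/P$, the module $R/(P + y^{p^n}R)$ admits a filtration of length $p^n$ whose factors are all isomorphic to $R/(P+yR)$, a module of dimension $d-1$. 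Applying the inductive hypothesis to $R/(P+yR)$ with index $t-1$ and using Proposition~\ref{filt} yields
\[ \ell\bigl(H_{s-t+1}(\cF^n(G_\bu) \otimes R/(P+y^{p^n}R))\bigr) \;\leq\; p^n \cdot C' p^{n \min\{d-1,\,t-1\}} \;=\; C' p^{n \min\{d,\,t\}}, \]
and hence the same bound on the kernel of $y^{p^n}$ on $H_{s-t}(\cF^n(G_\bu) \otimes R/P)$. The corner case $t = 0$ is handled separately: $H_{s+1}$ vanishes since the complex is concentrated in degrees $0, \ldots, s$, forcing $\Ker(y^{p^n}) = 0$, and the nilpotency argument below then forces $H_s(\cF^n(G_\bu) \otimes R/P) = 0$.

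\textbf{Uniform nilpotency of $y^{p^n}$ and the main obstacle.} The remaining input is that $y^{p^n}$ acts on $H_{s-t}(\cF^n(G_\bu) \otimes R/P)$ with nilpotency index bounded independently of $n$; combined with the kernel bound above, this closes the induction up to a constant. To secure this uniformity, fix $N$ so that $\m^N$ annihilates every $H_i(G_\bu)$. By (the proof of) Remark~\ref{torann}, for every $f \in \m^N$ the element $f^{s+1}$ is in fact null-homotopic on $G_\bu$. The Frobenius base change functor $\cF^n$ converts multiplication by $f$ on a free complex into multiplication by $f^{p^n}$ on the Frobenius twist and preserves null-homotopies, so $f^{(s+1)p^n}$ is null-homotopic on $\cF^n(G_\bu)$ and therefore annihilates $H_*(\cF^n(G_\bu) \otimes M)$ for every $M$. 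Applying this to generators of $\m^N$ and pigeonholing on products, there is a constant $C^*$ such that $\m^{C^* p^n}$ kills the homology, whence $(y^{p^n})^{C^*} = y^{C^* p^n} = 0$ on the homology, giving a uniform nilpotency index. The \emph{main obstacle} is exactly this uniformity: the annihilator of the twisted homology must scale at most linearly in $p^n$, and the null-homotopy-through-Frobenius calculation is precisely what guarantees this, allowing the inductive mechanism to succeed.
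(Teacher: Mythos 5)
Your overall architecture—reduce to prime cyclic modules by subadditivity, induct on $\dim(M)$, use the Frobenius-twisted parameter short exact sequence $0 \to R/P \xrightarrow{y^{p^n}} R/P \to R/(P+y^{p^n}R) \to 0$, filter the cokernel into $p^n$ copies of $R/(P+yR)$, and combine a kernel bound with a uniform nilpotency bound—is correct, and the exponent bookkeeping $1+\min\{d-1,t-1\}=\min\{d,t\}$ checks out. Note the paper itself does not prove this theorem; it cites \cite[Theorem 6.2]{HH93} (strengthening Roberts), so there is no in-paper proof to compare against. Your argument is in the same spirit as the cited sources.

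One point of justification is off, though the mathematical claim is right. You cite ``(the proof of) Remark~\ref{torann}'' for the assertion that $f^{s+1}$ is null-homotopic on $G_\bu$ whenever $f$ kills all $H_i(G_\bu)$. Remark~\ref{torann} and its proof (a spectral-sequence filtration of the multiple Tor) establish only that $f^{s+1}$ \emph{annihilates} the relevant Tor modules; they do not produce a chain null-homotopy. The null-homotopy statement is the ``ghost lemma'': if $G_\bu$ is a bounded complex of projectives with homology concentrated in $s+1$ degrees and $\phi:G_\bu\to G_\bu$ induces $0$ on all homology, then $\phi^{s+1}=0$ in $D(R)$, and since $G_\bu$ is K-projective this means $\phi^{s+1}$ is null-homotopic. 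This distinction is not cosmetic here: it is precisely the null-homotopy that interacts correctly with $\cF^n$. Applying $\cF^n$ to $f^{s+1}\cdot\mathrm{id} = dh+hd$ yields $f^{(s+1)p^n}\cdot\mathrm{id} = \cF^n(d)\cF^n(h)+\cF^n(h)\cF^n(d)$, which after tensoring with $M$ gives the needed annihilation of $H_*(\cF^n(G_\bu)\otimes M)$ by a power of $y^{p^n}$ bounded independently of $n$. A direct application of Remark~\ref{torann} to $\cF^n(G_\bu)$ would instead require knowing in advance what annihilates $H_*(\cF^n(G_\bu))$ for each $n$, which is essentially the thing one is trying to control. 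Also, the pigeonhole step is unnecessary: since $y\in\m$, $y^N\in\m^N$ already kills $H_*(G_\bu)$, so $(y^{p^n})^{N(s+1)} = (y^N)^{(s+1)p^n}$ annihilates $H_*(\cF^n(G_\bu)\otimes M)$ directly, giving a nilpotency index $\leq N(s+1)$ uniformly in $n$. With the ghost lemma invoked explicitly in place of Remark~\ref{torann}, the argument is complete and correct.
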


Before proving the next theorem, we need a preliminary result.

\begin{lemma}\label{nu} Let $\rmk$ be an F-finite local ring of prime characteristic $p >0$
of Krull dimension $d$,  and let
$M$ be any $R$-module of Krull dimension $d$. Let $[K:K^p] = p^\alpha$, and let
$\gamma > 0$ be the Hilbert-Kunz multiplicity of $M$.
Then $$\lim_{n \to \infty} \frac{\nu\big(F^n_*(M)\big)}{p^{(\alpha+d)n}} = \gamma.$$
In particular,  $\nu\big(F^n_*(M)\big) = \OO(p^{\alpha+d)n})$.
\end{lemma}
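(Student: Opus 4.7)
The plan is to give a direct formula for $\nu\bigl(F^n_*(M)\bigr)$ in terms of the Hilbert-Kunz function $\ell_R\bigl(M/\m^{[p^n]}M\bigr)$, and then to invoke the definition of the Hilbert-Kunz multiplicity. First I would unwind the definition: as an abelian group $F^n_*(M)$ equals $M$, but with $R$-action twisted so that $r\cdot m = r^{p^n}m$; in particular $\m\cdot F^n_*(M) = \m^{[p^n]}M$ computed inside $M$. Consequently
\[
F^n_*(M)\big/\m F^n_*(M) \;=\; F^n_*\bigl(M/\m^{[p^n]}M\bigr),
\]
which is a $K$-vector space whose $K$-action is twisted through $F^n:K\to K$.

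The key step is to compute the dimension over $K$ of such a twisted vector space. For any finite-dimensional $K$-vector space $V$, the twisted action on $F^n_*(V)$ factors as $K \xrightarrow{\sim} K^{p^n} \hookrightarrow K$, so
\[
\dim_K F^n_*(V) \;=\; \dim_{K^{p^n}}(V) \;=\; [K:K^{p^n}]\cdot \dim_K(V).
\]
The F-finiteness hypothesis $[K:K^p]=p^\alpha$ together with a short induction (each step $[K^{p^i}:K^{p^{i+1}}]=p^\alpha$ via the isomorphism supplied by Frobenius) yields $[K:K^{p^n}]=p^{\alpha n}$. Since $\m^{[p^n]}$ and $\m$ share a radical and $M$ is finitely generated, $M/\m^{[p^n]}M$ has finite length over $R$, and its ordinary $K$-dimension equals $\ell_R\bigl(M/\m^{[p^n]}M\bigr)$. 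Applying the dimension computation with $V = M/\m^{[p^n]}M$ I obtain
\[
\nu\bigl(F^n_*(M)\bigr) \;=\; p^{\alpha n}\cdot \ell_R\bigl(M/\m^{[p^n]}M\bigr).
\]

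Dividing both sides by $p^{(\alpha+d)n}$ reduces the asserted limit to the definition of the Hilbert-Kunz multiplicity $\gamma$ of $M$:
\[
\frac{\nu\bigl(F^n_*(M)\bigr)}{p^{(\alpha+d)n}} \;=\; \frac{\ell_R\bigl(M/\m^{[p^n]}M\bigr)}{p^{dn}} \;\xrightarrow[n\to\infty]{}\; \gamma.
\]
The $\OO$-estimate $\nu\bigl(F^n_*(M)\bigr) = \OO(p^{(\alpha+d)n})$ is then immediate from convergence to a finite limit. I do not expect any real obstacle here; the only points that require care are the bookkeeping of the twisted $K$-action and the identification $\m\cdot F^n_*(M)=\m^{[p^n]}M$ inside $M$, both of which are routine.
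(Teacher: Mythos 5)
Your argument is essentially identical to the paper's: both reduce $\nu\bigl(F^n_*(M)\bigr)$ to $\ell_R\bigl(F^n_*(M/\m^{[p^n]}M)\bigr)$ via Nakayama, both convert the length across $F^n_*$ by the factor $[K:K^{p^n}]=p^{\alpha n}$, and both then appeal to Monsky's theorem. One small wrinkle: your dimension formula $\dim_K F^n_*(V) = [K:K^{p^n}]\dim_K V$ is proved for $K$-vector spaces $V$, but $M/\m^{[p^n]}M$ is not a $K$-vector space (it is only killed by $\m^{[p^n]}$, not by $\m$); you should instead invoke the length identity $\ell_R\bigl(F^n_*(N)\bigr)=[K:K^{p^n}]\,\ell_R(N)$ for arbitrary finite-length $R$-modules $N$, which follows from your vector-space case by exactness of restriction of scalars applied to a composition series.
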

\begin{proof}  By Nakayama's lemma,
 $\nu_R\big(F^n_*(M)\big) = \ell\big(F^n_*(M/\m^{[p^n]}M)\big)$.  The theory of Hilbert-Kunz multiplicities
 \cite{Mon83} yields that  $\ell(M/\m^{[p^n]}M) = \gamma p^{nd} + \OO(p^{d(n-1)})$,
 where $\gamma >0$ is the Hilbert Kunz multiplicity of $M$. To get the length over $R$
 of $F^n_*(M/\m^{[p^n]}M)$ we simply multiply by $[K:K^{p^n}] = p^{\alpha n}$. \end{proof}

From Theorem~\ref{Kzbnd} and Lemma~\ref{nu} we obtain:

\begin{theorem}\label{nMlimcm}  Let $\rmk$ be an F-finite local ring of prime characteristic $p >0$
of Krull dimension $d$,  and let
$M$ be any $R$-module of Krull dimension $d$.  Then the sequence $\{F^n_*(M)\}_n$ is a lim \CM sequence of
$R$-modules.  Hence, if $S$ is any ring module-finite over $R$ of the same Krull dimension as $R$,
$\{F^n_*(S)\}_n$ is a lim \CM sequence of $R$-algebras.  In particular,  $\{F^n_*(R)\}_n$ is a lim \CM sequence
of $R$-algebras. \end{theorem}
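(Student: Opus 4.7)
The plan is to reduce both the numerator $\ell(H_i(\ux; F^n_*(M)))$ and the denominator $\nu(F^n_*(M))$ appearing in the lim \CM condition to expressions in $p^n$ and then compare growth rates, using Theorem~\ref{Kzbnd} for the numerator and Lemma~\ref{nu} for the denominator.

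First I would fix a system of parameters $\ux = \vect x d$ for $R$ and rewrite the Koszul homology of $F^n_*(M)$ as a Frobenius-twisted version of a Koszul homology of $M$. Concretely, in $F^n_*(M)$ the element $x_i$ acts as $x_i^{p^n}$, so the complex $\cK_\bu(\ux;R) \otimes_R F^n_*(M)$ is, as a complex of abelian groups, equal to $\cK_\bu(\ux^{[p^n]};M)$, where $\ux^{[p^n]}=(x_1^{p^n},\dots,x_d^{p^n})$. Pulling the $R$-action back through Frobenius, one gets the identification
\[ H_i(\ux;\,F^n_*(M)) \;\cong\; F^n_*\bigl(H_i(\ux^{[p^n]};\,M)\bigr) \]
as $R$-modules. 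Equivalently, $\cF^n$ applied to the free resolution $\cK_\bu(\ux;R)$ is $\cK_\bu(\ux^{[p^n]};R)$, so $H_i(\ux^{[p^n]};M) = H_i(M \otimes_R \cF^n(\cK_\bu(\ux;R)))$, which is precisely the object bounded by Theorem~\ref{Kzbnd}.

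Next I would compare $R$-lengths across the Frobenius twist. Since $R$ is F-finite with $[K:K^p]=p^\alpha$, a direct composition-series computation (or the elementary fact that $F^n_*(K)$ has $R$-length $[K:K^{p^n}] = p^{n\alpha}$) gives
\[ \ell_R\bigl(F^n_*(N)\bigr) \;=\; p^{n\alpha}\,\ell_R(N) \]
for every finite length $R$-module $N$. Applying this to $N = H_i(\ux^{[p^n]};M)$ (which has finite length because $\ux^{[p^n]}$ is still a system of parameters) and then invoking Theorem~\ref{Kzbnd} with $G_\bu = \cK_\bu(\ux;R)$ (a length-$d$ free complex with finite-length homology, so $s=d$ and $t=d-i$), I obtain a constant $C>0$, independent of $n$ and $i$, with
\[ \ell_R\bigl(H_i(\ux;\,F^n_*(M))\bigr) \;=\; p^{n\alpha}\,\ell_R\bigl(H_i(\ux^{[p^n]};M)\bigr) \;\leq\; C\,p^{n(\alpha + d - i)} \]
for every $1\leq i\leq d$ and every $n\geq 0$.

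Finally, I would combine this with Lemma~\ref{nu}, which gives $\nu(F^n_*(M)) = \gamma p^{n(\alpha+d)} + o\bigl(p^{n(\alpha+d)}\bigr)$ with $\gamma>0$ the Hilbert--Kunz multiplicity of $M$ (nonzero because $\dim M = d$). Dividing,
\[ \frac{\ell_R\bigl(H_i(\ux;F^n_*(M))\bigr)}{\nu(F^n_*(M))} \;\leq\; \frac{C\,p^{n(\alpha+d-i)}}{\gamma\,p^{n(\alpha+d)} + o(p^{n(\alpha+d)})} \;=\; O\bigl(p^{-ni}\bigr) \;\longrightarrow\; 0 \]
for every $i\geq 1$, so $\{F^n_*(M)\}_n$ is lim \CM. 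The corollaries for a module-finite extension $S$ (and in particular for $S=R$) are immediate, once one observes $F^n_*(S)$ remains an $R$-algebra via the original $R$-action factoring through $R\subseteq S$. The only delicate point in the argument is the length factor $p^{n\alpha}$ introduced by restricting scalars along Frobenius; after accounting for this, the bound from Theorem~\ref{Kzbnd} is already tight enough to absorb it, because the Hilbert--Kunz denominator grows with the same factor $p^{n\alpha}$.
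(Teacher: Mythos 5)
Your proof is correct and follows essentially the same route as the paper: you make the same identification $H_i(\ux;F^n_*(M)) \cong F^n_*(H_i(\ux^{[p^n]};M))$, apply Theorem~\ref{Kzbnd} with $G_\bu = \cK_\bu(\ux;R)$ (indexing $s=d$, $t=d-i$) to get the $Cp^{n(d-i)}$ bound, account for the factor $[K:K^{p^n}]=p^{n\alpha}$ from restricting scalars along Frobenius, and divide by the Hilbert--Kunz asymptotics of Lemma~\ref{nu}. The only difference is expository: you spell out the Frobenius-twist bookkeeping in somewhat more detail.
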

 \begin{proof}  Let $\ux := \vect x d$ be a \sop for $R$. Let $\ux^h := x_1^h, \, \ldots, x_d^h$.
  The modules $F^n_*(M)$ all have the same Krull dimension
 as $R$. The Koszul homology module $H_i\big(\ux;\, F^n_*(M)\big)$ may be identified with
 $F^n_*\big(H_i(\ux^{p^n}; \,M)\big)$.  Hence, if we take $G_\bu := \cK_\bu(\ux;\, R)$,
 we have $H_i\big(\ux;\, F^n_*(M)\big) \cong F^n_*\Big(H_i\big(\cF^n(G_\bu) \otimes M\big)\Big)$.
 With $C$ as in Theorem~\ref{Kzbnd} we have that
 the length of $H_i\big(\cF^n(G_\bu) \otimes M\big)$ over $R$ is bounded by $Cp^{n(d-i)}$.
 Thus, the length of $H_i\big(\ux;\, F^n_*(M)\big)$ is bounded by
 $Cp^{n(d-i)}\ell_R\big(F^n_*(K)\big)$,  and $\ell_R\big(F^n_*(K)\big)=  [K:K^{p^n}]  = p^{\alpha n}$.  This yields
 $$
 \ell_R\big(H_i(\ux;\, \nn M)\big) \leq Cp^{n(d-i)}p^{\alpha n}
 $$
 for $i \geq 1$.   Consequently, for $i \geq 1$,  by Lemma~\ref{nu}
$$\frac{\ell\big(H_i(\ux; F^n_*(M)\big)} {\nu\big(F^n_*(M)\big)} \leq
\frac {Cp^{n(d-i)}p^{\alpha n}}{p^{\alpha n} C'p^{nd}}$$
for a suitable positive constant $C'$,
and the limit on the right as  $n \to \infty$ is 0,  as required.
The last two statements of the theorem then follow immediately.
 \end{proof}

 \begin{remark}  If $R$ is a reduced ring,  $F^n_*(R) \cong R^{1/p^n}$. Hence, as observed
 earlier,  $\{R^{1/p^n}\}_n$  is a lim \CM sequence of $R$-algebras in the F-finite case. \end{remark}

\section{Serre intersection multiplicities and lim Cohen-Macaulay sequences}\label{limcmSerre}

Throughout this section $T$ is a regular local ring and tensor products and Tor modules
are taken over $T$ unless otherwise specified.

\begin{theorem}\label{pos} Let $(T,\,\m,\,K)$ be a regular local ring of Krull dimension $d$.
 Let $P$ and $Q$ be prime ideals of $T$ such that $\dm(T/P) + \dm(T/Q)  = d$
 and $P+Q$ is $\m$-primary.  If $R := T/P$ and $S := T/Q$ have lim Cohen-Macaulay
 sequences $\{M_n\}_n$ and $\{N_n\}_n$, respectively, then $\chi(R,\,S) > 0$. In fact,
 $$\chi(R,\,S) =  \lim_{n \to \infty} \frac{\ell(M_n \otimes_T N_n)}{\rank(M_n)\rank(N_n)} \geq 1.$$ \end{theorem}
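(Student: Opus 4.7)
The plan is to decompose $\ell(M_n \otimes_T N_n)$ as the Euler characteristic $\chi^T(M_n, N_n) := \sum_i (-1)^i \ell\bigl(\Tor^T_i(M_n, N_n)\bigr)$ plus the alternating contribution from the higher Tor modules, handle each piece asymptotically, and finally produce a lower bound forcing $\chi(R,S) \geq 1$. Throughout, write $r_n := \rank(M_n)$, $s_n := \rank(N_n)$, $a := \dm(R)$, $b := \dm(S)$.

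For the Euler characteristic, I would take a prime filtration of $M_n$ as a $T$-module, whose quotients are cyclic prime modules $T/\fp$ with $\fp \supseteq P$; exactly $r_n$ of these are isomorphic to $R = T/P$, and every other quotient has Krull dimension strictly less than $a$. A parallel filtration is chosen for $N_n$. Since $P + Q$ is $\m$-primary, so is $\fp + \fq$ whenever $\fp \supseteq P$ and $\fq \supseteq Q$, and hence every mixed Tor $\Tor^T_i(T/\fp, T/\fq)$ has finite length. Bi-additivity of $\chi^T$, supplied by Proposition~\ref{filt}, then yields
\[
\chi^T(M_n, N_n) = r_n s_n \chi(R, S) + (\text{cross terms}),
\]
and every cross term vanishes by the Roberts--Gillet--Soul\'e vanishing theorem, since it involves a pair whose dimensions sum to strictly less than $d$.

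The main technical step is to establish $\ell\bigl(\Tor^T_i(M_n, N_n)\bigr) = \oo(r_n s_n)$ for every $i \geq 1$; this is where the two lim Cohen-Macaulay hypotheses are used symmetrically. Choose $\uy \subseteq Q$ of length $a$ whose images form a system of parameters in $R$, and dually $\ux \subseteq P$ of length $b$ with images a system of parameters in $S$; both are regular sequences in the regular local ring $T$. Form the quadruple hypertor $\Tor^T_*\bigl(\cK_\bu(\uy; T),\, \cK_\bu(\ux; T),\, M_n,\, N_n\bigr)$ and compare two spectral sequence computations from Discussion~\ref{mulTor}. Grouping $\cK_\bu(\uy; T) \otimes_T M_n = \cK_\bu(\uy; M_n)$ against $\cK_\bu(\ux; T) \otimes_T N_n = \cK_\bu(\ux; N_n)$ produces a spectral sequence whose $E^2$-page involves $\Tor^T_p\bigl(H_q(\uy; M_n),\, H_r(\ux; N_n)\bigr)$; for $(q, r) \neq (0, 0)$, the lim Cohen-Macaulay assumption forces at least one of the two arguments to be a finite-length module of length $\oo(r_n)$ or $\oo(s_n)$, so Corollary~\ref{ltor} bounds its contribution to the abutment by $\oo(r_n s_n)$. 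In the other grouping, since $\cK_\bu(\uy; T)$ and $\cK_\bu(\ux; T)$ resolve $T/(\uy)$ and $T/(\ux)$ and since $\uy \cup \ux \subseteq P + Q$ annihilates every $\Tor^T_i(M_n, N_n)$, the same abutment becomes a binomially weighted sum of the $\Tor^T_i(M_n, N_n)$. Matching the two computations and performing reverse induction on homological degree --- using the surjections of Lemma~\ref{Koszsurj} to isolate individual Tor terms --- should extract the desired bound on each $\ell\bigl(\Tor^T_i(M_n, N_n)\bigr)$. The main obstacle, I expect, is the combinatorial disentanglement of individual Tor modules from the binomially weighted sums that the quadruple hypertor naturally produces.

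Combining the two previous steps gives $\ell(M_n \otimes_T N_n) = r_n s_n \chi(R, S) + \oo(r_n s_n)$, so the claimed limit exists and equals $\chi(R, S)$. For the lower bound $\chi(R, S) \geq 1$, the natural surjection
\[
M_n \otimes_T N_n \surj (M_n/\m M_n) \otimes_K (N_n/\m N_n) \cong K^{\nu(M_n)\nu(N_n)}
\]
yields $\ell(M_n \otimes_T N_n) \geq \nu(M_n)\nu(N_n) \geq r_n s_n$, where the second inequality is Proposition~\ref{ranknu}. Dividing by $r_n s_n$ and passing to the limit gives $\chi(R, S) \geq 1$, completing the argument.
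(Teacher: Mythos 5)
Your proposal matches the paper's proof in its overall architecture and almost every ingredient: the Euler-characteristic decomposition of $\ell(M_n \otimes_T N_n)$, bi-additivity together with the Roberts/Gillet--Soul\'e vanishing theorem to obtain $\chi^T(M_n,N_n) = \rank(M_n)\rank(N_n)\chi(R,S)$, the choice of parameters $\uy \subset Q$ and $\ux \subset P$ adapted to $R$ and $S$, the quadruple hypertor with its two spectral sequence groupings, the invocation of Lemma~\ref{Koszsurj}, and the lower bound via $\nu(M_n)\nu(N_n) \geq \rank(M_n)\rank(N_n)$. The one ``main obstacle'' you flag --- reverse induction to disentangle binomially weighted sums --- is illusory, and seeing why closes your gap.

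The correct wiring is as follows. Let $\cF_\bu, \cG_\bu$ be free resolutions of $M_n, N_n$ and set $\cP_\bu = \cF_\bu \otimes_T \cG_\bu$, so $\cP_\bu \in D^{[-d,0]}$ with all homology killed by $(\ux,\uy)$. Replace the entries of $\ux,\uy$ by their $(d+1)\,$th powers; this step, which you omit, is what puts you in the hypotheses of Lemma~\ref{Koszsurj}, and the replaced elements are still systems of parameters adapted to $R$ and $S$, so the lim Cohen--Macaulay estimates persist. Lemma~\ref{Koszsurj} then gives a \emph{surjection}
$$\Tor^T_{d+i}\bigl(T/(\ux,\uy),\,\cP_\bu\bigr) \surj \Tor^T_i(M_n,N_n),$$
which already exhibits the individual Tor you want as a quotient --- no reverse induction and no binomial unscrambling is required. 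Bounding the source via your first spectral sequence grouping (iterated as in Discussion~\ref{mulTor}, using Remark~\ref{diag}) yields
$$\ell\bigl(\Tor^T_i(M_n,N_n)\bigr) \leq \sum_{p+q+r=d+i} \ell\Bigl(\Tor^T_p\bigl(H_q(\uy;M_n),\,H_r(\ux;N_n)\bigr)\Bigr).$$
The worry about the $(q,r)=(0,0)$ terms, which you leave implicit, is handled by the degree constraint: each summand vanishes if $p>d$ (as $T$ is regular of Krull dimension $d$), or if $q>\dm R$, or if $r>\dm S$; so $p+q+r=d+i$ with $i\geq 1$ and $p\leq d$ forces $q+r\geq 1$ in every surviving term. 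Thus at least one argument in each nonzero $\Tor^T_p$ has length $\oo(r_n)$ or $\oo(s_n)$, and Corollary~\ref{ltor} gives the term-by-term bound $\oo(r_n s_n)$, establishing $(\dagger)$. One small misattribution: bi-additivity of $\chi^T$ on prime filtrations is not Proposition~\ref{filt} (which is a subadditivity statement for $\lambda\bigl(G(\blank,\blank)\bigr)$); it follows in the usual way from additivity of length along the long exact sequence for $\Tor$.
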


 \begin{remark}\label{geq1} Note that since we have  $\ell(M_n \otimes _T N_n) \geq
 \ell\big((M_n/\m M_n) \otimes_K (N_n/\m N_n)\big)$
 $= \nu_T(M_n) \nu_T(N_n) \geq \rank(M_n) \rank(N_n)$, the limit on the right hand side
 of the display above is clearly at least 1. \end{remark}

\begin{corollary} If complete local domains with perfect (or algebraically closed) residue class field of dimension
at most $h$ have lim Cohen-Macaulay sequences, then Serre's conjecture on intersection
multiplicities holds in dimension  up to $h+2$. \end{corollary}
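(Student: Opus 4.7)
The plan is to adapt the proof of the Motivating Fact, replacing the exact higher-Tor vanishing (available for small Cohen-Macaulay modules) by an asymptotic variant (available for \lCM sequences). As in the motivating fact, the first step is bi-additive: each $M_n$ admits an $R$-filtration with $\rank(M_n)$ factors equal to $R$ and other factors of Krull dimension strictly less than $d_1 := \dm R$, and symmetrically for $N_n$ over $S$. By bi-additivity of $\chi$ and the unconditional vanishing part of Serre's conjecture (\cite{Rob85, GS85}) applied to every cross term involving a smaller-dimensional factor, I obtain $\chi(M_n, N_n) = \rank(M_n) \rank(N_n) \chi(R, S)$. In view of the expansion $\chi(M_n, N_n) = \ell(M_n \otimes_T N_n) + \sum_{i \geq 1} (-1)^i \ell(\Tor_i^T(M_n, N_n))$ and Remark~\ref{geq1}, the theorem then reduces to the estimate
$$(\star) \qquad \ell\bigl(\Tor_i^T(M_n, N_n)\bigr) = \oo\bigl(\rank(M_n) \rank(N_n)\bigr) \qquad \text{for all } i \geq 1.$$

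To prove $(\star)$, lift a system of parameters of $R$ to $\ux \subseteq \m$ and one of $S$ to $\uy \subseteq \m$ so that $\underline{z} := (\ux, \uy)$ is a system of parameters (hence a regular sequence, as $T$ is regular) for $T$. This can be arranged by first picking $\uy \subseteq P \cap \m$ lifting a sop of $S$, which is possible because the image of $P$ in $S$ is $\m/Q$-primary, and then completing $\uy$ to a sop for $T$ by some $\ux$ whose image in $R$ is forced to be a sop of $R$ by subadditivity of heights in the Cohen-Macaulay ring $T$. Since $P + Q$ is $\m$-primary, there exists $N \in \N_+$ depending only on $T, P, Q, \underline z$ (and not on $n$) with $\underline{z}^N \subseteq P + Q$; hence $\underline{z}^N$ annihilates every $\Tor_i^T(M_n, N_n)$.

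Let $\cF_\bullet, \cG_\bullet'$ be finite free resolutions of $M_n, N_n$ over $T$, so that $\cG_\bullet := \cF_\bullet \otimes_T \cG_\bullet'$ is a flat complex with $H_*(\cG_\bullet) = \Tor^T_*(M_n, N_n)$ supported in $[0, d]$. Applying Lemma~\ref{Koszsurj} with $y_i = z_i^N$ and $x_i = z_i^{N'}$ for $N' := N(d+1)$ produces surjections $H_{i+d}\bigl(\cK_\bullet(\underline z^{N'}; T) \otimes_T \cG_\bullet\bigr) \twoheadrightarrow \Tor_i^T(M_n, N_n)$. Factoring $\cK_\bullet(\underline z^{N'}; T) = \cK_\bullet(\ux^{N'}; T) \otimes_T \cK_\bullet(\uy^{N'}; T)$ and iterating the multiple-Tor spectral sequences of Discussion~\ref{mulTor} presents the source as a finite filtration whose factors are subquotients of
$$\Tor_a^T\!\bigl(H_p(\ux^{N'}; M_n),\, H_q(\uy^{N'}; N_n)\bigr), \qquad a + p + q = i + d.$$
Each term is bounded by Corollary~\ref{ltor} and Corollary~\ref{Koszlength}(b) by a uniform constant (depending on $T, N$) times $\ell(H_p(\ux; M_n)) \cdot \ell(H_q(\uy; N_n))$. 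For $i \geq 1$, the constraint $a \leq d$ (from regularity of $T$) forces $p + q \geq i \geq 1$, so at least one of $p, q$ is positive; the \lCM hypotheses combined with Theorem~\ref{rkchar} and Lemma~\ref{Cbound} give $\ell(H_p(\ux;M_n)) = \oo(\rank M_n)$ for $p \geq 1$ and $= \OO(\rank M_n)$ for $p = 0$, and analogously for $N_n$. Each contributing product is thus $\oo(\rank M_n \cdot \rank N_n)$, and summing over the finitely many admissible $(a,p,q)$ yields $(\star)$.

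The main obstacle is the dimension count in the final step: contributions from $(p, q) = (0, 0)$ are of genuine size $\OO(\rank M_n \rank N_n)$ rather than $\oo$, and the argument succeeds only because such terms are confined to total degree $a \leq d$ in the abutment while the surjection of Lemma~\ref{Koszsurj} places $\Tor_i^T(M_n, N_n)$ for $i \geq 1$ at degree $i + d \geq d+1$, safely out of the $(0,0)$ range. The independence of $N$ from $n$, a consequence of $P + Q$ being a fixed $\m$-primary ideal, is what keeps the many constants introduced through the spectral sequence from blowing up as $n \to \infty$.
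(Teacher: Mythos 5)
Your proposal proves the main estimate in Theorem~\ref{pos} --- that if $R = T/P$ and $S = T/Q$ carry \lCM\ sequences then $\chi(R,S) > 0$ --- and does so by an argument that essentially follows the paper's own proof of that theorem (choosing parameters inside $P$ and $Q$, powering up via Lemma~\ref{Koszsurj}, and iterating the multiple-Tor spectral sequences). But Theorem~\ref{pos} is not the Corollary. The Corollary is a deduction from Theorem~\ref{pos}, and your writeup addresses none of its actual content: the dimension bound $h+2$, the residue-field hypothesis, and the low-dimensional cases are never mentioned.

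The missing argument runs as follows. By completing and then passing to a faithfully flat complete local extension with algebraically closed residue field and extended maximal ideal, reduce to the case where $T$ is complete with algebraically closed residue field; by bi-additivity of $\chi$ reduce to $M = T/P$, $N = T/Q$, and arrange $\dim(T/P) \leq \dim(T/Q)$. If $\dim(T/P) = 0$ then $P = \m$, $Q = (0)$, and $\chi = 1$. If $\dim(T/P) = 1$ then $\hgt(Q) = 1$, so $Q$ is principal since the regular local ring $T$ is factorial, and this is the complete-intersection case already proved by Serre. If $\dim(T/P) \geq 2$, then $\dim(T/Q) = \dim(T) - \dim(T/P) \leq (h+2) - 2 = h$, hence $\dim(T/P) \leq \dim(T/Q) \leq h$; now both $T/P$ and $T/Q$ are complete local domains of dimension at most $h$ with algebraically closed residue field, so both carry \lCM\ sequences by hypothesis, and Theorem~\ref{pos} applies. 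This dimension-counting deduction and the disposal of the low-dimensional cases are exactly what the Corollary asks for, and they are essential: when $\dim(T/P) \leq 1$, the hypothesis of the Corollary gives no \lCM\ sequence for $T/Q$ (whose dimension can be as large as $h+2$), so Theorem~\ref{pos} cannot be invoked in those cases and the separate known results must be substituted.
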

\begin{proof} To prove the result for a regular local ring $(T,\, \m, \, K)$, it suffices to prove it for $\wh{T}$, and
it then suffices to prove it for $\wt{T}$, where $\wt{T}$ is a faithfully flat complete local extension
of $T$ whose residue field is the algebraic closure of $K$ and whose maximal ideal is
$\m \wt{T}$.  Thus, we may replace $T$ by a complete regular local ring of the same dimension whose
residue class field is algebraically closed.  It suffices to show that if $P$ and $Q$ are prime ideals of
$T$ such that $P + Q$ is $\m$-primary and $\dim(T/P) + \dim(T/Q) = \dim(T)$,  then
$\chi^T(T/P, T/Q) >0$.  We may assume without loss of generality that $\dim(T/P)
\leq \dim(T/Q)$.  If $P = (0)$ and $Q = \m$, the result is obvious.  If $\dim(T/P) = 1$ then the height of
$Q$ is $1$,  and $Q$ is principal.  This case is also known.  If $\dim(T/P) \geq 2$,
then $\dim(T/Q) \leq h$,  and we have $\dim(T/P) \leq \dim(T/Q) \leq h$, so that both $T/P$
and $T/Q$ have lim \CM sequences by hypothesis.  The result now follows from
Theorem~\ref{pos}.  \end{proof}

\noindent\emph{Proof of Theorem~\ref{pos}.}\  It will suffice to show that for all $i \geq 1$,
we have that
$$
(\dagger) \quad\ell\bigl(\Tor^T_i(M_n,\, N_n)\bigr) = \oo\bigl(\rank(M_n) \rank(N_n)\bigr).$$
 We then have that
$$
\chi^T(R,\, S) = \frac{\chi^T(M_n, \, N_n)}{\rank(M_n)\rank(N_n)} =
\frac{\ell(M_n \otimes N_n)}{\rank(M_n)\rank(N_n)} +
\sum_{i=1}^d \frac{(-1)^i \ell\big(\Tor^T_i(M_n,\,N_n)\big)}{\rank(M_n)\rank(N_n)}.
$$
As $n \to \infty$, the leftmost term is constant, the first summand on the rightmost expression
is the term whose limit is taken in the statement of the theorem, and
and the remaining terms in the rightmost expression approach 0 as $n \to \infty$.

To prove $(\dagger)$ we first want to choose parameters
$\ux ,\uy$  for   $T$  such that the  $x_i$   are in  $Q$
and their images form a \sop\ in  $T/P$  and the  $y_j$  are in  $P$
and their images form a \sop\ in  $T/Q$.   By elementary prime
avoidance we can recursively choose $\vect x h \in Q$ such that they are
part of  a system of parameters in $T$,  where $h = \hgt(Q) = \dm(T/P)$
and such that their images are also a \sop\ in $T/P$:  once $\vect x i \in Q$
have been chosen for $i < h$ such that they are part of a \sop\ in $T$ and
have images that are part of a \sop\ in $T/P$,  one chooses $x_{i+1} \in Q$
that is not in any minimal prime of $(\vect x i)$ nor any minimal prime
of $P+(\vect x i)$: no such minimal prime can contain $Q$ because
$\hgt(Q) = h$ and $\hgt(P) + \hgt(Q) = \dm(T) > \hgt(P) +i$.  Then one recursively chooses
$\vect y k \in P$,  where $k = \hgt(P)$.  Once $\vect xh, \vect y j$,  $j <k$
have been chosen so that $\vect x h, \vect yj$ are part of a \sop\ for $T$ and
the images of $\vect yj$ are part of a \sop\ for $T/Q$, one chooses $y_{j+1} \in P$
that is not in any minimal prime of $\vect x h, \vect y i$ nor any minimal prime
of $Q+(\vect y j)$.  A minimal prime of the former that contained $P$ would
contain $(\vect x h) + P$, which is primary to $\m$,  while $h + j < h+k = \dm(T)$,
while a minimal prime of the latter that contained $P$ would contain $P+Q$,
which is also primary to $\m$.

 Hence, we may assume without loss of generality that we have
parameters $\ux \in P$, $\uy \in Q$ as above such that $\ux, \uy \in P+Q$ is a
\sop in for $T$.  The systems of parameters    $\ux$ and
$\uy$ will be fixed for the remainder of the proof, except that at one point all of the elements will be replaced
by powers.   We will complete the proof by showing
that for $i \geq 1$ and for all $n$,
$$(*)\quad \ell\bigl(\Tor_i(M_n,\, N_n)\bigr) \leq
\sum_{r+s+t = d+i} \ell\Bigl(\Tor^T_t\bigl(H_r(\ux; M_n),\, H_s(\uy; N_n)\bigr)\Bigr).$$
Here, $r,s,t$ are nonnegative integers and the terms in the sum on the right vanish if $t > d$, or if $r > h$,  or if $s >k$.
If $t \leq d$, the condition $r+s+t > d$ implies that $r +s > 0$.  Thus, for each of the at most
$(d+1)(h+1)(k+1)$
nonvanishing terms on the right, we have  $\ell(H_r(\ux; \, M_n) \leq
 \OO\big(\rank(M_n)\big)$ and $\ell(H_s(\uy; \, M_n) \leq O\big(\rank(N_n)\big)$, and since at least one of $r,s$
 must be positive, in at least one of the two terms we can replace $\OO$ by $\oo$.  The fact that
 $\ell\bigl(\Tor_i(M_n,\, N_n)\bigr) =\oo\bigl(\rank(M_n) \rank(N_n)\bigr)$ then follows from
 Corllary ~\ref{ltor}.

 In this paragraph we make repeated used of the spectral sequences described in Discussion~\ref{mulTor}.
Let $\cP_\bu$ denote the total complex
obtained by tensoring a free resolution $\cF_\bu$ of $M_n$ over $T$ with a free resolution $\cG_\bu$ of $N_n$
over $T$.  Because $\ux, \, \uy \in P+Q$,  these
elements kill the homology of $\cP_\bu$.  Since $\cP_\bu\in D^{[-d,0]}$, it follows from Lemma~\ref{Koszsurj} that, upon replacing the elements in $\ux, \,\uy$ by their $(d+1)\,$th powers, we have that 
$\Tor^T_{d+i}\bigl(T/(\ux,\uy), \, \cP_\bu\bigr)$ maps onto $H_i(\cP_\bu) \cong \Tor_i(M_n,\,N_n)$.
But
$$\Tor^T_{d+i}\bigl(T/(\ux, \uy), \, \cP_\bu\bigr) \cong \Tor^T_{d+i}\bigl(\cK_\bu(\ux; T), \cK_\bu(\uy; T), \cP_\bu\bigr)
\cong\qquad\qquad\qquad\qquad\qquad$$
$$\quad\Tor^T_{d+i} \bigl(\cK_\bu(\ux; T),\, \cK_\bu(\uy; T), \cF_\bu, \cG_\bu\bigr) \cong
\Tor^T_{d+i}\bigl(\cK_\bu(\ux; T) \otimes \cF_\bu, \cK_\bu(\uy;\,T)\otimes \cG_\bu\bigr).$$
We have a spectral sequence
$$\Tor^T_q\big(H_r(\ux; M_n), \cK_\bu(\uy;\,T)\otimes \cG_\bu\big) \imp
\Tor^T_{q+r}\bigl(\cK_\bu(\ux; T) \otimes \cF_\bu, \cK_\bu(\uy;\,T)\otimes \cG_\bu\bigr).$$ But for each term on
the left above
with $q+r = d+i$  we also have a spectral sequence
$$\Tor^T_t\bigl(H_r(\ux; M_n), \, H_s(\uy; N)\bigr) \imp
\Tor^T_{t+s}\bigl(H_r(\ux; M_n), \cK_\bu(\uy;\,T)\otimes \cG_\bu\bigr),$$
Using Remark~\ref{diag},
this shows that
$$\ell\Bigl(\Tor^T_{d+i}\bigl(T/(\ux,\uy), \, \cP_\bu\bigr)\Bigr) \leq
\sum_{r+s+t = d+i} \ell\Bigl(\Tor^T_t\bigl(H_r(\ux; M_n), H_s(\uy; N_n)\bigr)\Bigr),$$
and since the term on the left maps onto $\Tor^T_i(M_n,\,N_n)$, we have the required length estimate. \qed

\begin{remark}
Let us give a derived category argument for the last paragraph of the proof above (i.e., the proof of $(\ast)$), with the same notation there. As each $M_n$ is a $T/(\underline{y})$-module and each $N_n$ is a $T/(\underline{x})$-module, the tensor product $M_n \otimes_T^L N_n$ is naturally linear over $T/(\underline{x},\underline{y}) = 
T/(\underline{x}) \otimes_T^L T/(\underline{y})$, i.e., in the image of $D(T/(\underline{x},\underline{y})) \to D(T)$. We can thus write
\[ G := (M_n \otimes_T^L N_n) \otimes_T^L T/(\underline{x},\underline{y}) \simeq (M_n \otimes_T^L N_n) 
\otimes_{T/(\underline{x},\underline{y})}^L  T/(\underline{x},\underline{y}) \otimes_T^L T/(\underline{x},\underline{y}).\]
By a standard calculation with Koszul complexes, we can write 
$T/(\underline{x},\underline{y}) \otimes_T^L T/(\underline{x},\underline{y}) = \oplus_i \wedge^i F[i]$, where $F$ is a free module over $T/(\underline{x},\underline{y})$ of rank $d$ (given canonically by the conormal bundle 
$(\underline{x},\underline{y})/(\underline{x},\underline{y})^2$). In particular, looking at the $i=d$ summand shows that $(M_n \otimes_T^L N_n)[d]$ appears as a direct summand of $G$, whence
\[ \ell(\mathrm{Tor}_i^T(M_n,N_n)) = \ell(H_{i+d}(M_n \otimes_T^L N_n[d])) \leq \ell(H_{i+d}(G)).\]
On the other hand, we can also rewrite $G$ as 
\[ G = \left(M_n \otimes_T^L T/(\underline{x}\right)) \otimes_T^L \left(N_n \otimes_T^L T/(\underline{y}\right)).\]
Filtering the bracketed terms by their cohomology and running the cohomology spectral sequence then shows that
\[ \ell(H_{i+d}(G)) \leq \sum_{r+s+t=d+i} \ell(\mathrm{Tor}^T_t(H_r(\underline{x};M_n), H_s(\underline{y};N_n))),\]
as wanted.
\end{remark}

\section{Asymptotic module closure operations}\label{op} 

If $R$ is a local ring, every sequence $\cM = \{M_n\}_n$ of nonzero finitely generated $R$-modules together with
an $\N_+$-valued function $\alpha$ on the modules in the sequence $\cM$ (e.g., $M_n \mapsto \nu(M_n)$, the 
least number of generators, or
$M_n \mapsto \rank(M_n)$)   defines
a closure operation on submodules of finitely generated modules over $R$.  A
surprising number of useful properties can be proved with little or no restriction on the
sequence.  Under mild hypotheses, one can show that the closure obtained for ideals is
contained in the integral closure: see Theorem~\ref{cloint}.  In case the sequence of modules is lim Cohen-Macaulay,
one obtains results similar to results on capturing parameter colon ideals and on the
behavior of Koszul homology (in degree at least one, the cycles are in the closure of boundaries
in the ambient chain module) similar to results originally proved for tight closure. These results
are presented in \S\ref{cap}: Theorems~\ref{thmcappar} and \ref{strcap} are specific results of this kind.

Note that there have been many attempts to extend tight closure theory:  see, for example,
\cite{Bre03, Heit01, HeitMa21, Ho94, Ho03, HV04, HZ18} and \cite{Jia21}.

\begin{definition}\label{clo} Let $\cM$ denote $\{M_n\}_n$,  a sequence of nonzero finitely generated modules of over a local ring $\rmk$ of dimension $d$.   Let $\alpha$ be a function from the set of modules $\{M_n: n \in \N\}$ to $\N_+$, the
positive integers.  Of particular interest is the case where $\alpha$ is  $\nu$ (number of generators)
or $\rank$ (see Subsection~\ref{rank}), which is torsion-free rank when $R$ is a domain.  For each such $\cM$ and
$\alpha$ we define a closure operation on submodules
of finitely generated $R$-modules,  which we refer to as $\cM$-closure with respect to $\alpha$, or
$(\cM, \, \alpha)$-closure.   If $A \inc B$ are
$R$-modules, we use the notation $A\cla_B$ for the $\cM$-closure of $A$ in $B$ with
respect to $\alpha$.
 These closure operations on submodules $A$ of finitely generated
$R$-modules $B$ are defined as follows.  If $B/A$ has finite length,
we define the closure $A\cla_B$  to be the largest submodule
$A'$ of $B$ containing $A$
such that
$$ (\dagger_\alpha)\quad\ell\Big(\Img\big(M_n \otimes_R (A'/A) \to M_n \otimes_R (B/A)\big)\Big) =
\oo\bigl(\alpha(M_n)\bigr).$$
Since $0 \to \Img\big(M_n \otimes_R (A'/A)\big) \to M_n \otimes_R B/A \to M_n \otimes_R B/A' \to 0$
is exact, this is equivalent to the condition
$$(\ddagger_\alpha)\quad \ell(M_n \otimes B/A') - \ell(M_n \otimes B/A) = \oo\bigl(\alpha(M_n)\bigr).$$
There is a largest such submodule because the sum of two submodules satisfying $(\dagger_\alpha)$ again
satisfies $(\dagger_\alpha)$, and $B$ is Noetherian.
In general, we define the $\cM$-closure of $A$ in $B$ with respect to $\alpha$
as the intersection of the closures of the modules
$A +\m^t B$ for $0 \leq t < \infty$. This evidently
gives the same result as the original definition if $B/A$ has finite length.

In some cases when $\cM$ and $\alpha$ are fixed, we may simplify notation and write $A\cln_B$ instead
of $A\cla_B$, especially for closure  with respect to $\nu$ or $\rank$ in situations when the two possible
closures are known to coincide.  \end{definition}

\begin{remark} The subscript $\blank_B$ may be omitted if $B$ is clear from context.
In particular, in discussing ideals of $R$,   we omit the subscript $R$ when discussing
the closure of an ideal in $R$ unless otherwise indicated. \end{remark}

\begin{remark} We can make essentially the same definition as in \ref{clo} for a net of nonzero
$R$-modules as discussed
in Remark~\ref{net} of \S\ref{limdef}. In fact, the results of this section are entirely valid for
nets as well as for sequences, with only very straightforward modifications of the arguments.
For the sake of simplicity, we have chosen to state most results only in the case of sequences.
Example~\ref{intclop} is an exception. \end{remark}

Throughout this section, we make use of the notion of {\it rank} as defined in the second paragraph
of subsection~\ref{rank}.  This agrees with torsion-free rank when the base ring is a domain.  We
also make free use of Propositions~\ref{ranknu} and~\ref{multrank}, which show that this notion of
rank has the usual relationships to $\nu$ and to Hilbert-Samuel multiplicity. \smallskip

The next result establishes some basic properties of the closures defined in \ref{clo}:  (a)--(i) correspond
to well-known properties of tight closure.

\begin{proposition}\label{cloprop}  Let $\rmk$ be a local ring, let $\cM = \{M_n\}_n$, $\alpha$ be as in Definition~\ref{clo},
and let $A \inc B$ be finitely generated $R$-modules. Let $G$ denote a finitely
generated free $R$-module that maps onto $B$.  Let $H$ be the inverse image of $A$ in $G$.
Let $b \in B$ and let $\widetilde{b}$ denote an element of $G$ that maps
to $b$. Let $\ov{b}$ denote the image of $b$ in  $B/A$.

\bena
\item The following three statements are equivalent:
\benn
\item $b \in A\cla_B$.    
\item $\ov{b} \in 0\cla_{B/A}$.   
\item $\widetilde{b} \in H\cla_G$.  
\een
\een
Moreover:
\benr
\item   $A \inc A\cla_B$. 
\item If $\theta: B \to B'$ is $R$-linear and $\theta(A) \inc A' \inc B'$,
then $\theta(A\cla_B) \inc (A')\cla_{B'}$. 
\item If $A \inc A' \inc B$ then $A\cla_B \inc (A')\cla_B$. 
\item If $A \inc B \inc B'$ , then $A\cla_B \inc A\cla_{B'}$. 
\item $(A\cla_B)\cla_B = A\cla_B$.  
\item $(A \oplus A')\cla_{B \oplus B'} \cong A\cla_{B} \oplus {A'}\cla_{B'}$.  
\item The intersection of any family of $\cM$-closed submodules of $B$ with respect to $\alpha$
is $\cM$-closed with respect to $\alpha$. 
\item If $A\cla_B = A$ and $J$ is any ideal of $R$, then $(A:_BJ)\cla_B = A:_BJ$.  
\item For any ideal $J \inc R$,  $JA\cla_B \inc (JA)\cla_B$. 
\item  If $\alpha$, $\beta$ are two functions from the set $\{M_n: n \geq 1\}$ to the positive
integers and there is a positive real constant $c$ such that $\alpha(M_n) \leq c\beta(M_n)$
for all $n \gg 0$,  then $A\cla_B \inc A\clb_B$.  
\item If rank is defined and nonzero on $\{M_n:n \in \N_+\}$ then $A^{*\cM, \rank} \inc A^{*\cM, \nu}$.    
\een
\end{proposition}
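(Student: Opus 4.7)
The plan is to derive (l) as an immediate formal consequence of part (k), with Proposition~\ref{ranknu} supplying the only required input. By that proposition, whenever $\rank(M_n)$ is defined (and in particular is nonzero), one has $\rank(M_n) \leq \nu(M_n)$ for every $n \in \N_+$. Setting $\alpha = \rank$, $\beta = \nu$, and the positive constant $c = 1$, this is precisely the hypothesis of (k) (indeed, it holds for every $n$, which is even stronger than the $n \gg 0$ demanded there). Applying (k) directly therefore yields $A^{*\cM,\rank}_B \inc A^{*\cM,\nu}_B$, which is the claim.

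The substantive content is packaged in (k), whose key observation is that the closure condition $(\dagger_\alpha)$ is \emph{more restrictive} when $\alpha$ is pointwise smaller. Explicitly, if $\ell_n := \ell\bigl(\Img(M_n \otimes (A'/A) \to M_n \otimes (B/A))\bigr) = \oo(\alpha(M_n))$ and $\alpha(M_n) \leq c\beta(M_n)$ for $n \gg 0$, then
$$\frac{\ell_n}{\beta(M_n)} \;\leq\; c \cdot \frac{\ell_n}{\alpha(M_n)} \;\longrightarrow\; 0,$$
so $\ell_n = \oo(\beta(M_n))$ as well. Hence every $A' \supseteq A$ witnessing condition $(\dagger_\alpha)$ also witnesses $(\dagger_\beta)$; taking the largest such $A'$ at each finite-length stage, and then intersecting over the terms $(A+\m^t B)$ in general, gives $A\cla_B \inc A\clb_B$.

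There is no genuine obstacle: (l) is a purely formal specialization of (k) to the universally available comparison $\rank \leq \nu$ supplied by Proposition~\ref{ranknu}. The only conceptual point worth flagging is the direction of the inclusion---a \emph{smaller} weight function produces a \emph{smaller} closure, because the asymptotic smallness condition $\oo(\alpha)$ becomes correspondingly harder for a candidate submodule to satisfy. This is why (l) is worth isolating: it tells us that the two most natural closure operations arising from $\cM$ are ordered in a predictable way whenever both are defined.
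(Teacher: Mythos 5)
Your proof of (l) is correct and takes exactly the same route as the paper: the paper likewise derives (l) directly from (k) by noting that $\rank(M_n) \leq \nu(M_n)$ (Proposition~\ref{ranknu}) lets one take $c=1$. Your expanded explanation of why $(\dagger_\alpha)$ is the more restrictive condition when $\alpha$ is pointwise smaller is accurate and matches the paper's one-line justification of (k).
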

\begin{proof}  The equivalence of (1) and (2) in part (a) when $B/A$ has finite length is immediate from
the definition.  The general case  follows from the definition and the fact that $B/(A+\m^t) \cong (B/A)/\m^t(B/A)$.
The equivalence of (3) is then clear, since $G/H \cong B/A$ in such a way that the image of $\wt{b}$ is $\ov{b}$.

Part (b) is obvious from the definition.

 To prove part (c) we may replace the pairs
$(A,B)$ and $(A',B')$ by the pairs $(0,B/A)$ and $(0,B'/A')$ and $\theta$ by the map $B/A \to B'/A'$  that it induces.
Thus, we may assume that $A= 0$,  $A' =0$.  First suppose that $B$, $B'$ have finite length.
 The result now follows because
for any $C \inc B$,    with $\iota:C \inc B$, $\iota':\theta(C) \inc B'$,  and $\id:= \id_{M_n}$,
$$\ell\big((\id\otimes\iota)(M_n  \otimes_R C) \big) \geq
\ell\big((\id  \otimes_R \iota')(M_n \otimes_R \theta(C))\big),$$
since the surjection $C \surj \theta(C)$ induces a surjection
$$(\id\otimes\iota)(M_n  \otimes_R C) \surj (\id  \otimes_R \iota')(M_n \otimes_R \theta(C))$$
after one applies $M_n\otimes_R \blank$ and takes images. One may now apply this with
$C := 0\cla_B$.

To prove (c) in general it suffices to note that for all $t$,  $\theta(\m^tB) \inc \m^tB'$ and
$\theta(C + \m^tB) \inc \theta(C) + \m^tB'$,
and then to apply the case already proved to the induced map $\theta_t:B/\m^tB \to B'/\m^tB'$ and
the submodule $C_t := (C+\m^tB)/\m^tB$ of $B/\m^tB$ when $C = 0\cla_B$.

 Parts (d) and (e) are both special cases of part (c).

The statement in part (f) follows if we show that if $A \inc A' \inc A'' \inc B$ are such that $A'$ is in
the closure of $A$
in $B$ and $A''$ is in the closure of $A'$ in $B$, then $A''$ is in the closure of $A$ in $B$.  This in turn reduces
to the case where $B/A$ has finite length by considering $A + \m^tB \inc A'+\m^t B \inc A'' + \m^tB$ for
every value of $t$.  The case where $B/A$ has finite length may be proved as follows.
The image of $M_n \otimes_R (A''/A)$ in $M_n \otimes_R (B/A)$ has a filtration in which one factor
is the image of $M_n \otimes_R(A'/A)$ in $M_n \otimes_R (B/A)$, and the other may be identified
with the image of $M_n \otimes _R(A''/A') \inc M_n \otimes_R (B/A')$.  Since the lengths of both factors are
$\oo\big(\alpha(M_n)\big)$, the sum of their lengths is also $\oo\big(\alpha(M_n)\big)$.

The $\inc$ in part (g) follows at once, for each of the summands, from (b), while $\supseteq$
follows from (b) as well using the two projections $B \oplus B' \surj B$ and $B \oplus B' \surj B'$:
these carry  $(A \oplus A')\cla_{B \oplus B'}$ into $A\cla_B$ and ${A'}\cla_{B'}$, respectively.

Part (h) is clear, since if $u$ is in the closure of the intersection, (b) implies that it is in the closure
of each module in the family and, hence, in each module in the family.    To prove (i),  note
that $A:_B J = \bigcap_{f \in J} A:_B f$.  By (h), we may assume that  $J = fR$.  By (d),
since $0$ is closed in $B/A$,   we have that $0$ is closed in $f(B/A) \cong B/(A:_B fR)$.

To prove $(j)$ we first consider the case where $J = rR$ is principal. We apply (c) to the map $\theta$ given by
multiplication by $r$ from $B$ to $B$ to obtain  $rA\cla_B \inc (rA)\cla_B$.  In general,  if $J = (\vect r k)$ we
have that for every $i$,  $r_i A\cla_B \inc (r_i A)\cla_B \inc (JA)\cla_B$ by part (d), and so
$J(A\cla B) = \sum_{i=1}^k r_i A\cla_B \inc (JA)\cla_B$.

Part (k) is immediate from the observation that $(\dagger_\alpha)$ is a stronger condition than
$(\dagger_\beta)$, while (l) follows from (k),  since $\rank(M_n) \leq \nu(M_n)$ for all $n \geq 1$.
 \end{proof}

  \begin{examples}\label{exclo}  Let $\rmk$ be a local ring.
 \benn
\item  If all of the terms of the sequence $\cM$ are $R$,  then $0$ is closed in the $B$
 for any module $B$ of finite length, for if $C \inc B$ is not 0, we cannot have $\ell(C) = \oo\big(\alpha(R)\big)$, which
 is a constant, unless $\ell(C) = 0$.  It follows easily that for every pair of finitely generated modules $A \inc B$,
 $A\cla_B = A$, no matter what $\alpha$ is.  

 \item If $\alpha$ is $\nu$ or if rank is defined and nonzero for the modules $M_n$ and $\alpha$ is rank,
 $\m\cla_R = \m$.  The only other possibility is that $\m\cla_R = R$,  which would imply
 that the closure of $0$ in $K$ is $K$,  so that $\ell(M_n/\m M_n) = \nu(M_n)$ is $\oo\big(\alpha(M_n)\big)$.
 This is clearly false since $\nu(M_n)/\alpha(M_n)$ is $\geq 1$ and so does not approach 0 as $n \to \infty$.  
 \een
 \end{examples}

\begin{discussion}\label{tcintro} We shall soon prove in
Theorem~\ref{tclimcm}  that tight closure over a reduced equidimensional F-finite local ring
is a closure operation arising from a lim \CM sequence.  Studying tight closure in the F-finte case
is a central concern:  see Discussion~\ref{tc}.
We refer the reader to \cite{Bru96, HH90, HH93, HH94a, HH94b} for
tight closure background.  In particular, we need the characterization of tight closure related
to Hilbert-Kunz multiplicities given in \cite[Theorem 8.17]{HH90}.  Hilbert-Kunz multiplicities were defined by
Monsky \cite{Mon83}, and studied further, for example, in \cite{HaMon93, MonT04, MonT06}.   \end{discussion}

\begin{remark}\label{froblength} If $R$ has prime characteristic $p>0$, we write $\cF_R^n= \cF^n$ for the base 
change functor from $R$-modules to $R$-modules given by  $F^n_R = F^n$,  the $n$-fold composition of the
Frobenius endomorphism with itself, so that $F^n:r \mapsto r^{p^n}$.  The $\cF^n_R$ are called the {\it Frobenius}
or  {\it Peskine-Szpiro} functors.  See, for example, \cite[Discussion 8.1]{HH90} for more detail.
Note that $\cF^n(R) = R$,  that $\cF^n\big(\Coker(r_{ij}) \big) \cong \Coker(r_{ij}^{p^n})$,  and that
$\cF^n(R/I) \cong R/I^{[p^n]}$,  where $I^{[p^n]}$ denotes the ideal generated by the $p^n\,$th powers of
all elements (equivalently, of a set of generators) of $I$, and is the same as the extension of $I$
under $F^n:R \to R$.

If $B$ is any $R$-module, note that   $F_*^n\big(\cF^n(B)\big) \cong F_*^n(R) \otimes_R B$.
If $\rmk$ is an F-finite local ring and $B$ is a finite length R-module,
$$
(\dagger) \quad \ell_R\big(F_*^n(R) \otimes_R B\big) = \ell_R\big(F_*^n\big(\cF^n(B)\big)\big) =
[K:K^{p^n}]\ell_R\big(\cF^n(B)\big).
$$
More generally, if $C \inc B$, the image of $F_*^n(R) \otimes C$ in $F_*^n(R) \otimes B$,
and the image of $\cF^n(C)$ in $\cF^n (B)$, which is denoted $C^{[p^n]}_B$ in \cite{HH90},
are related in the same way: $\Img \big(F_*^n(R) \otimes C \to F_*^n(R) \otimes B\big) \cong F_*^n(C^{[p^n]}_B)$,
and so
$$
(\ddagger) \quad \ell\Big(\Img \big(F_*^n(R) \otimes C \to F_*^n(R) \otimes B\big)\Big) =
[K:K^{p^n}]\ell_R(C^{[p^n]}_B).
$$
 \end{remark}

 \begin{remark}\label{Fnrank} If $\rmk$ is an F-finite local domain of Krull dimension $d$, and $[K:K^p] =p^\alpha$,
 the torsion free rank of
 $F_*(R)$ over $R$ is $p^{(\alpha+d)}$, by a
theorem of Kunz \cite{Kunz76},  but see also the footnote to \cite[Theorem (2.2)(ii)]{Tu12}. From this one
has at once that the torsion-free rank of  $F^n_*(R)$ over $R$ is $p^{(\alpha+d)n}$. \end {remark}

\begin{theorem}\label{tclimcm} Let $\rmk$ be  an equidimensional reduced F-finite local ring
of prime characteristic $p >0$ and
Krull dimension $d$.  The closure operation obtained from the lim Cohen-Macaulay sequence
$\cM := \{R^{1/p^n}\}_n$  (or  $\{F^n_*(R)\}_n$) and either $\nu$ or $\rank$ \emph{is} the usual notion of
tight closure in prime characteristic $p >0$.  I.e., for this choice of $\cM$,  $A^{*\cM,\rank}_B =
A^{*\cM, \nu}  = A^*_B$.  \end{theorem}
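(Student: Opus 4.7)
The plan is to unpack the defining asymptotic condition for $(\cM,\nu)$-closure with $\cM = \{F^n_*(R)\}_n$ and show that it reduces, after the Frobenius length computation of Remark~\ref{froblength}, to the Hochster--Huneke Hilbert--Kunz multiplicity characterization of tight closure.  By Definition~\ref{clo}, both $A^{*\cM,\nu}_B$ and the tight closure $A^*_B$ are intersections of the corresponding closures of $A + \m^t B$ in $B$ as $t$ ranges over $\N$, so it suffices to prove equality in the finite length case where $B/A$ has finite length.

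In the finite length case, by the maximality built into Definition~\ref{clo}, a submodule $A' \supseteq A$ is contained in $A^{*\cM,\nu}_B$ exactly when it satisfies $(\dagger_\nu)$.  Using the identity $(\ddagger)$ of Remark~\ref{froblength}, the length appearing in $(\dagger_\nu)$ equals $[K:K^{p^n}] \cdot \ell_R\bigl((A')^{[p^n]}_B / A^{[p^n]}_B\bigr) = p^{\alpha n}\ell_R\bigl((A')^{[p^n]}_B / A^{[p^n]}_B\bigr)$, where $p^\alpha = [K:K^p]$.  On the other hand, Lemma~\ref{nu} gives $\nu(F^n_*(R)) = \gamma\, p^{(\alpha+d)n}(1+\oo(1))$, where $\gamma = e_{HK}(R) > 0$.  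Cancelling the common factor $p^{\alpha n}$ reduces $(\dagger_\nu)$ to the condition
\[
  \ell_R\bigl((A')^{[p^n]}_B / A^{[p^n]}_B\bigr) = \oo(p^{dn}),
\]
equivalently $e_{HK}(A,B) = e_{HK}(A',B)$.  This is precisely the Hochster--Huneke Hilbert--Kunz multiplicity characterization of $A' \inc A^*_B$ over a reduced equidimensional F-finite local ring (\cite[Theorem~8.17]{HH90} together with its module-theoretic extension).  Hence $A^{*\cM,\nu}_B = A^*_B$.

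For the rank version, Proposition~\ref{cloprop}(l) immediately gives $A^{*\cM,\rank}_B \inc A^{*\cM,\nu}_B$.  For the reverse inclusion I would combine Kunz's rank formula (Remark~\ref{Fnrank}) with additivity of rank across the minimal primes of the reduced equidimensional ring $R$ to conclude $\rank(F^n_*(R)) = p^{(\alpha+d)n}$; Lemma~\ref{nu} then gives $\nu(F^n_*(R))/\rank(F^n_*(R)) \to \gamma > 0$, so the two functions are bounded above and below by positive constant multiples of each other.  Applying Proposition~\ref{cloprop}(k) in the reverse direction then yields $A^{*\cM,\nu}_B \inc A^{*\cM,\rank}_B$, completing the proof.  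The main anticipated obstacle is the first step: one must verify rigorously that both closure operations commute with the $\m^tB$-filtration in the manner claimed, and that the module-theoretic extension of the Hochster--Huneke Hilbert--Kunz criterion is available in the generality needed here (the ideal case is standard, but the module case requires invoking the appropriate form of Theorem~8.17 of \cite{HH90}, which is where most of the work hides).
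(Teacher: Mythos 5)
Your strategy matches the paper's quite closely: reduce to the finite-length case, apply Remark~\ref{froblength}$(\ddagger)$ to convert the image-length condition into a Frobenius-image length condition, divide by $p^{\alpha n}$ using Lemma~\ref{nu}, and recognize the resulting $\oo(p^{dn})$ criterion as the Hochster--Huneke characterization of tight closure; then handle rank via comparability with $\nu$. However, there is a genuine gap in the opening reduction.

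You assert that "by Definition~\ref{clo}, both $A^{*\cM,\nu}_B$ \emph{and the tight closure} $A^*_B$ are intersections of the corresponding closures of $A+\m^tB$." For the $(\cM,\nu)$-closure this is by construction, but for tight closure it is emphatically \emph{not} a definitional fact and is exactly where the nontrivial content lies. The identity $A^*_B = \bigcap_t (A+\m^tB)^*_B$ requires a completely stable test element $c \in R^\circ$, which exists here because $R$ is F-finite and hence excellent (\cite[Theorems 5.10 and 6.1(b)]{HH94b}); one then applies \cite[Proposition 8.13(b)]{HH90}. You flag this as "the main anticipated obstacle" but do not identify the tool that closes it, so as written the proof is incomplete. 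By contrast, your second worry — that \cite[Theorem 8.17]{HH90} might only cover the ideal case — is misplaced: once one has reduced to $A = 0$ and $B$ of finite length (as the paper does, and as you can too), Theorem 8.17 of \cite{HH90} is precisely the module-theoretic statement needed, characterizing $b \in 0^*_B$ by the condition $\ell(Rb^{[p^n]}_B) = \oo(p^{dn})$. So you should (a) invoke completely stable test elements to justify the filtration reduction for tight closure, and (b) drop the reservation about the module case and instead reduce to $A=0$, $B$ of finite length, before citing Theorem 8.17. With those adjustments your argument aligns with the paper's.
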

\begin{proof}  Let $p^\alpha = [K:K^p]$.  We first note that when $R$ is F-finite reduced local and equidimensional,
$\rank_R F_*^n( R)$ is well defined.  In fact, for any minimal prime $\fp$ of $R$,  if $L_\fp$ denotes the fraction
field of the domain $R/\fp$, this will be the same as $[L_\fp:L_\fp^{p^n}] = [L_\fp: L_\fp^p]^n$,  provided that
all of these degrees are the same, and do not depend on $\fp$.  In fact, by Remark~\ref{Fnrank},
they are all $p^{(\alpha+d)n}$.  Note that by
Lemma~\ref{nu}, $\nu_R\big(F_*(R)\big)$ is asymptotic to $\gamma p^{(\alpha + d)n}$.
This verifies that it does not matter whether we use $\nu$ or $\rank$ in studying this closure, which was
already guaranteed by Theorems~\ref{rkchar} and \ref{nMlimcm}.

Every excellent local ring $R$ and, hence, every F-finite local ring $R$ has a completely stable test element
$c \in R\0$. See, for example, \cite[Theorems 5.10 and 6.1(b)]{HH94b}.
 From this it follows using \cite[Proposition 8.13(b)]{HH90} that for any two finitely generated modules $A \inc B$,
$A^*_B =  \bigcap_t(A + \m^tB)^*_B$.  It therefore suffices to prove the theorem in the case where $B/A$ has
finite length.  Moreover, we may assume that $A = 0$ and that $B$ has finite length.

Now let $b \in B$.  By the definition of closure with respect to $(\cM, \, \nu)$,   $bR  \in A\cla_B$  if and only if
$\ell(\Img\big((F_*^n(R) \otimes bR) \to (F_*^n(R) \otimes_R B)\big) = \oo(p^{(\alpha + d)n})$.
Using the displayed line $(\ddagger)$ in Remark~\ref{froblength}, this is equivalent to
$p^{\alpha n} \ell(Rb^{p^n}_B) \leq \oo(p^{(\alpha + d)n})$,
or $\ell(Rb^{p^n}_B) \leq \oo(p^{dn})$.  But this is precisely the criterion for $b$ to be in $0^*_B$ provided
by \cite[Theorem 8.17]{HH90}.
\end{proof}

\begin{discussion}\label{tc}  In this paragraph, $R$ denotes a Noetherian ring of prime characeristic $p >0$,
$A \inc B$ are finitely generated $R$-modules, and $b \in B$.
It was asserted in Remark~\ref{tcintro} that the study of tight closure in the
F-finite case is a central concern.  For the large class of rings $R$ that have a completely stable test element
(or even a locally stable test element) $c$, which
includes rings essentially of finite type over an excellent local ring \cite[Theorem 6.1(b)]{HH94b},
$b \in A^*_B$  if and only if $b/1 \in (A_\m)^*_{B_\m}$ for every maximal ideal  $\m$ of
$R$. (The key point is that if $cb^{p^n} \notin A^{[p^n]}_B$, we can preserve this while localizing
at a suitable maximal ideal.)  Moreover, because every excellent local ring has a completely stable test
element $c$, if $R$ is local, $b \in A^*_B$ if and only if $b \in \wh{A}^*_{\wh{B}}$ over $\wh{R}$ (again,
if $cb^{p^n} \notin A^{p^n}_B$, this will be preserved when we complete).
Consequently, the problem
of understanding tight closure reduces to understanding what happens when $R$ is a complete local
ring.  Moreover, $b \in A^*_B$ if and only if that is true after base change to $R/\fp$ for each minimal
prime of $R$.  Thus, the case where $(R, \m)$ is a complete local domain is central.  But then one can
pass to a faithfully flat purely inseparable extension $R^\Gamma$ of $R$  with maximal ideal
$\m R^\Gamma$ so that $R^\Gamma$ is an F-finite local domain.  It turns out that
$b \in A_B^*$ over $R$ if and only if  $1 \otimes b \in (R^\Gamma \otimes_R A)^*_{R^\Gamma \otimes_R B}$
over $R^\Gamma$. We refer the reader to \cite[\S6]{HH94b} for a detailed treatment.  Since we may
also complete $R^\Gamma$ and kill minimal primes, tight closure in rings with a completely stable test
element is determined by tight closure in complete F-finite local domains.
 \end{discussion}

 \begin{example}\label{nonequi}  Our purpose here is to show that the lim \CM closure used in
 Theorem~\ref{tclimcm} does  not agree with tight closure, in general, for F-finite reduced rings that are not equidimensional.

 Let $T$ be the power series ring $K[[X,\,Y,\, Z]]$, where $K$ is an algebraically closed field of prime characteristic
 $p >0$.  (The situation where $T$ is the localization of  the polynomial ring $K[X,\,Y,\,Z]$ at the maximal
 ideal $(X,\,Y,\,Z)$ is entirely similar.)  Let $R:= T/(XY,XZ)T = K[x,\, y,\, z]$, where $xy = xz = 0$. The minimal
 primes of $R$ are $\fp = xR$ and $\fq = (y,\,z)R$.  Choose an integer $k \geq 2$.
 Note that $x^k-y,\,z$ is  a \sop for $R$, even though
 $z$ is in the minimal prime $\fq$.  Observe that $\dim(R/\fp) =2$ and $\dim(R/\fq)= 1$.  It is easy to verify
 that $(x^k-y):_R z = (x^k-y,\, x) = (x,\,y)$.  The tight closure of $(x^k-y)$ does not contain $x$, since this fails
 even after a base change to $R/\fq$.   However,   $ x \in (x^k-y)^{*\cM,\nu}_R$ for $\cM = \{F_*^n(R)\}$.  To check
 this, we must show that $x \in (x^k-y, \m^t)^{*\cM,\nu}_R$  for all $t$, and it suffices to show that
 $x \in (x^k-y, z^t)^{*\cM,\nu}_R$
 for all $t$.

  Consequently, we need to compare the difference
 of the lengths of the quotients $F_*^n(R)/(x, x^k-y, z^t)F_*^n(R)$ and $F_*^n(R)/(x^k-y, z^t)F_*^n(R)$ for  $n \gg 0$.
 Since $K$ is perfect, this is the same as the difference of the lengths of $R/(x,y,z^t)^{[p^n]}$
 and $R/(x^k-y, z^t)^{[p^n]}$.  Let $\gamma_1$, $\gamma_2$ be the respective Hilbert-Kunz multiplicities of $\fA_ 1 =(x,y,z^t)$  and $\fA_2 = (x^k-y, z^t)$.  Then the length of $R/\fA_i^{[p^n]}$ is $\gamma_ip^{2n} + \OO(p^n)$.
 Because the only minimal prime of $R$ of dimension 2 is $\fp$, we may compute the $\gamma_i$
 for $\fA_i(R/\fp)$ working over $R/\fp$ instead.  But in $R/xR$ the $\fA_i$ both extend to
 $(y, z^t)$.  This shows that $\gamma_1 = \gamma_2$, and the difference of the lengths is $\OO(p^n)
 = \oo(p^{2n})$, as required.  \end{example}

\begin{example}\label{intclop} Let $\cM$ be the net, indexed by itself, of all nonzero ideals of the Noetherian
local domain $R$, where $I \leq J$ means that there exists an ideal  $I'$ such that $II' = J$.  Thus,  $IJ$ is an
upper bound for $I$ and $J$.  The closure operation for $\cM$ and $\rank$, restricted to ideals, is
integral closure.  The reason is that if $I \inc J$, then $J$ is in the integral closure of $I$
if and only if there exists a nonzero ideal $\fA$ such that $J\fA = I\fA$,  and this will also
be true for all ideals of the form $\fA\fB$ in $\cM$, i.e., all ideals larger than $\fA$.  The
condition that $\ell(J \fB/I \fB) = \oo\bigl(\rank(\fB)\bigr) = \oo(1)$ in the case where $I$ is
$\m$-primary implies that $J\fB = I \fB$ for all sufficiently large $\fB$, since the length
cannot be smaller than 1 otherwise.
\end{example}

In the case of closure of an ideal, we note the following alternative characterization:

\begin{proposition}\label{idealcl} Let $\rmk$  be a local ring and let $\cM = \{M_n\}_n$ be a
sequence of nonzero  modules over $R$, and let $\alpha$ be a function
from finitely generated $R$-modules to $\N_+$.   Let $I$ be an $\m$-primary ideal of $R$
and let $u \in R$.  Then $u \in I\cla_R$ if and only if
$$\ell\Bigl(\frac{M_n}{IM_n:_{M_n}u}\Bigr) = \oo\bigl(\alpha(M_n)\bigr).$$ \end{proposition}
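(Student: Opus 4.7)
The plan is to unwind the definition of $I\cla_R$ and then identify the relevant image module explicitly. Since $I$ is $\m$-primary, $R/I$ has finite length, so the original form of the definition (without the $\bigcap_t (I+\m^tR)\cla_R$ intersection) applies directly. First I will observe that $u \in I\cla_R$ if and only if the specific submodule $I+uR$ satisfies $(\dagger_\alpha)$ relative to $I$ in $R$. The nontrivial direction uses the following easy fact: if $I\inc I' \inc I''$ and $I''$ satisfies $(\dagger_\alpha)$, then $I'$ does too, since the image $\Img(M_n \otimes_R (I'/I) \to M_n \otimes_R R/I)$ is a submodule of $\Img(M_n \otimes_R (I''/I) \to M_n \otimes_R R/I)$ and hence has no larger length. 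Combined with the fact that $I\cla_R$ is a submodule satisfying $(\dagger_\alpha)$, this shows the equivalence.

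The remaining step is then to compute the image appearing in $(\dagger_\alpha)$. The key identification is $(I+uR)/I \cong R/(I:_R u)$ via the surjection $R \to (I+uR)/I$ sending $r \mapsto \overline{ru}$, whose kernel is $I:_R u$. Tensoring with $M_n$ gives a surjection $M_n \twoheadrightarrow M_n\otimes_R (I+uR)/I$, $m \mapsto m \otimes \bar u$, and composing with the inclusion-induced map $M_n \otimes_R (I+uR)/I \to M_n \otimes_R R/I = M_n/IM_n$ yields the map $M_n \to M_n/IM_n$, $m \mapsto \overline{um}$. The image of this map is $(uM_n + IM_n)/IM_n$ and its kernel is precisely $IM_n:_{M_n} u$, so by the first isomorphism theorem the image is isomorphic to $M_n/(IM_n:_{M_n}u)$.

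Combining the two steps: $u \in I\cla_R$ if and only if the length of this image is $\oo(\alpha(M_n))$, i.e., if and only if
\[
\ell\!\left(\frac{M_n}{IM_n:_{M_n}u}\right) = \oo\bigl(\alpha(M_n)\bigr),
\]
as required. There is no real obstacle here; the only thing to watch is the formal bookkeeping that identifies $\Img(M_n \otimes_R (I+uR)/I \to M_n/IM_n)$ with $M_n/(IM_n:_{M_n}u)$ via the explicit generator $\bar u$, and the observation that the length of the image only decreases when passing to a smaller submodule, which justifies reducing "$u$ lies in the largest submodule satisfying $(\dagger_\alpha)$" to "$I+uR$ itself satisfies $(\dagger_\alpha)$."
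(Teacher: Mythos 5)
Your proof is correct and takes essentially the same route as the paper: both reduce $u \in I\cla_R$ to checking $(\dagger_\alpha)$ for $A' = I + uR$, then identify the image $(uM_n + IM_n)/IM_n$ with $M_n/(IM_n:_{M_n}u)$. The paper factors the identification through $uM_n/(IM_n \cap uM_n)$ and the multiplication-by-$u$ map $M_n/\Ann_{M_n}u \cong uM_n$, whereas you apply the first isomorphism theorem directly to $m \mapsto \overline{um}$; this is just a more compact packaging of the same computation, and your explicit preliminary remark (that the image-length condition is monotone in $A'$, so $u \in I\cla_R$ iff $I+uR$ itself satisfies $(\dagger_\alpha)$) is a correct unwinding of what the paper treats as immediate from the definition.
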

\begin{proof}  From the definition of $\cla$, $u \in I\cla_R$ iff $\ell\big((I + Ru)M_n /IM_n)\big) =
\oo\big(\alpha(M_n)\big)$. Multiplication by $u$ yields a surjection
$M \surj uM$ that restricts to a surjection $IM_n:_{M_n} u \surj I_nM_n \cap uM_n$.  Both surjections have
as kernel $\Ann_{M_n}u \inc IM_n:_{M_n} u$.  Thus
$$\frac{(I+uR)M_n}{IM_n}  \cong  \frac{uM_n} {IM_n \cap uM_n} \cong
\frac{M_n/\Ann_{M_n} u} {(IM_n:_{M_n}u)/\Ann_{M_n} u} \cong\frac{M_n}{ IM_n:_{M_n} u}.$$
 \end{proof}

\begin{proposition}\label{finlg}  Let $\rmk$  be a local ring and let $\cM = \{M_n\}_n$ be a
sequence of finitely modules over $R$. Let $A \inc A'  \inc B$ be finitely
generated $R$-modules.  Suppose that $A'/A$ happens to have finite length.  Then
a sufficient condition for $A' \inc A\cla_B$  is that
$$ \quad\ell\bigl(\Img( M_n \otimes_R A'/A \to M_n \otimes_R B/A)\bigr) =
\oo\bigl(\alpha(M_n)\bigr).$$
\end{proposition}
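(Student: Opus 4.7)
The plan is to reduce to the finite-length case already encoded in the definition of $\cla$. Since $B/A$ need not have finite length, $A\cla_B$ is defined as $\bigcap_{t \geq 0} (A + \m^t B)\cla_B$, where each intersectand is governed by the explicit finite-length condition $(\dagger_\alpha)$. So it suffices to show $A' \inc (A + \m^t B)\cla_B$ for every $t \geq 0$. Setting $A_t := A + \m^t B$ and $A'_t := A' + \m^t B$, monotonicity (Proposition~\ref{cloprop}(d)) reduces this to showing $A'_t \inc A_t\cla_B$; as $B/A_t$ has finite length, this amounts to the verification
\[ \ell\bigl(\Img(M_n \otimes_R A'_t/A_t \to M_n \otimes_R B/A_t)\bigr) = \oo\bigl(\alpha(M_n)\bigr). \]

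The key step is to observe that this image is a quotient of the image appearing in the hypothesis. The canonical surjections $A'/A \surj A'_t/A_t$ and $B/A \surj B/A_t$ fit into a commutative square together with the natural maps $A'/A \to B/A$ and $A'_t/A_t \to B/A_t$. Applying the right exact functor $M_n \otimes_R(-)$ and chasing this square shows that, writing $I$ and $I_t$ for the images in question, the induced map $M_n \otimes_R B/A \surj M_n \otimes_R B/A_t$ carries $I$ onto $I_t$. Hence $\ell(I_t) \leq \ell(I) = \oo(\alpha(M_n))$, giving exactly what is needed. Taking the intersection over $t$ yields $A' \inc A\cla_B$.

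The argument is essentially a bookkeeping exercise with the definition, so there is no serious obstacle; the one small point worth confirming is that the hypothesis implicitly requires $\ell(I) < \infty$, which is automatic because $A'/A$ has finite length and is therefore annihilated by some $\m^s$, making $M_n \otimes_R A'/A$ a finitely generated module over the Artinian ring $R/\m^s$, and $I$ is a quotient of it.
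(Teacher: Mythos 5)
Your proof is correct and follows essentially the same route as the paper: the key observation in both is that the image appearing in the finite-length condition for $A + \m^t B$ is a quotient of the image $I$ in the hypothesis, so its length is bounded by $\ell(I) = \oo(\alpha(M_n))$. The paper streamlines by first reducing to the case $A = 0$; you keep $A$ around and phrase the comparison as a commutative square, which is equivalent. (One small remark: the reduction from ``$A' \subseteq A_t\cla_B$'' to ``$A'_t \subseteq A_t\cla_B$'' is just the inclusion $A' \subseteq A'_t$ and does not need Proposition~\ref{cloprop}(d).)
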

\begin{proof}  As usual, we may assume that $A =0$ and that $A' \inc B$ has finite length.  We
need to show that  for all $t \gg 0$,  $\ell(\Img M_n \otimes_R (A' + \m^tB)/\m^tB \to M_n \otimes_R B/\m^t B) =
 \oo\big(\alpha(M_n)\big)$.   But the left hand side is at most $\ell\big(\Img(M_n \otimes_R A' \to M_n \otimes_R B\big)$,
 since there is a surjection
 $$\Img(M_n \otimes_R A' \to M_n \otimes_R B) \surj
 \Img\big(M_n \otimes_R (A' + \m^tB)/\m^tB \to M_n \otimes_R (B/\m^tB)\big),$$ induced by the surjection
 $M_n \otimes_R B \surj  M_n \otimes_R (B/\m^tB)$.
\end{proof}

\begin{proposition}\label{resscal} Let $(R,\, \m) \to (S,\, \n)$ be a local homomorphism such
that $S$ is a module-finite
extension of the $R$,  let $\cM$ be a sequence of nonzero finitely generated modules over $S$
(hence, also, over $R$, by restriction of scalars), let $A \inc B$ be $R$-modules and let
$u \in B$. Let $\alpha_R$ and $\alpha_S$ be functions from the set $\{M_n: n \in \N_+\}|$ to $\N_+$
such that there exist positive constants $c_1$ and $c_2$ such that
$$c_1\alpha_S(M_n) \leq \alpha_R(M_n) \leq c_2 \alpha_S(M_n)$$
for all $n \geq 1$.
Then $u$ is in the $\cM$-closure of $A$ in $B$ with respect to $\alpha_R$,
working over $R$, if and only if the image $1 \otimes u$ of $u$ in $S \otimes_RB$ is in the
$\cM$-closure of the image of $S \otimes_R A$ in $S \otimes_R B$, working over $S$.

In particular,  the result holds if
\bena
\item $\alpha_R =  \nu_R$ and $\alpha_S = \nu_S$,  or  
\item $R$ and $S$ are domains of dimension $d$, the $M_n$ have dimension $d$,
$\alpha_R = \rank_R$, and $\alpha_S = \rank_S$. 
\een
\end{proposition}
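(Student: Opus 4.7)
The plan is to reduce to the finite length case using cofinality of filtrations, identify the image modules appearing in the two closure tests via the standard base change formula, and compare lengths using the residue field extension $[L:K]$, where $L := S/\n$.

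First I would reduce to the case where $B/A$ has finite length over $R$. By definition $A^{*\cM,\alpha_R}_B = \bigcap_t (A + \m^t B)^{*\cM,\alpha_R}_B$, and the analogous intersection on the $S$ side is over $A^S + \n^t(S \otimes_R B)$, where $A^S := \Img(S \otimes_R A \to S \otimes_R B)$. Because $S$ is module-finite over $R$, $\m S$ is $\n$-primary, so $\n^k \inc \m S$ for some $k$; together with $\m \inc \n$, this makes the $\m$- and $\n$-adic filtrations on $S \otimes_R B$ cofinal, and by monotonicity of closure (Proposition~\ref{cloprop}(d)) the $S$-side intersection coincides with the one over $A^S + \m^t(S \otimes_R B)$. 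Right exactness of tensor product gives
$$(S \otimes_R B)/(A^S + \m^t(S \otimes_R B)) \cong S \otimes_R (B/(A + \m^t B)),$$
so the statement reduces, term-by-term, to the finite length claim: for $B$ of finite length over $R$ and $u \in B$, $u \in 0^{*\cM, \alpha_R}_B$ iff $1 \otimes u \in 0^{*\cM, \alpha_S}_{S \otimes_R B}$.

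Next I would apply the identity $M_n \otimes_R C \cong M_n \otimes_S (S \otimes_R C)$, valid for any $R$-module $C$ because each $M_n$ is naturally an $S$-module. With $C = Ru \inc B$, this identifies
$$I_n := \Img(M_n \otimes_R Ru \to M_n \otimes_R B) \cong \Img(M_n \otimes_S S(1 \otimes u) \to M_n \otimes_S (S \otimes_R B)),$$
so the two closure conditions become $\ell_R(I_n) = \oo(\alpha_R(M_n))$ and $\ell_S(I_n) = \oo(\alpha_S(M_n))$ respectively. Since $B$ has finite length over $R$, $I_n$ has finite length over both $R$ and $S$; an $S$-composition series of $I_n$ has factors isomorphic to $L$, each of $R$-length $[L:K]$, which is finite because $L$ is a quotient of the finite-dimensional $K$-space $S/\m S$. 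Hence $\ell_R(I_n) = [L:K]\,\ell_S(I_n)$, and the hypothesis $c_1 \alpha_S(M_n) \leq \alpha_R(M_n) \leq c_2 \alpha_S(M_n)$ together with the positivity and finiteness of $[L:K]$ makes the two $\oo$-conditions equivalent.

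The two particular cases are then immediate. For (a), Proposition~\ref{limcmres}(b) gives $\nu_S(M_n) \leq \nu_R(M_n) \leq \nu_R(S)\nu_S(M_n)$, so $c_1 = 1$, $c_2 = \nu_R(S)$ work. For (b), when $R$ and $S$ are both domains of dimension $d$, $R \hookrightarrow S$ by integrality and dimension considerations, and $K \otimes_R S$ is a domain integral over the field $K$, hence itself a field; since any field containing $S$ contains $L = \mathrm{Frac}(S)$, we get $K \otimes_R S = L$, whence $\rank_R(M_n) = \dim_K(L \otimes_S M_n) = [L:K]\rank_S(M_n)$, and $c_1 = c_2 = [L:K]$ works. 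The main subtlety I expect is the filtration cofinality reduction (checking that the intersection of closures over the cofinal $\m$-adic subsystem equals the full $\n$-adic intersection); the remainder of the argument is essentially bookkeeping around the base-change identity and a composition-series length count.
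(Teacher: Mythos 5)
Your argument is correct and follows essentially the same route as the paper: reduce to the finite-length case via cofinality of the $\m S$-adic and $\n$-adic filtrations, use the base-change identity $M_n \otimes_R C \cong M_n \otimes_S(S\otimes_R C)$ to identify the relevant images, compare lengths via $\ell_R(\cdot)=[L{:}K]\ell_S(\cdot)$, and invoke the constants $c_1,c_2$. One small caution: in part (b) you reuse $K$ and $L$ to denote the \emph{fraction} fields of $R$ and $S$, whereas elsewhere (and in the paper) $K=R/\m$ and $L=S/\n$ are the residue fields, so the degree $[L{:}K]$ appearing in your length formula and the one appearing in your rank formula are \emph{different} quantities ($[L{:}K]$ residual vs. $\rank_R(S)$); the mathematics is right, but you should use distinct notation for the fraction fields to avoid the apparent clash.
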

\begin{proof} Since the powers of $\m S$ are cofinal with the powers of $\n$,  it suffices to
check this for every $t$ when $A$ is replaced by $A + \m^tB$.  Hence, we may assume that
$B/A$ has finite length over $R$.  Moreover, as usual, we may then assume that $A = 0$ and that $B$ itself has
finite length over $R$. Note that $M_n \otimes_R B \cong M_n \otimes_S (S \otimes_R B)$,  and
that the images of $N_n = M_n \otimes_S S\otimes_R Ru)$ and $ M_n \otimes_S Ru$ may be identified.
The result now follows from the fact that $\ell_R(N_n) = [L:K]\ell_S(N_n)$ and
the existence of the constants $c_1, c_2$.

In case $\alpha = \nu$,  we have $\nu_S(M_n) \leq \nu_R(M_n) \leq \nu_R(S) \nu_S(M_n)$,  so
that we may take $c_1 = 1$ and $c_2 = \nu_R(S)$. If $R$ and $S$ are domains and $\alpha = \rank$,
we have that $\rank_R(M_n) = \rank_R(S)\rank_S(M_n)$.
\end{proof}

We remind the reader of the treatment of Hilbert-Samuel multiplicity in  Discussion~\ref{mult}. A local
ring $\rmk$ is called {\it equidimensional} if $\dim(R/\fp) = \dim(R)$ for every minimal prime $\fp$ of $R$,
and is called {\it formally equidimensional} (or {\it quasi-unmixed}) if its $\m$-adic completion
$\wh{R}$ is equidimensional.  An excellent equidimensional ring, e.g., an excellent local domain,
is formally equidimensional.

\begin{theorem}\label{cloint} Let $\rmk$  be a formally equidimensional local ring and let $\cM = \{M_n\}_n$ be a
sequence of finitely generated modules over $R$ for which $\rank$ is defined and nonzero.
Then for every ideal $I$ of  $R$, $I ^{*\cM, \rank} \subseteq\ov{I}$,  the integral closure of $I$.  
That is, integrally closed ideals of $I$
are $\cM$-closed with respect to rank.  Hence, radical ideals are $\cM$-closed with respect to
rank, and so prime ideals are $\cM$-closed with respect to rank.
\end{theorem}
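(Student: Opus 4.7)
The plan is to combine Rees's multiplicity characterization of integral closure in formally equidimensional local rings (for an $\m$-primary ideal $J$, $u\in\ov J$ iff $e(J;R)=e(J+uR;R)$) with an iteration of the closure operation supplied by Proposition~\ref{cloprop}.

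First, I would reduce to the case where $I$ is $\m$-primary. By the definition of closure for non-finite-colength submodules, $I^{*\cM,\rank}=\bigcap_t(I+\m^t)^{*\cM,\rank}$, so if the inclusion holds for every $\m$-primary ideal it yields $I^{*\cM,\rank}\subseteq\bigcap_t\ov{I+\m^t}$; formal equidimensionality then lets one identify this intersection with $\ov I$ via the valuative characterization of integral closure, since every Rees valuation $v$ of $I$ satisfies $v(\m^t)>v(I)$ for $t\gg0$, forcing $v(I+\m^t)=v(I)$, so that intersecting over $t$ (and over the finite list of Rees valuations of $I$) recovers $\ov I$.

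With $I$ now $\m$-primary and $u\in I^{*\cM,\rank}$, I would prove by induction on $j$ that $(I+uR)^j\subseteq(I^j)^{*\cM,\rank}$. The case $j=1$ follows from Proposition~\ref{cloprop}(b) and the hypothesis. For the inductive step, one application of Proposition~\ref{cloprop}(j) gives $(I+uR)(I^{j-1})^{*\cM,\rank}\subseteq(I^j+uI^{j-1})^{*\cM,\rank}$, and a second application gives $uI^{j-1}\subseteq I^{j-1}\cdot I^{*\cM,\rank}\subseteq(I^j)^{*\cM,\rank}$; combining with Proposition~\ref{cloprop}(d) and the idempotence (f), one obtains $(I+uR)^j\subseteq(I^j)^{*\cM,\rank}$. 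Consequently, for every fixed $j\geq1$,
\[ \ell\bigl((I+uR)^j M_n/I^j M_n\bigr)=\oo(\rank(M_n))\text{ as }n\to\infty. \]

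Finally, suppose for contradiction that $u\notin\ov I$, so $\delta:=e(I;R)-e(I+uR;R)>0$ by Rees. By Proposition~\ref{multrank}, $e(I;M_n)-e(I+uR;M_n)=\delta\cdot\rank(M_n)$, and the Hilbert--Samuel polynomial expansion yields, for $j\gg j_0(n)$,
\[ \ell\bigl((I+uR)^j M_n/I^j M_n\bigr)=\frac{\delta\cdot\rank(M_n)}{d!}\,j^d+Q_n(j), \]
with $Q_n$ of degree $\leq d-1$ in $j$. The crux of the argument is to make this error term $Q_n(j)$ uniform in $n$: one wishes to fix some $j^*\gg 1$ and conclude that $\ell((I+uR)^{j^*}M_n/I^{j^*}M_n)/\rank(M_n)\geq\delta(j^*)^d/(2d!)>0$ for all sufficiently large $n$, contradicting the iteration step. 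This uniformity would likely be extracted by filtering $M_n$ into its generically-free part—contributing the clean expression $\rank(M_n)\cdot(\ell(R/I^j)-\ell(R/(I+uR)^j))$ matching the leading asymptotics—together with a lower-dimensional torsion part whose contribution to $\ell((I+uR)^j M_n/I^j M_n)$ must be shown to be $O(j^{d-1})$ uniformly, via Samuel-style estimates on modules of dimension $<d$. Establishing this uniform control is the principal obstacle.
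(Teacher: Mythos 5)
Your reduction to the $\mathfrak{m}$-primary case is fine, and your induction showing $(I+uR)^j \subseteq (I^j)^{*\cM,\rank}$ is correct; but the argument has a genuine gap exactly where you flag it, and your proposed fix does not close it. The iteration gives, for each \emph{fixed} $j$, that $\ell\big((I+uR)^jM_n/I^jM_n\big) = \oo\big(\rank(M_n)\big)$ as $n\to\infty$; but to derive a contradiction from $\delta>0$ you would need a single $j^*$ (independent of $n$) and a uniform lower bound $\ell\big((I+uR)^{j^*}M_n/I^{j^*}M_n\big)\geq c\,(j^*)^d\rank(M_n)$ for all $n\gg0$. The threshold $j_0(n)$ past which the Hilbert–Samuel polynomial of $M_n$ matches the length function may grow with $n$, so the two asymptotics (in $j$ for fixed $n$, in $n$ for fixed $j$) do not combine. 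Your sketch of filtering $M_n$ into a ``generically-free part'' plus a torsion part does not resolve this: in a non-domain formally equidimensional ring such a splitting is not clean, and even over a domain the torsion/cokernel pieces vary with $n$, so a bound $O(j^{d-1})$ for their contribution must itself be made uniform in $n$ — which is the same difficulty in disguise.

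The paper sidesteps the problem entirely and never iterates the closure operation at all. Writing $J=I+uR$, it filters $J^tM_n/I^tM_n$ by the ideals $\fA_s=\sum_{j=0}^s I^{t-j}u^j$ and proves the elementary bound $\ell(\fA_sM_n/\fA_{s-1}M_n)\leq \nu(I^{t-s})\,\ell(JM_n/IM_n)$, which sums to
\[
\ell\big(J^tM_n/I^tM_n\big)\;\leq\;\Big(\sum_{s=1}^t\nu(I^{t-s})\Big)\ell\big(JM_n/IM_n\big)\;\leq\;C\,t^d\,\ell\big(JM_n/IM_n\big),
\]
with $C$ depending only on the Hilbert function of $\gr_I R$, hence independent of both $t$ and $n$. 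Feeding this into the multiplicity formula
$e(I;R)-e(J;R)=d!\lim_{t\to\infty}\ell\big(J^tM_n/I^tM_n\big)/(\rank(M_n)\,t^d)$
gives $0\leq e(I;R)-e(J;R)\leq Cd!\cdot\ell(JM_n/IM_n)/\rank(M_n)\to 0$. Thus the hypothesis $u\in I^{*\cM,\rank}$ is used exactly once, to say $\ell(JM_n/IM_n)=\oo(\rank(M_n))$, and the uniform combinatorial estimate does the rest. You should adopt this direct bound in place of both the power-iteration and the attempted generic/torsion splitting.
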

\begin{proof}  The integral closure of $I \inc R$ is the intersection of the integral closures of the
$\m$-primary ideals containing $I$, by \cite[Cor. 6.8.5]{SwHu06}.
Hence, it suffices to prove the result when $I$ is $\m$-primary and integrally closed, $I = I ^{*\cM, \rank}$.
If not, we can choose an element $u \in I ^{*\cM, \rank} - I$. By Examples~\ref{exclo}, we have that $u \in \m$.  Let
$J = I + Ru$.  It suffices to prove that if $J \inc I ^{*\cM, \rank}$,  then the multiplicities
of $R$ with respect to $I$ and $J$ are equal, for then $J \inc \ov{I}$ by \cite{Rees61} or \cite[Theorem 11.3.1]{SwHu06}.
Since $I \inc J$, $e(I;\,R) \geq e(J;\,R)$ while, by Proposition~\ref{multrank}
 $$e(I;\, R) - e(J; \, R) =
\frac{e(I;\, M_n) - e(J; M_n)}{\rank(M_n)} =
d!\lim_{t \to \infty} \frac{\ell(M_n/I^tM_n) - \ell(M_n/J^tM_n)}{\rank(M_n)\,t^d}.$$
Consequently, using the second asymptotic formula for multiplicities in subsection~{mult},
$$(\dagger)\quad  0 \leq e(I;\, R) - e(J; \, R) = d!\lim_{t \to \infty}\frac{\ell(J^tM_n/I^t M_n)}{\rank(M_n)\,t^d}.$$
Note that $$J^t = (I + uR)^t = I^t + I^{t-1}u + \cdots + I^{t-s}u^s + \cdots + Ru^t.$$
Let  $\fA_0 = I^t$ and
$\fA_s = \sum_{j=0}^s I^{t-j}u^j$, so that $\fA_t = J^t$.  Then
$\ell(J^t M_n/ I^t M_n) = \sum_{s= 1}^t \ell(\fA_s M_n/\fA_{s-1}M_n)$.
We next want to prove that
$$(*) \quad \ell\biggl(\frac{\fA_sM_n}{\fA^{s-1} M_n}\biggr) \leq \nu(I^{t-s})\ell\biggl(\frac{JM_n}{IM_n}\biggr).$$
First observe that $\fA_s$ is generated over $\fA_{s-1}$ by $I^{t-s}u^s$, an ideal with $\nu(I^{t-s})$
generators.  Hence, $\fA_sM_n/\fA_{s-1}M_n$ is generated by the image of
$I^{t-s}u^s M_n$.  This is contained in $I^{t-s}u^{s-1}JM_n$,  while the denominator
contains $I^{t-s}u^{s-1}(IM_{n}) = I^{t-(s-1)}u^{s-1}M_n$.  Consequently,
$$(**) \quad \ell\biggl(\frac{\fA_sM_n}{\fA_{s-1}M_n}\biggr)
\leq \ell\biggl(\frac{I^{t-s}u^{s-1}JM_n}{I^{t-s}u^{s-1}IM_n}\biggr) \leq \ell\biggl(\frac{I^{t-s} JM_n}{I^{t-s} IM_n}\biggr),$$
where the inequality on the right follows because the surjection $I^{t-s} JM_n \to I^{t-s}u^{s-1}JM_n$ given by multiplication
by $u^{s-1}$ maps $I^{t-s} IM_n$ onto $I^{t-s}u^{s-1}IM_n$ and so induces a surjection
$$\frac{I^{t-s} JM_n}{I^{t-s} IM_n} \surj  \frac{I^{t-s}u^{s-1}JM_n}{I^{t-s}u^{s-1}IM_n}.$$
We have a surjection $R^{\nu(I^{t-s})} \surj I^{t-s}$ and, hence, we have
$$R^{\nu(I^{t-s})} \otimes \biggl(\frac{JM_n}{IM_n}\biggr)\surj
I^{t-s} \otimes \frac{JM_n}{IM_n} \cong \frac {I^{t-s}\otimes (JM_n)}{\Img\bigl(I^{t-s} \otimes (IM_n)\bigr)} \surj
\frac{I^{t-s}JM_n}{I^{t-(s-1)}M_n},$$
which shows that
$$\ell\biggl(\frac{I^{t-s}JM_n}{I^{t-(s-1)}M_n}\biggr) \leq
\ell\biggl(R^{\nu(I^{t-s})} \otimes \frac{JM_n}{I M_n}\biggr)
= \nu(I^{t-s})\ell\biggl(\frac{JM_n}{IM_n}\biggr).$$
Along with $(**)$, this establishes the inequality $(*)$ asserted above.   Hence,
$$(***) \quad \ell\biggl(\frac{J^tM_n}{I^tM_n}\biggr) \leq
\biggl(\sum_{s=1}^t \nu(I^{t-s})\biggr)\ell\biggl(\frac{JM_n}{IM_n}\biggr).$$

We have that $\sum_{s=1}^t \nu(I^{t-s}) = \sum_{s=1}^t \dm_K (I^{t-s}/\m I^{t-s}) = \sum_{j=0}^{t-1}H(j)$,
where $H$ is  Hilbert function of $(R/\m) \otimes_R \gr_IR$, and so coincides with a polynomial of degree $d-1$
in $t$ for all $t \gg 0$.    It follows that there is a constant $C > 0$
such that $\sum_{s=1}^t \nu(I^{t-s}) \leq Ct^d$  for all $t$, so that from $(***)$ we obtain:
$$\ell\biggl(\frac{J^tM_n}{I^tM_n}\biggr) \leq Ct^d\,\ell\biggl(\frac{JM_n}{IM_n}\biggr),$$ for all $t \geq 1$, where
$C$ is independent of $t$ and $n$.  Using this fact and $(\dagger)$ we have that for all $n \geq 1$,
$$ 0 \leq e(I;\, R) - e(J;\, R) \leq d! \lim_{t \to \infty} \frac{Ct^d \,\ell\bigl(JM_n/IM_n)}{\rank(M_n)\,t^d}
 = Cd! \, \frac{\ell(JM_n/IM_n)}{\rank(M_n)}.$$
Since  $\ell(JM_n/IM_n) = \oo\bigl(\rank(M_n)\bigr)$, we have that  $e(I,\,R) = e(J,\, R)$, as required. \end{proof}

\begin{proposition}\label{omcl} Let $\rmk$  be a reduced formally equidimensional local ring and let $\cM = \{M_n\}_n$
be a sequence of finitely generated $R$-modules for which rank is defined and nonzero. Then  $\m$ is $\cM$-closed
with respect to $\nu$ and $\rank$, while $(0)$ is $(\cM, \rank)$-closed. If $\cM$ is (weakly) lim \CM,  then $0$ is
$\cM$-closed with respect to both $\nu$ and rank.\end{proposition}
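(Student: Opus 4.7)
The plan is to dispatch each of the three assertions in turn, using results already established in the excerpt, with the weakly lim Cohen-Macaulay hypothesis only entering at the last step to convert closure with respect to rank into closure with respect to $\nu$.

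First, that $\m$ is $\cM$-closed with respect to both $\nu$ and $\rank$ is essentially already done in Examples~\ref{exclo}(2): the only alternative to $\m\cla_R = \m$ would be $\m\cla_R = R$, which would force $\nu(M_n) = \ell(M_n/\m M_n) = \oo(\alpha(M_n))$, contradicting $\nu(M_n)/\alpha(M_n) \geq 1$ (which holds both for $\alpha = \nu$ and, via Proposition~\ref{ranknu}, for $\alpha = \rank$).

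Next, to see that $(0)$ is $(\cM,\rank)$-closed, I would use that $R$ is reduced, so $(0) = \bigcap_i \fp_i$ with $\fp_i$ the (finitely many) minimal primes of $R$. By Theorem~\ref{cloint}, applicable because $R$ is formally equidimensional and rank is defined and nonzero on $\cM$, every prime ideal of $R$ is $(\cM,\rank)$-closed. Proposition~\ref{cloprop}(h) then gives that the intersection $(0)$ is $(\cM,\rank)$-closed as well.

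Finally, assume $\cM$ is weakly lim Cohen-Macaulay. The goal is to deduce that $(0)$ is $(\cM,\nu)$-closed, knowing that it is already $(\cM,\rank)$-closed. The key observation is that, by Lemma~\ref{Cbound}, the ratio $\nu(M_n)/\rank(M_n)$ is bounded above by a positive constant, and we always have $\rank(M_n) \leq \nu(M_n)$ by Proposition~\ref{ranknu}. Hence each of $\nu$ and $\rank$ is bounded by a positive constant times the other on the modules of $\cM$, and two applications of Proposition~\ref{cloprop}(k) yield that $(\cM,\nu)$-closure and $(\cM,\rank)$-closure coincide on any inclusion $A \inc B$. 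Combining this with the previous paragraph, $(0)$ is $(\cM,\nu)$-closed, completing the proof. The only place where anything subtle occurs is the invocation of Lemma~\ref{Cbound} to trap $\nu$ between constant multiples of $\rank$; everything else is a direct application of bookkeeping lemmas already in place.
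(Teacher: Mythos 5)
Your proof is correct and essentially follows the paper's own route: $\m$ via Examples~\ref{exclo}(2), $(0)$ via Theorem~\ref{cloint}, and the transfer from rank to $\nu$ via Lemma~\ref{Cbound} together with Proposition~\ref{cloprop}(k),(l). The only (cosmetic) divergence is in the middle step: the paper observes directly that in a reduced ring $(0)$ is radical, hence integrally closed, and applies Theorem~\ref{cloint} once, whereas you decompose $(0)$ as an intersection of the minimal primes, apply Theorem~\ref{cloint} to each, and then invoke Proposition~\ref{cloprop}(h) to pass to the intersection—both are fine, the paper's version is just one application shorter.
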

\begin{proof} The result for $\m$ was proved in Examples~\ref{exclo}(2).  The result that $(0)$ is $(\cM, \rank)$ closed
follows from Theorem~\ref{cloint} and the fact that when $R$ is reduced, the ideal (0) is integrally closed
(more generally, its integral closure is the ideal of all nilpotent elements).  The final statement then follows
from Lemma~\ref{Cbound}.
 \end{proof}

\section{Capturing  Koszul homology and parameter colon ideals with lim Cohen-Macaulay closures.}\label{cap}

In this section, we prove that the closures coming from lim \CM sequences of modules share many of the
``colon-capturing" properties that tight closure has.  The results we obtain are sufficiently strong to enable
us to prove, for example, that a domain that has a  lim \CM sequence of modules has a big \CM module.

\begin{theorem}\label{thmcappar}  Let $\rmk$ be  local ring of
Krull dimension $d$. Let $\cM$ be a lim Cohen-Macaulay sequence of modules.
\bena
\item If $\ux  = \vect x k$ is part of a system of parameters for $R$ and $\cK_\bu(\ux;\, R)$ is the Koszul complex,
then the cycles in $\cK_i(\ux;\, R)$ are in the $\cM$-closure of the boundaries in $\cK_i(\ux;\, R)$ with respect to
$\nu$ for all $i \geq 1$.    
\item If $\vect x {k+1}$ is part of a system of parameters for $R$, then the ideal $(\vect x k)R:_R x_{k+1}$ is contained
in the $\cM$-closure of $(\vect x k)R$ with respect to $\nu$.  
\een
If $R$ is a domain or, more generally, if rank is defined for all of the modules in $\cM$, the same results hold for
 $\cM$-closure with respect to rank.
\end{theorem}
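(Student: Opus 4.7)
The plan begins with reducing the rank statement to the $\nu$-version. By Lemma~\ref{Cbound}, once $\{M_n\}$ is lim \CM and $\rank$ is defined, $\nu(M_n)/\rank(M_n)$ is bounded above and below by positive constants, so $\oo\bigl(\nu(M_n)\bigr)=\oo\bigl(\rank(M_n)\bigr)$; Proposition~\ref{cloprop}(k) then gives $A^{*\cM,\nu}_B=A^{*\cM,\rank}_B$. Hence the rank clause follows from the $\nu$-versions of (a) and (b), so it suffices to prove those.

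For (b), I would use Proposition~\ref{idealcl} together with the definition $(\vect x k)^{*\cM,\nu}_R=\bigcap_t\bigl((\vect x k)+\m^t\bigr)^{*\cM,\nu}_R$ to reduce to showing, for each $t\geq 1$ and each $u\in(\vect x k):_R x_{k+1}$,
\[\ell\bigl(M_n/(((\vect x k)+\m^t)M_n:_{M_n}u)\bigr)=\oo\bigl(\nu(M_n)\bigr).\]
Extend $\vect x{k+1}$ to a full s.o.p.\ $\vect x d$, choose $N$ so that $x_j^N\in\m^t$ for $j\geq k+2$, and put $J=(\vect x k, x_{k+2}^N,\ldots,x_d^N)R$. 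Reordering the s.o.p.\ $(\vect x k, x_{k+2}^N,\ldots,x_d^N, x_{k+1})$ to place $x_{k+1}$ last and applying Theorem~\ref{Koszul}(d), the annihilator $(JM_n:_{M_n}x_{k+1})/JM_n$ is a quotient of $H_1(\vect x{k+1}, x_{k+2}^N,\ldots,x_d^N; M_n)$, which has length $\oo\bigl(\nu(M_n)\bigr)$ by lim \CM on the full s.o.p. Since $ux_{k+1}\in(\vect x k)\subseteq J$, $uM_n\subseteq JM_n:_{M_n}x_{k+1}$, so the image of $uM_n$ in $M_n/JM_n$ has length $\oo\bigl(\nu(M_n)\bigr)$; surjecting onto the further quotient $M_n/((\vect x k)+\m^t)M_n$ only decreases length.

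For (a), fix $z\in Z_i(\ux;R)$ and $t\geq 1$. A direct chase identifies the image of $M_n\otimes_R Rz$ in $M_n\otimes_R \cK_i(\ux;R)/(B_i(\ux;R)+\m^t\cK_i(\ux;R))$ with a subquotient of $(Z_i(\ux;M_n)+\m^t\cK_i(\ux;M_n))/(B_i(\ux;M_n)+\m^t\cK_i(\ux;M_n))$, whose length is at most $\ell\bigl(H_i(\ux;M_n)/\m^t H_i(\ux;M_n)\bigr)$ since $\m^t Z_i(\ux;M_n)\subseteq Z_i(\ux;M_n)\cap \m^t\cK_i(\ux;M_n)$. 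It therefore suffices to show this last length is $\oo\bigl(\nu(M_n)\bigr)$ for each $t$ and $i\geq 1$. Extend $\ux$ to a s.o.p.\ $(\ux,\uy)$ and pick $N$ with $(\uy^N)\subseteq\m^t$; then $H_i(\ux;M_n)/\m^t H_i(\ux;M_n)$ is a quotient of $H_i(\ux;M_n)/(\uy^N)H_i(\ux;M_n)=E^2_{0,i}$ in the Grothendieck spectral sequence
\[E^2_{p,q}=H_p(\uy^N; H_q(\ux;M_n))\Rightarrow H_{p+q}(\ux,\uy^N;M_n)\]
of the double complex $\cK_\bu(\uy^N;R)\otimes_R\cK_\bu(\ux;M_n)$. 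The edge map $E^2_{0,i}\twoheadrightarrow E^\infty_{0,i}$ has image a subquotient of $H_i(\ux,\uy^N;M_n)$, which is $\oo\bigl(\nu(M_n)\bigr)$ by lim \CM on the full s.o.p.; its kernel is filtered by images of differentials $d^r\colon E^r_{r,i-r+1}\to E^r_{0,i}$ for $r\geq 2$, each of length at most $\ell\bigl(H_r(\uy^N; H_{i-r+1}(\ux;M_n))\bigr)$.

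The hard part will be controlling these correction terms $H_r(\uy^N; H_{i-r+1}(\ux;M_n))$. For $2\leq r\leq i$ the inner module $H_{i-r+1}(\ux;M_n)$ is a positive-degree Koszul homology killed by $(\ux)$, and I would bound the corresponding term by an induction on $i$ whose hypothesis controls $H_{i-r+1}(\ux;M_n)/\m^{t'}H_{i-r+1}(\ux;M_n)$ for every $t'$. For $r=i+1$ the term is $H_{i+1}(\uy^N; M_n/(\ux)M_n)$, where $\uy^N$ is a s.o.p.\ of $R/(\ux)$; this is the most delicate case, and I would attack it via the companion spectral sequence $H_p(\ux; H_q(\uy^N;M_n))\Rightarrow H_{p+q}(\ux,\uy^N;M_n)$ of the same double complex, combined with part (b) applied inside the induction, to reduce once more to lim \CM on the full s.o.p.
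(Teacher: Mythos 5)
Your reduction of the rank clause to the $\nu$ clause via Lemma~\ref{Cbound} and Proposition~\ref{cloprop}(k) is correct and matches the paper. Your proof of (b) is correct, but it is a genuinely different argument from the paper's: you work directly, showing $u\in(\vect x k):x_{k+1}$ implies $\ell\bigl((uM_n+(\vect x k,\m^t)M_n)/(\vect x k,\m^t)M_n\bigr)=\oo\bigl(\nu(M_n)\bigr)$ by packaging $\Ann_{M_n/JM_n}(x_{k+1})$ as a quotient of $H_1$ of a full system of parameters via Theorem~\ref{Koszul}(d), whereas the paper deduces (b) as a one-line corollary of (a) by projecting the cycle $(-r_1,\ldots,-r_k,r)\in Z_1(\vect x{k+1};R)$ onto its last coordinate. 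Your route for (b) is self-contained and does not depend on (a); the paper's is shorter but requires (a) first.

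The serious problem is (a). Your reduction to showing $\ell\bigl(H_i(\ux;M_n)/\m^t H_i(\ux;M_n)\bigr)=\oo\bigl(\nu(M_n)\bigr)$ is fine, and the edge map argument controls $E^\infty_{0,i}$ via lim Cohen--Macaulayness on the full s.o.p.\ $(\ux,\uy^N)$. But the correction terms $E^2_{r,i-r+1}=H_r\bigl(\uy^N; H_{i-r+1}(\ux;M_n)\bigr)$ for $r\ge 2$ are not controlled by what you propose. Your inductive hypothesis would control $W_q/\m^{t'}W_q$ where $W_q:=H_q(\ux;M_n)$, but that does not bound the higher Koszul homology $H_r(\uy^N;W_q)$ for $r\ge 1$: there is no general inequality of the form $\ell\bigl(H_r(\uy^N;W)\bigr)\le C\,\ell(W/\m^{t'}W)$. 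Indeed, $H_r(\uy^N;W)$ is a subquotient of $W^{\binom{d-k}{r}}$ and is killed by $(\uy^N)$, but the cycles themselves need not lie in $\m^{t'}$-adic quotients of $W$, and when $W$ is far from Cohen--Macaulay over $R/(\ux)$ these terms can be of the same order as $\ell(W/(\uy^N)W)$ or worse. The $r=i+1$ case is sketched even more loosely, and the appeal to a second spectral sequence plus (b) ``inside the induction'' does not close the circle. The paper sidesteps all of this: it places $\cK_\bu(\ux;R)$ as a direct-summand subcomplex of $\cK_\bu\bigl(\vect x k,x_{k+1}^t,\ldots,x_d^t;R\bigr)$, applies the full-s.o.p.\ case there (which is exactly the lim Cohen--Macaulay hypothesis), and then uses the $R$-linear retraction $\theta$ together with Proposition~\ref{cloprop}(c) to push the closure statement down to $\cK_\bu(\ux;R)$, noting that $\theta$ sends the boundaries of the large complex into $B+\m^t\,\cK_i(\ux;R)$. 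This avoids any spectral sequence and any need to control Koszul homology of Koszul homology. I would encourage you to abandon the spectral sequence route for (a) and instead pursue the subcomplex-plus-retraction idea.
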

\begin{proof}
(a)  If we have a full system of parameters $\ux  = \vect x d$, we have that
 $\ell\bigr(H_i(\ux; M_n)\bigl) = \oo\bigl(\nu(M_n)\bigr)$ for all $i \geq 1$.
by the definition of lim \CM sequence of modules.

If we have a part of a system of parameters, say $\vect xk$
where $k < d$, we wrtie $_k\cK_i$ for $\cK_i(\vect x k; \, R)$.
We may extend $\vect x k$ to a full system of parameters $\vect x d$. Let $t$ be
a positive integer and consider the
Koszul complex  $\cK^{(t)}_\bu = \cK_\bu(\vect x k, \,x_{k+1}^t, \, \ldots, \, x_d^t;\,R)$.
We may think of this Koszul complex as the exterior algebra over $R$ of  a free
module $\cK_1^{(t)} = Ru_1 \oplus \cdots \oplus Ru_d$ such that $u_i \mapsto x_i$
(respectively, $x_i^t$) if $i \leq k$ (respectively, $i > k$).  The sublagebra generated
by $Ru_1 \oplus \cdots \oplus Ru_k$  gives the Koszul complex $_k\cK_\bu = \cK_\bu(\vect x k;\, R)$, which
we think of as a subcomplex.    We want to show for $i \geq 1$ that every cycle $z \in {}{_k\cK_i}$
is in the $\cM$-closure of $B = \Img(_k\cK_{i+1})$ with respect to $\nu$.  We may think of the same
element as a cycle in $\cK_i^{(t)}$. Here, we know that it is in the $\cM$-closure of $\Img(\cK_{i+1}^{(t)})$ with
respect to $\nu$.  Consider the standard generators for $\bigwedge^{i+1}(Ru_1 + \cdots + Ru_d)$
consisting of elements $v = u_{j_1} \wedge \cdots \wedge u_{j_{i+1}}$.  If the $j_h$ occurring are all at most $k$,
the image of $v$ is an element of $B$, and we get generators of $B$ over $R$ this way.  If any of the $j_h$
is $>k$, the image of $v$ is in $(x_{k+1}^t, \, \ldots, x_d^t)\cK^{(t)}_i \inc \m^t\cK^{(t)}_i$.  Hence,
$\Img(\cK^{(t)}_{i+1}) \inc B + \m^t \cK^{(t)}_i$,  and so  $z \in \bigl(B + \m^t \cK^{(t)}_i\bigr) ^{*\cM, \nu}_{\cK^{(t)}_i}$.
Let $\theta: \cK^{(t)}_i \to {}{_k\cK_i}$ be the $R$-module retraction that fixes all of the standard
generators of $u_{j_1} \wedge \cdots \wedge u_{j_i+1}$ of $\cK^{(t)}_i$ such that all of the $j_h \leq k$
and sends all of the other standard generators to 0.  By Proposition~\ref{cloprop}(c), the image of $z$, which is $z$,
is in the $\cM$-closure of  $\theta(B + \m^t \cK^{(t)}_i)$ with respect to $\nu$, and so
$z \in (B + \m^t \, _k\cK_i) ^{*\cM, \nu}_{_k\cK_i}$.  Since this is true for every $t$,  it follows that
$c \in B^{*\cM, \nu}_{_k\cK_i}$, as claimed.

(b) Suppose that $rx_{k+1} = \sum_{j=1}^k r_jx_j$.  Then $(-r_1, \,\ldots, \, -r_k, r)$ is a cycle
in $\cK_1(\vect x {k+1};\, R)$,  and is in the $\cM$-closure of the trivial Koszul relations
with respect to $\nu$.  Let $\pi: \cK_1(\vect x {k+1};\, R) \to R$ be projection on the last
coordinate.  The image of the trivial Koszul relations under this map is the ideal
$(\vect x k)R$,  and so, by Proposition~\ref{cloprop}(c),
the image $r$ of $z$ maps into the $\cM$-closure of this ideal with
respect to $\nu$.

The final statement holds because the closure with respect to rank contains the closure with
respect to $\nu$.
\end{proof}

\begin{remark} Colon-capturing results for tight closure typically require that the elements $\vect x {k+1}$ be part of
a system of parameters modulo every minimal prime of $R$.  But note that the closure given, for example, by the
sequence $F^n_*(R)$ in the F-finite case agrees with tight closure {\it when $R$ is equidimensional}, but
not in general.  In non-equidimensional cases lim \CM closures may have advantages over tight closure,
as in Example~\ref{nonequi}.
\end{remark}

\begin{proposition}\label{torcap}  Let $\rmk$ be a ring of Krull dimension $d$ and let $\cM_n$ be
a lim Cohen-Macaulay sequence of modules over $R$.  If $\ux = \vect x d$ is a
system of parameters for $R$, then
$\ell\bigl(\Tor_1^R\bigl(R/(\ux),\, M_n\bigr)\bigr) = \oo\bigl(\nu(M_n)\big)$. \end{proposition}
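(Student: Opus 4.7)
The plan is to realize $\Tor_1^R(R/(\ux), M_n)$ as a quotient of the Koszul homology module $H_1(\ux;\, M_n)$ and then invoke the hypothesis that $\{M_n\}_n$ is lim Cohen-Macaulay.

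I would take any free resolution of $R/(\ux)$ whose low-degree portion has the form
\[ \cdots \to F_2 \to R^d \arwf{\ux} R \to R/(\ux) \to 0, \]
where $R^d \arwf{\ux} R$ is the first Koszul differential $(r_i) \mapsto \sum_i x_i r_i$. Its kernel is the module of Koszul 1-cycles $Z_1 := Z_1(\ux;\,R) \inc R^d$, and $F_2 \to R^d$ has image $Z_1$. Since $F_2 \surj Z_1$ remains surjective after tensoring with $M_n$, the image of $F_2 \otimes_R M_n$ in $M_n^d$ agrees with the image of $Z_1 \otimes_R M_n$ in $M_n^d$. Computing $H_1$ of the tensored complex therefore gives
\[ \Tor_1^R(R/(\ux), M_n) \cong \frac{Z_1(\ux;\, M_n)}{\Img\bigl(Z_1 \otimes_R M_n \to M_n^d\bigr)}, \]
where $Z_1(\ux;\, M_n) \inc M_n^d$ denotes the Koszul 1-cycles of $\cK_\bu(\ux;\,M_n)$.

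Next, the Koszul 1-boundaries $B_1(\ux;\,M_n) \inc M_n^d$ are by definition generated by the expressions $x_j e_i m - x_i e_j m$; equivalently, they are the image of $B_1(\ux;\,R) \otimes_R M_n$ in $M_n^d$, where $B_1(\ux;\,R) \inc Z_1$ is the module of trivial Koszul relations of $R$. Hence $B_1(\ux;\,M_n) \inc \Img\bigl(Z_1 \otimes_R M_n \to M_n^d\bigr)$, and comparing numerators and denominators yields a natural surjection
\[ H_1(\ux;\,M_n) \;=\; \frac{Z_1(\ux;\,M_n)}{B_1(\ux;\,M_n)} \;\surj\; \frac{Z_1(\ux;\,M_n)}{\Img(Z_1 \otimes_R M_n \to M_n^d)} \;\cong\; \Tor_1^R(R/(\ux),\, M_n). \]
The bound $\ell\bigl(\Tor_1^R(R/(\ux), M_n)\bigr) \leq \ell\bigl(H_1(\ux;\, M_n)\bigr) = \oo\bigl(\nu(M_n)\bigr)$ then follows from the defining property of a lim Cohen-Macaulay sequence.

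There is no real obstacle here. The only point to be aware of is that, because $\ux$ is merely a system of parameters and not a regular sequence on $R$, the Koszul complex $\cK_\bu(\ux;\,R)$ is generally not a resolution of $R/(\ux)$, so one cannot naively identify Koszul homology with $\Tor$; the obstruction is exactly $H_1(\ux;\,R) = Z_1/B_1$, and this is what forces the comparison map above to be a proper surjection rather than an isomorphism.
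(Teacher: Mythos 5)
Your proof is correct and takes essentially the same approach as the paper: compare $\Tor_1^R(R/(\ux),\,M_n)$ and $H_1(\ux;\,M_n)$ as quotients of the common kernel $Z_1(\ux;\,M_n)\subset M_n^d$, observe that the Koszul $1$-boundaries sit inside the image of the resolution's second-syzygy module after tensoring, and hence obtain the desired surjection from Koszul homology onto $\Tor_1$. Your write-up just spells out the tensoring step more explicitly than the paper does.
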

\begin{proof} We have a free resolution of $R/(\ux)$ that begins $R^{b} \to R^d \to R \to 0$
where the map $R^d \to R$ sends $(\vect r d) \mapsto \sum_{i=1}^d r_ix_i$ and the image
$B$ of the map $R^b \to R_d$ contains the submodule generated by the standard Koszul
relations on $\ux$.  $\Tor^R_1(R/(\ux),\,M_n)$ is the homology at the middle spot
of $M_n^b \to M_n^d \to M_n$, and this implies that we have a surjection $H_1(\ux;\,M_n)
\surj \Tor_1^R\bigl(R/(\ux),\,M_n\bigr)$.  The result is now immediate from the definition of
lim \CM sequence.
 \end{proof}

We can improve the results on colon ideals involving a system of parameters as follows.

\begin{theorem}\label{strcap} Let $\rmk$ be a local ring and $\cM = \{M_n\}_n$ a lim Cohen-Macaulay
sequence over $R$.   Let $\vect x k$ be part of a system of parameters for $R$,
let $\vect a k$ be positive integers, and let $\vect b k$ be nonnegative integers.
Then  $(x_1^{a_1+b_1}, \, \ldots, x_k^{a_k+b_k}) ^{*\cM, \nu} : x_1^{b_1} \cdots x_k^{b_k} =
(x_1^{a_1}, \, \ldots, x_k^{a_k}) ^{*\cM, \nu}$.  The same result holds for $\cM$-closures with
respect to rank when all of the modules $M_n$ have a well-defined rank.  \end{theorem}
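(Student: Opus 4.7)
The inclusion $\supseteq$ is immediate from Proposition \ref{cloprop}(j): multiplication by $y := x_1^{b_1}\cdots x_k^{b_k}$ sends $(x_1^{a_1},\ldots,x_k^{a_k})^{*\cM,\nu}$ into $\bigl(y\cdot(x_1^{a_1},\ldots,x_k^{a_k})\bigr)^{*\cM,\nu}\subseteq (x_1^{a_1+b_1},\ldots,x_k^{a_k+b_k})^{*\cM,\nu}$. For the nontrivial inclusion $\subseteq$, I would induct on $\sum_i b_i$ to reduce to the case where a single $b_i$ equals $1$ and the rest are $0$. Indeed, if WLOG $b_1\geq 1$, then writing $y = x_1\cdot y'$ with $y' = x_1^{b_1-1}x_2^{b_2}\cdots x_k^{b_k}$ converts the hypothesis $uy\in I^{*\cM,\nu}$ into $(ux_1)y'\in I^{*\cM,\nu}$, and the inductive hypothesis, applied with parameters $(a_1+1,a_2,\ldots)$ and $(b_1-1,b_2,\ldots)$, yields $ux_1\in (x_1^{a_1+1},x_2^{a_2},\ldots,x_k^{a_k})^{*\cM,\nu}$. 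The remaining problem is the base case: given $ux_1\in I^{*\cM,\nu}$ with $I := (x_1^{a_1+1},x_2^{a_2},\ldots,x_k^{a_k})$, show $u\in J^{*\cM,\nu}$ with $J := (x_1^{a_1},x_2^{a_2},\ldots,x_k^{a_k})$.

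For the base case, fix $t\geq 1$ and aim to show $u\in (J+\m^t)^{*\cM,\nu}$. Extend $x_1,\ldots,x_k$ to a full system of parameters $x_1,\ldots,x_d$, choose $T\geq t$ with $(x_{k+1}^T,\ldots,x_d^T)\subseteq \m^t$, and define the $\m$-primary ideals $J_T := J+(x_{k+1}^T,\ldots,x_d^T)$ and $I_T := I+(x_{k+1}^T,\ldots,x_d^T)$ together with $K_T := (x_2^{a_2},\ldots,x_k^{a_k},x_{k+1}^T,\ldots,x_d^T)$, so that $J_TM_n = x_1^{a_1}M_n + K_TM_n$ and $I_TM_n = x_1^{a_1+1}M_n + K_TM_n$. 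A standard pigeonhole argument furnishes $s$ with $\m^s\subseteq I_T$, whence the hypothesis combined with Proposition \ref{cloprop}(d) gives $ux_1\in I_T^{*\cM,\nu}$, and Proposition \ref{idealcl} yields $\ell\bigl(M_n/(I_TM_n :_{M_n} ux_1)\bigr) = \oo(\nu(M_n))$. The crux is the Key Lemma
\[
\ell\bigl((I_TM_n :_{M_n} x_1)/J_TM_n\bigr) = \oo(\nu(M_n)),
\]
granting which a short chase finishes: setting $E_n := I_TM_n :_{M_n} x_1$, the identity $I_TM_n :_{M_n} ux_1 = E_n :_{M_n} u$ and the injection $(E_n :_{M_n} u)/(J_TM_n :_{M_n} u)\hookrightarrow E_n/J_TM_n$ (induced by multiplication by $u$) yield $\ell(M_n/(J_TM_n :_{M_n} u)) = \oo(\nu(M_n))$, so $u\in J_T^{*\cM,\nu}\subseteq (J+\m^t)^{*\cM,\nu}$, and letting $t$ vary gives $u\in J^{*\cM,\nu}$.

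The Key Lemma is the main obstacle and where I expect the real work to lie. Setting $N := M_n/K_TM_n$, a direct computation identifies $(I_TM_n :_{M_n} x_1)/J_TM_n$ with the kernel of $N/x_1^{a_1}N\xrightarrow{x_1\cdot}N/x_1^{a_1+1}N$, which is a homomorphic image of $\Ann_N(x_1) = H_1(x_1;M_n/K_TM_n)$. To bound the latter, I invoke the spectral sequence of the Koszul double complex $\cK_\bu(x_1;R)\otimes_R\cK_\bu(K_T;R) = \cK_\bu(x_1,K_T;R)$ tensored with $M_n$:
\[
E^2_{p,q} = H_p\bigl(x_1;\,H_q(K_T;M_n)\bigr)\Longrightarrow H_{p+q}(x_1,K_T;M_n).
\]
All differentials $d^r$ for $r\geq 2$ at position $(1,0)$ vanish for degree reasons (their other endpoint has a negative coordinate), so $E^\infty_{1,0} = E^2_{1,0} = H_1(x_1;M_n/K_TM_n)$ is a quotient of $H_1(x_1,K_T;M_n)$. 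But $x_1, x_2^{a_2},\ldots,x_k^{a_k},x_{k+1}^T,\ldots,x_d^T$ is obtained from the full system of parameters $x_1,\ldots,x_d$ by raising to positive powers, so Corollary \ref{Koszlength}(b) combined with the lim Cohen-Macaulay hypothesis yields $\ell(H_1(x_1,K_T;M_n))\leq a_2\cdots a_k\cdot T^{d-k}\cdot\ell(H_1(x_1,\ldots,x_d;M_n)) = \oo(\nu(M_n))$, proving the Key Lemma. The closure-with-respect-to-rank case follows by the same argument, since for a lim Cohen-Macaulay sequence with $\rank$ defined $\nu(M_n)$ and $\rank(M_n)$ differ by a bounded multiplicative constant (Lemma \ref{Cbound}), making $\oo(\nu(M_n))$ and $\oo(\rank(M_n))$ interchangeable.
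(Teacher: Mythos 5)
Your proof is correct and takes a genuinely different route from the paper's. Both arguments reduce to the case of dividing by a single parameter with exponent one, but after that they diverge. The paper first reduces to $k = d$, then proves the key inclusion by tensoring $M_n$ against a commutative diagram built from the cyclic modules $R/(J:x_d)$, $R/J$, $R/(J+x_dR)$ and their $\cln$-closed counterparts, combining the resulting length identities with Proposition~\ref{torcap} and the previously established colon-capturing result Theorem~\ref{thmcappar}(b). Your argument instead passes to $\m$-primary approximations $J_T \subseteq I_T$ and works directly with colon submodules of $M_n$; the crux is your Key Lemma $\ell\bigl((I_T M_n :_{M_n} x_1)/J_T M_n\bigr) = \oo\bigl(\nu(M_n)\bigr)$, obtained by realizing this quotient as a homomorphic image of $H_1(x_1;\, M_n/K_T M_n)$, which is in turn a quotient of $H_1(x_1, K_T;\, M_n)$, and then invoking Corollary~\ref{Koszlength}(b). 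This route is more self-contained---it bypasses both Theorem~\ref{thmcappar}(b) and Proposition~\ref{torcap} and never manipulates the closure ideals $J\cln$ directly---and it is slightly more elementary, since the spectral-sequence step in your Key Lemma can be replaced by a direct citation of the short exact sequence in Theorem~\ref{Koszul}(d), which furnishes the surjection $H_1(x_1, K_T;\, M_n) \surj \Ann_{M_n/K_T M_n}(x_1)$ without any spectral-sequence bookkeeping. What the paper's route buys is the reuse (and illustration) of the colon-capturing machinery already developed for \lCM closures; what yours buys is a shorter, freestanding Koszul-homology argument.
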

\begin{proof} The final statement about rank follows from Lemma~\ref{Cbound} once we have
established the result for $\nu$.
Let $\alpha$ be either $\nu$ or $\rho$.  Throughout this proof we
use $\blank\cln$ for $\blank\cla$.
We first want to reduce to the case where  $k = d$.  For the moment,
assume this case.  Extend $\vect x k$ to a full system of parameters $\ux = \vect x d$
and fix a positive integer $t$. Let $I = (x_1^{a_1+b_1}, \, \ldots, x_k^{a_k+b_k})R$ and
$\mu = x_1^{b_1} \cdots x_k^{b_k}$. Then
$I\cln:\mu \inc \bigl(I + (x_{k+1}^t, \, \ldots, x_d^t)\bigr)\cln : \mu$, which, by our assumption,
is contained
$J_t = (x_1^{a_1}, \, \ldots, x_k^{a_k}, x_{k+1}^t, \, \ldots, x_d^t)\cln$. By the definition of
${}\cln$,  the intersection of the $J_t$ as $t$ varies is  $(x_1^{a_1}, \, \ldots, x_k^{a_k})\cln$.
We have thus reduced to the case where $k = d$.

We next observe that we can reduce to the case where only one of the $x_j$ occurring
in $\mu$ has a positive exponent, and that exponent is one.  We use induction on the
total degree of $\mu$ in the $x_j$.  If the degree is larger than one, write $\mu = \mu_0 x_j$,
and then the result is immediate from the induction hypothesis and the fact that for
any ideal $\fA$ and two elements $f,\,g$ of the ring,  $\fA:_R fg = (\fA:_R f):_R g$,  applied
with $\fA = I\cln$,  $f = \mu_0$, and  $g = x_j$.  By renumbering the parameters, we may
assume that $\mu = x_d$.  Since $x_1^{a_1}, \, \ldots, \, x_{d-1}^{a_{d-1}},\,x_d$ is simply
another system of parameters, we may change notation, and assume that $a_i = 1$
for $i < d$.

Consequently, all we need to prove is that
$$(x_1,\, x_2, \, \ldots, \,x_{d-1}, \, x_d^{a+1})\cln:x_d =
(x_1,\, x_2, \, \ldots,\, x_{d-1}, x_d^a)\cln$$ for $a \geq 1$.
Since $\supseteq$  follows from part (j) of  Theorem~\ref{cloprop}
 it remains to show $\inc$.

 Let $J=(x_1,\dots,x_{d-1}, x_d^{a+1})$. We consider the following commutative diagram:
$$\CD
 0 @>>>{\disp \frac{R}{J:x_d}}  @>{\cdot x_d}>>  {\disp \frac{R}{J}}  @>>>
 {\disp \frac{R}{J+x_dR}} @>>> 0\\
  @.                                   @V{\eta}VV                       @V{\theta}VV           @VVV   @.\\
 0 @>>> {\disp \frac{R}{J\cln:x_d}}  @>{\cdot x_d}>> {\disp \frac{R}{J\cln}} @>>>
{\disp   \frac{R}{J\cln+x_dR}} @>>> 0\\
      @.                  @VVV                                           @VVV                      @VVV\\
   {}       @.           0                        @.                          0               @.                0
 \endCD$$
We apply $\blank \otimes_R M_n$ to the  commutative diagram above. Note that $J+x_dR =
(\ux)R$. Let $N_n$ denote $\Tor_1^R\big(R/(\ux), M_n\big)$.  Then we have:
$$\CD
{}       @.         0               @.              0\\
@.                 @VVV                   @VVV\\
{}     @.       \Ker(\eta)    @>\gamma>>  \Ker(\theta)\\
@.                  @VVV                     @VVV\\
 N_n @>>>{\disp \frac{R}{J:x_d}\otimes M_n}  @>{\cdot x_d\otimes\id}>>  {\disp \frac{R}{J}\otimes M_n}  @>>>
 {\disp \frac{R}{J+x_dR}\otimes M_n} @>>> 0\\
  @.                                   @V{\eta}VV                       @V{\theta}VV           @VVV   @.\\
   {}  @. {\disp \frac{R}{J\cln:x_d}\otimes M_n}  @>{\cdot x_d\otimes\id}>> {\disp \frac{R}{J\cln}\otimes M_n} @>>>
{\disp   \frac{R}{J\cln+x_dR}\otimes M_n} @>>> 0\\
      @.                  @VVV                                           @VVV                      @VVV\\
   {}       @.           0                        @.                          0               @.                0
 \endCD$$
\quad\\
We have that $\ell\bigl(\Ker(\eta)\bigr) = \ell\bigl(\Img(\gamma)\bigr) + \ell\bigl(\Ker(\gamma)\bigr)
\leq \ell\bigl(\Ker(\theta)\bigr) + \ell(N_n)$,  since $\Ker(\gamma)$ is the intersection of $\Ker(\eta)$
with the image of $N_n$ and so is a subquotient of $N_n$.

By the definition of lim Cohen-Macaulay closure,
$\ell\bigl(\Ker(\theta)\bigr)=\oo\bigl(\alpha(M_n)\bigr)$, and by Corollary~\ref{torcap},
$\ell(N_n) = \ell\Big(\Tor_1^R\big(R/(\ux), M_n\big)\Big)=\oo\bigl(\alpha(M_n)\bigr)$. Hence
$\ell\bigl(\Ker(\eta)\bigr)=\oo\bigl(\alpha(M_n)\bigr)$, so that by the exactness of the leftmost column we have
$$ (*) \quad\ell\biggl(\frac{R}{J:x_d}\otimes M_n\biggr)-\ell\biggl(\frac{R}{J\cln:x_d}\otimes M_n\biggr)=\oo\bigl(\alpha(M_n)\bigr).$$

Note that if $r \in J:x_d$,  then $x_dr \in (\vect x {d-1},\, x_d^{a+1})$, so that
$$
x_dr - x_d^{a+1}r' = x_d(r - x_d^ar') \in (\vect x {d-1}), \ \ \mathrm{and}
$$
$$r - (x_d^a r') \in (\vect x {d-1}):x_d \inc (\vect x {d-1})\cln$$
by Theorem~\ref{thmcappar}(b). Therefore, we have that
$$r \in (\vect x {d-1})\cln + x_d^aR \inc (\vect x {d-1},\, x_d^a)\cln.$$  Thus,
$J:x_d \inc (x_1,\dots,x_{d-1}, x_d^a)\cln$. Consequentlly,
$$(**)\quad\ell\bigg(\frac{R}{(x_1,\dots,x_{d-1}, x_d^a)}\otimes M_n\bigg)-
\ell\bigg(\frac{R}{J:x_d}\otimes M_n\bigg)=\oo\big(\alpha(M_n)\big).$$
By subtracting  $(**)$ from $(*)$ we obtain:
$$\ell\bigg(\frac{R}{(x_1,\dots,x_{d-1}, x_d^a)}\otimes M_n\bigg)-\ell\bigg(\frac{R}{J\cln:x_d}\otimes M_n\bigg)= \oo\big(\alpha(M_n)\big).$$
This shows that $J\cln:x_d\subseteq (x_1,\dots,x_{d-1}, x_d^a)\cln$, as required
\end{proof}

As a corollary, we obtain a  proof of the direct summand conjecture for
rings that have a lim Cohen-Macaulay sequence, and we also obtain the result that
over a regular local ring, for every choice of $\cM$, every submodule of every
finitely generated module is $\cM$-closed with respect to $\nu$ and $\rho$.

\begin{corollary}\label{clsmall}  Let $\rmk$ be a local ring of Krull dimension $d$, and let $\cM = \{M_n\}_n$ be a
lim Cohen-Macaulay sequence of modules for $R$.  Let ${}\cln$ denote $\cM$-closure with respect to
either $\nu$ or rank (if rank is defined for the $M_n$). Let
$\vect x d$ be any system of parameters for $R$. Let $I_t =
(x_1^t,\, \ldots, x_d^t)R$.  Then for all $t \geq 1$,  $$I_t\cln: (x_1\cdots x_d)^{t-1} \inc (\vect x d)\cln \inc \m,$$
and so  $(x_1\cdots x_d)^{t-1} \notin I_t\cln$.
Hence:
\bena
\item If $T  \to R$ is a module-finite local map from a regular local ring $T$,  then $T$ is a direct summand of
$R$ as an $T$-module.
\item If $R$ is regular, the $\cM$-closure of every submodule $A$ of every finitely generated
module $B$ is $A$, both with respect to $\nu$ and $\rank$.
\een
\end{corollary}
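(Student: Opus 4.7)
The main inequality is an immediate combination of earlier results: taking $k=d$, $a_i=1$, $b_i=t-1$ in Theorem~\ref{strcap} gives $I_t\cln:(x_1\cdots x_d)^{t-1}=(\vect x d)\cln$, while $(\vect x d)\inc\m$ together with the monotonicity in Proposition~\ref{cloprop}(d) and the closedness $\m\cln=\m$ from Examples~\ref{exclo}(2) yields $(\vect x d)\cln\inc\m$. The claim $(x_1\cdots x_d)^{t-1}\notin I_t\cln$ is then immediate, since otherwise $1$ would lie in the colon ideal, hence in $\m$.

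For part (a), observe that the displayed chain is precisely the monomial conjecture for $R$: for every \sop $\vect y d$ and every $t\geq 1$, $(y_1\cdots y_d)^{t-1}\notin(y_1^t,\ldots,y_d^t)R$. The plan is to invoke Hochster's classical reduction of the direct summand theorem to the monomial conjecture (see~\cite{Ho73a}). Concretely, given $T\to R$ module-finite with $T$ regular, I would choose a regular \sop $\vect y d$ of $T$ (still a \sop of $R$); after completion, the socle generator $[1/(y_1\cdots y_d)]\in H^d_\n(\wh T)\cong E_{\wh T}(K)$ has nonzero image in $H^d_\n(\wh T\otimes_T R)$ precisely because the monomial property holds in $R$, giving an injection $H^d_\n(\wh T)\inj H^d_\n(\wh T\otimes_T R)$. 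Matlis duality (using that $\wh T$ is Gorenstein) then converts this into a splitting of $\wh T\to\wh T\otimes_T R$, which descends to a splitting of $T\to R$ by faithful flatness.

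For part (b), I would proceed in two reduction steps. First, by the definition of closure and Krull's intersection theorem applied to $B/A$, the identity $A\cln_B=A$ follows from $(A+\m^tB)\cln_B=A+\m^tB$ for every $t$, which in turn reduces to $0\cln_C=0$ for each finite-length quotient $C=B/(A+\m^tB)$. Second, given such a finite-length $C$, choose a regular \sop $\vect x d$ generating $\m$ and $t$ large enough that $I_t\inc\m^t$ annihilates $C$; then $C$ is a finite-length module over the zero-dimensional complete-intersection Gorenstein ring $R/I_t$, which is self-injective, so $C$ embeds in $(R/I_t)^m$ for some $m$. Proposition~\ref{cloprop}(e),(g) then further reduces the task to showing $I_t\cln=I_t$; the main inequality is deployed as follows: if $I_t\cln$ strictly contained $I_t$, then the nonzero ideal $I_t\cln/I_t$ of the zero-dimensional Gorenstein ring $R/I_t$ would contain its one-dimensional socle, generated by $(x_1\cdots x_d)^{t-1}+I_t$, placing $(x_1\cdots x_d)^{t-1}$ in $I_t\cln$ and contradicting the main inequality. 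The most delicate step of the overall plan is the invocation in (a) of the monomial-to-direct-summand implication: although classical, it requires external input (local cohomology and Matlis duality) beyond the paper's internal toolkit, whereas (b), once the main inequality is in hand, stays entirely within the framework developed in \S\ref{op}--\ref{cap}.
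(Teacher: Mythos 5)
Your proof is correct and follows essentially the same route as the paper: the first inclusion comes from Theorem~\ref{strcap} with $k=d$, $a_i=1$, $b_i=t-1$; part (a) is the classical reduction of the direct summand property to the monomial property; and part (b) reduces to showing a socle generator of $R/I_t$ is not in $0\cln_{R/I_t}$ after embedding a finite-length module into copies of $R/I_t$.

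Two small remarks.  For the second inclusion $(\vect x d)\cln \inc \m$, the paper cites Proposition~\ref{omcl}, whose stated hypotheses (reduced, formally equidimensional) are not assumed in the corollary; your route via Proposition~\ref{cloprop}(d) plus Examples~\ref{exclo}(2) is actually the cleaner justification, since Examples~\ref{exclo}(2) needs only $\alpha=\nu$ or $\alpha=\rank$ defined and nonzero.  For part (a), the paper simply cites \cite{Ho73b} for the implication ``monomial $\Rightarrow$ direct summand''; your Matlis duality sketch is the standard argument behind that citation and is fine, though note the relevant reference in the paper's bibliography is \cite{Ho73b}, not \cite{Ho73a}.
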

\begin{proof}  The first inclusion in the displayed line is immediate from Theorem~\ref{strcap}, while
the second inclusion follows from  Proposition~\ref{omcl}.

(a) Choose a system of parameters $\vect x d$ for $T$:  it is also a system of parameters
for $R$.  By \cite{Ho73b}, to prove that $T \to R$ splits, it suffices to show that for all $t \geq 1$,
$(x_1\cdots x_d)^{t-1} \notin I_t$.

(b) It suffices to prove that $0$ is closed in each module $B$ of finite length, and $B$
will embed in a finite direct sum of copies of the injective hull of $K$ over $R$ and, hence, in the
direct sum of finitely modules of the form $R/(x_1^t, \cdots, x_d^t)$. By Proposition~\ref{cloprop}(d),
it suffices to show for each summand that a socle generator is not in the closure of 0. This is true
because for all $t$,  $(x_1\cdots x_d)^{t-1} \notin I_t\cln$ in $R$.
\end{proof}

\section{The Dietz axioms for lim \CM closures and big \CM modules}\label{big} 

In this section we show that the existence of a lim Cohen-Macaulay sequence of modules
for a complete local domain implies that the compete local domain has a big Cohen-Macaulay module.
We follow the strategy of \cite{Di10}, where it is shown that if a closure operation on submodules of
finitely generated modules satisfies a certain set of seven axioms,  it follows that the ring
has a big Cohen-Macaulay module.  If $R$ is local, we can choose a minimal prime $\fp$ of
$\wh{R}$ so that $\wh{R}/\fp$ has the same dimension of $R$, and then elements of $R$ form
a system of parameters only if their images in $\wh{R}/\fp$ form a system of parameters.
Moreover we can make a local extension $\wh{R}/\fp \to S$ so that $S$ is complete, has a perfect
(or even algebraically closed) residue class field, and the closed fiber is simply the residue class
field of $S$.  A big Cohen-Macaulay module for $S$ is also a big Cohen-Macaulay module for $R$.
Thus, our results show that if every complete local domain with perfect (or even algebraically closed)
residue class field has a lim Cohen-Macaulay sequence of modules, then every local ring
has a big Cohen-Macaulay module.

Given a closure operation for finitely generated modules and their submodules over $R$,  one
gets a notion of {\it phantom extension} of $R$ as follows (cf.~\cite{HH94a} for the case of tight closure
and \cite{Di10}).  An injection
$f:R \to B$, where $B$ is a finitely generated $R$-module, is called a {\it phantom extension} with respect
to the closure operation if the following
condition holds.  Let $C := B/\Img(f)$.  We have a short exact sequence $(*)\quad 0 \to R \to B \to C \to 0$.
Choose a projective resolution $G_\bu$ of $C$ by finitely generated free $R$-modules, say
$\cdots G_2 \to G_1 \to G_0 \to 0$ of $C = B/\Img(f)$,
so that $\Ext_R^1(C,\,R)$ may be viewed as $Z_1/B_1$  where
$$Z_1 = \Ker\bigl(\Hom_R(G_1,\, R) \to \Hom_R(G_2,R)\bigr)\ \ \mathrm{and}$$
$$B_1 =\Img\bigl(\Hom_R(G_0,\, R) \to \Hom_RG_1,R)\bigr).$$
Then $f:R \to B$ is a {\it phantom extension} with respect to the given closure operation
if an element of $Z_1$  that maps to the element $\eta \in \Ext^1_R(C,\,R)$ corresponding to the
exact sequence $(*)$ is in the closure of $B_1$ in $\Hom_(G_1,\, R)$.  This condition turns out
to be independent of the choice of $G_\bu$ and of the choice of element that maps to  $\eta$.

\begin{discussion} {\bf The Dietz axioms.}
Let $(R,\m)$ be a fixed complete local domain.  Let ${}\cln$ denote a closure operation
over $R$ which assigns to every  $R$-submodule $A$ of a finitely generated $R$-module $B$
a submodule $A\cln_B$ of $B$. Let $A$, $B$, and $C$ be arbitrary finitely generated $R$-modules with
$A \inc B $. By the {\it Dietz axioms} we mean the following seven conditions on ${}\cln$.
\ben
\item $A\cln_B$ is a submodule of $B$ containing $A$.
\item $(A\cln_B)\cln_B = A\cln_B$, i.e., the  ${}\cln$-closure of $A$ in $B$ is closed in $B$.
\item If $A \inc B \inc C$, then $A\cln_C\inc B\cln_C$.
\item Let $\theta : B \to C$ be an $R$-limear homomorphism. Then $\theta(A\cln_B) \inc \theta(A)\cln_C$.
\item If $A\cln_B = A$, then $0\cln_{B/A} = 0$.
\item The ideals $\m$ and $0$ are ${}\cln$-closed in $R$; i.e., $\m\cln_R = \m$ and $0\cln_R = 0$.
\item  Let $\vect x {k+1}$ be part of a system of parameters for $R$, and let $J =
(\vect x k)R$. Suppose that there exist a surjective $R$-linear homomorphism $f : B \surj R/J$  and $v \in B$
such that $f(v) = x_{k+1} + J \in R/J$. Then
$(Rv)\cln_B \cap \Ker(f) \inc (Jv)\cln_B$.
\een
\end{discussion}

The main result of \cite{Di10} is:

\begin{theorem}[\bf Dietz] A complete local domain $R$ has a big Cohen-Macaulay module if and only
if it has a closure operation on submodules of finitely generated modules satisfying axioms
(1)--(7) above. \end{theorem}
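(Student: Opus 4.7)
The plan is to prove both implications separately, with the substantive work in the backward direction.

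For the forward direction, given a big Cohen-Macaulay module $M$ for $R$, I would define $A\cln_B$ to be the set of $b \in B$ such that $1 \otimes b$ lies in the image of the natural map $M \otimes_R A \to M \otimes_R B$, with the passage to non-finite-length cases via $\bigcap_t (A + \m^t B)\cln_B$. Axioms (1)--(5) are then formal consequences of the functoriality of tensoring with $M$ and elementary diagram-chasing. Axiom (6) uses $M \neq \m M$: were $1 \in \m\cln_R$, then $1 \otimes 1 \in \m M$, contradicting big Cohen-Macaulayness of $M$. Axiom (7) is supplied by the regular-sequence property: if $f: B \surj R/J$ with $f(v) = x_{k+1} + J$ and $rv \in (Rv)\cln_B \cap \Ker(f)$, then $r x_{k+1} \in J$, so $r \in JM :_M x_{k+1} = JM$ by regularity, which yields $rv \in (Jv)\cln_B$ after tracing through the definition.

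For the backward direction, assume $R$ admits a closure operation $\cln$ satisfying (1)--(7). I would construct a big Cohen-Macaulay module as a colimit $M_\infty = \varinjlim M_i$ of finitely generated modules with $M_0 = R$, where each step $M_i \to M_{i+1}$ is a phantom extension (in the sense defined just before the theorem statement) designed to kill one bad colon relation. Concretely, enumerate all triples $(\vect x {k+1},\, i,\, v)$ where $\vect x {k+1}$ is part of a system of parameters for $R$, $v \in M_i$, and $x_{k+1} v \in (\vect x k) M_i$ but $v \notin (\vect x k) M_i$; at each stage form $M_{i+1}$ by the pushout that forces $v$ to lie in $(\vect x k) M_{i+1}$. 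Axiom (7) is precisely what is needed to verify that each such transition is a phantom extension with respect to $\cln$, so that the construction produces a coherent sequence of phantom extensions of $R$.

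The main obstacle, and the technical heart of the argument, is showing that $M_\infty \neq \m M_\infty$, equivalently $1 \notin \m M_\infty$. This amounts to showing that phantom extensions of $R$ cannot force $1$ into $\m$ times the extension: were this to happen, the corresponding class in $\Ext^1$ would in fact lie in the image of $\Hom_R(G_0, R) \to \Hom_R(G_1, R)$ rather than merely in its closure, and combined with axiom (6) (which keeps $\m$ closed in $R$) this would contradict the strictness of the bad colon relation being killed at that stage. The remaining delicate point is to organize the enumeration so that bad relations created at later stages are themselves captured and killed while non-triviality is preserved; this is bookkeeping rather than conceptually new. Once both issues are handled, every system of parameters acts as a regular sequence on $M_\infty$ and $M_\infty / \m M_\infty \neq 0$, giving the desired big Cohen-Macaulay module.
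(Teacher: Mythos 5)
Both directions of your proposal follow the same outline the paper gives (which itself defers to Dietz's paper \cite{Di10} for the complete argument): in the forward direction, define the closure from a big \CM module; in the backward direction, iterate modifications starting from $R \arwf{\id} R$, show by induction via axiom (7) that the composites $R \to B$ stay phantom, and extract a big \CM module from the direct limit.

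There is, however, a genuine gap at exactly the step the paper flags as the final punch line: proving that a phantom extension $R \to B$ cannot have $1 \in \m B$. You argue that $1 \in \m B$ would force the extension class $\eta \in \Ext^1_R(B/R, R)$ to lie in $B_1 = \Img\bigl(\Hom_R(G_0,R) \to \Hom_R(G_1,R)\bigr)$ rather than merely in $B_1\cln$ — in other words, that the sequence would split. This implication is false: with $R = k[[x]]$, $B = R^2$, $\alpha(1)=(x,0)$, we have $1 \in \m B$ while the extension class is nonzero (no $R$-linear retraction $R^2 \to R$ can send $(x,0)$ to $1$). The actual argument is a pushforward, not a promotion from closure to membership. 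Present $B \cong (R \oplus R^{b_0})/W$ where $W$ is the graph of a cocycle $\phi \in \Hom_R(R^b,R)$ with $\phi \in B_1\cln$, and $R^b \arwf{\mu} R^{b_0}$ the relevant presentation map. Then $1 \in \m B$ produces $y \in R^b$ with $\mu(y) \in \m R^{b_0}$ and $\phi(y)$ a unit. Axiom (4) applied to the $R$-linear evaluation $\Hom_R(R^b,R) \to R$ at $y$ carries $\phi \in B_1\cln$ to $\phi(y)$ lying in the closure of $\{\psi(\mu(y)) : \psi \in \Hom_R(R^{b_0},R)\} \inc \m$, hence $\phi(y) \in \m\cln_R = \m$ by axiom (6) — contradicting that $\phi(y)$ is a unit. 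So axioms (4) and (6), via functoriality, do the work; the extension need not split.

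A smaller point on the forward direction: in verifying axiom (7) you only treat elements of the form $rv$ in $(Rv)\cln_B \cap \Ker(f)$, but $(Rv)\cln_B$ typically contains elements not in $Rv$. One must take an arbitrary $w \in (Rv)\cln_B \cap \Ker(f)$; for each $m$ in the big \CM module $M$, the phantom-style condition gives $m \otimes w = n_m \otimes v$ in $M \otimes B$ for some $n_m \in M$, and applying $\id_M \otimes f$ together with the fact that $JM :_M x_{k+1} = JM$ (since $\vect x {k+1}$ is a regular sequence on $M$) yields $n_m \in JM$, whence $w \in (Jv)\cln_B$. Your phrase ``$r \in JM :_M x_{k+1}$'' should be ``$rM \inc JM$.'' These are repairable, but as written the verification is incomplete.
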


We comment on the proof.  If there is a big Cohen-Macaulay module $\fB$, one may define
a closure operation satisfying (1)--(7) by letting $A\cln_B$ be the pullback to $B$ of
$\Ker\bigl(B/A \to (B/A) \otimes_R \fB\bigr)$.  In the other direction, one uses the same idea
as in \cite{HH94a}.  One starts with the phantom extension (with respect to ${}\cln$) $R \arwf{\id} R$.
One then shows:
\ben[label=$(\dagger)$]
\item If $R \to B$ is phantom with respect to
${}\cln$ and $x_{k+1}m = \sum_{i=1}^k x_im_i$
is a relation on part $\vect x {k+1}$ of a system of parameters for $R$ with coefficients in $B$,
then the composite map $$(*) \quad R \to B \to \frac{B \oplus R^k}{R(m,\, -x_1,\, \ldots, -x_k)}$$
is again phantom.
\een

See \cite{Di10} for a detailed treatment.
The rather subtle Dietz axiom (7) plays a critical role in the proof that condition $(\dagger)$ holds.
The map $$B \to \frac{B \oplus R^k}{R(m,\, -x_1,\, \ldots, -xk)}$$
is referred to as a {\it modification}.
One then shows that the direct limit $\fB$ of a carefully chosen family of
modules $B$, each obtained from a finite sequence of successive modifications $R \to B_1 \cdots \to B_n =:B$,
is a big \CM module for $R$ provided that the image of $1 \in R$ in $B$ is not in $\m \cB$.  For this, it suffices
to prove that the image of  $1 \in R$ is not in $\m B$ for every map $R \to B$ when $B$ is obtained
by a successive sequence of modifications.  From $(\dagger)$ and mathematical induction one knows that the maps
$R \to B$ obtained in this way are all phantom extensions, and one shows that for a phantom extension
$R \to B$ one cannot have that the image of $1 \in R$ is in $\m B$.

We shall prove below that if $\cM$ is a lim Cohen-Macaulay sequence of modules,
then the associated closure operation over the complete local domain $R$, either with respect to rank or with respect
to $\nu$,  satisfies the Dietz axioms (1)--(7).  It follows at once
that if $R$ has a lim Cohen-Macaulay sequence, then $R$ has a big Cohen-Macaulay
module.   In fact, we have already proved that the first six axioms hold:

\begin{proposition}  Let $\rmk$ be a local ring.  The Dietz axioms (1) --- (5) inclusive hold for every
closure operation on $R$ with respect to a sequence of $R$-modules.  Moreover,
if $R$ is reduced  and formally equidimensional and $\cM$ is a (weakly)  lim \CM sequence
of modules for which rank is defined, then axiom (6) holds for $\cM$-closure with respect to $\nu$ or rank,
which are the same.  In particular, (6) holds $(\cM,\nu)$-closure (equivalently, $(\cM, \rank)$-closure
for every (weakly) lim \CM sequence of modules over a complete local domain. \end{proposition}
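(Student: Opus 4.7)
The plan is to verify each of the six Dietz axioms by appealing to the closure-theoretic facts established earlier in the paper. Axioms $(1)$--$(5)$ hold in complete generality, for any sequence $\cM$ of nonzero finitely generated $R$-modules and any choice of $\alpha$. Specifically, I would read off axiom $(1)$ (that $A\cla_B$ is a submodule of $B$ containing $A$) from Proposition~\ref{cloprop}(b) together with the construction in Definition~\ref{clo}; axiom $(2)$ (idempotence) from Proposition~\ref{cloprop}(f); axiom $(3)$ (monotonicity in the submodule) from Proposition~\ref{cloprop}(d); axiom $(4)$ (compatibility with $R$-linear maps) from Proposition~\ref{cloprop}(c) specialized to $A' := \theta(A)$; and axiom $(5)$ from Proposition~\ref{cloprop}(a), whose equivalence (1)$\Leftrightarrow$(2) says that $0\cla_{B/A} = 0$ precisely when $A\cla_B = A$.

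For axiom $(6)$ I would handle $\m$ and $(0)$ separately. The closedness $\m\cla_R = \m$ was already noted in Examples~\ref{exclo}(2): were $\m\cla_R = R$, we would have $\ell(M_n/\m M_n) = \nu(M_n) = \oo(\alpha(M_n))$, impossible when $\alpha$ is $\nu$ or (when defined and nonzero) $\rank$, since in either case $\nu(M_n)/\alpha(M_n) \geq 1$. For $(0)\cla_R = (0)$, the reduced and formally equidimensional hypothesis makes $(0)$ an integrally closed ideal of $R$, so Theorem~\ref{cloint} supplies $(0)^{*\cM,\rank}_R = (0)$, settling axiom $(6)$ for the rank-closure. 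To pass to the $\nu$-closure I would invoke Lemma~\ref{Cbound}: the (weakly) lim \CM hypothesis furnishes, for any \sop\ $\ux$, a constant $C_\ux$ with $\nu(M_n) \leq C_\ux \rank(M_n)$, and Proposition~\ref{cloprop}(k) then gives $A^{*\cM,\nu}_B \inc A^{*\cM,\rank}_B$; combined with Proposition~\ref{cloprop}(l) this yields the equality of the two closures, so in particular $(0)^{*\cM,\nu}_R = (0)$.

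For the \emph{in particular} clause, I would simply observe that every complete local domain is reduced and formally equidimensional (being its own completion), so every (weakly) lim \CM sequence over such a ring automatically satisfies the hypotheses of the first part, and axiom $(6)$ follows for either closure (which, again by the preceding paragraph, agree).

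There is essentially no obstacle to this argument, since every ingredient has been developed in earlier sections. The only subtlety worth flagging is keeping careful track of which facts require the (weakly) lim \CM hypothesis: axioms $(1)$--$(5)$ together with closedness of $\m$ demand only that $\cM$ be a sequence of nonzero modules with the appropriate choice of $\alpha$, whereas closedness of $(0)$ in the $\nu$-closure genuinely uses the (weak) lim \CM condition, via Lemma~\ref{Cbound}, in order to identify the $\nu$-closure with the $\rank$-closure.
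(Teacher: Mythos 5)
Your proposal is correct and follows essentially the same approach as the paper: axioms (1)--(5) are matched to Proposition~\ref{cloprop} parts (b), (f), (d), (c), (a) exactly as the paper does, and for axiom (6) the paper simply cites Proposition~\ref{omcl}, whose proof is precisely the argument you spell out (closedness of $\m$ from Examples~\ref{exclo}(2), closedness of $(0)$ under the rank-closure from Theorem~\ref{cloint} and the fact that $(0)$ is integrally closed in a reduced ring, and the transfer to the $\nu$-closure via Lemma~\ref{Cbound} and Proposition~\ref{cloprop}(k),(l)).
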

\begin{proof} Axioms (1), (2), (3), (4), and (5) follow from Theorem~\ref{cloprop} parts
(b), (f), (d), (c), and (a), respectively, while the statements about (6) follow from
Proposition~\ref{omcl}. \end{proof}

We are now ready to prove one of the main results of this section:

\begin{theorem} Let $\rmk$ be a complete local ring and suppose that $R$ has a lim \CM sequence $\cM$ of
$R$-modules.  Then closure with respect to $\nu$ or rank (if rank is defined on the modules in $\cM$)
is a Dietz closure, and so $R$ has
a big \CM module. \end{theorem}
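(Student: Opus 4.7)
Since the two propositions immediately preceding this theorem establish Dietz axioms (1)--(6) for $\cM$-closure with respect to $\nu$ (and with respect to $\rank$, when defined), the essential content is axiom (7). The plan is to verify (7) and then invoke Dietz's theorem together with the standard reduction from a complete local ring to a complete local domain (by passage to $\wh R/\fp$ for a minimal prime $\fp$ of $\wh R$ with $\dim \wh R/\fp = \dim R$) to produce the big Cohen-Macaulay module. The strategy for (7) is to deduce it from the colon-capturing Theorem~\ref{strcap} and the functoriality of Proposition~\ref{cloprop}(c) via a diagram chase after tensoring with $M_n$.

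With the notation of axiom (7), let $w \in (Rv)\cln_B \cap \Ker(f)$; the goal is $w \in (Jv)\cln_B$. The case $w \in Rv$ is immediate: since $rv \in \Ker(f)$ iff $rx_{k+1} \in J$, one has $Rv \cap \Ker(f) = (J:_R x_{k+1})v$. Theorem~\ref{strcap} gives $J:_R x_{k+1} \subseteq J\cln_R$, and Proposition~\ref{cloprop}(c) applied to $\theta : R \to B$ with $\theta(1) = v$ yields $\theta(J\cln_R) \subseteq \theta(J)\cln_B = (Jv)\cln_B$; hence $Rv \cap \Ker(f) \subseteq (Jv)\cln_B$. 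For general $w$, I would reduce to the finite-length setting: by the $\m$-adic definition of $\cln$, it suffices to show $w \in (Jv + \m^sB)\cln_B$ for each $s \geq 1$.

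Fix $s$ and apply $M_n \otimes_R(-)$ to
\[
0 \to (Rv+\m^sB)/(Jv+\m^sB) \to B/(Jv+\m^sB) \to B/(Rv+\m^sB) \to 0.
\]
Let $U_n$ and $V_n$ denote the images of $M_n \otimes Rw$ and $M_n \otimes (Rv+\m^sB)/(Jv+\m^sB)$ in $M_n \otimes B/(Jv+\m^sB)$, respectively. The hypothesis $w \in (Rv+\m^sB)\cln_B$ gives $\ell(U_n/(U_n \cap V_n)) = o(\alpha(M_n))$ via the surjection onto $M_n \otimes B/(Rv+\m^sB)$. The hypothesis $f(w) = 0$ implies that the induced map $\bar f : B/(Jv+\m^sB) \to R/(J+\m^s)$ (well-defined as $Jv \subseteq \Ker(f)$) kills $U_n$; since $\bar f$ restricted to $(Rv+\m^sB)/(Jv+\m^sB)$ is essentially multiplication by $x_{k+1}$, the intersection $U_n \cap V_n$ embeds into the $x_{k+1}$-torsion of $M_n/(J+\m^s)M_n$.

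The main obstacle is the module-level colon-capture estimate $\ell\big(((J+\m^s)M_n :_{M_n} x_{k+1})/(J+\m^s)M_n\big) = o(\alpha(M_n))$. To obtain this, extend $\vect x {k+1}$ to a full system of parameters $\vect x {k+1}, y_{k+2}, \ldots, y_d$ of $R$; by Corollary~\ref{Koszlength}(b), raising $y_{k+2}, \ldots, y_d$ to their $s$-th powers inflates Koszul homology lengths by at most the constant $s^{d-k-1}$, so the lim \CM estimate $\ell(H_i(\vect x {k+1}, y_{k+2}^s, \ldots, y_d^s; M_n)) = o(\alpha(M_n))$ for $i \geq 1$ persists. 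Standard Koszul spectral sequence arguments (using Theorem~\ref{Koszul}(d) and Proposition~\ref{torcap}) then identify the required $x_{k+1}$-torsion as a subquotient of the first Koszul homology of this system of parameters, yielding the $o(\alpha(M_n))$ bound. Combining the two length estimates gives $\ell(U_n) = o(\alpha(M_n))$, so $w \in (Jv+\m^sB)\cln_B$; intersecting over $s$ completes axiom (7).
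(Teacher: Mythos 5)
The proposal correctly notes that only axiom (7) remains, correctly computes $Rv \cap \Ker(f) = (J:_R x_{k+1})v$, and correctly invokes colon-capture plus Proposition~\ref{cloprop}(c) for that special case. But the argument for general $w \in (Rv)\cln_B \cap \Ker(f)$ has a fatal flaw at the key step.

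The issue is the claimed ``module-level colon-capture estimate''
\[
\ell\Bigl(\bigl((J+\m^s)M_n :_{M_n} x_{k+1}\bigr)/(J+\m^s)M_n\Bigr) = \oo\bigl(\alpha(M_n)\bigr),
\]
on which you propose to bound $\ell(U_n \cap V_n)$. This estimate is false in general: since $x_{k+1} \in \m$, we have $x_{k+1}\cdot \m^{s-1}M_n \subseteq \m^s M_n \subseteq (J+\m^s)M_n$, so the $x_{k+1}$-torsion of $M_n/(J+\m^s)M_n$ contains the image of $\m^{s-1}M_n$, which typically has length comparable to $\nu(M_n)$, not $\oo(\nu(M_n))$. (Taking $k=0$, $J=0$, $s=1$ over $R = K[[x,y]]$ already gives the whole of $M_n/\m M_n$.) The Koszul spectral sequence route you sketch bounds $\Ann_{M_n/(\ux^-)M_n}(x_{k+1})$ where $(\ux^-)$ is a parameter ideal like $(x_1,\ldots,x_k,x_{k+2}^s,\ldots,x_d^s)$, but this is strictly smaller than $(J+\m^s)$, and torsion does not behave well under the surjection $M_n/(\ux^-)M_n \twoheadrightarrow M_n/(J+\m^s)M_n$, so the needed embedding is not available. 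In short: you are trying to show a length is small, but the length you bound it by is not small.

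The paper avoids this difficulty entirely by a different mechanism. It first reduces to $k = d-1$ (by precomposing $f$ with the further quotient $R/J \twoheadrightarrow R/(J + (x_{k+2}^t,\ldots,x_d^t))$, an observation that turns $J + \m^t$-type ideals into honest parameter ideals). In that top case it does \emph{not} bound any single length, but instead compares two rows of a $\Tor$-exact diagram: the error introduced when tensoring with $M_n$ appears as a homomorphic image of $\Tor_1^R(R/(\ux), M_n)$, which Proposition~\ref{torcap} controls, and the alternating-sum identity then yields the difference
$\ell\bigl(M_n \otimes (N+Rv)/(Rv+\m^tB)\bigr) - \ell\bigl(M_n \otimes (N+Rv)/(Ry+Rv+\m^tB)\bigr) = \oo(\nu(M_n))$,
placing $y$ in $(Rv)\cln_{N+Rv}$. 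Only then does the identification $N \cap Rv = (J:x_d)v$ and the colon-capture theorem get used to pass to $(Jv)\cln_B$. Your direct length bound cannot be repaired by cosmetic changes; the argument must switch to a length-difference estimate along the lines of the paper (or at least reduce to the $k=d-1$ case first, where the offending $\m^s$ is replaced by an actual parameter ideal and the $\Tor_1$ control becomes applicable).
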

\begin{proof} In this argument, all tensor products are taken  over $R$.
Let $d$ be the Krull dimension of $R$ and let ${}\cln$ indicate $\cM$-closure with respect
to $\nu$ or rank (if it is define on $\cM$), which are the same. It remains only to prove the Dietz axiom (7).
Let $\vect x{k+1}$ be part of a full system of parameters $\ux = \vect x d$ for $R$,
and let $J=(\vect x k)R$.
Suppose that we have a surjective homomorphism $f:B\to R/J$ and $v \in B$ such that
$f(v)=x_{k+1}+J$. We need to show that $(Rv)\cln_B\cap \Ker{f}\subseteq (Jv)\cln_B$

We first prove this for $k = d-1$. In this case $J=(x_1,\dots,x_{d-1})$and $f(v)=x_d$. Let $N=\Ker f$ and
let $y\in(Rv)\cln_B\cap N$. So we have an exact sequence: \[0\to \frac{N+Rv}{Rv}\to \frac{B}{Rv}\to\frac{R}{(x_1,\dots,x_d)}\to 0.\] Since $(x_1,\dots,x_d)$ is $\m$-primary, for $t\gg 0$, $\m^tB$ maps to $0$ in
$R/(x_1,\dots,x_d)$. Since $y\in N$, the image of $y$ in $R/(x_1,\dots,x_d)$ is also $0$. Hence when $t\gg0$, we have the following commutative diagram:
\[  \xymatrix{
   0 \ar[r] &{\disp \frac{N+Rv}{Rv+\m^tB}}  \ar[r]\ar@{->>}[d] & {\disp  \frac{B}{Rv+\m^tB} }  \ar[r]\ar@{->>}[d]
    & {\disp \frac{R}{(\ux)} } \ar[r]\ar[d]^{\id} & 0\\
   0 \ar[r] & \disp{\frac{N+Rv}{Ry+Rv+\m^tB}}  \ar[r] & \disp{\frac{B}{Ry+Rv+\m^tB}}  \ar[r]
   & \disp{\frac{R}{(\ux)}}  \ar[r] & 0
} \]
Applying $M_n \otimes \blank$ to the commutative diagram above, we have:

\[  \xymatrix@C=9pt{  {\disp \Tor_1^R\!\Big(\!M_n,\frac{R}{(\ux)}\Big)}\ar[r]\ar[d]_{\id}&\disp{M_n \otimes \frac{N+Rv}{Rv+\m^tB}} \ar[r]\ar@{->>}[d]
     &\disp{M_n \otimes  \frac{B}{Rv+\m^tB}} \ar[r]\ar@{->>}[d]  &\disp{\frac{M_n}{(\ux)M_n}}\ar[r] \ar[d]^{\id} & 0\\
 \disp{ \Tor_1^R\!\Big(\!M_n,  \frac{R}{(\ux)}\Big)}\ar[r]&\disp{M_n \!\otimes\!  \frac{N+Rv}{Ry+Rv+\m^tB}}\ar[r]
     & \disp{M_n \!\otimes\! \frac{B}{Ry+Rv+\m^tB}}  \ar[r]  &\disp{\frac{M_n}{(\ux)M_n}} \ar[r] & 0
} \]
By Proposition~\ref{torcap}, $\ell\big(\Tor_1^R(R/(\ux), M_n)\big)=\oo\big(\nu(M_n)\big)$, so any homomorphic image of
$\ell\big(\Tor_1^R(R/(\ux), M_n)\big)$ is also $\oo\big(\nu(M_n)\big)$. Since the alternating sum of the lengths in a finite
exact sequence of modules of finite length is 0, we obtain an equation on lengths from each of the two rows:
\[\ell\Big(M_n \otimes \frac{B}{Rv+\m^tB}\Big)+\oo(\nu(M_n))=
\ell\Big(M_n\otimes\frac{N+Rv}{Rv+\m^tB}\Big)+\ell\Big(\frac{M_n}{(\ux)M_n}\Big)\ \ \mathrm{and} \]
\[\ell\Big(M_n \otimes\frac{B}{Ry+Rv+\m^tB}\Big)+\oo\big(\nu(M_n\big))=
\ell\Big(M_n \otimes\frac{N+Rv}{Ry+Rv+\m^tB}\Big)+\ell\Big(\frac{M_n}{(\ux)M_b}\Big)\]

Since $y\in(Rv)\cln_B\subseteq (Rv+\m^tB)\cln_B$, we also know that
\[\ell\Big(M_n \otimes\frac{B}{Rv+\m^tB}\Big)-\ell\Big(M_n \otimes \frac{B}{Ry+Rv+\m^tB}\Big)=\oo\big(\nu(M_n)\big).\]
Combining these three equations we have that for all $t \gg 0$
 \[\ell\Big(M_n \otimes\frac{N+Rv}{Rv+\m^tB}\Big)-\ell\Big(M_n \otimes\frac{N+Rv}{Ry+Rv+\m^tB}\Big)=
 \oo\big(\nu(M_n)\big).\]

 These equations imply that $y\in(Rv+\m^tB)\cln_{N+Rv}$ for all $t\gg0$.
 Since we know that $B/(N+Rv)$ has finite length, we have that is killed by $\m^{t_0}$ for some $t_0$.
 But then $\m^{t+t_0}B \inc \m^t(N+Rv)$,and so $y \in  (Rv + \m^t(N+Rv)\cln_{N+Rv}$ for all $t \gg 0$.
 From the definition of ${}\cln$,  it follows that $y \in (Rv)\cln_{N+Rv}$.  Since
 $(N+Rv)/Rv\cong N/(N\cap Rv)$ we have that  $y\in (N\cap Rv)\cln_N\subseteq (N\cap Rv)\cln_B.$

But since $f(v)=x_d$, it is easy to see that $N\cap Rv=\Ker f\cap Rv= (J:x_d)v$. By Theorem~\ref{thmcappar}(b),
we have that $J:x_d \inc J\cln_R$.  Consequently,
$$\big((J:x_d)v\big)\cln_B\subseteq (J\cln_R  v)\cln_B.$$
Consider the map $\theta: R \to B$  such that $r \mapsto rv$.  By the Dietz axiom (4),  which we have
already established, $ \theta(J\cln_R) \inc \theta(J)\cln_B$, i.e., $J\cln_R v \inc (Jv)\cln_B$.
But then we have $J\cln_R  v)\cln_B \inc \big((Jv)\cln_B\big)\cln_B = (Jv)\cln_B$, by the Dietz axiom (2), which also
has been proved.  Combining this with the line displayed just above, we have $\big((J:x_d)v\big)\cln_B \inc (Jv)\cln_B$,
as required. This completes the proof of the case where $k=d-1$.

Now assume $k< d-1$. We consider the composite map
\[f_t: B\xrightarrow{f}R/J=R/(x_1,\dots,x_k)\surj R/(x_1,\dots, x_k, x_{k+2}^t,\dots, x_d^t)\]
where the rightmost map is just the natural surjection. We still have $f_t(v)=x_{k+1}$.  We now apply the
result for $k =d-1$ treating $x_{k+1}$ as the last parameter, which we may, since systems of parameters are
permutable. It follows that
\[(Rv)\cln_B\cap\Ker f_t\subseteq \big(Jv+(x_{k+2}^t,\dots, x_d^t)v\big)\cln_B\subseteq (Jv+\m^tB)\cln_B.\]
Finally, we have \[(Rv)\cln_B\cap\Ker f \subseteq \bigcap_t \big((Rv)\cln_B\cap \Ker f_t\big)\subseteq
\bigcap_t (Jv+\m^tB)\cln_B=(Jv)\cln_B. \]
\end{proof}

\begin{remark} Let $\rmk$ be a complete local domain and, for simplicity, assume that $K$ is algebraically closed.
Let $\cM = \{M_n\}_n$ be  as sequence of module-finite extension algebras of $R$ that is a lim \CM sequence of
modules.  Even if all the $M_n$ are also domains, we do not know whether the closure operation associated with this sequence satisfies the algebra axiom of  \cite{RG18}.  Whenever that is so, we can prove that $R$ has a big \CM-algebra. \end{remark}

\section{Strongly lim \CM sequences}\label{strong} 

In this section we define the notion of a {\it strongly lim \CM} sequence of modules, and prove
several theorems about their behavior. We use local duality and
spectral sequence arguments to prove some  length estimates for various homology and cohomology
modules. See Theorems~\ref{spseq}, \ref{boundhom}, and \ref{thmcap}.
In particular, we show that strongly lim \CM sequences are lim \CM, which is not obvious.  This is Corollary~\ref{strongimp}.  We note that if $R$ is F-finite local
and $M$ is a finitely generated $R$-module of dimension $d = \dim(R)$,  then $F^n_*(M)$
is a strongly lim \CM sequence of modules: see Theorem~\ref{Fnstrong}.  This result strengthens
Theorem~\ref{nMlimcm}.

Recall from \S\ref{alt} that $\ell_\cV(H)$ denotes the shortest
length of a finite filtration of $H$ in which all factors are modules in $\cV$, or $+\infty$ if no such filtration
exists.

\begin{definition} Let $\rmk$ be a local ring of Krull dimension $d$.  We define a sequence\footnote{Again, there is
an obvious generalization to nets of modules.}  of modules
$\cM = \{M_n\}_n$ to be {\it strongly lim \CM} if the Krull dimension of every $M_n$ is $d$, and there exists a finite
set of Artinian modules $\cV = \{\vect V a\}$ such that for every $j < d$,  $\ell_\cV\big(H^j_\m(M_n)\big) =
 \oo\big(\nu(M_n)\big).$
\end{definition}.

\begin{proposition}\label{stcomp} The sequence $\{M_n\}_n$ is strongly lim \CM over $R$ if and only if the sequence
$\{\wh{M}_n\}_n$ is strongly lim \CM over $\wh{R}$. \end{proposition}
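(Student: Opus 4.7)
The plan is to verify that each of the three ingredients in the definition of strongly lim \CM is invariant under $\m$-adic completion, and then to observe that the required filtrations are also preserved.

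First I would record the invariance of the basic numerical data. Since $R \to \wh{R}$ is faithfully flat with $\m\wh{R}$ the maximal ideal of $\wh{R}$, one has $\dim(R) = \dim(\wh{R})$ and, for every finitely generated $R$-module $M_n$, $\dim_R(M_n) = \dim_{\wh{R}}(\wh{M}_n)$; moreover $\wh{M}_n/\m\wh{M}_n \cong M_n/\m M_n$ as $K$-vector spaces, so $\nu_R(M_n) = \nu_{\wh{R}}(\wh{M}_n)$. Thus the dimension hypothesis in the definition of strongly lim \CM is literally the same on both sides, and any $\oo\bigl(\nu(M_n)\bigr)$ condition is the same whether interpreted over $R$ or over $\wh{R}$.

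Next I would compare the local cohomology modules and their module structures. By flat base change (or directly from the \v{C}ech complex on generators of $\m$), $H^j_\m(M_n) \otimes_R \wh{R} \cong H^j_{\m\wh{R}}(\wh{M}_n)$, and since $H^j_\m(M_n)$ is $\m$-torsion and in fact already an $\wh{R}$-module via its natural $\wh{R}$-structure, this natural map is an isomorphism of $\wh{R}$-modules. The key categorical fact to isolate is that the categories of Artinian $R$-modules and Artinian $\wh{R}$-modules are equivalent: any Artinian module over the local ring $\rmk$ is $\m$-power torsion (in the sense that every element is killed by a power of $\m$), so its $R$-submodule lattice equals its $\wh{R}$-submodule lattice and its $R$-module structure extends uniquely to $\wh{R}$. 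In particular a finite collection $\cV = \{V_1, \ldots, V_a\}$ of Artinian $R$-modules is the same datum as a finite collection of Artinian $\wh{R}$-modules.

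Combining these, for any finitely generated $R$-module $M_n$ and any $j$, a finite filtration of $H^j_\m(M_n)$ with factors isomorphic to elements of $\cV$ over $R$ is exactly the same as such a filtration of $H^j_{\m\wh{R}}(\wh{M}_n)$ with factors in $\cV$ over $\wh{R}$. Consequently $\ell_\cV\bigl(H^j_\m(M_n)\bigr) = \ell_\cV\bigl(H^j_{\m\wh{R}}(\wh{M}_n)\bigr)$, and the defining condition $\ell_\cV\bigl(H^j_\m(M_n)\bigr) = \oo\bigl(\nu(M_n)\bigr)$ for $j < d$ holds over $R$ if and only if it holds over $\wh{R}$. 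This yields both implications simultaneously. The only potential wrinkle to verify carefully is the equivalence of the two Artinian module categories; I would dispatch this by noting that any Artinian $R$-module embeds in a finite direct sum of copies of $E_R(K) = E_{\wh{R}}(K)$, at which point the $R$- and $\wh{R}$-actions agree tautologically.
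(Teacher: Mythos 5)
Your proof is correct and takes essentially the same approach as the paper, which simply observes in one line that the local cohomology modules are unchanged under completion. You have filled in the supporting details (invariance of $\nu$ and $\dim$, and the equivalence between the categories of Artinian $R$-modules and Artinian $\wh{R}$-modules), which are accurate and standard.
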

\begin{proof} This is clear, since the local cohomology modules are the same. \end{proof}

Let $\rmk$ be a local ring of Krull dimension $d$.  In this section, tensor product, Hom, and Ext are all taken over
$R$ unless otherwise indicated by the use of subscripts and superscripts.

\begin{discussion}\label{dual} Let $M$ and $W$ denote finitely generated
$R$-modules.
Note that if $G_\bu$ is a left complex of finitely generated free $R$-modules over a local ring $R$ whose
augmentation $N = H_0(G_\bu) = \Coker(G_1 \to G_0)$ is locally free on the punctured spectrum
and such that $H_i(G_\nu)$ has finite length for $i \geq 1$,  then $H_i(G_\bu \otimes_R W)$
has finite length for every finitely generated $R$-module $W$ and $i \geq 1$. This is a consequence
of the fact that for any $f \in \m$,  $R_f \otimes_R G_\bu \to N_f \to 0$ is an exact sequence of
$R_f$-projective modules, so that $G_\bu \otimes_R W) \otimes_R R_f$ is acyclic, and some
power of $f$ therefore kills $H_i(G_\bu \otimes_R W)$ for $i \geq 1$. Note that the hypotheses
are preserved if we make a base change to $\wh{R}$.   (The completion of $N$ is still locally free
on the punctured spectrum of $\wh{R}$:  a proper prime of $\wh{R}$ cannot contain $\m$, and $f$
is in $\m$ and not in the prime,  $N_f$ is projective over $R_f$ and the base change to $\wh{R}$
preserves this.) Hence, the homology for positive indices $i$ of
$G_\bu \otimes_R \wh{W} \cong (G_\bu \otimes_R \wh{R}) \otimes_{\wh{R}}\wh{W}$ has
the same finite length as $H_i(G \otimes_R W)$.   Let $\wh{G}_\bu = \wh{R} \otimes_R G_\bu$.
If $V$ is any Artinian $R$-module, then
$$H^i\bigl(\Hom(G_\bu,\, V)\bigr)^\lor \cong H^i\bigl(\Hom(G_\bu \otimes_R \wh{R},\, V)\bigr)^\lor
\cong H_i(\wh{G}_\bu \otimes_{\wh{R}} V^\lor),$$
 which will have finite length
homology since $V^\lor$ is Noetherian over $\wh{R}$.  Moreover, $H^i\bigl(\Hom(G_\bu,\, V)\bigr)$
and $H_i(\wh{G}_\bu \otimes_{\wh{R}} V^\lor)$  have the same length. \end{discussion}

\begin{discussion} If $R = S/\fA$ is a homomorphic
image  of a Gorenstein ring $S$ of Krull dimension $d+h$, we let $\omega_i(M)$ denote
$\Ext_S^{d+h-i}(M,\,S)$, which is a finitely generated $R$-module
whose Matlis dual over $R$ is $H^{d-i}_\m(M)$:  the latter condition determines $\omega_i(M)$
up to isomorphism.  If $R$ is complete, we may take $\omega_i(M)$ to be the Matlis dual
of $H^{d-i}_\m(M)$.  In the sequel, we may take $\omega_i(M)$ to be a finitely generated module
over either $R$ or $\wh{R}$ whose Matlis dual is $H^{d-i}_\m(M)$.

In this situation we have a {\it dualizing complex} $\cI^\bu$ for $R$ obtained as follows.
 Let $\cJ^\bu$ be a minimal injective resolution of $S$, and let $\cI^i = \Hom_S(R, \, \cJ^{i+h})$.
 Note that $\Hom_S(R, \cJ^s) = 0$ for $s < h$,
since all of the associated primes of $\cJ^s$ have height $s$, so that $\cJ^s$ does not contain
an element killed by $\fA$.
The $\cI^i$ are injective over $R$, each module $H^i(\cI^\bu) = \omega_i(R)$ is
finitely generated, and its Matlis dual is $H^{d-i}_\m(R)$.
(If $R$ is Cohen-Macaulay, $\cI^\bu$ is an injective resolution of a canonical module $\omega =
\omega_0(R)$ for $R$.)  The dualizing complex gives an alternative way of calculating
$\omega_j(M)$: if $M$ is finitely generated, we may let $\omega_j(M):= H^j\bigl(\Hom_R(M,\, \cI^\bu)\bigr)$.
In fact, this gives the same modules as the calculation of $\Ext_S^{d'-i}(M, S)$.
Let $\blank^{\lor}$
indicate Matlis dual over $R$, i.e., $\Hom_R(\blank, E)$ where $E = E_R(K)$ is the injective
hull of $K$ over $R$.  For any finitely generated module $M$,  $H^i_\m(M) \cong \omega_{d-i}(M)^{\lor}$.
\end{discussion}

Note that in the theorem below, we can avoid referring to the modules $\omega_t(M)$ (and we do
so in the alternative statements), since
 by Discussion~\ref{dual}:  \vskip 3pt
\centerline{$ \disp H_i\bigl(G_\bu \otimes  H^j_\m(M)\bigr)^\lor \cong
H^i\Bigl((\Hom\bigl(G_\bu, \omega_{d-j}(M)^\lor\bigr)\Bigr) \cong
 H^i\Bigl((\Hom\bigl(G_\bu, \omega_{d-j}(M)\bigr)\Bigr)^\lor$}
\noindent and so $H_i\bigl(G_\bu \otimes  H^j_\m(M)\bigr)$  and
$H^i\Bigl(\Hom\bigl(G_\bu, \omega_{d-j}(M)\bigr)\Bigr)$
have the same length (finite or infinite)
if $i \geq 1$.  The two lengths are finite if $i \geq 1$.

\begin{theorem}\label{spseq} Let $\rmk$ be a local of Krull dimension $d$, let $M$ be a finitely
generated $R$-module, and let $G_\bu$ be a left complex $ \cdots \to G_k \to \cdots \to G_0 \to 0$
of finitely generated free $R$-modules, which may have infinite length. Assume that $N = H_0(G_\bu)$ is locally free
on the punctured spectrum of $R$ and that $H_i(G_\bu)$ has finite length for $i \geq 1$.
Let $\omega_t(M)$ be a finitely generated module over $R$ or $\wh{R}$ whose Matlis
dual is $H^{d-t}_\m(R)$.
Then: \vskip 3 pt
\centerline{$\disp  \ell\Bigl(H^0_\m\bigl(H_0(G_\bu \otimes M)\bigr)\Bigr) \leq
\sum_{s,t \geq 0,\,\,s+t = d} \ell\Bigl(H^s\bigl(\Hom_R\bigl(G_\bu, \, \omega_t(M)\bigr)\bigr)\Bigr)$ \ and} \vskip 3pt
\centerline{$\disp \ell\bigl(H_i(G_\bu \otimes M)\bigr) \leq
\sum_{s,t \geq 0,\,\,s+t = d+i} \ell\Bigl(H^s\bigl(\Hom_R\bigl(G_\bu, \, \omega_t(M)\bigr)\bigr)\Bigl)$\ for $i \geq 1$.}

Alternatively: \vskip 3pt
\centerline{$\disp  \ell\Bigl(H^0_\m\bigl(H_0(G_\bu \otimes M)\bigr)\Bigr) \leq
\sum_{s,t \geq 0,\,\,s+t = d} \ell\bigl(H_s(G_\bu \otimes_R H^{d-t}_\m(M)\bigr)$.\ and} \vskip 3pt
\centerline{$\disp \ell\bigl(H_i(G_\bu \otimes M)\bigr) \leq
\sum_{s,t \geq 0,\,\,s+t = d+i} \ell\bigl(H_s(G_\bu \otimes_R H^{d-t}_\m(M)\bigr)$\ for $i \geq 1$.}
\end{theorem}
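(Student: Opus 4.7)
The approach is to compute the hypercohomology $\mathbb{H}^\bullet := H^\bullet\!\bigl(R\Gamma_\m(G^\bullet \otimes_R M)\bigr)$ in two different ways, where $G^\bullet$ denotes $G_\bullet$ reindexed cohomologically (so $G^{-i} = G_i$, concentrated in non-positive degrees). Concretely, let $\mathcal{C}^\bullet$ be the \v{C}ech complex on a generating set of $\m$, which is bounded and flat and satisfies $H^\bullet(\mathcal{C}^\bullet \otimes_R -) = H^\bullet_\m(-)$. I would form the double complex $K^{p,q} := \mathcal{C}^p \otimes_R G^q \otimes_R M$ and denote the cohomology of its total complex by $\mathbb{H}^\bullet$. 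Both associated spectral sequences converge to $\mathbb{H}^{p+q}$, since the \v{C}ech filtration is bounded and the $G$-filtration induces a filtration of finite length in each total degree (using boundedness of $\mathcal{C}^\bullet$).

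For the first spectral sequence (filtering by \v{C}ech degree $p$), one has
\[
{'E}_2^{p,q} \;=\; H^p_\m\!\bigl(H^q(G^\bullet \otimes_R M)\bigr).
\]
By Discussion~\ref{dual}, the hypotheses on $G_\bullet$ imply that $H_{-q}(G_\bullet \otimes_R M)$ has finite length for $q \leq -1$; consequently, on any anti-diagonal $p+q = -i$ with $i \geq 0$, the only potentially nonzero ${'E}_2$ term is ${'E}_2^{0,-i}$, which equals $H^0_\m(H_0(G_\bullet \otimes_R M))$ when $i = 0$ and equals $H_i(G_\bullet \otimes_R M)$ when $i \geq 1$. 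I would then verify that the higher differentials at $(0,-i)$ vanish on both sides: incoming differentials originate from negative \v{C}ech degree and hence are zero, while outgoing differentials at page $r \geq 2$ land in $H^r_\m(H_{i+r-1}(G_\bullet \otimes_R M))$, which vanishes because the argument has finite length. This pins down ${'E}_\infty^{0,-i} = {'E}_2^{0,-i}$ and yields $\mathbb{H}^0 = H^0_\m(H_0(G_\bullet \otimes_R M))$ and $\mathbb{H}^{-i} = H_i(G_\bullet \otimes_R M)$ for $i \geq 1$.

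For the second spectral sequence (filtering by $G$-degree $q$, using flatness of each $G^q$ to compute horizontal \v{C}ech cohomology first), one has
\[
{''E}_2^{p,q} \;=\; H^q\!\bigl(G^\bullet \otimes_R H^p_\m(M)\bigr).
\]
Convergence delivers the length estimate $\ell(\mathbb{H}^n) \leq \sum_{p+q=n}\ell({''E}_2^{p,q})$. Setting $n = -i$ and reindexing via $s := -q$, $t := d - p$ (so $s + t = d + i$), the nonzero contributions correspond to $0 \leq p \leq d$ and take the form $H_s(G_\bullet \otimes_R H^{d-t}_\m(M))$. Combining this with the identification from the first spectral sequence yields the two alternative inequalities stated at the end of the theorem. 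The original forms in terms of $\omega_t(M)$ then follow from the Matlis-duality identity $\Hom_R(G_\bullet, \omega_t(M)) \cong (G_\bullet \otimes_R H^{d-t}_\m(M))^\lor$ recorded in the paragraph preceding the theorem.

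The main obstacle will be the clean identification of $\mathbb{H}^{-i}$ from the first spectral sequence, since it requires simultaneously exploiting the finite length of $H_i(G_\bullet \otimes_R M)$ for $i \geq 1$ (to kill off-diagonal ${'E}_2$ entries) and the vanishing of all higher differentials at $(0,-i)$. Once this identification is in hand, the rest is standard spectral-sequence bookkeeping and index shuffling.
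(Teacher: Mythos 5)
Your proof is correct and is essentially the Matlis dual of the paper's argument. The paper forms the double complex $\Hom_R(G_\bu \otimes_R M,\, \cI^\bu)$, where $\cI^\bu$ is a dualizing complex (after first passing to $\wh{R}$, or assuming $R$ is a quotient of a Gorenstein ring), and runs both spectral sequences; the terms it obtains are the Matlis duals of the terms in your two spectral sequences, namely $H^{d-j}_\m\bigl(H_i(G_\bu \otimes M)\bigr)^\lor$ on one side and $H^i\bigl(\Hom_R(G_\bu, \omega_j(M))\bigr) \cong H_i\bigl(G_\bu \otimes H^{d-j}_\m(M)\bigr)^\lor$ on the other. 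You instead tensor with the \v{C}ech complex $\cC^\bu$ directly. The advantage of your version is that it is slightly more self-contained: you do not need to complete $R$ or invoke a dualizing complex, since $\cC^\bu$ works for any local ring, and the boundedness of $\cC^\bu$ immediately gives a finite filtration of the total complex in each degree, so convergence is clean. You also prove the ``alternative'' forms of the inequalities first and then deduce the $\omega_t(M)$ forms by the Matlis-duality identity stated before the theorem, which reverses the order in the paper but is logically equivalent. Your verification that the first spectral sequence degenerates to give $\mathbb{H}^{-i} \cong H_i(G_\bu \otimes M)$ — incoming differentials vanish by non-negativity of \v{C}ech degree, outgoing differentials land in $H^r_\m$ of finite-length modules for $r \geq 2$ — is more explicit than the paper's corresponding argument and is correct. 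No gaps.
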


\begin{remark} Before giving the proof of Theorem~\ref{spseq}, we observe that
the first two statements in the conclusion of the theorem can be combined into a single statement:  since
the module on the left in the second statement has finite length, it can be replaced by
$H^0_\m\bigl(H_i(G_\bu \otimes M)\bigr)$.  The resulting version of the second statement is then
true when $i =0$:  that is the first statement.  The same remark applies to the alternative forms.  Note also
that if any module on the right hand side of the one of these statements has infinite length, the
statement is obviously true.  \end{remark}

\begin{proof} The hypotheses are preserved if we replace $R$ by its completion.  (If $R = S/\fA$ with
$S$ a Gorenstein local ring, this step is not needed, since $R$ has
a dualizing complex).
Let $0 \to \cI^0 \to \cdots \to \cI^d$ be a dualizing complex for $R$.
We consider the spectral sequences associated with double complex $\Hom_R(G_\bu \otimes _R M, \, \cI^\bu)$:

$$\CD
{}   @.          0                                                      @.    {}      @.        0                @. {} \\
@.               @VVV                                                @.                      @VVV              @.\\
0 @>>> \Hom_R(G_0 \otimes M,\, \cI^0) @>>> \cdots @>>> \Hom_R(G_0 \otimes M, \,\cI^d) @>>> 0\\
@.               @VVV                                                @.                      @VVV              @.\\
{}     @.        \vdots                                      @.        \vdots   @.         \vdots                      @. {} \\
@.               @VVV                                                @.                      @VVV              @.\\
0 @>>> \Hom_R(G_k \otimes M,\, \cI^0) @>>> \cdots @>>> \Hom_R(G_k \otimes M, \,\cI^d) @>>> 0\\
@.               @VVV                                                @.                      @VVV              @.\\
{}   @.          0                                                      @.    {}      @.        0                @. {} \\
      \endCD$$

If we calculate the homology of the $j\,$th column,  then since $\Hom_R(\blank, \cI^j)$ is
exact we get  $\Hom_R\bigl(H_\bu(G_\bu \otimes M), \cI^j)$.  Thus, the iterated cohomology
at the $i,j\,$ spot is dual to $H^{d-j}_\m \bigl(H_i(G_\bu \otimes M)\bigr)$.  Since the modules
$H_\bu(G_\bu \otimes M)$ have finite length unless $i =0$, the terms in the array vanish unless
$i =0$ or $j = d$.  Thus, in the $E_2$ array, only the top row and the rightmost column are nonzero.
Let $N := H_0(G_\bu)$.  By the right exactness of tensor, $H_0(G_\bu \otimes M) \cong N \otimes M$.

The top row ($i=0$)  consists of
$$
H^d_\m(N \otimes M)^{\lor} \ \ \ldots \ \ H^{d-j}_\m (N \otimes M)^{\lor} \ \ \ldots \ \  H^0_\m(N \otimes M)^{\lor}.$$
The remaining terms ($i \geq 1$) in the rightmost column  ($j = d$)  are the modules
$$\Bigl(H^0_\m\bigl(H_i(G_\bu \otimes M)\bigr)\Bigr)^\lor \cong H_i(G_\bu \otimes M)^\lor.$$
Note that $d_r:E_r^{i,j} : E_r^{i-1,j+2}.$
It follows at once that the target of each of these maps is 0 if $i =0$ or $j = d$ for
$r \geq 2$,  and so $E_2 = E_\infty$.

We next calculate the cohomology first with respect to rows and then with respect to columns.
Since $\Hom_R(G \otimes M, \, \blank) \cong \Hom_R\bigl(G,\, \Hom_R(M,\, \blank)\bigr)$ as functors
of two variables, after we take cohomology with respect to rows the $j\,$th column
is $\Hom_R\bigl(G_\bu,\, \omega_j(M)\bigr)$,  so that the $i,j$ term in the $E_2$ array
is $H^i\bigl(\Hom_R\bigl(G_\bu, \, \omega_j(M)\bigr)\bigr)$.

The terms  on the diagonals $s+t = d+i$ for $i \geq 0$ in $E_r$ are replaced by subquotients
each time $r$ increases by 1, and these converge to an associated graded of $E_\infty$.   But
for these diagonals, $E_\infty$ has only one term on each such diagonal.  The length of this
unique term is therefore bounded by the sum of the lengths of the terms on the diagonal in the $E_2$ term.
Coupled with the fact that a module and its Matlis dual have the same length, whether finite
or infinite, this yields the statements in the theorem.
\end{proof}

\begin{theorem}\label{boundhom} Let $\rmk$ be a local domain of Krull dimension $d$ and
let $G_\bu$ be a complex of finitely generated free $R$-modules such $H_0(G_\bu)$ is locally
free on the punctured spectrum of $R$ and $H_i(G_\bu)$ has finite length for $i \geq 1$.
Let $s, j \in \N$ be such that $j \geq 1$ and $s < d$.  Let $\cM$ be a strongly lim Cohen-Macaulay
sequence over $R$. Then
$$ \ell\Bigl(H_j\bigl(G_ \bu \otimes H^s_\m(M_n)\bigr)\Bigr) = \oo\bigl(\nu(M_n)\bigr).$$
\end{theorem}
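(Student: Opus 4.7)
The plan is a subadditivity argument built on the strongly \lCM\ hypothesis. Let $\cV = \{\vect V a\}$ be the finite family of Artinian modules associated to $\cM$. For each $n$, the definition of strongly \lCM\ furnishes a filtration $0 = W_0 \inc W_1 \inc \cdots \inc W_{L_n} = H^s_\m(M_n)$ of length $L_n = \ell_\cV\bigl(H^s_\m(M_n)\bigr) = \oo\bigl(\nu(M_n)\bigr)$, whose successive quotients $W_i/W_{i-1}$ are isomorphic to modules $V_{k_i} \in \cV$. The estimate will come from bounding $H_j(G_\bu \otimes \blank)$ on each factor by a constant independent of $n$, and then adding up along the filtration.

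The key preliminary claim is that $\ell\bigl(H_j(G_\bu \otimes V)\bigr) < \infty$ for every $V \in \cV$ and every $j \geq 1$. I would verify this by Matlis dualizing over $\wh{R}$: since each $G_i$ is finitely generated free, there is a natural identification
$$H_j(G_\bu \otimes V)^\lor \cong H^j\bigl(\Hom_R(G_\bu,\, V^\lor)\bigr),$$
where $V^\lor$ is finitely generated over $\wh{R}$ by Matlis duality, so each $\Hom_R(G_i, V^\lor) \cong (V^\lor)^{\nu(G_i)}$ is finitely generated over $\wh{R}$. The hypothesis that $N = H_0(G_\bu)$ is locally free on the punctured spectrum and that the higher homologies of $G_\bu$ have finite length means that for every $f \in \m$ the complex $(G_\bu)_f$ is a projective $R_f$-resolution of the projective module $N_f$, so
$$H^j\bigl(\Hom_R(G_\bu, V^\lor)\bigr)_f \cong \Ext^j_{R_f}\bigl(N_f,\, (V^\lor)_f\bigr) = 0 \quad \text{for } j \geq 1.$$
The right hand side is thus a finitely generated $\wh{R}$-module with support inside $\{\m\wh{R}\}$, hence of finite length; the same then holds for $H_j(G_\bu \otimes V)$ by Matlis duality.

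With the preliminary claim in hand, set $C := \max_k \ell\bigl(H_j(G_\bu \otimes V_k)\bigr)$, a constant independent of $n$. Flatness of each $G_i$ means that tensoring the short exact sequences $0 \to W_{i-1} \to W_i \to V_{k_i} \to 0$ with $G_\bu$ yields short exact sequences of complexes; the associated long exact sequences in homology give the subadditivity
$$\ell\bigl(H_j(G_\bu \otimes W_i)\bigr) \leq \ell\bigl(H_j(G_\bu \otimes W_{i-1})\bigr) + \ell\bigl(H_j(G_\bu \otimes V_{k_i})\bigr).$$
Induction on $i$ therefore produces the target estimate
$$\ell\Bigl(H_j\bigl(G_\bu \otimes H^s_\m(M_n)\bigr)\Bigr) \leq L_n \cdot C = \oo\bigl(\nu(M_n)\bigr).$$

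The main obstacle is precisely the finite-length claim for $\ell\bigl(H_j(G_\bu \otimes V)\bigr)$: a general $V \in \cV$ need not be finitely generated (it may, for instance, include copies of $E_R(K)$), so the naive support-plus-Noetherian argument is not available on $H_j(G_\bu \otimes V)$ itself. The dualization step above is what reduces this to the standard Noetherian situation and is what makes essential use of the hypothesis that $N$ is locally free on the punctured spectrum.
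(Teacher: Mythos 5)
Your proof is correct and follows essentially the same strategy as the paper: filter $H^s_\m(M_n)$ by the finitely many Artinian modules in $\cV$ (using the strongly lim \CM hypothesis to control the filtration length by $\oo(\nu(M_n))$), bound the contribution of each factor by a constant via the finite-length claim, and conclude by subadditivity of length along the long exact sequences in Koszul-type homology. Your Matlis-duality argument for the key finite-length claim ($H_j(G_\bu \otimes V)^\lor \cong H^j(\Hom_R(G_\bu, V^\lor))$ with $V^\lor$ Noetherian over $\wh{R}$, followed by the punctured-spectrum vanishing) is essentially the content the paper delegates to Discussion~\ref{dual}, just spelled out in a slightly more direct form; the small point you glossed over — that a finitely generated $\wh R$-module vanishing after localization at every $f\in\m$ has support in $\{\m\wh R\}$, since any prime of $\wh R$ strictly below $\m\wh R$ misses some element of $\m$ — does go through, so there is no gap.
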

\begin{proof}  Let $\cV = \{\vect V a\}$ be a finite family of Artinian modules as in the definition of
strongly lim Cohen-Macaulay sequence.    Then

$$(\#) \quad\ell_\cV\bigl(H^s_\m(M_n)\bigr) = \oo\bigl(\nu(M_n)\bigr)$$ for $s < d$.
Let $B$ denote an upper bound
for the lengths of the finitely many modules  $H_j\bigl(G_\bu \otimes V_h)\bigr)$, $1 \leq h \leq a$ (these
have finite length by Discssion~\ref{dual}).
Since $H_\bu(G_\bu \otimes \blank)$ has a long exact sequence, we may apply
$(*)$ of \ref{subadd} with $\lambda$ equal to length $\ell$ to obtain the estimate
$$ \ell\Bigl(H_j\bigl(G_ \bu \otimes H^s_\m(M_n)\bigr)\Bigr)\leq \ell_\cV\bigl(H^s_\m(M_n)\bigr) B.$$
Here, we have used $\ell_\cV$ to give an upper bound for the number of factors in a filtration of $H^s_\m(M_n)$ and
$B$ to bound the contribution from each factor.
Coupled with $(\#)$ above, this yields the desired conclusion. \end{proof}

\begin{theorem}\label{thmcap}  Let $\rmk$ be a local domain of
Krull dimension $d$. Let $\cM$ be a strongly lim Cohen-Macaulay sequence of modules.
 If $G_\bu$ is a finite free complex $0 \to G_d \to  \cdots \to G_0 \to 0$  of finitely generated free modules
with finite length homology, $d_i:G_i \to G_{i-1}$,  $Z_i = \Ker(d_i) \inc G_i$ and $B_i = \Img(d_{i+1}) \inc G_i$,
then $Z_i \inc (B_i)\cl_{G_i}$ for all $i \geq 1$.  Moreover, $\ell\bigl(H_i(G_\bu \otimes M_n)\bigr) =
\oo\big(\nu(M_n)\big)$.
If rank is defined on the modules in $\cM$, then the same result holds for $\cM$-closure with respect to rank.
\end{theorem}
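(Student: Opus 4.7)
The plan is to prove the length estimate $\ell\bigl(H_i(G_\bu \otimes M_n)\bigr) = \oo\bigl(\nu(M_n)\bigr)$ for all $i \geq 1$ first, and then derive the closure inclusion $Z_i \inc (B_i)\cl_{G_i}$ as a direct consequence via Proposition~\ref{finlg}. The key observation is that the boundedness of $G_\bu$ in length $d$ fits perfectly with the strong lim Cohen-Macaulay hypothesis, which only controls $H^j_\m(M_n)$ for $j < d$.

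For the length estimate, I would apply Theorem~\ref{spseq} to $G_\bu$ and $M = M_n$. The hypothesis that $H_0(G_\bu)$ be locally free on the punctured spectrum is satisfied vacuously, since $H_0(G_\bu)$ has finite length and hence vanishes after localization at any non-maximal prime. Theorem~\ref{spseq} then yields
\[ \ell\bigl(H_i(G_\bu \otimes M_n)\bigr) \leq \sum_{\substack{s,t \geq 0 \\ s+t=d+i}} \ell\bigl(H_s(G_\bu \otimes H^{d-t}_\m(M_n))\bigr). \]
Since $G_\bu$ is concentrated in degrees $[0,d]$, only terms with $s \leq d$ contribute, which forces $t \geq i \geq 1$ and hence $d-t \leq d-i < d$; moreover $s \geq i \geq 1$. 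Thus each surviving term falls precisely into the range to which Theorem~\ref{boundhom} applies (complex index at least $1$, local cohomology degree strictly less than $d$), giving that each is $\oo\bigl(\nu(M_n)\bigr)$. As the sum is finite, this yields $\ell\bigl(H_i(G_\bu \otimes M_n)\bigr) = \oo\bigl(\nu(M_n)\bigr)$.

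For the closure statement, I would identify the image of $M_n \otimes (Z_i/B_i)$ in $M_n \otimes (G_i/B_i)$ with $\alpha_n/\beta_n$, where $\alpha_n := \Img(M_n \otimes Z_i \to M_n \otimes G_i)$ and $\beta_n := \Img(M_n \otimes B_i \to M_n \otimes G_i)$. Because $\alpha_n$ is contained in $\Ker(\id_{M_n} \otimes d_i)$, the quotient $\alpha_n/\beta_n$ is a submodule of $H_i(G_\bu \otimes M_n)$, so $\ell(\alpha_n/\beta_n) \leq \ell\bigl(H_i(G_\bu \otimes M_n)\bigr) = \oo\bigl(\nu(M_n)\bigr)$. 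Since $Z_i/B_i = H_i(G_\bu)$ has finite length, Proposition~\ref{finlg} applies and delivers $Z_i \inc (B_i)^{*\cM,\nu}_{G_i}$.

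For the rank version, I would specialize the length estimate to the Koszul complex $\cK_\bu(\ux;R)$ of a system of parameters $\ux$ (which is a finite free complex with finite-length homology), thereby showing that $\{M_n\}$ is lim Cohen-Macaulay with respect to $\nu$ and, in particular, weakly lim Cohen-Macaulay; Lemma~\ref{Cbound} then gives that $\nu(M_n)/\rank(M_n)$ is bounded, so $\oo\bigl(\nu(M_n)\bigr) = \oo\bigl(\rank(M_n)\bigr)$ and both assertions transfer verbatim to the rank setting. The main obstacle is the index bookkeeping in the spectral sequence sum: every term must be shown to involve only local cohomology of $M_n$ in degree $< d$, which is precisely where the boundedness of $G_\bu$ cooperates with the strong lim Cohen-Macaulay hypothesis.
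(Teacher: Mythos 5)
Your proof is correct and follows essentially the same route as the paper: apply the alternative form of Theorem~\ref{spseq} to bound $\ell\bigl(H_i(G_\bu\otimes M_n)\bigr)$ by the sum of terms $\ell\bigl(H_s(G_\bu\otimes H^{d-t}_\m(M_n))\bigr)$ with $s+t=d+i$, observe that the constraints $s\leq d$ (support of $G_\bu$) and $t\leq d$ (nonvanishing of local cohomology) force both $s\geq 1$ and $d-t<d$ so Theorem~\ref{boundhom} kills each term, and then deduce the closure inclusion from Proposition~\ref{finlg} by identifying the relevant image inside $H_i(G_\bu\otimes M_n)$. You also carefully verify two points the paper glosses over — that $H_0(G_\bu)$ being finite length implies the punctured-spectrum hypothesis of Theorem~\ref{spseq}, and that $s\geq 1$ holds for all surviving terms — and your appeal to Lemma~\ref{Cbound} to transfer from $\nu$ to rank makes explicit a step the paper leaves unstated. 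No gaps.
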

\begin{proof}  By the second statement in the alternative form of Theorem~\ref{spseq}, we have that for all $n$,
$$\ell\bigl(H_i(G_\bu \otimes M_n)\bigr) \leq
\sum_{s,t \geq 0,\,\,s+t = d+i} \ell\bigl(H_s(G_\bu \otimes_R H^{d-t}_\m(M_n)\bigr)$$ for $i \geq 1$.
Because $G_j$ is 0 for $j > d$,  we may assume that $s \leq d$, and then $s+t = d+i$ forces
$t \geq i \geq 1$ and so $d-t < d$.  It follows from Theorem~\ref{boundhom}
that as $s$ varies from $0$ to $d$, each of the $d+1$ possibly nonzero terms on the right
is $\oo\bigl(\nu(M_n)\bigr)$,  and so for $i \geq 1$ we have
$$\ell\bigl(H_i(G_\bu \otimes M_n)\bigr) = \oo\bigl(\nu(M_n)\bigr),$$
which establishes the second statement.
Let $Z_i = \Ker(G_i \to G_{i-1})$ and
let $B_i = \Img(G_{i+1} \to G_i)$.  Let $\wt{Z}_i^{(n)}$ be the image of $Z_i \otimes M_i$  in $G_i \otimes M_n$
and  $\cB_i^{(n)}$ be the image of $G_{i+1} \otimes M_n \to G_i \otimes M_n$, which is the
same as the image of $B_i \otimes M_n \to G_i \otimes M_n$.  Let
$\cZ_i^{(n)} = \Ker(G_i \otimes M_n \to G_{i-1} \otimes M_n)$.  Then $\wt{Z}_i^{(n)}/\cB_i^{(n)}$
injects into $\cZ_i^{(n)}/\cB_i^{(n)} = H_i(G_\bu \otimes M_n)$,  which has finite length.
Moreover $$\ell\bigl(\wt{Z}_i^{(n)}/\cB_i^{(n)}) \leq \ell\bigl(H_i(G_\bu \otimes M_n)\bigr) = \oo\bigl(\nu(M_n)\bigr).$$
By Remark~\ref{finlg}, this suffices to show that $Z_i \inc (B_i)\cl_{G_i}$.
\end{proof}

\begin{corollary}\label{strongimp} A strongly lim \CM sequence of modules over a local ring is lim \CM.
\end{corollary}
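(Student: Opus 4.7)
My plan is that this corollary should follow almost immediately from Theorem~\ref{thmcap} by taking the free complex $G_\bu$ there to be a Koszul complex on a system of parameters. Fix a system of parameters $\ux = \vect x d$ for $R$ and set $G_\bu := \cK_\bu(\ux;\, R)$. This is a finite left complex $0 \to G_d \to \cdots \to G_0 \to 0$ of finitely generated free $R$-modules, and the first step of my plan is to verify that it satisfies the hypotheses appearing in Theorem~\ref{thmcap} (and in the underlying Theorem~\ref{spseq}): its zeroth homology is $R/(\ux)R$, which has finite length since $(\ux)R$ is $\m$-primary, hence is trivially locally free on the punctured spectrum (being zero there), and for $i \geq 1$ the module $H_i(\ux;\, R)$ is annihilated by the $\m$-primary ideal $(\ux)R$ by Theorem~\ref{Koszul}(g), and is therefore also of finite length.

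Granting these hypotheses, I would next invoke Theorem~\ref{thmcap} to conclude that
\[ \ell\bigl(H_i(\ux;\, M_n)\bigr) \;=\; \ell\bigl(H_i(G_\bu \otimes_R M_n)\bigr) \;=\; \oo\bigl(\nu(M_n)\bigr) \]
for every $i \geq 1$, using the identification $\cK_\bu(\ux;\, R) \otimes_R M_n = \cK_\bu(\ux;\, M_n)$. This is precisely Definition~\ref{limcmdef}, witnessing that $\cM$ is lim Cohen-Macaulay with respect to $\ux$, and hence with respect to every system of parameters, by the independence result proved in \S\ref{limdef}.

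The one point of friction is that Theorem~\ref{thmcap} (and the ingredient Theorem~\ref{boundhom}) is stated under the hypothesis that $R$ is a local domain, while the corollary is asserted over an arbitrary local ring. My plan for this is to observe that the domain hypothesis is not actually used in those proofs: the key input, Theorem~\ref{spseq}, is stated and proved for arbitrary local rings (after passage to $\wh{R}$ where a dualizing complex exists, which is harmless as strong lim CM is preserved under completion by Proposition~\ref{stcomp}), and the proof of Theorem~\ref{boundhom} merely combines the $\ell_\cV$-bound from the definition of strongly lim CM with the subadditivity of $\ell\bigl(H_j(G_\bu \otimes \blank)\bigr)$ over long exact sequences, again without any use of the domain assumption.

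I do not expect any genuine obstacle: the strongly lim CM definition was engineered exactly to give the vanishing of $\ell_\cV\bigl(H^j_\m(M_n)\bigr)$ for $j < d$ at a rate $\oo(\nu(M_n))$, which is what the spectral sequence of Theorem~\ref{spseq} converts into the Koszul homology estimate needed for the lim CM condition. The only mild care needed is in bookkeeping the hypothesis change from domains to general local rings, which I would handle by a short remark rather than a separate argument.
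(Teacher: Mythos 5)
Your proof is correct and is essentially the same as the paper's, which consists of the single sentence ``apply Theorem~\ref{thmcap} when $\ux$ is a system of parameters for $R$ and $G_\bu = \cK_\bu(\ux;\,R)$.'' The verification of the hypotheses of Theorem~\ref{thmcap} you spell out (free Koszul complex, $H_0$ of finite length hence trivially locally free on the punctured spectrum, higher Koszul homology of finite length) is the expected routine check. You also raise a real point the paper elides: Theorem~\ref{thmcap} and its ingredient Theorem~\ref{boundhom} are stated for a local \emph{domain}, while the corollary is stated for a general local ring. Your diagnosis is sound --- the domain hypothesis is not actually used in those proofs (the spectral sequence argument in Theorem~\ref{spseq} needs only a local ring after completing, and Theorem~\ref{boundhom}'s proof just combines the $\ell_\cV$-bound with subadditivity along the long exact Tor sequence) --- and a careful reader would want this noted explicitly, as you propose to do.
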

\begin{proof} Simply apply Theorem~\ref{thmcap} when $\ux$ is a \sop for $R$ and $G_\bu = \cK_\bu(\ux;\,R)$
\end{proof}

In order to prove Theorem~\ref{Fnstrong}, we need the following result, which uses an idea of \cite{Mon83} and
is close to results in \cite{Du83a, Sei89}, but does not appear to follow from these references in the generality we need
here.

\begin{theorem}\label{Fnfilt} Let $\rmk$ be an F-finite local ring and let $M$ be a finitely generated $R$-module of Krull dimension $d$.
Let $[K:K^p] = p^\alpha$.
Then there exists a finite set of primes $\vect P k$ in the support of $M$ such that for all $n \in \N$,
$F^n_*(M)$  has a filtration in which all factors have the form $R/P_i$, $1\leq i \leq k$, and the length of
the filtration is
$\OO(p^{(\alpha+ d)n})$, i.e., $\ell_{\{R/P_1, \, \ldots, \, R/P_k\}}\big(F^n_*(M)\big) = \OO(p^{(\alpha+d)n})$. \end{theorem}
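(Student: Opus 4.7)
The plan is a double induction: the outer one is on the dimension $d' := \dim R/P$ of the prime cyclic modules obtained after a standard reduction, and the inner one is on $n$ for iterating Frobenius. First, by choosing a prime filtration of $M$ with factors $R/P_i$, $P_i \in \mathrm{Supp}(M)$, and noting that $F^n_*$ is exact (being restriction of scalars along $F^n$), it suffices to establish the result for $M = R/P$ with the bound $O(p^{(\alpha + d')n})$; the union of the prime sets over the factors then gives the required finite set for $M$, with the overall bound $O(p^{(\alpha + d)n})$ since each $d' \leq d$. The base case $d' = 0$ is immediate: $R/P = K$ and $F^n_*(K)$ is a $K$-vector space of dimension $[K^{1/p^n}:K] = p^{\alpha n}$, hence is filtered by $p^{\alpha n}$ copies of $R/\mathfrak{m}$.

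For the inductive step ($d' \geq 1$), observe that $P$ annihilates $F_*(R/P)$, so $F_*(R/P)$ is naturally a module over the F-finite local domain $R/P$; by Kunz's theorem (cf.~Remark~\ref{Fnrank}) it is torsion-free of rank $p^{\alpha + d'}$ over $R/P$. Picking $p^{\alpha + d'}$ elements whose images form an $L$-basis of $F_*(R/P) \otimes_{R/P} L$, where $L = \mathrm{Frac}(R/P)$, produces a short exact sequence of $R$-modules
\[
0 \to (R/P)^{p^{\alpha + d'}} \to F_*(R/P) \to N \to 0,
\]
with $N$ finitely generated and $R/P$-torsion; in particular $\mathrm{Supp}_R(N) \subsetneq V(P)$ and $\dim_R N \leq d' - 1$. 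Applying the exact functor $F^{n-1}_*$ yields
\[
0 \to F^{n-1}_*(R/P)^{p^{\alpha + d'}} \to F^n_*(R/P) \to F^{n-1}_*(N) \to 0.
\]

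By the outer induction applied to $N$, there is a fixed finite set of primes $T$ such that for every $m \geq 0$, $F^m_*(N)$ admits a filtration with factors $R/Q$, $Q \in T$, of length $O(p^{(\alpha + d' - 1)m})$. Let $S_P = \{P\} \cup T$, and let $\ell_n$ denote the shortest length of a filtration of $F^n_*(R/P)$ with factors in $\{R/Q : Q \in S_P\}$. Then $\ell_0 = 1$ and the displayed sequence yields the recursion
\[
\ell_n \leq p^{\alpha + d'} \, \ell_{n-1} + C \cdot p^{(\alpha + d' - 1) n}
\]
for some constant $C$ independent of $n$. Dividing through by $p^{(\alpha + d')n}$ converts this to $\ell_n p^{-(\alpha+d')n} \leq \ell_{n-1} p^{-(\alpha+d')(n-1)} + C p^{-n}$, and summing the resulting geometric series shows that $\ell_n p^{-(\alpha+d')n}$ is bounded, hence $\ell_n = O(p^{(\alpha + d')n})$. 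The main obstacle is ensuring that the set of primes $S_P$ is uniform in $n$: a priori iterating Frobenius could introduce new primes at each step. The key trick is to construct the short exact sequence only once (at level $n=1$) and then iterate via the exact functor $F^{n-1}_*$, so that the outer induction—which provides a finite set of primes $T$ for $N$ uniformly in $m$—controls every step, leaving only $\{P\} \cup T$ as the prime set at every level $n$.
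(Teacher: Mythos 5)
Your proposal is correct and follows essentially the same strategy as the paper. Both reduce to prime cyclic modules $R/P$, both construct the key short exact sequence $0 \to (R/P)^{p^{\alpha+d'}} \to F_*(R/P) \to N \to 0$ using Kunz's rank computation (the paper phrases this via Noetherian induction after reducing to the domain case, but the substance is induction on $\dim R/P$ exactly as you organize it), and both obtain the bound from a convergent geometric series in $p^{-1}$. The only cosmetic difference is presentation: the paper unrolls the recursion into an explicit formula $F^n_*(R) \sim b^nR + b^{n-1}N + b^{n-2}F_*(N) + \cdots + F^{n-1}_*(N)$ and bounds the resulting sum directly, while you keep the one-step recursion $\ell_n \leq p^{\alpha+d'}\ell_{n-1} + Cp^{(\alpha+d'-1)n}$ and divide through by $p^{(\alpha+d')n}$ to solve it; these are the same computation. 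Your remark about the key point—that the exact sequence is constructed once at level $n=1$ and iterated via the exact functor $F^{n-1}_*$, keeping the prime set fixed—is precisely the idea underlying the paper's formula $(\dagger_n)$.
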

\begin{proof}  We use the following notation in this proof:  if $M$ has a finite filtration with factors $\vect N k$ such
that $N_i$ occurs $a_i$ times, we write $M \sim \sum_{i=1}^k a_iN_i$, while in this
situation, if $b_i \geq a_i$ for every $i$ we write
$M \prec \sum_{i=1}^k b_iN_i$.  Since $F_*$ is exact, we then
get $F_*(M) \sim \sum_{i=1}^k a_iF_*(N_i)$.

We proceed by Noetherian induction on $M$.   Since $M$ has a finite filtration by prime cyclic
modules  $R/P$ and  $F^n_*$ is an exact functor, we immediately reduce to the case where $M$ itself
is prime cyclic.  Note that for every factor, the Krull dimension has not increased. Thus,
there is no loss of generality in assuming that $M = R$ is a domain.

We have an exact sequence
$0 \to R^b \to F_*(R) \to N \to 0$, where $N$ has strictly smaller Krull dimension than $M$,
where $b := p^{\alpha+d}$ by Remark~\ref{Fnrank}.
 Then
$$(\dagger_1) \quad F_*(R) \sim  bR + N.$$
By a straightforward induction on $n$,
$$(\dagger_n) \quad F^n_*(R) \sim b^nR + b^{n-1}N + b^{n-2}F_*(N) + \cdots + F^{n-1}_*(N).$$
At the inductive step, apply $F_*$ to formula $(\dagger_n)$ and use $(\dagger_1)$ to replace
$b^nF^*(R)$  by $b^n(bR) + b^nN$.

 Apply the induction hypothesis to $N$.  That is, there is a finite set of primes $\{\vect Pk\}$ in the support of
 $N$ such that $F^t_*(N) \prec  Cp^{(\alpha +d-1)t}\sum_{i=1}^k(R/P_i)$ for a constant $C > 0$.
 Since $p^{(\alpha +d-1)t} = (b/p)^t$,
using $(\dagger_n)$, we see that
$$F^n_*(R) \prec b^nR + \sum_{t=1}^{n-1} b^{n-t}C(b/p)^t\big((R/P_1) + \cdots + (R/P_k)\big).$$

This shows that $\ell_{\{R, \, R/P_1, \, \ldots, R/P_k\}}(F^n_*(R))$ is bounded by
$$b^n + kC \sum_{t=1}^{n-1} b^{n-t}(b/p)^t =
b^n(1+ kC\sum_{t=1}^{n-1} p^{-t}),$$ and the coefficient of $b^n$ is bounded independent of $n$
since $\sum_{t=1}^\infty p^{-t}$ converges.
 \end{proof}

 \begin{remark}\label{stlem}
 Let  $f:R \to S$ be a module-finite local map and let $M, \, W$ be finitely generated $S$-modules.
 \bena
 \item  The completion of $W$ with respect to the maximal ideal $\m_R$ of $R$ is the same as with respect to
 the maximal ideal $\m_S$ of $S$, since $\m_R S$ is primary to $\m_S$.  It follows that
 if $\wh{f}$ is the induced map $\wh{R} \to \wh{S}$, then $\wh{f}_*(\wh{M}) \cong \wh{f_*(M)}$. In particular,
 when $R$ is F-finite
 $(F_{\wh{R}}^n)_*(\wh{M}) \cong \wh{(F_R^n)_*(M)}$. 

\item  Let $E_R$ and $E_S$ denote the injective hulls of the residue class fields for $R$ and $S$.
Then $f_*\big(\Hom_S(W, E_S)\big) \cong \Hom_R(f_*(W), E_R)$ as $S$-modules: these are naturally isomorpic
functors of the $S$-module $W$.  Note that $E_S \cong \Hom_R(S, \, E_R)$.  In particular we may
apply this when $R$ is F-finite, $S = R$, and $f = F^n$ to obtain $F^n_*\big(\Hom_R(W, E_R)\big)\cong \Hom(F^n_*(W), E_R)$.
 \een
\end{remark}

\begin{remark}\label{Hsdim} Let $\rmk$ be a complete local ring and let $M$ be a finitely generated $R$-module of
dimension $d$.  Let $E_R$ denote the injective hull of $K$ ovtoer $R$.  Then for $i <d$,
$\Hom_R(H^i_\m(M), \, E_R)$ has Krull dimension at most $d-1$ (where this is interpreted to mean
that it is 0 when $d=0$).   To see this,  we may use induction
on $d$ while allowing $R$ to vary.  The case where $d = 0$ is obvious. We may also filter $M$ by
prime cyclic modules and use the long exact sequence for local cohomology along with mathematical
induction on the number of factors in the filtration to reduce to the case where $M$ is a prime cyclic module. Thus,
we might as well assume that $M = R$ is a domain and that the result holds for all choices of $M$
of dimension at most $d-1$ over any ring.  Then $R$ is module-finite over a complete regular local ring $R_0$,
and we may replace $R$ by $R_0$.  When we take a prime cyclic filtration of $R$ as a module over $R_0$ the factors
are either copies of $R_0$, with local cohomology 0 if $i < d$, or else of dimension smaller than $d$, so that
we may apply the induction hypothesis.
\end{remark}

\begin{theorem}\label{Fnstrong} Let $\rmk$ be an F-finite local ring of Krull dimension $d$, and let $M$ be any
finitely generated $R$-module of Krull dimension $d$.  Then $\cM := \{F^n_*(M)\}_n$ is a strongly
lim \CM sequence of $R$-modules.  \end{theorem}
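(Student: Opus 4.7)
The plan is to reduce to the complete case, rewrite each local cohomology module $H^j_\m(F^n_*(M))$ with $j<d$ as a Matlis dual of a Frobenius pushforward of a finitely generated module of dimension $\leq d-1$, and then apply the filtration result Theorem~\ref{Fnfilt} to that lower-dimensional module to produce the required finite family $\cV$.

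First I would invoke Proposition~\ref{stcomp} to assume $R$ is complete. Next I would note that since $F^n_*$ is exact and the ideals $\m^{[p^n]}=F^n(\m)R$ are cofinal with the powers of $\m$, there is a natural isomorphism $H^j_\m(F^n_*(M)) \cong F^n_*(H^j_\m(M))$ of $R$-modules for every $j$. Setting $\omega_{d-j}(M):=\Hom_R(H^j_\m(M),E_R)$, which is a finitely generated $R$-module (as $R$ is complete), Matlis duality combined with Remark~\ref{stlem}(b) applied to $f=F^n$ yields
\[ H^j_\m(F^n_*(M)) \cong F^n_*\bigl(\Hom_R(\omega_{d-j}(M),E_R)\bigr) \cong \Hom_R\bigl(F^n_*(\omega_{d-j}(M)),E_R\bigr). \]

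The crucial input is that for $j<d$, Remark~\ref{Hsdim} gives $\dim \omega_{d-j}(M)\leq d-1$. Then I would apply Theorem~\ref{Fnfilt} to $\omega_{d-j}(M)$: for each $j<d$ I get a finite set $\Sigma_j$ of primes with $\dim(R/P)\leq d-1$ for $P\in\Sigma_j$, and a filtration of $F^n_*(\omega_{d-j}(M))$ by factors $R/P$ ($P\in\Sigma_j$) of total length at most $C_j\,p^{(\alpha+d-1)n}$, where $p^\alpha=[K:K^p]$. Dualizing via $\Hom_R(-,E_R)$—which is exact and contravariant on the category of finitely generated (or Artinian) $R$-modules over the complete local ring $R$—produces a filtration of $\Hom_R(F^n_*(\omega_{d-j}(M)),E_R)\cong H^j_\m(F^n_*(M))$ of the same total length, whose factors are the Artinian modules $\Hom_R(R/P,E_R)$, $P\in\Sigma_j$. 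I would then set
\[ \cV \;:=\; \bigl\{\Hom_R(R/P,E_R) \;:\; P\in \Sigma_0\cup\cdots\cup\Sigma_{d-1}\bigr\}, \]
a finite family of Artinian $R$-modules independent of $n$.

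Finally I would combine this with Lemma~\ref{nu}, which gives $\nu(F^n_*(M))\sim \gamma\,p^{(\alpha+d)n}$ for some $\gamma>0$: for every $j<d$,
\[ \ell_\cV\bigl(H^j_\m(F^n_*(M))\bigr) \;\leq\; C_j\,p^{(\alpha+d-1)n} \;=\; O\bigl(p^{(\alpha+d-1)n}\bigr) \;=\; \oo\bigl(\nu(F^n_*(M))\bigr), \]
which is exactly the strong lim Cohen-Macaulay condition. I don't anticipate a serious obstacle here; the main thing to verify carefully is the compatibility of Frobenius pushforward with both local cohomology and Matlis duality (the two isomorphisms above), and the fact that Matlis-dualizing a filtration by cyclic modules $R/P$ gives a filtration by the Artinian duals $\Hom_R(R/P,E_R)$ of the same length—routine manipulations once the complete hypothesis is in place.
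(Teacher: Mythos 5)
Your argument is correct and follows the same route as the paper's proof: reduce to the complete case via Proposition~\ref{stcomp}, pass through Matlis duality and the identity $H^j_\m(F^n_*(M)) \cong F^n_*(H^j_\m(M))$ together with Remark~\ref{stlem}(b) to convert the problem into filtering $F^n_*$ of the Noetherian Matlis dual $W_j = \omega_{d-j}(M)$, invoke Remark~\ref{Hsdim} for the dimension drop, apply Theorem~\ref{Fnfilt}, and compare with Lemma~\ref{nu}. The paper's $W_i$ is exactly your $\omega_{d-i}(M)$, and the only difference is that you have made a couple of the intermediate isomorphisms explicit.
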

\begin{proof}  By Proposition~\ref{stcomp} and Remark~\ref{stlem}(a), we may assume that
$R$ is complete.   By Lemma~\ref{nu},
 $\nu\big(F^n_*(M)\big)  = \OO(p^{(\alpha+d)n})$,
 where $p^\alpha := [K:K^p]$.

 Hence, by Remark~\ref{stlem}(b)  it suffices to filter the Matlis duals of the local cohomology
modules  $H^i_\m\big(F^n_*(M)\big)$  for $0 \leq i \leq d-1$
with numbers of factors that are $\oo\big(p^{(\alpha+d)n}\big)$.  Let $W_i$ denote
the Matlis dual of $H^i_\m(M)$ for $0 \leq i \leq d-1$.
By Remark~\ref{stlem}(b), it suffices to filter the modules $F^n_*(W_i)$ for $0 \leq i \leq d-1$ using a fixed
set of prime cyclic factors $R/P_1, \, \ldots, \, R/P_k$ in such a way that the length of the filtration
of each $F^n_*(W_i)$ is  $\OO(p^{(\alpha + d-1)n})$, for this will make it $\oo(p^{(\alpha + d)n})$.
By Remark~\ref{Hsdim}, the $W_i$ all have dimension at most $d-1$,  and so we do have
such filtrations by Theorem~\ref{Fnfilt}.  \end{proof}

\section{Mixed characteristic results}\label{mixed} 

In this section, we shall give some examples of lim Cohen-Macaulay sequences in mixed characteristic. A relatively simple source of examples is Frobenius lifts, which allows one to imitate the construction in characteristic $p$; see Examples~\ref{AffFrobLift} and \ref{ProjFrobLift}. We also include another class of examples (see \S \ref{ss:FrobNonLift}) where Frobenius does not lift. Throughout this section, we fix a prime $p>0$.

\begin{notation}
Let $k$ be a perfect field of characteristic $p$.  Given a $k$-scheme $E$, write $F_{E/k}:E^{(-1)} \to E$ for the $k$-linear relative Frobenius for $E/k$ (obtained from the standard relative Frobenius $E \to E^{(1)}$ by twisting by the inverse Frobenius on $k$). Iterating this construction gives a tower
\[ \{\cdots \to E^{(-n-1)} \to E^{(-n)} \to ... \to E^{(-1)} \to E\} \]
of $k$-schemes that we call the Frobenius tower of $E/k$; note that the composite map $E^{(-n)} \to E$ is isomorphic to the $n$th power $F^n:E \to E$ of the Frobenius if we ignore $k$-linearity.

Let $V$ be a $p$-complete $p$-torsion-free DVR with  residue field $k$.

Given a closed immersion $Z \subset X$, we shall write $I_{Z\subset X}$ for the ideal sheaf of $Z$ in $X$; when $Z$ is a divisor, this is also $\mathcal{O}_X(-Z)$.
\end{notation}

\subsection{Frobenius-liftable examples}

Let us first discuss some examples of lim Cohen-Macaulay sequences in mixed characteristic provided by Frobenius lifts.

\begin{example}[Affine Frobenius lifts]
\label{AffFrobLift}
Let $(R,\mathfrak{m})$ be a $p$-adically complete and $p$-torsion-free noetherian local ring with perfect residue field $k$. Assume we are given an endomorphism $\phi:R \to R$ lifting the Frobenius on $R/p$. Then the sequence $\{\phi^n_* R\}$ is lim-Cohen-Macaulay. Indeed, we have $\mathrm{rank}_R(\phi^n_* R) = \mathrm{rank}_{R/p}(F^n_* R/p)$. Moreover, we can choose a system of parameters $(p,x_2,...,x_d)$ on $R$ such that $x_2,...,x_d$ is a system of parameters on $R/p$. We then have natural identifications $H_i(p, x_2,\dots,x_d; \phi^n_* R) \simeq H_i(x_2,\dots,x_d; F^n_* R/p)$. So the lim Cohen-Macaulay property for $\{\phi^n_* R\}$ over $R$ follows from that of $\{F^n_* R/p\}$ over $R/p$.
\end{example}

\begin{example}[Projective Frobenius lifts]
\label{ProjFrobLift}
Let $\{X_n\}$ be a tower of smooth projective $V$-schemes whose base change to $k$ gives the Frobenius tower $\{E^{(-n)}\}$ of a smooth projective $k$-variety. Let $L$ be an ample line bundle on $X_0$, and write $L_n \in \mathrm{Pic}(X_n)$ for its pullback to $X_n$. Consider the homogeneous co-ordinate ring $A_n := \Gamma_*(X_n,L_n) := \oplus_{i \geq 0} H^0(X_n, L_n^i)$. Then we claim that $\{A_n\}$ forms a lim Cohen-Macaulay sequence over $A_0$ after localization at the homogeneous maximal ideal $\mathfrak{m} = \mathfrak{m}_V + (A_0)_{>0}$. To check this, by reducing modulo a uniformizer of $V$ and using the Koszul homology definition of lim Cohen-Macaulayness, we reduce to checking that $\{R_n := \Gamma_*(E^{(-n)}, L_n)\}$ is a lim-Cohen-Macaulay sequence over $R_0$ after localization at $(R_0)_{>0}$. Ignoring $k$-linearity, we can identify $R_n \simeq F^n_* \Gamma_*(E, L^{p^n})$ as $R_0$-modules. As $E$ is smooth, Serre vanishing and a standard calculation of local cohomology of affine cones shows that for any $j < \dim(R_0)$, the function $n \mapsto \ell(H^j_{\mathfrak{m}}(R_n))$ is constant for $n \gg 0$. In particular, the sequence $\{R_n\}$ is even strongly lim Cohen-Macaulay (and hence lim Cohen-Macaulay) after localization at $(R_0)_{>0}$.
\end{example}

\subsection{Frobenius-non-liftable examples}
\label{ss:FrobNonLift}

We now give an example where Frobenius lifts are not available. For this, our strategy is to deform the Frobenius {\em as a map (and not as an endomorphism)} on exceptional divisors of carefully constructed resolutions of isolated singularities:

\begin{proposition}[Deforming Frobenius on exceptional divisors]
\label{prop:FindFrobTower}
Let $(R,\mathfrak{m},k)$ be a complete local normal domain. Let $f:X \to \mathrm{Spec}(R)$ be a proper birational map of integral schemes which is an isomorphism outside $\mathfrak{m} \in \mathrm{Spec}(R)$, and such that the reduced preimage $E \subset X$ of $\mathfrak{m}$ is a Cartier divisor with $E$ being a smooth projective variety over $k$. Assume that the conormal bundle $L = I_{E \subset X}|_E \in \mathrm{Pic}(E)$ satisfies the following: for any $n,m \geq 1$, we have
\[ H^2(E, T_{E/k} \otimes (F^n)^* L^m) = 0 \quad \text{and} \quad  H^1(E, (F^n)^* (T_{E/k} \otimes L^m)) = 0,\]
where $F^n:E \to E$ is the $n$-fold Frobenius, and $T_{E/k} = (\Omega^1_{E/k})^\vee$ is the tangent bundle. Then one can find a tower $\{X_n\}$ of finite flat maps with base $X_0 = X$ whose pullback to $E$ agrees with the Frobenius tower of $E$.
\end{proposition}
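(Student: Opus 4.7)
The plan is to construct $X_{n+1} \to X_n$ inductively in $n$: starting from $X_n \to X$ finite flat with Cartier exceptional divisor $E^{(-n)}$ whose conormal bundle is $(F^n)^*L$, I will produce a finite flat cover $X_{n+1} \to X_n$ whose pullback to $E^{(-n)}$ is the relative Frobenius $F : E^{(-n-1)} \to E^{(-n)}$. I will first build the cover formally along $E^{(-n)}$ by a deformation-theoretic induction on infinitesimal neighborhoods, and then algebraize using properness of $X_n$ over the complete local ring $R$.

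\textbf{Inductive deformation.} Let $I_n$ denote the ideal sheaf of $E^{(-n)}$ in $X_n$ and set $X_{n,m} := V(I_n^{m+1})$. I will inductively construct finite flat maps $g_{n,m}: Y_{n,m} \to X_{n,m}$ starting from $g_{n,0} = F: E^{(-n-1)} \to E^{(-n)}$. At the step $m \to m+1$, the target $X_{n,m+1}$ is already a square-zero extension of $X_{n,m}$ by the ideal $J := I_n^{m+1}/I_n^{m+2}$, which as a sheaf on $E^{(-n)}$ is $(F^n)^* L^{m+1}$. I seek a compatible square-zero extension $Y_{n,m+1}$ of $Y_{n,m}$ carrying a finite flat map to $X_{n,m+1}$ that extends $g_{n,m}$. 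By standard deformation theory of finite flat morphisms, such extensions form a torsor under $\Ext^1_{\mathcal{O}_{Y_{n,m}}}(L_{Y_{n,m}/X_{n,m}}, g_{n,m}^* J)$, with obstructions in the corresponding $\Ext^2$.

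\textbf{Vanishing of obstructions.} Because $J$ is annihilated by $I_n$, these $\Ext$ groups reduce to ones computed on the special fibers, namely $\Ext^i_{\mathcal{O}_{E^{(-n-1)}}}(L_{E^{(-n-1)}/E^{(-n)}}, (F^{n+1})^*L^{m+1})$. The key observation I will exploit is that the differential of the relative Frobenius vanishes, so from the cotangent triangle one obtains a splitting $L_{E^{(-n-1)}/E^{(-n)}} \simeq \Omega^1_{E^{(-n-1)}/k} \oplus F^*\Omega^1_{E^{(-n)}/k}[1]$. The obstruction space thus decomposes as
\[ H^2(E^{(-n-1)}, T_{E^{(-n-1)}/k} \otimes (F^{n+1})^*L^{m+1}) \oplus H^1(E^{(-n-1)}, (F^{n+1})^*(T_{E^{(-n)}/k} \otimes L^{m+1})). \]
Identifying $E^{(-n-1)}$ and $E^{(-n)}$ with $E$ via perfectness of $k$, each summand becomes exactly one of the groups the hypothesis declares to vanish (for indices $n+1, m+1 \geq 1$). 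The induction therefore proceeds without obstruction, yielding a finite flat formal cover $\widehat{Y}_n \to \widehat{X}_n$ of the formal completion of $X_n$ along $E^{(-n)}$.

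\textbf{Algebraization and main obstacle.} Finally, since $X_n$ is proper over the complete local ring $R$ (it is finite over the proper $R$-scheme $X$) and $E^{(-n)}$ is the reduced preimage of $\mathfrak{m}$, Grothendieck's existence theorem will algebraize $\widehat{Y}_n$ to an actual finite flat $\mathcal{O}_{X_n}$-algebra, producing the desired $X_{n+1} \to X_n$. The substantive point to get right is matching the single obstruction space against the two separate cohomology vanishing hypotheses; the decomposition of the cotangent complex of Frobenius into a degree-$0$ piece and a shifted piece is exactly what effects this matching, and without it no single hypothesis would naturally suffice.
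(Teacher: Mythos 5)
Your proof is correct and follows essentially the same approach as the paper's: reduce to a formal deformation problem along the exceptional divisor via formal GAGA, deform the relative Frobenius inductively across infinitesimal neighborhoods, identify the obstruction space using the splitting $L_{E^{(-1)}/E} \simeq \Omega^1_{E^{(-1)}/k} \oplus F^*\Omega^1_{E/k}[1]$ coming from the vanishing of Frobenius on differentials, and observe that both summands of the resulting $\Ext^2$ vanish by hypothesis. The only cosmetic difference is that you algebraize after each stage of the outer tower while the paper builds the entire formal tower first and algebraizes once, and your indexing of the infinitesimal neighborhoods is shifted by one from the paper's; both bookkeeping conventions produce exactly the obstruction groups $H^2(E, T_{E/k}\otimes (F^n)^*L^m)$ and $H^1(E,(F^n)^*(T_{E/k}\otimes L^m))$ covered by the hypotheses.
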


\begin{proof}
Write $\mathfrak{X} = \mathfrak{X}_0$ for the formal scheme obtained by taking the $\mathfrak{m}$-adic formal completion of $X=X_0$. As $X \to \mathrm{Spec}(R)$ is proper, formal GAGA implies that it sufffices to solve the problem after $\mathfrak{m}$-adic formal completion, i.e., it suffices to construct a tower $\{\mathfrak{X}_n\}$ of $\mathfrak{m}$-adic formal schemes over $\mathfrak{X}$ with finite flat transition maps and such that the pullback to $E$ gives the Frobenius tower of $E$. For this, we proceed via deformation theory. We shall explain the construction when $n=1$; the general case proceeds exactly the same way, replacing $\mathfrak{X}_0$ in our construction with $\mathfrak{X}_n$ constructed previously by induction.

In the $n=1$ case, we must explain why the Frobenius map $F_{E/k}:E^{(-1)} \to E$ admits a finite flat lift $\mathfrak{X}_1 \to \mathfrak{X}_0$ across the inclusion $E \hookrightarrow \mathfrak{X}_0$. For an integer $m \geq 1$, write $\mathfrak{X}_{0,m} := V(I_E^{m})$ for the displayed infinitesimal neighbourhood of $E$, so $\mathfrak{X}_{0,1} = E$ and $\varinjlim_m \mathfrak{X}_{0,m} = \mathfrak{X}_0$. We shall construct a compatible system of finite flat maps $f_m:\mathfrak{X}_{1,m} \to \mathfrak{X}_{0,m}$ whose pullback to $\mathfrak{X}_{0,1} = E$ is the Frobenius $E^{(-1)} \to E$. We proceed by induction on $m$. For $m=1$, we simply take $\mathfrak{X}_{1,m} = E^{(-1)}$ with $f_1$ being the Frobenius $E^{(-1)} \to E$. Fix an integer $m \geq 1$ and assume by induction that we have found a finite flat map $f_{m}:\mathfrak{X}_{1,m} \to \mathfrak{X}_{0,m}$ inducing the Frobenius over $\mathfrak{X}_{0,1} \subset \mathfrak{X}_{0,m}$; it suffices to explain why $f_m$ admits a finite flat lift $f_{m+1}:\mathfrak{X}_{1,m+1} \to \mathfrak{X}_{0,m+1}$ across the square-zero thickening $\mathfrak{X}_{0,m} \subset \mathfrak{X}_{0,m+1}$.  Since $I_E^m/I_E^{m+1} = (I_E/I_E^2)^{\otimes m} = L^m$, the obstruction to finding such a deformation lies $\mathrm{Ext}^2(L_{E^{(-1)}/E}, F_{E/k}^* L^m)$. As Frobenius induces the $0$ map on differential forms, the transitivity triangle for the cotangent complex collapses to give $L_{E^{(-1)}/E} \simeq \Omega^1_{E^{(-1)}/k} \oplus F_{E/k}^* \Omega^1_{E/k}[1]$. Consequently, we can write
\[ \mathrm{Ext}^2(L_{E^{(-1)}/E}, F^* L^m) \simeq H^2(E^{(-1)}, T_{E^{(-1)}/k} \otimes F_{E/k}^* L^m) \oplus H^1(E^{(-1)}, F_{E/k}^*(T_{E/k} \otimes L^m)). \]
Identifying $E^{(-1)}$ with $E$ (not $k$-linearly!), we learn that the right side is isomorphic (as an abelian group) to
\[ H^2(E, T_{E/k} \otimes F^* (L^m)) \oplus H^1(E, F^*(T_{E/k} \otimes L^m)),\]
which vanishes by assumption.
\end{proof}

\begin{remark}
The assumptions in Proposition~\ref{prop:FindFrobTower} do not include the constraint that the line bundle $L$ is ample on $E$. However, it is harmless to impose this for our purposes as it will be satisfied in cases of interest (Examples~\ref{EllCurveSing} and \ref{BadNonFLiftSing}). Moreover, once $L = I_E|_E \in \mathrm{Pic}(E)$ is ample, it follows that $I_E \in \mathrm{Pic}(X)$ is itself ample. Indeed, by \cite[Tag 01Q3]{StaProj}, it suffices to show that for any coherent sheaf $F$ on $X$, the sheaf $F \otimes I_E^{N}$ is globally generated for  $N \gg 0$.  To verify this property, write $i:E \to X$ for the inclusion. The formal functions theorem and the ampleness of $L$ imply that the canonical map $F  \otimes I_E^n \to i_* (F|_E) \otimes I_E^n$ gives a surjection on $H^0(X,-)$ for all $n \gg 0$. Using the ampleness of $L$ on $E$, we may choose a surjection $(L^{-N})^{\oplus r} \to F|_E$ for $N \ll 0$. Regarding this as a section of $i_*(F|_E) \otimes (I_E^N)^{\oplus r}$, we can find a map $(I_E^{-N})^{\oplus r} \to F$ that is surjective after restriction to $E$. But $E$ contains all closed points of $X$, so the map $(I_E^{-N})^{\oplus r} \to F$ must be surjective, whence $F \otimes I_E^N$ is globally generated, as wanted.
\end{remark}

We next give some examples where the hypotheses in Proposition~\ref{prop:FindFrobTower} are satisfied; note that the exceptional divisors $E$ appearing in these examples do not admit a lift to $V$ together with a lift of the Frobenius.

\begin{example}[Elliptic curves]
\label{EllCurveSing}
Let $(R,\mathfrak{m},k)$ be a normal $2$-dimensional complete local flat $V$-algebra with minimal resolution $f:X \to \mathrm{Spec}(R)$. Assume that the reduced exceptional divisor of $X$ is an elliptic curve $E$ over $k$. Then the hypotheses of Proposition~\ref{prop:FindFrobTower} are satisfied: as the line bundle $L$ is ample (as $E$ is contracted by $f$, its normal bundle must be negative) and $T_{E/k} = \mathcal{O}_E$ is trivial (as $E$ has genus $1$), this follows from Riemann-Roch.

An explicit example of such an $R$ is given by $V\llbracket x,y\rrbracket/(f(x,y,\pi))$, where $f(x,y,z) \in V[x,y,z]$ is a homogenous cubic lifting the defining equation of a smooth curve $E := V(f(x,y,z)) \in \mathbf{P}^2_k$ and $\pi \in V$ is a uniformizer; for instance, we may take $f(x,y,z) = x^3 + y^3 + z^3$ when $p \neq 3$. The resolution $f:X \to \mathrm{Spec}(R)$ is given by the blowup $X=\mathrm{Bl}_{(\pi,x,y)}(\mathrm{Spec}(R))$ and has exceptional divisor $E$.
\end{example}

\begin{example}[Arbitrary liftable varieties]
\label{BadNonFLiftSing}
 Let $E/k$ be a smooth projective variety that admits a flat projective lift to $V$. Then we shall construct a map $f:X \to \mathrm{Spec}(R)$ as in Proposition~\ref{prop:FindFrobTower} with exceptional fiber $E$ and such that $R$ (and hence $X$) is $V$-flat. Moreover, in our construction, we can also arrange that the line bundle $L = I_E|_E \in \mathrm{Pic}(E)$ is as ample as desired.

By the liftability assumption, we can find a flat projective lift $\widetilde{E}/V$ of $E/k$; in particular, the conormal bundle of $E \subset \widetilde{E}$ is trivial. Let $M$ be a very ample line bundle on $\widetilde{E}$ whose restriction to $E$ satisfies the vanishing required in Proposition~\ref{prop:FindFrobTower}.

We will construct a blowup $\widetilde{X} \to \widetilde{E}$ where $E$ lifts and such that $E \subset \widetilde{X}$ has conormal bundle $M|_E$; this implies that $E \subset \widetilde{X}$ can be contracted, at least in algebraic spaces, which then yields the desired map $f$ by base changing to the complete local ring of the contraction.

To find the desired blowup, choose a smooth divisor $Z \subset E$ in the linear system $M$, so $I_{Z \subset E} \simeq M|_E$. In particular, $Z \subset \widetilde{E}$ is a codimension $2$ regularly immersed closed subscheme.  Let $\widetilde{X} = \mathrm{Bl}_Z(\widetilde{E})$ be the blowup of $\widetilde{E}$ along $Z$. As $Z$ was already a Cartier divisor on $E$, the strict transform of $E$ in $\widetilde{X}$ is isomorphic to $E$, so we can view $E$ as a divisor on $\widetilde{X}$. Moreover, as the conormal bundle of $E \subset \widetilde{E}$ was trivial, a calculation shows that the conormal bundle $I_{E \subset \widetilde{X}}|_{E}$ of $E \subset \widetilde{X}$ identifies with $M|_E$. By Artin's theorems \cite[Theorem 3.1, Theorem 6.2]{Art70}, there exists a proper birational contraction $\widetilde{f}:\widetilde{X} \to \widetilde{Y}$ of $E$ in the category of algebraic spaces. Take $f:X \to \mathrm{Spec}(R)$ to be the base of $\widetilde{f}$ to the complete local ring of $\widetilde{Y}$ at its singular point. The inclusion $E \subset \widetilde{X}$ refines to $E \subset X$ as $E$ is contracted to the singular point of $\widetilde{Y}$ under $\widetilde{f}$. The conormal bundle of $E \subset X$ is the same as that of $E \subset \widetilde{X}$, i.e., it equals $M|_E$. As the vanishing conditions in Proposition~\ref{prop:FindFrobTower} are satisfied by assumption, we win.
\end{example}

Our desired examples will be obtained from Proposition~\ref{prop:FindFrobTower} by passing to section rings. For this, we need the following lemma bounding the length of the local cohomology of the resulting rings:

\begin{lemma}
\label{EstLenRes}
Let $(R,\m)$ be a complete normal local domain of dimension $d$. Let $Y\to \mathrm{Spec}(R)$ be a projective birational morphism of normal schemes that is an isomorphism outside $\{\m\}$. Suppose $Y$ is Cohen-Macaulay and the reduced pre-image $E\subseteq Y$ of $\{\m\}$ is a prime Cartier divisor, and that $L=I_E=\mathcal{O}_Y(-E)$ is ample. Let $S=\oplus_{j\geq 0}H^0(Y, L^j)$ be the section ring of $Y$ with respect to $L$, with $\m_S=\m+ S_{>0}$ (note that $S_0=R$). Suppose $N\geq 0$ is such that
$$H^{>0}(E, \omega_E\otimes L^j)=0=H^{>0}(E, \mathcal{O}_E\otimes L^j) \text{ for all $j>N$}.$$
Then $S$ is normal and we have
$$\ell(H_{\m_S}^i(S)) \leq (N-1)\cdot\sum_{n=2}^{N}\ell(H^{d-i+1}(E, \omega_E(n))) + (N+1)\cdot \sum_{n=0}^{N}\ell(H^{i-1}(E, \mathcal{O}_E(n)))$$
for all $2\leq i\leq d$. In particular, $\ell(H_{\m_S}^i(S))$ is bounded by data depending only on $E$ and $L|_E$, and the same bound remains true if $L$ is replaced by $L^{p^c}$ for any $c \geq 0$.
\end{lemma}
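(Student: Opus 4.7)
The plan has three stages: (1) normality of $S$, (2) reducing $H^i_{\m_S}(S)$ to cohomology of $L^n$ on $Y$, and (3) reducing to cohomology of $\mathcal{O}_E(n)$ and $\omega_E(n)$ on $E$. The overall bound is obtained by exploiting the natural $\Z$-grading on $H^i_{\m_S}(S)$ and summing degree-by-degree.

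\emph{Normality of $S$.} Since $Y$ is normal and $L$ is ample, standard section-ring theory gives that $S$ is normal: $S$ is saturated in nonnegative degrees, and the natural inclusion of $S$ into the rational-section ring of $L^\bullet$ on $Y \setminus E$ (whose ring of degree-zero invariants is the normal ring $R$) identifies $S$ as integrally closed in its fraction field.

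\emph{Reduction to cohomology on $Y$.} Since $\m_S$ is graded, $H^i_{\m_S}(S) = \bigoplus_n H^i_{\m_S}(S)_n$. Use the local cohomology spectral sequence
\[ E_2^{p,q} = H^p_\m\bigl(H^q_{S_{>0}}(S)\bigr) \Rightarrow H^{p+q}_{\m_S}(S). \]
By standard section-ring theory, $H^q_{S_{>0}}(S)_n \cong H^{q-1}(Y, L^n)$ for $q \geq 2$, while $H^0_{S_{>0}}(S) = 0$ and $H^1_{S_{>0}}(S)_n \cong R$ for $n < 0$ (using normality of $R$ and Hartogs with $d \geq 2$), zero otherwise. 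Since $f\colon Y \to \Spec(R)$ is an isomorphism outside $\m$, each $R^{q-1} f_* L^n$ is supported at $\{\m\}$ for $q \geq 2$, so $H^{q-1}(Y, L^n)$ has finite length over $R$ and $E_2^{p,q}_n = 0$ for $p \geq 1$, $q \geq 2$. The $(i-1, 1)$-position contributes $H^{i-1}_\m(R)$ only in degrees $n < 0$; this vanishes for $i = 2$ by the $S_2$ property of normal $R$, and for $i \geq 3$ the Cohen-Macaulayness of $Y$ (which forces $R^{>0} f_* \mathcal{O}_Y$ to be finite length) is used to argue that the differentials $d_{i-1}\colon E_{i-1}^{0, i-1}_n \to E_{i-1}^{i-1, 1}_n$ absorb this contribution into subquotients of $H^{i-2}(Y, L^n)$, yielding $\ell_R\bigl(H^i_{\m_S}(S)_n\bigr) \leq \ell_R\bigl(H^{i-1}(Y, L^n)\bigr)$.

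\emph{Reduction to cohomology on $E$.} Bound $\sum_n \ell_R\bigl(H^{i-1}(Y, L^n)\bigr)$ using the short exact sequence $0 \to L^{n+1} \to L^n \to L^n|_E \to 0$ and its long exact sequence. For $n > N$, the hypothesis $H^{i-1}(E, L^n|_E) = 0$ combined with Serre vanishing (ample $L$) propagates backward to $H^{i-1}(Y, L^n) = 0$. For $n \leq -N$, Serre duality on the Cohen-Macaulay $Y$ gives
\[ H^{i-1}(Y, L^n) \cong H^{d-i+1}(Y, \omega_Y \otimes L^{-n})^\vee, \]
and the adjunction $\omega_Y|_E = \omega_E \otimes L|_E$ (since $E$ is Cartier) combined with the analogous exact sequence yields the required vanishing via the hypothesis on $\omega_E(n)$. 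In the range $n \in \{0, \ldots, N\}$, iterating the long exact sequence gives
\[ \ell\bigl(H^{i-1}(Y, L^n)\bigr) \leq \sum_{n'=n}^N \ell\bigl(H^{i-1}(E, \mathcal{O}_E(n'))\bigr); \]
summing over $n$ produces the $(N+1)$ factor since each $n'$ appears in at most $N+1$ summands. Analogously, for $n \in \{-(N-1), \ldots, -1\}$, duality plus iteration yield
\[ \ell\bigl(H^{i-1}(Y, L^n)\bigr) \leq \sum_{n'=-n+1}^N \ell\bigl(H^{d-i+1}(E, \omega_E(n'))\bigr); \]
summing produces the $(N-1)$ factor. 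Combining these two sums gives the asserted bound. The final assertion (replacement of $L$ by $L^{p^c}$) is immediate: if $n > N$ then $p^c n \geq n > N$, so the hypothesis transfers and the same bound applies verbatim.

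\emph{Main obstacle.} The delicate step is controlling the $(i-1, 1)$-contribution $H^{i-1}_\m(R)$ in the spectral sequence in negative degrees for $i \geq 3$, which a priori need not have finite length. Carefully tracking the spectral sequence differentials, using both the $S_2$-property of the normal $R$ and the Cohen-Macaulayness of $Y$ (which forces the higher direct images $R^{>0} f_* \mathcal{O}_Y$ to have finite length), to ensure this contribution is absorbed and the total bound remains finite, is where the main technical work lies.
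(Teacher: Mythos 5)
Your approach is a legitimate alternative to the paper's: you use the composite-functor spectral sequence $E_2^{p,q} = H^p_\m\bigl(H^q_{S_{>0}}(S)\bigr) \Rightarrow H^{p+q}_{\m_S}(S)$, whereas the paper uses the Sancho de Salas sequence. Your positive-degree analysis ($n \geq 0$) is correct: in that range $H^1_{S_{>0}}(S)_n = 0$, the spectral sequence degenerates, and you get $H^i_{\m_S}(S)_n \cong H^{i-1}(Y,L^n)$, which is the paper's equation (5). Your arithmetic for the $(N+1)$ and $(N-1)$ factors also agrees with the paper's.

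The gaps are in the negative-degree analysis. First, your claim that the $(i-1,1)$ contribution ``is absorbed'' by the differential $d_{i-1}$ does not hold as stated: the source $E_{i-1}^{0,i-1}_n$ is a subquotient of $H^{i-2}(Y,L^n)$, which has finite length, so $d_{i-1}$ cannot kill $H^{i-1}_\m(R)$ unless the latter is already finite. The correct observation (which you gesture at but misattribute) is that $Y$ CM together with $f$ being an isomorphism off $\{\m\}$ forces $R$ to be \emph{generalized} Cohen-Macaulay, so $H^{<d}_\m(R)$ has finite length; but even then, your asserted bound $\ell(H^i_{\m_S}(S)_n) \leq \ell(H^{i-1}(Y,L^n))$ omits the $E_\infty^{i-1,1}$ contribution entirely and is not justified. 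Second, and more seriously, your Serre duality step $H^{i-1}(Y,L^n) \cong H^{d-i+1}(Y, \omega_Y \otimes L^{-n})^\vee$ is false: $Y$ is not proper (only proper over the affine $\mathrm{Spec}(R)$), and the correct duality for a Cohen-Macaulay $Y$ proper over a complete local ring is $H^{j}_E(Y,\mathcal{F})^\vee \cong H^{d-j}(Y, \omega_Y \otimes \mathcal{F}^{-1})$, involving local cohomology \emph{with support in $E$}. The difference $H^{i-1}_E(Y,L^n)$ vs.\ $H^{i-1}(Y,L^n)$ is exactly governed by the $H^*(Y\setminus E, \mathcal{O}) \cong H^{*+1}_\m(R)$ terms you were hand-waving away in the first gap, so the two errors are really the same error appearing twice. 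The paper's Sancho de Salas sequence avoids both issues by producing, for $n = -j < 0$, the identification $H^i_{\m_S}(S)_{-j} \cong H^{i-1}_E(Y, L^{-j}) \cong H^{d-i+1}(Y,\omega_Y(j))^\vee$ directly, after which the $\omega_E$ long exact sequence gives the bound. Your final formula is correct, but the two intermediate steps that produce it are each individually wrong; to repair your approach you should either switch to the Sancho de Salas sequence in negative degrees or prove $H^i_{\m_S}(S)_{-j} \cong H^{i-1}_E(Y, L^{-j})$ from your spectral sequence and then apply the correct local duality.

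Two smaller points: (i) the normality of $S$ is cleanest to see, as in the paper, by noting that $S$ is the Rees ring of the divisorial filtration $\mathrm{ord}_E$ and hence integrally closed in $R[T]$; your argument is vaguer. (ii) ``Cohen-Macaulayness of $Y$ forces $R^{>0}f_*\mathcal{O}_Y$ to be finite length'' is incorrect as a causal claim; finite length of $R^{>0}f_*$ comes from $f$ being an isomorphism off $\{\m\}$, not from CM-ness of $Y$.
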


\begin{remark}
If $N=0$ (in which case the first term should be interpreted as $0$), then the lemma implies $ \ell(H_{\m_S}^i(S)) \leq \ell(H^{i-1}(E, \mathcal{O}_E))$ for all $2\leq i\leq d$, and in fact, it follows from the proof that we have equality in this case.
\end{remark}

\begin{proof}
Since $L$ is ample, we have that $Y=\mathrm{Proj}(S)$ and $\mathcal{O}_Y(1):= \widetilde{S(1)}=L$. Next note that $S$ is normal in $R[T]$, since $S$ can be viewed as the Rees ring associated to the divisorial valuation $\mathrm{ord}_E(-)$. Since $R$ is normal, it follows that $S$ is normal. 

Now we consider the Sancho de Salas sequence
$$\cdots \to \oplus_{j\in\mathbb{Z}} H_E^{i-1}(Y, \mathcal{O}_Y(j))\to H_{\m_S}^i(S) \to \oplus_{j\geq 0} H_\m^i(S_j) \to \oplus_{j\in\mathbb{Z}} H_E^i(Y, \mathcal{O}_Y(j)) \to \cdots.$$
It follows that
\begin{equation}
\label{eqn 1}
H_{\m_S}^i(S)_{-j}  \cong  H_E^{i-1}(Y, \mathcal{O}_Y(-j)) \cong H^{d-i+1}(Y, \omega_Y(j))^\vee \text{ for all $i$ and all $j>0$.}
\end{equation}
where the second isomorphism follows from duality and that $Y$ is Cohen-Macaulay (so $\omega_Y^{\bullet}\cong \omega_Y[d]$). 
From the exact sequence
$$0\to \omega_Y \to \omega_Y(E)\cong \omega_Y(-1) \to \omega_E\to 0,$$
after twisting and taking global sections we obtain
$$H^{d-i+1}(Y, \omega_Y(j)) \to H^{d-i+1}(Y, \omega_Y(j-1)) \to H^{d-i+1}(E,\omega_E(j)).$$
Our assumption says that, when $i\leq d$, $H^{d-i+1}(E, \omega_E(j))=0$ for all $j>N$. This, together with Serre vanishing, which gives that $H^{d-i+1}(Y, \omega_Y(j))=0$ when $i\leq d$ and $j\gg0$ (and an obvious descending induction), shows that
\begin{equation}
\label{eqn 2}
H^{d-i+1}(Y, \omega_Y(j))=0 \text{ when $i\leq d$ and $j\geq N$}
\end{equation}
and that
\begin{align}
\label{eqn 3}
  \ell(H^{d-i+1}(Y,\omega_Y(j)))  & \leq \sum_{n=j+1}^{N}\ell(H^{d-i+1}(E, \omega_E(n))) \\
   &  \leq \sum_{n=2}^{N}\ell(H^{d-i+1}(E, \omega_E(n))) \text{ when $i\leq d$ and $0<j< N$}. \notag
\end{align}
Putting (\ref{eqn 1}), (\ref{eqn 2}), (\ref{eqn 3}) together we obtain
\begin{equation}\label{eqn 4}
\ell(H_{\m_S}^i(S)_{<0}) \leq (N-1)\cdot\sum_{n=2}^{N}\ell(H^{d-i+1}(E, \omega_E(n))) \text{ for all $i\leq d$. }
\end{equation}

Now we investigate $H_{\m_S}^i(S)_{\geq 0}$. Note that for each $j\geq 0$, we have the following commutative diagram for all $i\geq 2$:
\[\xymatrix{
H_E^{i-1}(Y, \mathcal{O}_Y(j))\ar[r] \ar[d]^= & H_{\m_S}^i(S)_j \ar[r] \ar[d] & H_\m^i(S_j) \ar[r] \ar[d]^\cong  & H_E^i(Y, \mathcal{O}_Y(j)) \ar[d]^= \\
H_E^{i-1}(Y, \mathcal{O}_Y(j))\ar[r] & H^{i-1}(Y, \mathcal{O}_Y(j)) \ar[r] & H^{i-1}(Y-E, \mathcal{O}_{Y-E}(j)) \ar[r]  & H_E^i(Y, \mathcal{O}_Y(j))
}
\]
where the vertical isomorphism follows from the fact that when $i\geq 2$ we have (note that we have $S_j=H^0(Y, \mathcal{O}_Y(-jE))\hookrightarrow S_0=H^0(Y, \mathcal{O}_Y)$ whose cokernel has finite length):
$$H_\m^i(S_j)\cong H_\m^i(R) \cong H^{i-1}(Y-E, \mathcal{O}_{Y-E})\cong H^{i-1}(Y-E, \mathcal{O}_{Y-E}(j)).$$
Thus we have
\begin{equation}\label{eqn 5}
H_{\m_S}^i(S)_j \cong H^{i-1}(Y, \mathcal{O}_Y(j)) \text{  for all $i\geq 2$ and $j\geq 0$. }
\end{equation}
From the short exact sequence
$$0\to \mathcal{O}_Y(-E)=\mathcal{O}_Y(1) \to \mathcal{O}_Y\to \mathcal{O}_E\to 0,$$
after twisting and taking global sections we obtain
$$ H^{i-1}(Y, \mathcal{O}_Y(j+1)) \to H^{i-1}(Y, \mathcal{O}_Y(j)) \to H^{i-1}(E, \mathcal{O}_E(j)) $$
Our assumption says that, when $i\geq 2$, $H^{i-1}(E, \mathcal{O}_E(j))=0$ for all $j>N$. This together with Serre vanishing that $H^{i-1}(Y, \mathcal{O}_Y(j))=0$ when $i\geq 2$ and $j\gg0$ (and an obvious descending induction) shows that
\begin{equation}\label{eqn 6}
H^{i-1}(Y, \mathcal{O}_Y(j))=0 \text{ when $i\geq 2$ and $j>N$}
\end{equation}
and that
\begin{align}\label{eqn 7}
H^{i-1}(Y, \mathcal{O}_Y(j)) & \leq \sum_{n=j}^{N}\ell(H^{i-1}(E, \mathcal{O}_E(n)))  \\
 & \leq \sum_{n=0}^{N}\ell(H^{i-1}(E, \mathcal{O}_E(n))) \text{ when $i\geq 2$ and $0\leq j\leq N$} \notag
\end{align}
Putting (\ref{eqn 5}), (\ref{eqn 6}), (\ref{eqn 7}) together we obtain
\begin{equation}\label{eqn 8}
\ell(H_{\m_S}^i(S)_{\geq 0}) \leq (N+1)\cdot \sum_{n=0}^{N}\ell(H^{i-1}(E, \mathcal{O}_E(n))) \text{ for all $i\geq 2$. }
\end{equation}

Finally, by (\ref{eqn 4}) and (\ref{eqn 8}) we have
$$ \ell(H_{\m_S}^i(S)) \leq (N-1)\cdot\sum_{n=2}^{N}\ell(H^{d-i+1}(E, \omega_E(n))) + (N+1)\cdot \sum_{n=0}^{N}\ell(H^{i-1}(E, \mathcal{O}_E(n)))$$
for every $2\leq i\leq d$ as wanted.
\end{proof}

\begin{example}[A lim Cohen-Macaulay sequence without Frobenius lifts]
\label{ex:LimCMNoFrob}
Pick $f:X \to \mathrm{Spec}(R)$, $E$ and $L$ as in Proposition~\ref{prop:FindFrobTower} with $L$ ample (see Examples~\ref{EllCurveSing} and \ref{BadNonFLiftSing} for explicit examples). Let $\{X_n\}$ be the tower provided by Proposition~\ref{prop:FindFrobTower}. Let $S_n = \Gamma_*(X_n, I_{E_n \subset X_n}) = \oplus_{j \geq 0} H^0(X_n, I_{E_n \subset X_n}^j)$ be the homogenous co-ordinate ring of $X_n$ with respect to the ample line bundle $I_{E_n \subset X_n}$. Then we claim that $\{S_n\}$ forms a lim Cohen-Macaulay sequence over $S_0$ after localization at the graded maximal ideal $\mathfrak{m}_{S_0} = \mathfrak{m}_R + (S_0)_{>0}$.

To see this, let $E_n \subset X_n$ be the preimage of $E \subset X$, so each $E_n$ is identified with $E$ (not $k$-linearly), the induced map $E_n \to E$ is identified with $F^n:E \to E$, and thus the conormal bundle  $I_{E_n \subset X_n}|_{E_n}$ is identified with $(F^n)^* L = L^{p^n}$. Writing $R_n = H^0(X_n, \mathcal{O}_{X_n}) = (S_n)_0$, the induced map $X_n \to \mathrm{Spec}(R_n)$ is the Stein factorization of the composition $X_n \to X \to \mathrm{Spec}(R)$; as the latter is an alteration (being a composition of a proper birational map with a finite flat map), its Stein factorization $X_n \to \mathrm{Spec}(R_n)$ is a proper birational map and the induced map $R = R_0 \to R_n$ is a finite extension of complete local normal domains, whence $\sqrt{\mathfrak{m}_R R_n} = \mathfrak{m}_{R_n}$. It is clear from these descriptions that the map $X_n \to \mathrm{Spec}(R_n)$ satisfies the hypotheses of Lemma~\ref{EstLenRes}, that each $S_n$ is a finite $S_0$-algebra, and moreover that the graded maximal ideal $\mathfrak{m}_{S_n}$ of $S_n$ is given by $\sqrt{\mathfrak{m}_{S_0}}$. Applying the conclusion of Lemma~\ref{EstLenRes}, we learn that the function $n \mapsto \ell(H^i_{\mathfrak{m}_{S_0}}(S_n))$ is bounded for $i \leq \dim(R_n) = \dim(S_n)-1$, proving that $\{S_n\}$ is strongly lim Cohen-Macaulay.
\end{example}

We end this section by explaining why some of these examples (e.g., Example~\ref{EllCurveSing}) admit no small Cohen-Macaulay algebras. Our argument mirrors that in \cite{Bha14} via the Witt vectors using the following:

\begin{lemma}
\label{LocalCohBigEx}
With notation as in Lemma~\ref{EstLenRes}, suppose $H^{>0}(E, L|_E^j)=0$ for all $j>0$ (e.g., when $N=0$, such as Example~\ref{EllCurveSing}) and that the residue characteristic is $p > 0$. Then $H^i(E, W_n\mathcal{O}_E)$ is a direct summand of $H^{i+1}_{\m_S}(W_nS)$ for all $n$ and all $0<i<d$.
\end{lemma}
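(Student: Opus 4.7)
The strategy is to mimic the proof of Lemma~\ref{EstLenRes}, applying Witt vectors throughout. The key observation is that equation~(5) in that proof gave $H^i_{\m_S}(S)_0 \cong H^{i-1}(Y, \mathcal{O}_Y)$ for $i \geq 2$ and exhibited this as a direct summand of $H^i_{\m_S}(S)$ via the $\mathbb{G}_m$-grading on $S$. In the present (``$N = 0$'') case, the remaining graded pieces $H^i_{\m_S}(S)_j$ for $j \geq 1$ in fact vanish, and a short diagram chase gives $H^{i+1}_{\m_S}(S) \cong H^i(Y, \mathcal{O}_Y) \cong H^i(E, \mathcal{O}_E)$ for $1 \leq i \leq d-1$, which handles the $n = 1$ case of the lemma. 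For general $n$, the plan is to establish (a) a ``Witt vector Sancho de Salas'' direct summand $H^i(Y, W_n\mathcal{O}_Y) \hookrightarrow H^{i+1}_{\m_S}(W_nS)$ for $1 \leq i \leq d-1$, and (b) an identification $H^i(Y, W_n\mathcal{O}_Y) \cong H^i(E, W_n\mathcal{O}_E)$ for $i \geq 1$.

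For step (b), one first checks that $H^i(Y, L^j) = 0$ for all $i \geq 1$ and $j \geq 1$: this follows from the hypothesis $H^{>0}(E, L^j|_E) = 0$ for $j > 0$ together with the long exact sequence of $0 \to L^{j+1} \to L^j \to L^j|_E \to 0$ and Serre vanishing for $j \gg 0$, exactly as in the proof of Lemma~\ref{EstLenRes}. Now consider the short exact sequence of sheaves on $Y$
\[ 0 \to K_n \to W_n\mathcal{O}_Y \to W_n\mathcal{O}_E \to 0,\]
where $K_n$ is the kernel. The Verschiebung filtration on $W_n\mathcal{O}_Y$ restricts to a filtration on $K_n$ whose successive quotients are isomorphic, as abelian sheaves on $Y$, to the ideal $L = I_E$ (via extraction of each Witt component). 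Since $H^i(Y, L) = 0$ for $i \geq 1$, a standard devissage gives $H^i(Y, K_n) = 0$ for $i \geq 1$, and then the long exact sequence in cohomology yields the desired isomorphism $H^i(Y, W_n\mathcal{O}_Y) \cong H^i(E, W_n\mathcal{O}_E)$ for $i \geq 1$.

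\textbf{The main obstacle} is step (a): establishing the Witt vector analog of the Sancho de Salas direct summand. The $\mathbb{G}_m$-action defining the grading on $S$ does not extend to a genuine grading on $W_nS$, because Witt vector addition mixes $\mathbb{G}_m$-weights by Frobenius twists (the weight-$m$ subset fails to be closed under Witt addition when $m \neq 0$). Hence one cannot extract a ``degree $0$'' direct summand as directly as for $S$. Instead, the plan is to argue inductively via the Verschiebung filtration
\[ 0 = V^n W_0 S \subset V^{n-1}W_1 S \subset \cdots \subset V W_{n-1} S \subset W_n S,\]
whose successive quotients $V^k W_{n-k}S / V^{k+1} W_{n-k-1} S$ are isomorphic as $W_nS$-modules to Frobenius pushforwards $F^k_* S$. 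Running through the corresponding long exact sequences in local cohomology, and at each stage extracting the ``degree $0$ in $S$'' direct summand (identified with $F^k_* H^i(Y, \mathcal{O}_Y)$ by the base case already established in Lemma~\ref{EstLenRes}), yields, after reassembly, the desired direct summand $H^i(Y, W_n\mathcal{O}_Y) \hookrightarrow H^{i+1}_{\m_S}(W_nS)$. The delicate point is to verify that these stagewise direct summands assemble compatibly under the connecting maps in the long exact sequence, so that the filtration on $W_nS$ really does produce a summand isomorphic to $H^i(Y, W_n\mathcal{O}_Y)$ rather than merely a filtered subquotient.
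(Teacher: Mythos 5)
Your plan reproduces the shape of the paper's proof faithfully: step (b) (the identification $H^i(Y, W_n\mathcal{O}_Y) \cong H^i(E, W_n\mathcal{O}_E)$ via the vanishing of $H^{>0}(Y, W_nI_E)$ and the d\'evissage of $W_nI_E$ by copies of $I_E$) is exactly what the paper does, and the framing of step (a) as a ``Witt vector Sancho de Salas'' direct summand, passing through $U := \mathrm{Spec}(S)\setminus V(S_{>0})$ and the sequence $R\Gamma_{V(S_{>0})}(W_nS) \to R\Gamma(W_nS) \to R\Gamma(U, W_n\mathcal{O}_U) \to$, is also the paper's route.

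However, the approach you propose for step (a) has a genuine gap, and it is precisely the one you flag at the end as ``delicate.'' You want to run the long exact sequences of local cohomology down the Verschiebung filtration $0 = V^nW_0S \subset V^{n-1}W_1S \subset \cdots \subset W_nS$, extract the degree-$0$ direct summand from each graded piece $F^k_*S$ (using the $\mathbf{G}_m$-grading there), and ``reassemble.'' But there is no $\mathbf{G}_m$-grading on $W_nS$ itself, so there is nothing forcing the extension classes of the filtration to be compatible with the chosen summands. Concretely: if $0 \to A \to B \to C \to 0$ is an extension and you have chosen decompositions $A = A_0 \oplus A_1$ and $C = C_0 \oplus C_1$, there is no induced decomposition of $B$ unless the class of the extension in $\mathrm{Ext}^1(C,A)$ has vanishing cross-terms in $\mathrm{Ext}^1(C_0, A_1)$ and $\mathrm{Ext}^1(C_1, A_0)$ — a condition you have no means to verify here. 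This is exactly the trap that invalidated the proof of the analogous statement in \cite[Lemma 3.8]{Bha14}, as the paper itself points out.

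The paper's fix (Lemma~\ref{WittSummand}) avoids the reassembly problem entirely. Instead of using the $\mathbf{G}_m$-action on $U$ directly (which does not induce an action on $W_n\mathcal{O}_U$, since Witt addition mixes $\mathbf{G}_m$-weights), it passes to the perfected group $G = \mathbf{G}_m^{\mathrm{perf}}$. The key fact is that $G$ is relatively perfect over $\mathbf{Z}_p$ as a $p$-adic formal scheme, so $W_n(Z \times G) \simeq W_n(Z) \times G$, and the induced $G$-action on $U$ \emph{does} lift to an action on $W_n\mathcal{O}_U$. This yields a genuine $\mathbf{Z}[1/p]$-grading on $f_*W_n\mathcal{O}_U$, whose degree-$0$ subring is $W_n\mathcal{O}_Y$; the direct summand is then read off, with no inductive assembly required. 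Your Verschiebung filtration reappears there, but only as a secondary tool to identify the degree-$0$ piece of a grading that already exists globally, not to manufacture the grading.

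One smaller point: your claim that for $n=1$ all nonzero graded pieces $H^i_{\m_S}(S)_j$ with $j \geq 1$ vanish uses the vanishing $H^{>0}(Y, \mathcal{O}_Y(j)) = 0$ for $j \geq 1$ (coming from the $\mathcal{O}_E$ half of the $N=0$ hypothesis), which is fine; but the paper's hypothesis for this lemma only requires $H^{>0}(E, L|_E^j) = 0$ for $j>0$, not the full $N=0$ condition including the $\omega_E$ vanishing, and the direct-summand conclusion does not need vanishing of the other graded pieces — only that the degree-$0$ piece splits off.
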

\begin{proof}
First of all, since $H^{>0}(E, L|_E^j)=0$ for all $j>0$, it follows by Serre vanishing and descending induction on $j$ (as in the proof of Lemma~\ref{EstLenRes}) that $H^{>0}(Y, L^j)=0$ for all $j>0$. In particular, taking $j=1$ we have $H^{>0}(Y, I_E)=0$. This implies $H^{>0}(Y, W_nI_E)=0$ since $W_nI_E$ is an iterated extension of $I_E$ as abelian sheaves. Consider the long exact sequence induced by
$0\to W_nI_E\to W_n\mathcal{O}_Y \to W_n\mathcal{O}_E\to 0,$
we obtain that $H^{>0}(Y, W_n\mathcal{O}_Y)\cong H^{>0}(E, W_n\mathcal{O}_E)$.

Let $U:=\mathrm{Spec}(S)-V(S_{>0})$. The projection map $\pi$: $U\to Y=\mathrm{Proj}(S)$ is a $\mathbf{G}_m$-torsor.   As $Y$ is proper over a $p$-complete ring, the complex $R\Gamma(Y, W_n \mathcal{O}_Y)$ is already $p$-complete and thus agrees with $R\Gamma(\widehat{Y}, W_n \mathcal{O}_{\widehat{Y}})$, where $\widehat{Y}$ is the $p$-adic formal completion of $Y$. Lemma~\ref{WittSummand} implies that the composition
\[ R\Gamma(Y, W_n \mathcal{O}_Y) \to R\Gamma(U, W_n \mathcal{O}_U) \to R\Gamma(\widehat{U}, W_n \mathcal{O}_{\widehat{U}}) \]
is the inclusion of a direct summand (where $\widehat{U}$ is the $p$-adic formal completion of $U$), whence the first map is also the inclusion of a direct summand. We then learn that for all $i >0$
\begin{equation}\label{eqn 9}
H^i(E, W_n\mathcal{O}_E) \cong H^i(Y, W_n\mathcal{O}_Y)\to H^i(U, W_n\mathcal{O}_U) \text{ is a direct summand.}
\end{equation}
We next note that $H^{>0}(\mathrm{Spec}(S), W_nS)=0$ since $W_nS$ is an iterated extension of $S$. Thus, by the standard sequence $R\Gamma_{V(S_{>0})}(\mathrm{Spec}(S), -)\to R\Gamma(\mathrm{Spec}(S), -)\to R\Gamma(U, -) \xrightarrow{+1}$, we have
\begin{equation}\label{eqn 10}
H^i(U, W_n\mathcal{O}_U)\cong H^{i+1}_{V(S_{>0})}(W_nS) \text{ for all $i>0$}.
\end{equation}
Next we claim that $H^i_{V(S_{>0})}(W_nS)$ is supported only on $\{\m_S\}$ for all $i\leq d$, and thus
\begin{equation}\label{eqn 11}
H^i_{V(S_{>0})}(W_nS)\cong H^i_{\m_S}(W_nS) \text{ when $i\leq d$. }
\end{equation}
To see this, again, since $W_nS$ is an iterated extension of $S$, it is enough to show that $H^i_{V(S_{>0})}(S)$ is supported only at $\{\m\}$ as an $R$-module when $i\leq d$. But the standard Cech complex after rotating gives us
$$R\Gamma_{V(S_{>0})}(S) \to S \to \oplus_{j\in\mathbb{Z}}R\Gamma(Y,\mathcal{O}_Y(j))\xrightarrow{+1}$$
Now $S =\oplus_{j\in\mathbb{Z}}\Gamma(Y,\mathcal{O}_Y(j))$ by construction, $H^i(Y, \mathcal{O}_Y(j))=0$ for all $|j|\gg0$ when $1\leq i\leq d-1$ by Serre vanishing (and that $Y$ is Cohen-Macaulay), and each $H^{>0}(Y, \mathcal{O}_Y(j))$ is supported only at $\{\m\}$ since $Y\to\mathrm{Spec}(R)$ is an isomorphism outside $\{\m\}$. Putting these together we see that $H^i_{V(S_{>0})}(S)$ is supported only at $\{\m\}$ as an $R$-module when $i\leq d$. Finally, by (\ref{eqn 9}), (\ref{eqn 10}), (\ref{eqn 11}), we obtain that
$H^i(E, W_n\mathcal{O}_E)$ is a direct summand of $H^{i+1}_{\m_S}(W_nS)$ for all $0<i<d$ as wanted.
\end{proof}

The next lemma was used above.

\begin{lemma}
\label{WittSummand}
  Let $f:V \to X$ be a $\mathbf{G}_m$-torsor in noetherian $p$-adic formal schemes. Then the natural map $W_n \mathcal{O}_X \to Rf_* W_n \mathcal{O}_V$ is a direct summand.
\end{lemma}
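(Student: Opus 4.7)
The plan is to construct the splitting using the canonical $\mathbf{G}_m$-action on $V$ over $X$ together with the functoriality of $W_n$. Since $\mathbf{G}_m$ is affine, $f$ is affine and $Rf_* = f_*$. By $W_n$-functoriality, the $\mathbf{G}_m$-action on $V$ induces a $\mathbf{G}_m$-action on $f_* W_n \mathcal{O}_V$ with respect to which the natural inclusion from $W_n \mathcal{O}_X$ (equipped with the trivial action) is equivariant. The goal is to identify $W_n \mathcal{O}_X$ with the $\mathbf{G}_m$-invariant (weight-$0$) subsheaf of $f_* W_n \mathcal{O}_V$, and to exhibit it as a direct summand by canonically projecting onto that subsheaf.

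Zariski-locally on $X$ the torsor trivializes, $V \cong \mathbf{G}_m \times X$ and $f_* \mathcal{O}_V = \mathcal{O}_X \langle t, t^{\pm 1}\rangle$. The canonical unit section $t = 1$ of the trivial torsor gives, by $W_n$-functoriality, a ring retraction $W_n(\mathcal{O}_X \langle t, t^{\pm 1}\rangle) \to W_n \mathcal{O}_X$ splitting the inclusion; a direct computation on Witt components identifies the image of this retraction with the subring of $\mathbf{G}_m$-invariants, confirming that locally $W_n \mathcal{O}_X$ does coincide with the invariant subsheaf of $f_* W_n \mathcal{O}_V$.

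The main obstacle is globalizing this local splitting in a trivialization-independent manner, since the local retraction from $t = 1$ depends on the chosen trivialization. A safe route is to induct on $n$ using the canonical $\mathbf{G}_m$-equivariant short exact sequence
\[ 0 \to \mathcal{O}_V \xrightarrow{V^{n-1}} W_n \mathcal{O}_V \to W_{n-1} \mathcal{O}_V \to 0 \]
pushed forward to $X$ (exact by affineness of $f$), with base case $n = 1$ given by the classical weight decomposition $f_* \mathcal{O}_V = \widehat{\bigoplus}_k L^k$ of the $p$-adically completed Rees algebra of the line bundle $L$ associated to the torsor, from which the canonical summand $L^0 = \mathcal{O}_X$ is read off. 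The subtle point to justify carefully is that the $\mathbf{G}_m$-action on $f_* W_n \mathcal{O}_V$ from $W_n$-functoriality — not literally an algebraic coaction, since $W_n$ fails to commute with tensor products — nevertheless admits a canonical projection onto invariants, and that this projection propagates through the Verschiebung extension so as to assemble the inductive step from the splittings of the two outer sequences.
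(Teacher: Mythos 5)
There is a genuine gap, and it is exactly the one the paper flags as the error in \cite[Lemma 3.8]{Bha14}. Your first paragraph asserts that ``by $W_n$-functoriality, the $\mathbf{G}_m$-action on $V$ induces a $\mathbf{G}_m$-action on $f_* W_n \mathcal{O}_V$.'' But a $\mathbf{G}_m$-action is a comodule structure: the coaction on $\mathcal{O}_V$ is a map $\mathcal{O}_V \to \mathcal{O}_V \otimes \mathcal{O}_X[t^{\pm 1}]$, and applying $W_n$ to it produces a map $W_n \mathcal{O}_V \to W_n(\mathcal{O}_V \otimes \mathcal{O}_X[t^{\pm 1}])$, which is \emph{not} of the form $W_n\mathcal{O}_V \to W_n\mathcal{O}_V \otimes W_n\mathcal{O}_X[t^{\pm 1}]$, precisely because $W_n$ does not commute with the tensor product $- \otimes \mathcal{O}_X[t^{\pm 1}]$: there is no map $W_n(R[t^{\pm 1}]) \to W_n(R)[t^{\pm 1}]$. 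You acknowledge this obstruction at the end (``not literally an algebraic coaction, since $W_n$ fails to commute with tensor products'') but never resolve it, and the two strategies you offer do not fill the hole. The local route via the section $t=1$ produces a retraction that, as you note, depends on the trivialization and does not glue. The inductive route via the Verschiebung sequence is a valid way to analyze weight pieces \emph{once a global weight decomposition has been constructed}, but it presupposes the very decomposition whose existence is in question; moreover the map $V^{n-1}: \mathcal{O}_V \to W_n\mathcal{O}_V$ is Frobenius-semilinear, so it scales weights by $p^{-(n-1)}$, which means the ``weight zero'' assertion only makes sense relative to a $\Z[1/p]$-grading, not the original $\Z$-grading.

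The paper's fix is the ingredient you are missing: replace $\mathbf{G}_m$ by $G = \mathbf{G}_m^{\mathrm{perf}} = \varprojlim_{x \mapsto x^p} \mathbf{G}_m$ via the projection $G \to \mathbf{G}_m$. Because $G$ is relatively perfect over $\Z_p$, one does have $W_n(R[t^{\pm 1/p^\infty}]) \simeq W_n(R)[t^{\pm 1/p^\infty}]$, so a $G$-action on $R$ genuinely induces a $G$-action on $W_n(R)$, i.e., a $\Z[1/p]$-grading on $f_* W_n \mathcal{O}_V$. Taking the weight-$0$ summand is then exact and compatible with the Verschiebung filtration (the $p^{-(n-1)}$ rescaling is harmless on weight $0$), and your inductive computation then does show that the weight-$0$ part is $W_n \mathcal{O}_X$. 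So the structure of your argument survives, but only after the perfection step, which is the essential content of the lemma.
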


A similar assertion was claimed in \cite[Lemma 3.8]{Bha14}. However, the proof given there is incorrect: a $\mathbf{G}_m$-action on an $\mathbf{F}_p$-algebra $R$ does not obviously induce a $\mathbf{G}_m$-action on $W_n(R)$ as there is no obvious map $W_n(R[t^{\pm 1}]) \to W_n(R)[t^{\pm 1}]$. However, a $\mathbf{G}_m^{\mathrm{perf}}$-action on $R$ does induce a $\mathbf{G}_m^{\mathrm{perf}}$-action on $W_n(R)$ as $W_n(-)$ behaves predictably with relatively perfect maps: we have $W_n(R[t^{\pm \frac{1}{p^\infty}}]) \simeq W_n(R)[t^{\pm \frac{1}{p^\infty}}]$. This is enough to run the argument, and is what we do below.

\begin{proof}
First, observe that since $f$ is affine, we have $f_* W_n \mathcal{O}_V \simeq Rf_* W_n \mathcal{O}_V$, so everything is in degree $0$. Next, recall that specifying a $\mathbf{G}_m$-action on a $p$-complete abelian group is the same thing as specifying a $\Z$-grading (in the $p$-complete sense), with the functor of taking $\mathbf{G}_m$-invariants corresponding to passage to the degree $0$ summand. In particular, $f_* \mathcal{O}_V$ has a natural $\Z$-grading with degree $0$ summand $\mathcal{O}_X$. To extend this to $W_n(-)$, it is convenient to pass to the action of a perfect group scheme. Let $G = \mathbf{G}_m^{\mathrm{perf}} = \lim_{x \mapsto x^p} \mathbf{G}_m$, regarded as a group scheme; one has a natural projection map $G \to \mathbf{G}_m$ coming from the last term of the inverse limit. Then  $G$-actions on $p$-complete abelian groups are the same thing as $\Z[1/p]$-gradings; the functor of regarding a $\mathbf{G}_m$-action as a $G$-action corresponds to regarding a $\Z$-grading as a $\Z[1/p]$-grading. Now the $\mathbf{G}_m$-action on $f$ induces a $G$-action on $f$ via the map $G \to \mathbf{G}_m$. As $G$ is relatively perfect over $\Z_p$ as a $p$-adic formal scheme, we have $W_n(Z \times_{\mathrm{Spf}(\Z_p)} G) \simeq W_n(Z) \times_{\mathrm{Spf}(\Z_p)} G$ for any $p$-adic formal scheme $Z$. In particular, the map $W_n(f)$ has a natural $G$-action, whence $f_* W_n \mathcal{O}_V$ carries a $\Z[1/p]$-grading. Filtering $W_n \mathcal{O}_V$ by (Frobenius twisted) copies of $\mathcal{O}_V$, one then checks that the degree $0$ subring of $f_* W_n \mathcal{O}_V$ coincides with $W_n \mathcal{O}_X$, which gives the desired direct summand property.
\end{proof}

\begin{corollary}[Non-existence of small Cohen-Macaulay algebras]
\label{NonExistSmallCM}
Consider Example~\ref{ex:LimCMNoFrob} with $E$ taken to be an elliptic curve (see Example~\ref{EllCurveSing}). Then the resulting ring $S$ admits no small Cohen-Macaulay algebra, i.e., there is no finite injective map $S \to T$ with $T$ being Cohen-Macaulay.
\end{corollary}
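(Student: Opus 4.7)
The strategy is to argue by contradiction in the style of \cite{Bha14}, playing off Lemma~\ref{LocalCohBigEx} against the vanishing of local cohomology for Witt vectors of a hypothetical small Cohen-Macaulay algebra. Suppose $S \hookrightarrow T$ is a finite injective map with $T$ Cohen-Macaulay; since $\dim S = 3$, we must have $\dim T = 3$ and hence $H^i_{\m_T}(T) = 0$ for $i < 3$. Using $\sqrt{\m_S T} = \m_T$ this gives $H^i_{\m_S}(T) = 0$ for $i<3$. The Verschiebung filtration $W_n T \supset V W_n T \supset \cdots \supset V^n W_n T = 0$ presents $W_n T$ as an iterated extension of copies of $T$ as abelian sheaves, so the long exact sequence in local cohomology yields $H^i_{\m_S}(W_n T) = 0$ for $i < 3$ and every $n$.

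Next I would combine this with the short exact sequence $0 \to W_n S \to W_n T \to Q_n \to 0$, where $Q_n := W_n T / W_n S$. The resulting long exact sequence, together with the vanishing just established, gives the canonical isomorphism
\[
H^2_{\m_S}(W_n S) \;\cong\; H^1_{\m_S}(Q_n).
\]
On the other side, Lemma~\ref{LocalCohBigEx} applies with $N = 0$: indeed, for $E$ elliptic and $L|_E$ ample, Serre vanishing on the curve $E$ gives $H^{>0}(E, L|_E^j) = 0$ for $j > 0$ and $H^{>0}(E, \omega_E \otimes L^j) = H^{>0}(E, L^j) = 0$ for $j > 0$ as well (using $\omega_E \cong \mathcal{O}_E$). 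Thus $H^1(E, W_n\mathcal{O}_E)$ is a direct summand of $H^2_{\m_S}(W_n S)$, and an inductive application of $0 \to V W_n\mathcal{O}_E \to W_n\mathcal{O}_E \to \mathcal{O}_E \to 0$ shows $\ell(H^1(E, W_n \mathcal{O}_E)) = n$.

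The contradiction will come from comparing module structures on these two descriptions of $H^2_{\m_S}(W_n S)$. Passing to the $p$-adic inverse limit (taking $E$ ordinary, which one may arrange in Example~\ref{EllCurveSing}), we have $\varprojlim_n H^1(E, W_n \mathcal{O}_E) = H^1(E, W\mathcal{O}_E) \cong W(k)$, a rank-one free $W(k)$-module on which $p = FV$ acts faithfully and non-nilpotently. Meanwhile, the $V$-adic filtration on $Q = WT / WS$ has graded pieces identified with $T/S$ as abelian sheaves, and the sequence $0 = H^1_{\m_S}(T) \to H^1_{\m_S}(T/S) \to H^2_{\m_S}(S) \to H^2_{\m_S}(T) = 0$ combined with Lemma~\ref{EstLenRes} (applied to $S$, with $N = 0$, giving $H^2_{\m_S}(S) \cong H^1(E, \mathcal{O}_E) \cong k$) shows $H^1_{\m_S}(T/S) \cong k$. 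Consequently $H^1_{\m_S}(Q)$ is $V$-adically complete and filtered by copies of the single character $k$, so $p = FV$ acts topologically nilpotently there; this is incompatible with the faithful action of $p$ on the summand $W(k)$, producing the desired contradiction.

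The main obstacle I foresee is making the incompatibility of $p$-actions rigorous at the level of filtrations: one must verify that the embedding $W(k) = H^1(E, W\mathcal{O}_E) \hookrightarrow H^2_{\m_S}(WS)$ is compatible with the Verschiebung operator induced from $W_n T / W_n S$, and that derived inverse limit issues (potential $\lim^1$ terms from the systems $\{H^i_{\m_S}(W_n T)\}$ and $\{H^1_{\m_S}(Q_n)\}$) do not disturb the summand property in the limit. A secondary difficulty is the supersingular case, where $H^1(E, W\mathcal{O}_E) = 0$ forces the argument to stay at finite level, using unbounded growth of $\ell(H^1(E, W_n\mathcal{O}_E)) = n$ against a finer invariant — most likely the Frobenius-semilinear $F$-isocrystal structure — that distinguishes the Witt-vector cohomology of an elliptic curve from any iterated extension built out of $H^1_{\m_S}(T/S)$.
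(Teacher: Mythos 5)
Your proposal diverges from the paper's argument at the decisive step, and the divergence leaves a real gap. The paper does not analyze the quotient $Q_n = W_n T / W_n S$ at all; instead, it invokes the trace argument of \cite[Lemma 3.4]{Bha14}, which produces a single integer $d > 0$ (independent of $n$) such that $d \cdot H^i_{\m_S}(W_n S) = 0$ for all $i < \dim S = 3$ and all $n$. That uniform annihilator is the engine of the contradiction: by Lemma~\ref{LocalCohBigEx}, $H^1(E,W_n\mathcal{O}_E)$ is a summand of $H^2_{\m_S}(W_n S)$, so $d$ would have to kill $H^1(E,W_n\mathcal{O}_E)$ for every $n$, hence kill the inverse limit $H^1(E,W\mathcal{O}_E)$, contradicting $H^1(E,W\mathcal{O}_E) \otimes \Q \neq 0$ from \cite{BBE07}. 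Your proposal never produces such a uniform bound.

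Where your argument actually breaks: the identification $H^2_{\m_S}(W_n S) \cong H^1_{\m_S}(Q_n)$ is correct given the vanishing of $H^i_{\m_S}(W_n T)$ for $i < 3$, but it does not by itself give a contradiction. At every finite level $p^n$ annihilates $W_n(-)$ of anything, so ``$p$ acts nilpotently on $H^1_{\m_S}(Q_n)$'' is vacuously true and cannot clash with the $W(k)$-module structure at finite $n$. To promote your argument to the limit you would need to know that the Verschiebung operator on $\{H^1_{\m_S}(Q_n)\}$ is compatible with the one on $\{H^1(E,W_n\mathcal{O}_E)\}$ under the summand inclusion and that the resulting pro-system has bounded $p$-torsion — precisely the kind of control that the trace argument hands you for free. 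Moreover, the claim that $H^1_{\m_S}(Q_n)$ is filtered purely by copies of $k$ is not justified: the V-adic filtration on $Q_n$ has Frobenius twists of $T/S$ as graded pieces, and the resulting long exact sequence in local cohomology brings in $H^0_{\m_S}$ and $H^2_{\m_S}$ of those twists as well, not just $H^1_{\m_S}(T/S)$.

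Finally, the supersingular case you flag as a secondary difficulty is not a difficulty in the paper's approach. By \cite{BBE07}, $H^1(E,W\mathcal{O}_E) \otimes \Q$ computes the slope-$<1$ part of $H^1_{\mathrm{crys}}(E)$, and for a supersingular elliptic curve both slopes equal $1/2 < 1$, so this group is in fact $2$-dimensional over $W(k)[1/p]$ — even larger than in the ordinary case. The rational non-vanishing needed for the paper's contradiction holds uniformly for every elliptic curve, with no need to pass to $F$-isocrystal bookkeeping. You should replace the structural analysis of $Q_n$ with the trace argument; once you have the uniform annihilator $d$, the rest of your setup (the summand from Lemma~\ref{LocalCohBigEx}, the growth of $H^1(E,W_n\mathcal{O}_E)$) closes immediately along the lines of the paper.
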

\begin{proof}
Note that $\dim(S) = \dim(Y) + 1 = \dim(E) + 1 + 1 =3$. Assume towards contradiction such a $T$ does exist. Following the trace argument of \cite[Lemma 3.4]{Bha14}, we learn that there is some integer $d > 0$ such that $d \cdot H^i_{\mathfrak{m}_S}(W_n S) = 0$ for $i < \dim(S)$ and all $n$. But Lemma~\ref{LocalCohBigEx} implies that $H^1(E, W_n \mathcal{O}_E)$ is a summand of $H^2_{\mathfrak{m}_S}(W_n S)$. As $E$ is an elliptic curve, we can find $n \gg 0$ such that the integer $d$ does not annihilate $H^1(E, W_n \mathcal{O}_E)$: indeed, the inverse limit $H^1(E, W \mathcal{O}_E) = \lim_n H^1(E, W_n \mathcal{O}_E)$ is nonzero after tensoring with $\Q$ by \cite{BBE07}. This is a contradiction, so no such $T$ can exist.
\end{proof}

\newpage

\quad\bigskip

$\begin{array}{ll}
\textrm{Department of Mathematics}           &\quad\textrm{Department of Mathematics}\\
\textrm{Institute for Advanced Study}              &\quad\textrm{University of Michigan}\\
\textrm{Princeton, NJ}                                &\quad\textrm{Ann Arbor, MI 48109--1043}\\
\textrm{USA}                                 &\quad \textrm{USA}\\
\smallskip
\textrm{E-mail:}\  \textrm{bhargav.bhatt@gmail.com}         &\quad \textrm{E-mail:}\ \textrm{hochster@umich.edu}\\
\smallskip

\textrm{Department of Mathematics}\\
\textrm{Purdue University}\\
\textrm{West Lafayette, IN}\\
\textrm{USA}\\
\smallskip
\textrm{E-mail:}\ \textrm{ma326@purdue.edu}\\
\end{array}$

\bigskip\bigskip

\end{document}